\theoremstyle{plain}
\newtheorem{thm}{Theorem}[section]
\newtheorem{thmIntr}{Theorem}
\newaliascnt{propIntr}{thmIntr}
\newaliascnt{corIntr}{thmIntr}
\newtheorem{corIntr}[corIntr]{Corollary}
\newaliascnt{QU}{thm}
\newtheorem{QU}[QU]{Question}
\newaliascnt{conj}{thm}
\newtheorem{conj}[conj]{Conjecture}
\newaliascnt{lem}{thm}
\newtheorem{lem}[lem]{Lemma}
\newaliascnt{cor}{thm}
\newtheorem{cor}[cor]{Corollary}
\newaliascnt{prop}{thm}
\newtheorem{prop}[prop]{Proposition}
\theoremstyle{definition}
\newaliascnt{rem}{thm}
\newtheorem{rem}[rem]{Remark}
\newaliascnt{defn}{thm}
\newaliascnt{ex}{thm}
\numberwithin{equation}{section}
\def\bP{\ensuremath{\mathbb{P}}}
\def\bQ{\ensuremath{\mathbb{Q}}}
\def\bZ{\ensuremath{\mathbb{Z}}}
\def\bC{\ensuremath{\mathbb{C}}}
\def\cE{\ensuremath{\mathcal{E}}}
\def\cF{\ensuremath{\mathcal{F}}}
\def\cG{\ensuremath{\mathcal{G}}}
\def\cH{\ensuremath{\mathcal{H}}}
\def\cI{\ensuremath{\mathcal{I}}}
\def\cK{\ensuremath{\mathcal{K}}}
\def\cM{\ensuremath{\mathcal{M}}}
\def\cN{\ensuremath{\mathcal{N}}}
\def\cO{\ensuremath{\mathcal{O}}}
\def\cP{\ensuremath{\mathcal{P}}}
\def\cQ{\ensuremath{\mathcal{Q}}}
\def\cS{\ensuremath{\mathcal{S}}}
\def\cT{\ensuremath{\mathcal{T}}}
\def\cU{\ensuremath{\mathcal{U}}}
\def\ev{\ensuremath{\mathrm{ev}}}
\def\hat{\widehat}
\def\hL{{\widehat{L}}}
\def\tS{{\widetilde{S}}}
\def\div{\mathrm{div}}
\def\Db{\mathop{\mathrm{D}^{\mathrm{b}}}\nolimits}
\DeclareMathOperator{\Pic}{Pic}
\DeclareMathOperator{\Supp}{supp}
\DeclareMathOperator{\ext}{ext}
\DeclareMathOperator{\rk}{rk}
\DeclareMathOperator{\im}{Im}
\DeclareMathOperator{\ch}{ch}
\DeclareMathOperator{\coker}{coker}
\DeclareMathOperator{\Gr}{Gr}
\DeclareMathOperator{\Fl}{Fl}
\DeclareMathOperator{\OGr}{OGr}
\DeclareMathOperator{\SO}{SO}
\DeclareMathOperator{\GL}{GL}
\DeclareMathOperator{\Sym}{Sym}
\DeclareMathOperator{\pr}{pr}
\DeclareMathOperator{\Sing}{Sing}
\def\loccit{{\it loc. cit.}}
\def\setminus{\smallsetminus}
\def\emptyset{\varnothing}
\def\longarrow#1#2{\mathchoice{#2}{#1}{#1}{#1}}
\def\to{\longarrow{\rightarrow}{\longrightarrow}}
\def\into{\longarrow{\hookrightarrow}{\lhook\joinrel\longrightarrow}}
\def\onto{\longarrow{\twoheadrightarrow}{\relbar\joinrel\twoheadrightarrow}}
\let\shortmapsto\mapsto
\def\mapsto{\longarrow{\shortmapsto}{\longmapsto}}
\def\bw#1{{\mathchoice%
 {\textstyle{\bigwedge\mkern-4.5mu^{#1}\mkern1mu}}%
 {\textstyle{\bigwedge\mkern-4.5mu^{#1}\mkern1mu}}%
 {\scriptstyle{\bigwedge\mkern-5mu^{#1}}}%
 {\scriptscriptstyle{\bigwedge\mkern-5mu^{#1}}}%
}}
\def\setmid#1#2{{\left\{{#1}:{#2}\right\}}}
\definecolor{applegreen}{rgb}{0.55, 0.71, 0.0}
\newcommand{\jieaoserious}[1]{{\color{black}#1}}
\newcommand{\set}[1]{\left\{#1\right\}}
\title[Projective models for Hilbert squares of $K3$ surfaces]{Projective models for Hilbert squares of $K3$ surfaces}
\author[Á.D.~Ríos~Ortiz, A.~Rojas, and J.~Song]{Ángel David Ríos Ortiz, Andrés Rojas, and Jieao Song}
\begin{document}

\begin{abstract}
For a very general polarized $K3$ surface $S\subset \bP^g$ of genus $g\ge 5$, we study the linear system on the Hilbert square $S^{[2]}$ parametrizing quadrics in $\bP^g$ that contain $S$. We prove its very ampleness for $g\geq 7$. In the cases of genus 7 or 8, we describe in detail the projective geometry of the corresponding embedding by making use of the Mukai model for $S$. In both cases, it can be realized as a degeneracy locus on an ambient homogeneous space, in a strikingly similar fashion.
In consequence, we give explicit descriptions of its ideal and syzygies.
Furthermore, we extract new information on the locally complete families, in a first step towards the understanding of their projective geometry.
\end{abstract}

\address{\'Angel David R\'ios Ortiz: Université Paris Cité and Sorbonne Université, CNRS, IMJ-PRG \hfill\newline\texttt{}
\indent F-75013 Paris, France}
\email{{\tt riosortiz@imj-prg.fr}}

\address{Andrés Rojas: Departament de Matemàtiques i Informàtica, Universitat de Barcelona \hfill \newline\texttt{}
\indent Gran Via de les Corts Catalanes 585,
08007 Barcelona, Spain}
\email{{\tt andresrojas@ub.edu}}

\address{Jieao Song: Dipartimento di Matematica
``Federigo Enriques",
Università degli Studi di Milano \hfill \newline\texttt{}
\indent Via Cesare Saldini 50, 20133 Milano, Italy}
\email{{\tt jieao.song@unimi.it}}

\maketitle

\setcounter{tocdepth}{2}

\section{Introduction}
Hyperkähler manifolds, stemming from the Beauville--Bogomolov decomposition theorem as one of the building blocks for manifolds with trivial canonical bundle \cite{beauville}, have attracted considerable attention in the last decades.
In the context of projective hyperkähler manifolds, one of the most challenging problems is to explicitly describe locally complete families. It is thus of compelling interest to understand linear systems on hyperkähler manifolds, their positivity properties (such as very ampleness and behaviors of higher syzygies), and the geometry of the corresponding projective models.

In the 2-dimensional case, where hyperkähler manifolds are nothing but $K3$ surfaces, Mukai described in a series of influential works \cite{mukai-models,mukai-genus18-20,mukai-genus11,mukai-genus13,mukai-genus16} projective models for general polarized $K3$ surfaces of low degree.
On the other hand, Saint-Donat's classical results \cite{SaintDonat} characterize very ampleness numerically, and describe the equations defining $K3$ surfaces in arbitrary degree. Furthermore, Voisin's groundbreaking proof of generic Green's conjecture for canonical curves \cite{voisin1,voisin2} determines the entire Betti diagram of a $K3$ surface of Picard rank one (embedded in projective space via the primitive linear system). 

For higher-dimensional hyperkähler manifolds, only a few explicit descriptions of locally complete families are known \cite{beauville-donagi, iliev-ranestad, ogrady-epw, debarre-voisin,twistedcubics,epwcubes} (and \cite{stability-families,perry-pertusi-zhao} as well).
Darkness also predominates in the study of equations and syzygies, in contrast to the case of abelian varieties, where, starting with the pioneering work of Mumford \cite{mumford-equations}, remarkable results (e.g.~\cite{koizumi, kempf, pareschi, pareschi-popa}) have been obtained in the last 60 years. Nevertheless, with the only evidence of $K3$ surfaces, many of these results are expected to admit parallels in the hyperkähler setting and thus constitute a source of inspiration (see \cite{ogrady-theta} as an instance of this line of thought for manifolds of generalized Kummer type).

In the sequel, we will restrict ourselves to hyperkähler fourfolds of $K3^{[2]}$-type (namely, those that are deformation of the Hilbert square of a $K3$ surface).
Following \cite{debarre}, we denote by $\cM_{2d}^{(\gamma)}$ the moduli space of polarized fourfolds $(X, H)$ of $K3^{[2]}$-type having square $q(H)=2d$ and divisibility $\div(H)=\gamma$ with respect to the Beauville--Bogomolov--Fujiki quadratic form $q$ on $H^2(X,\bZ)$.%
\footnote{Note that the space $\cM_{2d}^{(\gamma)}$ is non-empty and irreducible of dimension $20$ if and only if $\gamma=1$, or $\gamma=2$ and $2d\equiv 6\pmod 8$ (see \autoref{sec:hyperkähler}). In consequence, when $2d\not\equiv 6\pmod 8$, we will simply speak of a polarization of square $2d$ without specifying the divisibility, which is always $1$.}
In the absence of information for a general element in $\cM_{2d}^{(\gamma)}$, it is natural to specialize to actual Hilbert squares of $K3$ surfaces equipped with appropriate geometric polarizations, and study their positivity properties that might be preserved under deformation.

More precisely, as proven by Beauville \cite{beauville}, for any $K3$ surface $S$ there is an isomorphism 
\[
\Pic(S^{[2]})\cong\Pic(S)\oplus\bZ\cdot \delta,
\]
where $2\delta$ is the class of the divisor in $S^{[2]}$ parametrizing non-reduced subschemes. If we denote by $L_2\in\Pic(S^{[2]})$ the line bundle induced by $L\in\Pic(S)$, then linear systems of the form $|L_2-\delta|$---sending a pair of points in $S$ to the line they span in $\bP(H^0(L)^\vee)$---give a natural way of testing basepoint-freeness and very ampleness.
For example, it can be shown \cite[Section~3.6]{debarre} that a general member of $\cM_{2d}^{(\gamma)}$ with square $2d\ge 6$ (and either divisibility $1$ or $2$) is very ample.
On the other hand, the polarization is never very ample in the case of square~$2$ \cite{ogrady-epw,iliev-manivel}, so this leaves polarizations of square $4$ (that is, general members of $\cM_4^{(1)}$) as the only remaining case where the very ampleness is not decided (see also \cite[Problem~1.4]{IKKR}).

The goal of the present paper is to analyze complete linear systems of the form $|L_2-2\delta|$. Given a very ample line bundle $L$ on a $K3$ surface $S$, $|L_2-2\delta|$ can be identified with the space of quadric hypersurfaces in $\bP(H^0(L)^\vee)$ containing $S$, and the induced map
\begin{equation}\label{intro-linearsystem}
S^{[2]}\dasharrow \bP\left(H^0(S^{[2]},L_2-2\delta)^\vee\right)
\end{equation}
sends a length-2 subscheme $\xi$ to the hyperplane of quadrics containing the line $\langle\xi\rangle$ that it spans.

By specializing to $K3$ surfaces of Picard rank one (in which case $L_2-2\delta$ is basepoint-free), we will show that the linear systems $|L_2-2\delta|$ display a rich geometry, and serve as a good testing ground for positivity properties on polarized hyperkähler fourfolds. This fits well in the philosophy that, for a fixed square $2d$ and divisibility $1$, the linear systems $L_2-r\delta$ (in genus $g=d+r^2+1$) gain positivity when $r$ increases. Of course, the limitation with this approach is that these linear systems involve very difficult geometry of the embedding $S\subset \bP(H^0(L)^\vee)$, and so (even in the presence of a Mukai model) become unmanageable very quickly.

Our first contribution, concerning the geometry of $L_2-2\delta$ in arbitrary genera, is the following.

\begin{thmIntr}
\label{thm:L-2delta-very-ample}
Let $(S,L)$ be a polarized $K3$ surface of genus $g$ with $\Pic(S)=\bZ\cdot L$. Then:
\begin{enumerate}
    \item\label{thm:L-2delta-very-ample-1} If $g\geq7$, the line bundle $L_2-2\delta$ on $S^{[2]}$ is very ample.

    \item\label{thm:L-2delta-very-ample-2} If $g\geq6$, then $(S^{[2]},L_2-2\delta)$ satisfies Le Potier's strange duality with respect to the pair $(\cM(2,L,2),H)$, where $H\coloneqq\theta(1,0,-1)$.
\end{enumerate}
\end{thmIntr}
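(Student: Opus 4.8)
The plan is to prove the two parts by rather different mechanisms, each of which should reduce to positivity/duality statements that are essentially already in the literature once the right geometric dictionary is set up.

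For part (1), I would use the identification of $|L_2-2\delta|$ with the space of quadrics in $\bP^g=\bP(H^0(L)^\vee)$ containing $S$, together with the Saint-Donat theory of the embedding $S\subset\bP^g$. Since $\Pic(S)=\bZ\cdot L$ and $g\ge 7$ (in particular $g\ge 4$ and $L$ is not hyperelliptic, trigonal, or a plane quintic), $S$ is projectively normal and its homogeneous ideal is generated by quadrics; moreover the quadrics through $S$ cut out $S$ set-theoretically and even scheme-theoretically. Concretely, to check very ampleness of the map \eqref{intro-linearsystem} I would verify the two standard conditions for a length-$2$ subscheme: separation of points, i.e.\ for $\xi\ne\xi'$ there is a quadric through $S$ containing $\langle\xi\rangle$ but not $\langle\xi'\rangle$; and separation of tangent vectors, i.e.\ for each $\xi$ the quadrics through $S$ containing $\langle\xi\rangle$ do not all contain a given first-order deformation of $\xi$. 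The first follows from the fact that the quadrics through $S$ separate lines secant (or tangent) to $S$, which in turn follows from projective normality and ideal-generation-by-quadrics applied on $\bP^1=\langle\xi\rangle$: the restriction map $H^0(I_S(2))\to H^0(\cO_{\langle\xi\rangle}(2))/(\text{conic vanishing on }\xi)$ is what must be controlled, and a dimension count using $h^0(I_S(2))=\binom{g-1}{2}$ does the job for $g\ge 7$. The tangent-vector separation is the analogous statement for a length-$3$ subscheme supported on the line, again reducible to the surjectivity of $H^0(I_S(2))\to H^0(\cO_\ell(2))$ modulo the conditions coming from $\xi$; here the genus bound $g\ge 7$ is what guarantees enough quadrics. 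I expect the main obstacle to be handling the degenerate cases where $\langle\xi\rangle$ becomes tangent to $S$ or where the relevant line is a bisecant meeting $S$ with higher-order contact; these require the scheme-theoretic (not just set-theoretic) statement that the quadrics through $S$ cut out $S$, which is the delicate input from Saint-Donat.

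For part (2), the framework is Le Potier's strange duality on a $K3$ surface, which is a theorem of Marian--Oprea in this generality. The pair $(2,L,2)$ denotes the Mukai vector $v=(2,L,2)$ of rank $2$ sheaves, and $\cM(2,L,2)$ its moduli space; one computes $v^2=\langle L\rangle^2 - 8 = 2g-2-8 = 2g-10$, which is $\ge 2$ precisely when $g\ge 6$, so $\cM(2,L,2)$ is a smooth projective hyperkähler fourfold of $K3^{[2]}$-type for $g\ge 6$. The complementary Mukai vector is $w=(1,0,-1)$ with $\langle v,w\rangle$ computed from the Mukai pairing; the theta line bundle $\theta(1,0,-1)$ on $\cM(2,L,2)$ is $H$. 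The strange duality statement asserts that the natural pairing
\[
H^0\bigl(\cM(2,L,2),\theta_{w}\bigr)\otimes H^0\bigl(\cM(w),\theta_{v}\bigr)\longrightarrow \bC
\]
coming from the locus $\{(E,F): \Hom(E,F)\ne 0\}$ is a perfect pairing, where $\cM(w)=\cM(1,0,-1)$ is the Hilbert square $S^{[2]}$ (as $v(I_\xi(\text{twist}))$), and the theta bundle $\theta_v$ on this moduli space is exactly $L_2-2\delta$. Thus I would (i) identify $\cM(1,0,-1)$ with $S^{[2]}$ and check $\theta(2,L,2)=L_2-2\delta$ on it via the standard formula for theta divisors in terms of the Mukai vector and the Beauville--Bogomolov form — this is where the numerics $q(L_2-2\delta)$, $\div$, and genus $g=d+r^2+1$ with $r=2$ enter; (ii) invoke the Marian--Oprea theorem (valid once both moduli spaces are nonempty and of expected dimension, guaranteed by $g\ge 6$) to conclude the perfectness of the pairing. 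The main point to verify carefully is the theta-bundle identification in (i): one must match $\theta(2,L,2)$ with $L_2-2\delta$, and $\theta(1,0,-1)$ with $H$, using the determinant-of-cohomology construction of $\theta$ and Beauville's description of $\Pic(S^{[2]})$; this is a bookkeeping computation with Mukai vectors and the Fujiki form, and I expect it, rather than any deep geometry, to be the real content of the proof of part (2).
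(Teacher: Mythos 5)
Both parts of your proposal have genuine gaps.

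For part (1), the dimension count you propose cannot establish separation. For any length-$2$ subscheme $\xi$, the quadrics through $S$ containing $\langle\xi\rangle$ form a hyperplane in $H^0(\cI_{S}(2))$ (whose dimension, incidentally, is $\binom{g-2}{2}$, not $\binom{g-1}{2}$), so separating $\xi$ from $\xi'$ means showing two codimension-one conditions are independent --- i.e.\ exhibiting a quadric through $S$ containing one secant line but not the other. Projective normality and generation by quadrics give basepoint-freeness, but no amount of counting produces such a quadric; indeed for $g=6$ the surface is still cut out by quadrics yet $L_2-2\delta$ has square $2$ and is \emph{never} very ample, so the genus bound must enter through actual geometry, not numerology. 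The paper's mechanism is to manufacture quadrics with prescribed behavior on secant lines from stable bundles: for $E\in\cM(2,L,\lfloor g/2\rfloor)$ the assignment $\xi\mapsto H^0(E\otimes\cI_\xi)$ defines a map to a Grassmannian pulling back $\cO(1)$ to $L_2-2\delta$ (its composition with Pl\"ucker is governed by the quadrics $Q_{E,V}$ of \autoref{containmentcrit}), and being a closed immersion reduces to the bounds $h^1(E\otimes\cI_\xi)=0$ and $h^1(E\otimes\cI_\eta)\le1$ for $\eta\in S^{[3]}$, proven by stability and $v(T)^2\ge-2$ inequalities (with a separate analysis over the $2$-dimensional moduli space of such $E$ when $g$ is odd).

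For part (2), you cannot invoke Marian--Oprea: their theorem covers pairs of Mukai vectors both of rank $\ge 2$, whereas here one side is $\cM(1,0,-1)=S^{[2]}$, of rank one, and this is exactly the case left open. The paper instead extends O'Grady's argument: after the dimension equality $h^0(S^{[2]},L_2-2\delta)=\binom{g-2}{2}=h^0(\cM(2,L,2),H)$ (using that $H$ is big and nef for $g\ge9$), the strange duality map is identified with the pullback along $f\colon\cM(2,L,2)\dasharrow|I_S(2)|$, and the whole point is to prove that the image $Y_0$ is linearly non-degenerate in $|I_S(2)|$. That is the real content, and it is established via Green's theorem that the rank-$\le4$ quadrics containing a canonical curve $C\in|L|$ span $|I_C(2)|$, together with a Lazarsfeld--Mukai bundle computation showing each such rank-$4$ quadric is the hyperplane section of a quadric in $Y_0$. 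Your proposal locates the difficulty in the theta-bundle bookkeeping, which is the routine part, and omits the step where the argument actually lives.
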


As an immediate consequence of \autoref{thm:L-2delta-very-ample}.\eqref{thm:L-2delta-very-ample-1} for $g=7$, we can settle very ampleness for the last remaining case for general hyperkähler fourfolds of $K3^{[2]}$-type.

\begin{corIntr}
Let $(X, H)\in \cM_4^{(1)}$ be a general polarized hyperkähler fourfold of $K3^{[2]}$-type with square $4$.
Then $H$ is very ample.
\end{corIntr}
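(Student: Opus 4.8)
The plan is to exhibit a single very ample polarized fourfold in $\cM_4^{(1)}$ coming from \autoref{thm:L-2delta-very-ample}.\eqref{thm:L-2delta-very-ample-1}, and then to propagate very ampleness to a general member by an openness-plus-irreducibility argument. So the first step is to check that $(S^{[2]}, L_2-2\delta)$ is a legitimate point of $\cM_4^{(1)}$ when $(S,L)$ has genus $7$ and $\Pic(S)=\bZ\cdot L$. Using $q(L_2)=L^2$, $q(\delta)=-2$ and $L_2\perp\delta$, one computes
\[
q(L_2-2\delta) = L^2 + 4\,q(\delta) = (2\cdot 7 - 2) - 8 = 4 .
\]
For the divisibility, since $L$ is primitive in the unimodular lattice $H^2(S,\bZ)$ there is $\alpha\in H^2(S,\bZ)$ with $L\cdot\alpha=1$, and the induced class $\alpha_2\in H^2(S^{[2]},\bZ)$ then satisfies $q(L_2-2\delta,\alpha_2)=1$, so $\div(L_2-2\delta)=1$. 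Hence, by \autoref{thm:L-2delta-very-ample}.\eqref{thm:L-2delta-very-ample-1} applied with $g=7$, the fourfold $(S^{[2]},L_2-2\delta)$ is a point of $\cM_4^{(1)}$ whose polarization is very ample.

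The second step is to spread this out. Very ampleness is an open condition in smooth proper families of polarized varieties: given such a family with a relative polarization that is very ample on one fibre, the locus of fibres on which it remains very ample is Zariski-open (its complement is the locus where the relevant jet-evaluation/multiplication maps degenerate, which is closed). Every polarized fourfold of $K3^{[2]}$-type with square $4$ and divisibility $1$ fits, locally around its moduli point, into such a family --- for instance via its Kuranishi deformation space (smooth of dimension $20$, carrying a universal family with a relative polarization), which maps by the local period map onto an open subset of $\cM_4^{(1)}$; one may also work over a smooth cover or with the moduli stack to avoid the absence of a genuine universal family over the coarse space. Consequently the locus $V\subseteq\cM_4^{(1)}$ of those $(X,H)$ with $H$ very ample is Zariski-open, and by the first step it is non-empty.

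The third step is purely formal: as recalled in the footnote, $\cM_4^{(1)}$ is irreducible (it is $\cM_{2d}^{(\gamma)}$ with $2d=4$, $\gamma=1$), so the non-empty open set $V$ is dense; thus a general $(X,H)\in\cM_4^{(1)}$ satisfies $H$ very ample. I expect the only delicate point to be the bookkeeping around families over the moduli space in the second step --- making precise the passage from the Kuranishi family (or a level cover / the moduli stack) to $\cM_4^{(1)}$, and confirming that very ampleness on a single fibre of a smooth proper family indeed spreads to a Zariski-open set of fibres. Everything else follows immediately once \autoref{thm:L-2delta-very-ample}.\eqref{thm:L-2delta-very-ample-1} is in hand.
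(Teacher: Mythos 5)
Your proposal is correct and is exactly the argument the paper intends: the corollary is stated as an immediate consequence of \autoref{thm:L-2delta-very-ample}.\eqref{thm:L-2delta-very-ample-1}, the point being that for $g=7$ one has $q(L_2-2\delta)=(2g-2)-8=4$ and $\div(L_2-2\delta)=1$, so $(S^{[2]},L_2-2\delta)\in\cM_4^{(1)}$, and very ampleness then spreads to a general member by openness in families together with the irreducibility of $\cM_4^{(1)}$. Your square and divisibility computations and the openness/irreducibility bookkeeping are all correct.
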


On the other hand, Le Potier's strange duality conjecture is a statement about duality of sections of determinant line bundles on moduli spaces of sheaves. It has been solved in a wide range of cases (where both moduli spaces parametrize sheaves of rank $\geq2$) for generic $K3$ surfaces, see \cite{marian-oprea}. 
In the case $6\leq g\leq 8$, \autoref{thm:L-2delta-very-ample}.\eqref{thm:L-2delta-very-ample-2} was established by O'Grady \cite{ogrady} through a good understanding of the quadrics containing the Mukai model of $S$. In our case, well known results of Green \cite{green} on the rank-4 quadrics containing a canonical curve enable us to generalize the argument to all $g\geq6$.

Next we study the projective geometry of $S^{[2]}$ under the linear system $|L_2-2\delta|$ in the cases $g=7$ and $g=8$,
by making explicit use of their Mukai models. We observe that these two cases share a strikingly similar picture,
where the Hilbert square can be realized as a corank-$2$ degeneracy locus on an ambient homogeneous space of dimension 8.
With the help of the \emph{Gulliksen--Negård complex} \cite{gulliksen-negard} resolving the ideal of $S^{[2]}$ in the ambient homogeneous space, this construction allows us to give a concrete description of the homogeneous ideal and the syzygies of the Hilbert square in the projective space.

Recall from \cite{mukai-models} that a general $K3$ surface $S$ of genus $7$ is a $\bP^7$-linear section of an orthogonal Grassmannian $\OGr(5, V_{10})\subset\bP^{15}$. In particular, there is a natural isomorphism $V_{10}\cong H^0(S^{[2]},L_2-2\delta)$ for the quadrics in $\bP^7$ containing $S$. Note also that, by construction, $\bP(V_{10})$ is canonically equipped with a smooth quadric $Q$ yielding an isomorphism $\bP(V_{10})\cong \bP(V_{10}^\vee)$. In this way, the map in \eqref{intro-linearsystem} gets identified with a closed immersion
\[
S^{[2]}\into \bP(V_{10})=\bP^9
\]
which turns out to factor through the quadric $Q$.

\begin{thmIntr}\label{intro-thmC}
Let $(S,L)$ be a polarized $K3$ surface of genus $7$ with $\Pic(S)=\bZ\cdot L$. Then:
\begin{enumerate}
    \item\label{intro-thmC-1} $S$ induces a morphism of rank-$8$ vector bundles
    \[
        \varphi\colon \cS_+\to \cO^{\oplus 8}
    \]
    on the $8$-dimensional quadric $Q$ in $\bP^9$, where $\cS_+$ is one of the spinor bundles. The embedded $S^{[2]}$ in $\bP^9$ can be identified with the rank-$6$ degeneracy locus $D_6(\varphi)$.

    \item The closed immersion $S^{[2]}\into \bP^{9}$ is normal in degree $d$ for every $d\ge 3$.
    
    \item The homogeneous ideal of $S^{[2]}$ in $\bP^9$ is generated by the quadric $Q$ and $65$ quartics.

    \item If $(S,L)$ is general among $K3$ surfaces of Picard rank one (more precisely, when the singular locus of $D_7(\varphi)$ coincides with $S^{[2]}=D_6(\varphi)$), then the minimal graded free resolution for the ring of sections $R\coloneqq\bigoplus_{d}H^0(S^{[2]}, \cO(d))$ can be described by the Betti diagram \eqref{eq:S2-g7-betti-diagram}.
\end{enumerate} 
\end{thmIntr}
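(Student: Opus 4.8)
The plan is to realize $S^{[2]}$ as a degeneracy locus and then feed that description into the Gulliksen--Negård machine. For part (1), I would start from the Mukai model $S = \OGr(5,V_{10}) \cap \bP^7$ and use the spinor bundle $\cS_+$ on $\OGr(5,V_{10})$, which restricts to a rank-$8$ bundle on $Q$ under the identification $\bP(V_{10}) \cong \bP(V_{10}^\vee)$. The key point is that a length-$2$ subscheme $\xi \subset S$ spans a line $\langle\xi\rangle \subset \bP^7$, and the quadrics in $\bP^7$ through $\langle\xi\rangle$ cut out a point of $Q = \bP(V_{10})$; one must check that the fiber of $\cS_+$ at that point maps to $\cO^{\oplus 8}$ with rank exactly $6$ precisely when the corresponding point lies in the image of $S^{[2]}$. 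Concretely, I expect $\varphi$ to be built from the defining data of $S$ (the $8$-dimensional space $\langle S \rangle^\perp$ of quadrics, interpreted via the spinor embedding), and the drop of rank from $8$ to $6$ to encode the two linear conditions that a quadric vanish on a line. The rank can never drop below $6$ by a dimension count ($\dim S^{[2]} = 4$, and the expected codimension of $D_6(\varphi)$ in the $8$-dimensional $Q$ is $(8-6)(8-6) = 4$), which simultaneously shows $D_6(\varphi)$ has the expected codimension and hence that the Gulliksen--Negård complex is a resolution.

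For parts (2) and (3), I would invoke the Gulliksen--Negård complex \cite{gulliksen-negard} for the rank-$6$ (corank-$2$) degeneracy locus of $\varphi\colon \cS_+ \to \cO^{\oplus 8}$ on $Q$: it is a length-$4$ self-dual complex of the form
\[
0 \to \wedge^2 F \otimes \wedge^2 E^\vee \to (\text{rank-}63\text{ term}) \to \mathfrak{gl}(E)\oplus\mathfrak{gl}(F) / \cO \to E \otimes F^\vee \to \cO_Q \to \cO_{S^{[2]}} \to 0,
\]
where I write $E = \cS_+$, $F = \cO^{\oplus 8}$. Twisting by the relevant powers of $\cO_Q(1)$ and taking cohomology on $Q$ via Bott vanishing for the spinor bundle and its exterior powers, I obtain the graded pieces of the ideal sheaf of $S^{[2]} \subset Q$. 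Combined with the Koszul resolution of $\cO_Q$ inside $\bP^9$ (a single quadric), this gives: normality of the embedding in every degree $d \geq 3$ (the map $H^0(\bP^9, \cO(d)) \to H^0(S^{[2]},\cO(d))$ is surjective once the obstruction groups $H^1$ of the twisted complex vanish, which happens for $d \geq 3$), and that the homogeneous ideal is generated by $Q$ together with the quartics coming from the second term of the complex — a direct cohomology computation should yield exactly $65$ of them (matching $h^0$ of the appropriate twist of $E\otimes F^\vee = \cS_+^{\oplus 8}$ modulo the image of $\cO_Q$, after subtracting the $10$ quartics that are multiples of $Q$).

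Part (4) is where the genericity hypothesis enters and, I expect, will be the main obstacle. The Gulliksen--Negård complex resolves $\cO_{D_6(\varphi)}$ as an $\cO_Q$-module; to pass to the minimal graded free resolution of $R$ over the polynomial ring in $10$ variables, I would splice the (twisted, sheafified) Gulliksen--Negård complex with the Koszul complex of $Q$, then take global sections and show the resulting complex of graded modules is exact and minimal. Exactness requires the hypercohomology spectral sequence to degenerate, which in turn needs the vanishing of all the intermediate cohomology of the twisted terms of the complex — this is a Bott-type computation on $Q$, but it is only valid when $D_6(\varphi)$ has the expected codimension \emph{and} $D_7(\varphi)$ is singular exactly along $D_6(\varphi)$, i.e. the $\cS_+$-to-$\cO^{\oplus 8}$ map is "as generic as possible" given the constraints. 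The hypothesis in the statement ($\Sing D_7(\varphi) = S^{[2]}$) is precisely what guarantees that the Gulliksen--Negård complex is still a resolution and that no extra syzygies appear; verifying that a very general $(S,L)$ of Picard rank one satisfies it is a semicontinuity argument, reducing to exhibiting one $S$ for which it holds (or a Macaulay2 check). Minimality of the resolution then follows because all the maps in the Gulliksen--Negård complex are built from $\varphi$, whose entries are linear forms, so no unit entries appear after the correct twists are taken into account; reading off the Betti numbers from the ranks of the twisted terms produces the diagram \eqref{eq:S2-g7-betti-diagram}.
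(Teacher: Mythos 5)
Your parts (1)--(3) follow essentially the paper's route: $\varphi$ is the composition of $\cS_+\into V_+\otimes\cO_Q$ with the projection $V_+\onto V_+/H^0(L)^\vee=W$, and the twisted Gulliksen--Neg\aa rd complex together with Bott/Borel--Weil--Bott vanishing on $Q$ yields $d$-normality for $d\ge3$ and the $8\cdot 16-63=65$ quartics. Two small gaps there: first, ``the rank can never drop below $6$ by a dimension count'' is not an argument --- degeneracy loci can be nonempty below their expected dimension; the paper rules out $\rk\varphi_x\le5$ because $\bP(\cS_{+,x})\cap S$ would then contain a conic, impossible for $\Pic(S)=\bZ\cdot L$. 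Second, set-theoretic equality plus expected codimension (hence Cohen--Macaulayness) still requires a degree comparison via Porteous' formula to conclude the scheme-theoretic identification $S^{[2]}=D_6(\varphi)$.

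The genuine gap is in part (4). The module being resolved is the ring of sections $R=\bigoplus_d H^0(S^{[2]},\cO(d))$, and $S^{[2]}$ is \emph{not} projectively normal: $h^0(S^{[2]},\cO(2))=55=h^0(\bP^9,\cO(2))$ while $S^{[2]}$ lies on $Q$, so $m_2$ has one-dimensional cokernel and $R$ is not the coordinate ring $S_X/I_X$. Splicing the Gulliksen--Neg\aa rd complex with the Koszul complex of $Q$ and taking global sections would at best address $S_X/I_X$, and in any case does not produce a free resolution: the graded modules $\bigoplus_d H^0(Q,\cS_+(d))$ and $\bigoplus_d H^0(Q,\Lambda\cS_+(d))$ are not free over $\Sym^\bullet V_{10}$, and the entries of $\varphi$ are sections of $\cS_+^\vee$ on $Q$, not linear forms on $\bP^9$, so your minimality claim does not go through. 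The paper instead starts from a surjection $S_X\oplus S_X(-2)\onto R$ (the extra generator being a degree-$2$ section $s$ with $Z(s)=\Sing(F)\cap S^{[2]}$, where $F=Z(f)$ is a quartic cutting out $D_7(\varphi)$ in $Q$), uses the polar identity $m_3(\partial f/\partial x_i)=(\partial q/\partial x_i)\cdot s$ to show the kernel is generated in degrees $2,3,4$ by $(q,0)$, the ten $(\partial f/\partial x_i,\,x_i)$, and twenty quartics, and needs the linear independence of the $45$ Jacobian minors modulo $S_2\cdot q$, proved via the second fundamental form of $Q$ and the identification of $H^0(\cT_Q)$ with $\mathfrak{so}(10)$. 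The genericity hypothesis $\Sing D_7(\varphi)=S^{[2]}$ enters precisely there --- to know $S^{[2]}$ is cut out by $q$, $f$, and those minors --- not in making the Gulliksen--Neg\aa rd complex exact, which holds whenever $D_6(\varphi)$ has expected codimension. None of this is visible in your plan, and the entries $b_{0,2}=b_{1,1}=1$ of the Betti diagram cannot be reached without confronting the failure of $2$-normality.
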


Similarly, recall from \cite{mukai-models} that a general $K3$ surface $S$ of genus $8$ is a $\bP^8$-linear section of the Grassmannian $\Gr(2,V_6)\subset \bP(\bw2 V_6)$. In this case, quadrics in $\bP^8$ containing $S$ admit a natural identification $\bw2 V_{6}\cong H^0(S^{[2]},L_2-2\delta)$, so that \eqref{intro-linearsystem} reads as a closed immersion
\[
S^{[2]}\into \bP(\bw2 V_6^\vee)=\bP^{14}
\]
which turns out to factor through the Grassmannian $\Gr(2,V_6^\vee)$.

\begin{thmIntr}\label{intro-thmD}
Let $(S,L)$ be a polarized $K3$ surface of genus $8$ with $\Pic(S)=\bZ\cdot L$. Then:
\begin{enumerate}
    \item\label{intro-thmD-1} $S$ induces a morphism of rank-$6$ vector bundles
        \[
        \varphi\colon \bw2\cQ^\vee\to \cO^{\oplus 6}
        \]
    on the Grassmannian $\Gr(2,V_6^\vee)\subset\bP^{14}$, where $\cQ$ is the universal quotient bundle.
    The embedded $S^{[2]}$ in $\bP^{14}$ can be identified with the rank-$4$ degeneracy locus $D_4(\varphi)$.

    \item The closed immersion $S^{[2]}\into \bP^{14}$ is projectively normal, with homogeneous ideal generated by $15$ quadrics and $55$ cubics.

    \item Part of the Betti diagram of the coordinate ring of $S^{[2]}$ is given in~\eqref{eq:S2-g8-betti-diagram}.
\end{enumerate}
\end{thmIntr}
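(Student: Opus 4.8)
The plan is to follow the pattern of \autoref{intro-thmC}: the cases $g=7$ and $g=8$ are formally identical once one fixes the ambient homogeneous space, here $\Gr(2,V_6)$. For part (1), I start from Mukai's model \cite{mukai-models}: since $\Pic(S)=\bZ\cdot L$, the embedded $S\subset\bP^8=\bP(H^0(L)^\vee)$ is $\Gr(2,V_6)\cap\bP(A)$ for a general $9$-dimensional $A\subset\bw2 V_6$, equivalently the vanishing on $\Gr(2,V_6)$ of a general $6$-dimensional space $B\subset\bw2 V_6^\vee$ of linear forms. A Riemann--Roch count gives $h^0\bigl(\cI_{S/\bP^8}(2)\bigr)=\binom{10}{2}-h^0(2L)=45-30=15$, and for general $A$ the Plücker quadrics $Q_\eta(\omega)=\eta\wedge\omega\wedge\omega$, $\eta\in\bw2 V_6$, restrict injectively to $\bP(A)$, hence span $H^0(S^{[2]},L_2-2\delta)$; this gives the identification $H^0(S^{[2]},L_2-2\delta)\cong\bw2 V_6$. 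A direct computation then shows that the map \eqref{intro-linearsystem} sends a length-$2$ subscheme $\xi$, cut on $\Gr(2,V_6)$ by the $2$-planes $U_1,U_2\subset V_6$ (suitably interpreted when $\xi$ is non-reduced), to the functional $\eta\mapsto\eta\wedge\omega_1\wedge\omega_2$, i.e.\ to $[\omega_1\wedge\omega_2]\in\bP(\bw4 V_6)=\bP(\bw2 V_6^\vee)$; since $\omega_1\wedge\omega_2$ is decomposable this point lies on $\Gr(4,V_6)=\Gr(2,V_6^\vee)$ and represents the $4$-plane $U_1+U_2$. The bivector $\omega_1\wedge\omega_2$ can only vanish if $\dim(U_1+U_2)\le 3$, which forces a line of $\Gr(2,V_6)$ to lie in $\bP(A)$, hence a rational curve of $L$-degree $1$ on $S$, impossible for $\Pic(S)=\bZ\cdot L$. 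So \eqref{intro-linearsystem} is a morphism factoring through $\Gr(2,V_6^\vee)\subset\bP^{14}$; by \autoref{thm:L-2delta-very-ample} it is a closed immersion $f\colon S^{[2]}\into\bP^{14}$ with $f^*\cO(1)=L_2-2\delta$.

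Next comes the degeneracy-locus description. On $X:=\Gr(2,V_6^\vee)\cong\Gr(4,V_6)$, with $\cQ^\vee=\cU_4$ the tautological rank-$4$ subbundle, the space $B$ induces $\varphi\colon\bw2\cQ^\vee\hookrightarrow\bw2 V_6\otimes\cO\to B^\vee\otimes\cO=\cO^{\oplus6}$; its fibre at $[U_4]$ is $\bw2 U_4\to B^\vee$ with kernel $\bw2 U_4\cap A$, so $[U_4]\in D_4(\varphi)$ (corank $\ge 2$) exactly when $\bP(\bw2 U_4)\cap\bP(A)$ contains a line $\ell$, and then $\ell\cap\Gr(2,U_4)$ is a length-$2$ subscheme of $S$ with span $U_1+U_2=U_4$; this gives a bijection between $S^{[2]}$ and $D_4(\varphi)$. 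Were $\dim(\bw2 U_4\cap A)\ge 3$ for some $U_4$, then $\Gr(2,U_4)\cap\bP(A)$ would contain a curve of $L$-degree $\le 2$ on $S$, again impossible; hence $D_3(\varphi)=\emptyset$. Since $\dim f(S^{[2]})=4$, the locus $D_4(\varphi)$ has codimension at most, hence exactly, $4=(6-4)^2$ in $X$, so it is Cohen--Macaulay of pure dimension $4$; as its reduction is the smooth fourfold $f(S^{[2]})\cong S^{[2]}$, it is itself smooth and reduced, and $f$ identifies $S^{[2]}$ with $D_4(\varphi)$ scheme-theoretically. This proves part (1) and shows that the Gulliksen--Negård complex \cite{gulliksen-negard} of $\varphi$ is a length-$4$ locally free resolution of $\cO_{S^{[2]}}$ over $\cO_X$, with terms explicit twists by $\cO_X(1)$ of natural bundles built from $\cQ$.

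For parts (2) and (3) I combine this resolution with the classical minimal resolution of the arithmetically Cohen--Macaulay variety $\Gr(2,V_6)\subset\bP^{14}$, whose homogeneous ideal is generated by its $15$ Plücker quadrics: the mapping cone (equivalently the hypercohomology spectral sequence) of the pullback of the Gulliksen--Negård complex to $\bP^{14}$ gives a, generally non-minimal, free resolution of the coordinate ring $R$, and all cohomology of the bundles involved is computed on $\Gr(2,V_6)$ by Bott's theorem. Reading off the relevant $H^0$'s yields $H^0(\bP^{14},\cI_{S^{[2]}}(2))=H^0(\bP^{14},\cI_{\Gr(2,V_6)}(2))$, of dimension $15$ (no extra quadric vanishes on $S^{[2]}$), while $H^0(\bP^{14},\cI_{S^{[2]}}(3))$ needs $55$ further generators in degree $3$, produced by the linear entries of $\varphi$ in the Gulliksen--Negård complex; the same computation gives $H^1(\bP^{14},\cI_{S^{[2]}}(d))=0$ for all $d$, i.e.\ projective normality, and --- after cancelling the consecutive summands introduced by the cone --- the Betti diagram \eqref{eq:S2-g8-betti-diagram}.

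The conceptual crux is the transversality in part (1): that for $\Pic(S)=\bZ\cdot L$ the section $\varphi$ is general enough for $D_4(\varphi)$ to have the expected codimension (equivalently $D_3(\varphi)=\emptyset$), so that the Gulliksen--Negård complex is exact --- this is exactly what the absence of low-degree curves on $S$ buys, together with the very ampleness of \autoref{thm:L-2delta-very-ample}, needed to upgrade $f$ to a closed immersion along non-reduced subschemes. The remaining work --- the Bott computations on $\Gr(2,V_6)$ and, especially, checking that no unexpected cancellations occur so that \eqref{eq:S2-g8-betti-diagram} is the minimal resolution --- is lengthy but routine, and a direct computer-algebra check on one general $(S,L)$ is the most efficient way to pin down the finer Betti numbers.
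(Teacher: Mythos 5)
Your overall route is essentially the paper's: identify $H^0(S^{[2]},L_2-2\delta)$ with $\bw2 V_6$ via the Plücker quadrics of the Mukai model, realize the embedding as $\xi\mapsto[\omega_1\wedge\omega_2]\in\Gr(2,V_6^\vee)$, exhibit $S^{[2]}$ as the corank-$2$ locus of $\varphi\colon\bw2\cQ^\vee\to W\otimes\cO$, and then run the Gulliksen--Negård complex through Borel--Weil--Bott on $\Gr(2,6)$ to get $d$-normality, the $15+55$ generators, and (together with the Hilbert function and Green's duality) the partial Betti diagram. All of that matches the paper in substance.

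There is one genuine gap, in the scheme-theoretic identification $S^{[2]}=D_4(\varphi)$. You argue: $D_4(\varphi)$ has the expected codimension $4$, hence is Cohen--Macaulay; its reduction is the smooth fourfold $f(S^{[2]})$; therefore it is reduced. That last inference is false: a Cohen--Macaulay scheme with smooth reduction need not be reduced (e.g.\ $V(x^2,y^2)\subset\mathbb{A}^2$ is a codimension-$2$ degeneracy locus of the expected codimension, Cohen--Macaulay, with smooth reduction a point, yet non-reduced). Cohen--Macaulayness only buys you absence of embedded components; it does not rule out a generically non-reduced structure, and nothing in your setup guarantees that the section of $\sHom(\bw2\cQ^\vee,\cO^{\oplus6})$ is transverse to the rank stratification. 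The paper closes exactly this gap by a degree comparison: $D_4(\varphi)$, having no embedded components, equals $S^{[2]}$ once its degree---computed by Porteous' formula from the Chern classes of $\bw2\cQ^\vee$---agrees with $\deg_{L_2-2\delta}S^{[2]}$. You need to add that computation (or an explicit transversality argument at points of corank exactly $2$); with it, the rest of your argument goes through. Two smaller points: the ambient Grassmannian for the cohomology computations is $\Gr(2,V_6^\vee)$, not $\Gr(2,V_6)$ (harmless, as they are abstractly the same); and for part (3) note that the paper only determines \emph{part} of the Betti diagram \eqref{eq:S2-g8-betti-diagram} by these methods, so "checking that no unexpected cancellations occur" is more than what the statement requires and more than the stated techniques deliver.
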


Upon completion of this manuscript, we were informed that \autoref{intro-thmC}.\eqref{intro-thmC-1} and \autoref{intro-thmD}.\eqref{intro-thmD-1} were discussed in Benedetti's doctoral dissertation \cite[Section 3.1]{benedetti}.

After \autoref{intro-thmC} and \autoref{intro-thmD}, one is then naturally led to consider the analogous questions on the homogeneous ideal and syzygies for a general deformation of the pair $(S^{[2]},L_2-2\delta)$.
In the case of genus $7$, after performing some deformation-theoretic arguments, we prove that the projective model of a general deformation of $(S^{[2]},L_2-2\delta)$ is \emph{not} contained in a quadric hypersurface. This allows us to deduce its full minimal graded resolution.
Our main results about general members of $\cM_4^{(1)}$ and $\cM_6^{(1)}$, a first step towards the understanding of their projective geometry, read as follows:
\begin{thmIntr}\label{intro-thmdef}\leavevmode
\begin{enumerate}
\item\label{intro-thmdef-1} For a general polarized hyperkähler fourfold $(X, H)\in\cM_4^{(1)}$, the embedding
\[
X\into \bP(H^0(X,H)^\vee)=\bP^9
\]
is projectively normal.
Furthermore, its minimal graded free resolution is determined (see \eqref{eq:betti-diagram} for the Betti diagram). In particular, the homogeneous ideal of $X$ in $\bP^9$ is generated by $10$ cubics (with no linear syzygies) and $20$ quartics.
\item\label{intro-thmdef-2} For a general polarized hyperkähler fourfold $(X, H)\in \cM_6^{(1)}$, the embedding
\[
X\into \bP(H^0(X,H)^\vee)=\bP^{14}
\]
is projectively normal. The homogeneous ideal of $X$ in $\bP^{14}$ is generated by $15$ quadrics and at most $55$ cubics (one expects $20$ cubics).
\end{enumerate}
\end{thmIntr}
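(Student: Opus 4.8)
The strategy is to transfer homological information about the concrete projective model $(S^{[2]}, L_2 - 2\delta)$, obtained in \autoref{intro-thmC} and \autoref{intro-thmD}, to a general deformation inside the moduli space $\cM_{2d}^{(1)}$. First I would recall that the local period map realizes a neighborhood of $[(S^{[2]}, L_2-2\delta)]$ in $\cM_{2d}^{(1)}$ (with $2d=4$, $g=7$ for part \eqref{intro-thmdef-1}, and $2d=6$, $g=8$ for part \eqref{intro-thmdef-2}) as a $20$-dimensional family, and that $S^{[2]}$ for $S$ of Picard rank one is a genuinely generic point of this family (the Picard rank of $S^{[2]}$ is then exactly $2$, which is the minimum possible, so no Noether--Lefschetz locus is hit). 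The key deformation-theoretic input is upper-semicontinuity: the Betti numbers $\beta_{i,j}(X_t)$ of the coordinate ring of the projective model of a fiber $X_t$ can only jump up under specialization, so it suffices to exhibit \emph{one} deformation with Betti numbers no larger than the predicted ones, and combine this with a lower bound coming from the Hilbert polynomial (which is deformation-invariant, since $\chi(X_t, H_t^{\otimes n})$ is locally constant). Concretely, one computes the Hilbert polynomial of $(X, H)$ from the Riemann--Roch formula on $K3^{[2]}$-type fourfolds (using $q(H) = 2d$, $\div(H)=1$, and the known Fujiki constant and Todd class), and reads off the forced ranks of the linear strand.

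The heart of part \eqref{intro-thmdef-1} is the claim that a general $(X,H) \in \cM_4^{(1)}$ is \emph{not} contained in any quadric, i.e.\ $h^0(X, \cI_X(2)) = 0$, whereas for $X = S^{[2]}$ the model does lie on the quadric $Q$. So here semicontinuity goes the ``wrong'' way for the quadric, and one must argue that the quadric genuinely disappears upon deformation. The plan is to show that the $10$-dimensional space $H^0(\bP^9, \cO(2))$ restricted to $X$ has image of the expected dimension $10$ for general $X$: equivalently, that the quadric $Q$ on which $S^{[2]}$ sits does not survive as a section of $\cI_{X_t}(2)$. I would do this by a first-order computation — the obstruction to deforming the pair $(X, Q)$ along a tangent vector $v \in H^1(S^{[2]}, T_{S^{[2]}})$ lies in $H^0(S^{[2]}, \cO(2)) / (\text{image of } H^0(\bP^9, T_{\bP^9}|_{S^{[2]}}))$, and one checks the pairing with the $20$-dimensional period space is surjective onto the $1$-dimensional obstruction, so a general first-order deformation of $X$ does not extend $Q$. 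Once $h^0(\cI_X(2))=0$ is known, the Hilbert polynomial forces exactly $10$ cubic generators, and the absence of linear syzygies among them follows because the expected and actual ranks of the next Betti number agree; combined with the semicontinuity bound coming from the genus-$7$ Betti diagram \eqref{eq:S2-g7-betti-diagram} (with the quadric row and all rows it propagates into removed), every Betti number of the general fiber is pinned down, giving the full resolution \eqref{eq:betti-diagram}.

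For part \eqref{intro-thmdef-2} the situation is cleaner: here the genus-$8$ model already has its ideal generated by $15$ quadrics and $55$ cubics, and the $15$ quadrics are expected to persist (they cut out the Grassmannian $\Gr(2,V_6^\vee) \subset \bP^{14}$ through which the embedding factors, and the Grassmannian's equations deform). So one only needs: (i) projective normality of the general fiber, which follows from projective normality of $S^{[2]}$ (a Zariski-open condition, since $H^0(X_t, \cO(n))$ has locally constant dimension and surjectivity of $\Sym^n H^0(X_t, H_t) \to H^0(X_t, H_t^{\otimes n})$ is open); (ii) the upper bound of $55$ cubic generators from semicontinuity applied to \eqref{eq:S2-g8-betti-diagram}; and (iii) the lower bound of $15$ quadrics plus the dimension count from the Hilbert polynomial showing at least the expected number of cubics are needed. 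The gap between ``at most $55$'' and the expected $20$ is exactly the subtlety that the genus-$7$ argument resolves via the vanishing $h^0(\cI_X(2))=0$; for genus $8$ no such collapse is available (the quadrics stay), so the cubic generators could in principle still be syzygetically redundant in a way we cannot rule out by semicontinuity alone — hence the cautious ``at most $55$''.

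The main obstacle is the first-order obstruction computation in part \eqref{intro-thmdef-1}: one must show precisely that the unique quadric through $S^{[2]}$ fails to deform along a general period direction, which requires identifying the relevant obstruction map $H^1(S^{[2]}, T_{S^{[2]}}) \to H^0(S^{[2]}, N_{S^{[2]}/\bP^9}) \to \coker$ explicitly enough to see it is nonzero on a generic tangent vector. This is delicate because it intertwines the intrinsic deformation theory of the hyperkähler fourfold (whose tangent space is $H^{1,1}$, $20$-dimensional) with the extrinsic deformation theory of the embedding; the Gulliksen--Negård resolution and the degeneracy-locus description from \autoref{intro-thmC} should be the tools that make the normal bundle cohomology computable, but keeping track of the $\GL$- or spinor-equivariant structure throughout will be the technical crux.
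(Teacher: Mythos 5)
Your overall architecture is right, and for part~\eqref{intro-thmdef-2} your argument essentially matches the paper's: projective normality is inherited from $S^{[2]}$ (an open condition once the Hilbert function is constant), the $15$ quadrics and the lower bound on cubics come from the Hilbert function, and the upper bound of $55$ cubics comes from semicontinuity of Betti numbers applied to \eqref{eq:S2-g8-betti-diagram}. For part~\eqref{intro-thmdef-1}, however, there is a genuine gap at exactly the point you flag as ``the technical crux'': you assert, but do not prove, that the obstruction map onto the $1$-dimensional cokernel $H^1(\bP^9,\cI_{S^{[2]}}(2))$ is nonzero on a generic tangent direction. This is not a formality that follows from the Hilbert function or from equivariance; it is the entire mathematical content of the step. (A small correction to your setup: the relevant obstruction space is $H^0(X,\cO_X(2))/\mathrm{im}\,H^0(\bP^9,\cO(2))\cong H^1(\cI_X(2))$, not the quotient by the image of $H^0(\cT_{\bP^9}|_X)$; the latter only accounts for deformations by ambient automorphisms.) Also, your claim that $S^{[2]}$ with $\Pic(S)=\bZ L$ is ``a genuinely generic point'' of $\cM_4^{(1)}$ is false --- such $S^{[2]}$ has Picard rank $2$ and lies on a proper Noether--Lefschetz divisor, while the general member has Picard rank $1$; this is precisely why the quadric can, and must be shown to, disappear.

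The paper closes this gap by a different and cleaner route: an incidence-variety dimension count rather than an explicit obstruction computation. One shows $h^0(X,\cN_{X/\bP^9})=119$ and $h^1(X,\cN_{X/\bP^9})=0$ (so the Hilbert scheme component $\cH_X$ is generically smooth of dimension $119$), and --- this is the hard input --- $h^0(X,\cN_{X/Q})=64$. The latter rests on the degeneracy-locus description from \autoref{intro-thmC}: $\cN_{X/Q}\cong\cE_1^\vee\otimes\cE_2$ with $\cE_1^\vee=L^{[2]}$ the tautological bundle, together with a nontrivial lemma (\autoref{lem:cokernel-pullpush}) showing $\psi_*\pi^*\cE_2\cong W\otimes\cO_S$, which is proved by a delicate analysis of quadrisecant lines via the Fourier--Mukai partner $\cM(2,L,3)$. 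Given these numbers, if the general deformation lay on a quadric, the incidence variety $I\subset\cH_X\times|\cO_{\bP^9}(2)|$ would be birational to $\cH_X$ via the first projection, forcing $\dim\ker\pr_{2*}\ge 119-54=65$ at the point $([S^{[2]}],[Q])$, contradicting $h^0(\cN_{X/Q})=64$. Note that your first-order approach is in fact equivalent: from $0\to\cN_{X/Q}\to\cN_{X/\bP^9}\to\cO_X(2)\to 0$ and $h^1(\cN_{X/Q})=0$ one gets that $H^0(\cN_{X/\bP^9})\to H^0(\cO_X(2))$ is surjective exactly because $119-64=55=h^0(\cO_X(2))$, which is the surjectivity of your obstruction map. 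So your plan is viable in principle, but without the computation of $H^0(X,\cN_{X/Q})$ (or equivalently of that surjectivity) it does not constitute a proof.
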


We would like to point out that it can be proved that a general fourfold in $\cM_4^{(1)}$ (respectively in $\cM_6^{(1)}$) is the scheme-theoretical intersection of the $10$ cubics (respectively of the $15$ quadrics) alone,%
\footnote{Note that the extra equations of higher degree are needed to generate the (saturated) homogeneous ideal.}
which are the polars of a unique hypersurface of degree $4$ (respectively of degree $3$).
Manifestly, these hypersurfaces bear a striking resemblance to the classical Coble hypersurfaces for abelian varieties \cite{beauville-coble}. A detailed investigation of these \emph{Coble type hypersurfaces} will appear in our upcoming work \cite{coble-type-hypersurfaces}. 

Let us also note that this is in remarkable contrast with the varieties of lines of cubic fourfolds (general elements of $\cM_6^{(2)}$), where the projective normality can be checked via the Koszul complex; in this case, the 15 quadrics are nothing but the Plücker quadrics containing the Grassmannian $\Gr(2,6)$, and so are far from cutting out the hyperkähler fourfold.

Finally, it is worth noting that the minimal resolution of the coordinate ring of a general $(X,H)\in \cM_4^{(1)}$ behaves exactly as the Hilbert function predicts, in the sense that there are no unexpected syzygies.
This behavior suggests that the syzygies for a general polarized hyperkähler fourfold of $K3^{[2]}$-type with divisibility $1$ might follow their surface counterparts and be ``as simple as possible''.
This leads to some concrete conjectural descriptions, which shall be addressed in a subsequent work.
They are included at the end of this paper, together with some other natural observations and remarks arising from our results.

\subsection*{Structure of the paper}
We recall some preliminary results in \autoref{sec:prelim}, and notably in \autoref{sec:prelim-lowrank} the description of the locus of quadrics of low rank containing a $K3$ surface.
The description is crucial for the proof of \autoref{thm:L-2delta-very-ample}, which will be presented in \autoref{sec:proof-thm-A}.
Then we study the two cases of Hilbert squares of $K3$ surfaces in genus $7$ and $8$, as well as their general deformations, in \autoref{sec:genus-7} and \ref{sec:genus-8}, respectively.
Finally, in \autoref{sec:questions} we raise several questions concerning the geometry of $(S^{[2]},L_2-2\delta)$ in arbitrary genus, as well as the syzygies of their general deformations.

\subsection*{Acknowledgments}
We are grateful to Chiara Camere, Martí Lahoz, Federico Moretti, and Benedetta Piroddi  for useful conversations. 
We also thank Grzegorz and Michał Kapustka for informing us about the results contained in \cite{benedetti}, and Vladimiro Benedetti for follow-up discussions.

Despite all proofs being done in the ``old-fashioned way'', the usage of \emph{free and open-source} computer algebra systems {\sc Macaulay2} \cite{macaulay2}, {\sc Singular} \cite{singular}, and {\sc Sage} \cite{sagemath} is instrumental in this project.
We would like to thank the developers for their efforts.

Ríos Ortiz was supported by the European Research Council (ERC) under the European Union’s Horizon 2020 research and innovation programme (ERC-2020-SyG-854361-HyperK). Rojas was supported by a Beatriu de Pinós postdoctoral fellowship (no.~2023 BP 00054), the Spanish MICINN project PID2019-104047GB-I00, and the ERC Advanced Grant SYZYGY. Song was supported by a BMS Dirichlet postdoctoral fellowship and the PRIN2022 research grant 2022PEKYBJ.

\section{Preliminaries}
\label{sec:prelim}

\subsection{Generalities on hyperkähler manifolds}
\label{sec:hyperkähler}
We start by recalling some basic properties on hyperkähler manifolds. An excellent introduction to the subject can be found in \cite{debarre}.

A simply connected compact Kähler manifold $X$ is called a \emph{hyperkähler manifold} if $H^0(X,\Omega_X^2)$ is
generated by a nowhere degenerate holomorphic $2$-form, in other words, a symplectic form.
This means the dimension of $X$ is necessarily even.
Let $X$ be a hyperkähler manifold of dimension $2m$.
There exists a canonical integral primitive quadratic form $q\coloneqq q_X$ on $H^2(X,\bZ)$ called the \emph{Beauville--Bogomolov--Fujiki form} of $X$.

Let $(X,H)$ be a polarized hyperkähler manifold, where $H$ is an ample class on $X$. We have the following two invariants of $H$:
\begin{itemize}
\item The \emph{square} $q(H)$ with respect to the quadratic form;
\item The \emph{divisibility} $\div(H)$, which is the generator of the subgroup $q(H, H^2(X, \bZ))$ of $\bZ$.
\end{itemize}
The theory for moduli spaces of polarized hyperkähler manifolds is very well understood via the global Torelli theorem, thanks to the works of Verbitsky~\cite{verbitsky} and Markman~\cite{markman-survey}.

By \cite{huybrechts} there exists a polynomial $\mathrm{RR_X}(q)\in\bQ[q]$ called the \emph{Riemann--Roch polynomial} of $X$, such that for any line bundle $L\in \Pic(X)$ we have
\[
\chi(X,L) =\mathrm{RR}_X(q(L)).
\]
If $L$ is moreover ample, then thanks to the Kodaira vanishing theorem, the Riemann--Roch polynomial computes the number of global sections $h^0(X,L)$.

A hyperkähler manifold $X$ is said to be \emph{of $K3^{[m]}$-type} if it is deformation equivalent to the Hilbert scheme of $m$ points on a $K3$ surface.
For $K3^{[m]}$-type, we have the following description of the Riemann--Roch polynomial (see \cite[Lemma~5.1]{egl})
\[
\mathrm{RR}_{K3^{[m]}}(q) = \binom{\frac12 q + m+1}{m}.
\]
In particular, if $(X,H)$ is of $K3^{[2]}$-type with $H$ very ample and of square $2d$, assuming moreover that $X$ is projectively normal when embedded in $\bP^n$, where $n=\binom{
d+3}{2}-1$, we may compute the Hilbert function
\begin{equation}
\label{eq:hilbert-function}
\dim I_e = h^0(\bP^n, \cI_{X/\bP^n}(e)) = h^0(\bP^n,\cO(e)) - h^0(X, H^e)= \binom{n+e}{e} - \binom{de^2+3}{2}.
\end{equation}
Notably, the number does not depend on the divisibility of $H$.

Recall from the introduction that we denote by $\cM_{2d}^{(\gamma)}$ the moduli space of polarized hyperkähler manifolds $(X, H)$ of $K3^{[2]}$-type with square $q(H)=2d$ and divisibility $\div(H)=\gamma$.
The space $\cM_{2d}^{(\gamma)}$ is non-empty if and only if $\gamma=1$, or $\gamma=2$ and $2d\equiv 6\pmod 8$ \cite[Theorem~3.5]{debarre}.
In all cases, it is irreducible of dimension $20$.

\subsection{Syzygies and Green's duality theorem}
We recall some important notions in the study of syzygies for a projective variety.
We will state the results in a limited generality that will be sufficient for the context of this paper.
The definitive reference on the subject is of course Green's influential paper \cite{greenkoszul}, where one can find the original statements in their full generality.

Let $X$ be a projective variety and let $\varphi_H \colon X\into \bP V^\vee$ be the embedding defined by a very ample line bundle $H$ on $X$, where $V\coloneqq H^0(X, H)$. We denote by $S_X\coloneq \Sym^\bullet V=\Sym^\bullet H^0(X,H)$ the symmetric algebra and by $R_X\coloneqq \bigoplus_{d\ge 0} H^0(X,H^d)=\bigoplus_{d\ge 0} R_d$ the \emph{ring of sections}.%
\footnote{In the case where $X$ is projectively normal, $R_X$ is simply the quotient $S_X/I_X$ and is usually referred to as the \emph{coordinate ring} of $X$. We adopt the terminology to distinguish the two in cases where $X$ is not projectively normal. \jieaoserious{Another appropriate name would be the \emph{ring of power}.}}
Clearly $R_X$ admits the structure of a graded $S_X$-module via the natural map $m:S_X\to R_X$. The embedding $\varphi_H$ is said to be \emph{$d$-normal} if $m$ is surjective in degree $d$; it is said to be \emph{projectively normal} if it is $d$-normal for every $d$.

By Hilbert's syzygy theorem, there is a unique (up to isomorphism) minimal graded free resolution
\[
\dots\to F_2\to F_1\to F_0\to R_X\to 0,
\]
where
\[
F_i=\bigoplus_j S_X(-i-j)^{b_{i,j}}
\]
are free $S_X$-modules with shifted degrees.
The numbers $b_{i,j}$ are called the \emph{Betti numbers} and are invariants for the embedding $\varphi_H$.
It is customary to arrange them into the following \emph{Betti diagram}, which provides a convenient way to visualize the properties of the embedding.
\[
\begin{array}{r|ccccc}
  & 0       & 1       & 2       & 3       & \dots \\
\hline                         
0 & b_{0,0} & b_{1,0} & b_{2,0} & b_{3,0} & \dots \\
1 & b_{0,1} & b_{1,1} & b_{2,1} & b_{3,1} & \dots \\
2 & b_{0,2} & b_{1,2} & b_{2,2} & b_{3,2} & \dots \\
\vdots & \vdots & \vdots & \vdots & \vdots & \ddots \\
\end{array}
\]
The Betti numbers can be effectively computed by means of the \emph{Koszul cohomology groups}. Namely, one may consider the Koszul complex
\[
\cdots \to \bw{i+1} V\otimes R_{j-1}\to  \bw{i} V\otimes R_{j} \to  \bw{i-1} V\otimes R_{j+1}\to \cdots
\]
and let $\cK_{i,j}\coloneqq \cK_{i,j}(X,H)$ be the cohomogology group of the term in the middle.
It holds that $\dim \cK_{i,j}=b_{i,j}$ \cite[Theorem~1.b.4]{greenkoszul}.

Concerning varieties with trivial canonical bundle, we have the following result (see [\loccit, Corollary~2.c.10]) that predicts some symmetry of the Betti diagram.
\begin{thm}[Green's duality theorem]
\label{thm:green-duality}
Given $\varphi_H:X\hookrightarrow \bP^n$ as above for a smooth projective variety with trivial canonical
bundle $\omega_X\cong \cO_X$, we have for all $j\ge \dim X+1$
\[
\cK_{i,j}(X,H)^\vee \cong \cK_{\mathrm{codim}(X/\bP^n) -i, \dim X + 1-j}(X, H),
\]
and therefore the symmetry
\[
b_{i,j} = b_{\mathrm{codim}(X/\bP^n) -i, \dim X + 1-j}.
\]
Moreover, if $h^{\dim X-1}(\cO_X)=0$,%
\footnote{By the assumption that $\omega_X\cong \cO_X$, this is equivalent to
$h^1(\cO_X)=0$.}
then the duality holds for all $j\ge \dim X$.
\end{thm}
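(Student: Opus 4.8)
The plan is to deduce the statement from Serre duality on $X$ via the standard interpretation of Koszul cohomology through the evaluation bundle; I will assume $\dim X\ge 2$, the case of curves being classical and analogous. Write $V=H^0(X,H)$, so that $X\subset\bP^n$ with $\dim V=n+1$; set $c\coloneqq\mathrm{codim}(X/\bP^n)=n-\dim X$ and let $M_H\coloneqq\ker(V\otimes\cO_X\onto H)$, a vector bundle of rank $n$ with $\det M_H\cong H^{-1}$. Taking exterior powers of $0\to M_H\to V\otimes\cO_X\to H\to 0$ yields, for all $p$ and $q$, the exact sequences
\[
0\to\bw{p}M_H\otimes H^q\to\bw{p}V\otimes H^q\to\bw{p-1}M_H\otimes H^{q+1}\to 0,
\]
and chasing the long exact sequences attached to two consecutive ones (those with indices $(i+1,j-1)$ and $(i,j)$) identifies, whenever $H^1(X,H^{q-1})=0$, the group $\cK_{i,j}(X,H)$ with $H^1\!\bigl(X,\bw{i+1}M_H\otimes H^{j-1}\bigr)$. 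In the ranges considered one has $j-1\ge 1$, so this applies by Kodaira vanishing (using $\omega_X\cong\cO_X$).

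First I would dualize: Serre duality on $X$, combined with $\omega_X\cong\cO_X$ and the isomorphism $\bw{i+1}M_H^\vee\cong\bw{n-i-1}M_H\otimes H$ coming from $\det M_H\cong H^{-1}$, gives
\[
\cK_{i,j}(X,H)^\vee\;\cong\;H^{\dim X-1}\!\bigl(X,\bw{n-i-1}M_H\otimes H^{2-j}\bigr).
\]
Then I would push the cohomological degree down from $\dim X-1$ to $1$ by applying the exact sequences above $\dim X-2$ times; each step replaces $H^k(\bw{p}M_H\otimes H^q)$ by $H^{k-1}(\bw{p-1}M_H\otimes H^{q+1})$ as soon as $H^{k-1}(X,\cO_X(q))$ and $H^k(X,\cO_X(q))$ vanish. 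The outcome is $H^1\!\bigl(X,\bw{c-i+1}M_H\otimes H^{\dim X-j}\bigr)$, which by the first step (now requiring $H^1(X,\cO_X(\dim X-j))=0$) equals $\cK_{c-i,\dim X+1-j}(X,H)$; taking dimensions then yields the symmetry of the Betti numbers.

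The only genuine work is the bookkeeping of all these vanishings. One has to check that every twist $e$ occurring above lies in a vanishing range: the twist $j-1$ from the outer identifications is $\ge 1$, and the twists $2-j,\,3-j,\,\dots,\,\dim X-j$ from the descent are all $\le -1$, except that $\dim X-j$ equals $0$ precisely when $j=\dim X$. Since $\omega_X\cong\cO_X$, Kodaira vanishing gives $H^m(X,\cO_X(e))=0$ for $e\ge 1$ and $m\ge 1$, and dually $H^m(X,\cO_X(e))=0$ for $e\le -1$ and $m\le\dim X-1$, so every relevant correction term vanishes, the lone exception being the borderline twist $e=0$. That case forces $j=\dim X$ and requires $H^1(X,\cO_X)\cong H^{\dim X-1}(X,\cO_X)^\vee=0$, i.e.\ the extra hypothesis $h^{\dim X-1}(\cO_X)=0$; this is exactly what separates the two ranges $j\ge\dim X+1$ and $j\ge\dim X$. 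I expect this routine verification of vanishing ranges, rather than any homological subtlety, to be the main obstacle.
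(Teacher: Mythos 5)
Your sketch is correct: the identification $\cK_{i,j}(X,H)\cong H^1\bigl(X,\bw{i+1}M_H\otimes H^{j-1}\bigr)$ under $H^1(H^{j-1})=0$, followed by Serre duality with $\bw{i+1}M_H^\vee\cong\bw{n-i-1}M_H\otimes H$ and the descent through the exterior-power sequences, is exactly the standard kernel-bundle proof of Green's duality theorem, and your bookkeeping of the twists (all in the Kodaira/Serre vanishing range except the borderline twist $0$ at $j=\dim X$, which is precisely where the extra hypothesis $h^{\dim X-1}(\cO_X)=0$ enters) is accurate. The paper does not reprove this statement but cites Green's \emph{Corollary 2.c.10}, whose underlying argument is essentially the one you give, so there is nothing to add.
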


\subsection{Moduli spaces of sheaves on \texorpdfstring{$K3$}{K3} surfaces} 
We recall some basic aspects of moduli spaces of stable sheaves on $K3$ surfaces and their symplectic structure, a subject studied extensively by several authors, including Mukai, O'Grady, Huybrechts, and Yoshioka. The reader may consult \cite[Chapter 10]{lecturesK3} (and the references therein) for further details. For simplicity we will assume, as throughout the paper, that $(S,L)$ is a polarized $K3$ surface with $\Pic(S)=\bZ\cdot L$.

Let $\widetilde{H}(S,\bZ)$ denote the (even) cohomology ring of $S$. The pure weight-two Hodge structure on $H^2(S,\bZ)$ can be extended to $\widetilde{H}(S,\bZ)$ by requiring the degree 0 and 4 parts to be algebraic. The symmetric bilinear form
\[
\langle u,v \rangle \coloneqq \int_S u_2\wedge v_2 - \int_S (u_0\wedge v_4 + u_4\wedge v_0)   
\]
equips $\widetilde{H}(S,\bZ)$ with the structure of a lattice, called the \emph{Mukai lattice} of $S$. 

Given a coherent sheaf $F$ on $S$, we define its \textit{Mukai vector} as
\[
v(F)\coloneqq\mathrm{ch}(F)\cdot\sqrt{\mathrm{td}(S)}=\left(\rk(F), c_{1}(F), \ch_2(F)+\rk(F)\right) \in \widetilde{H}(S,\bZ). 
\]
The Hirzebruch--Riemann--Roch formula implies that for coherent sheaves $F,G$ on $S$ we have
\[
\chi(E,F)=-\langle v(E),v(F)\rangle.
\]

Let \(v=(r,cL,s) \in \widetilde{H}^{1,1}(S, \bZ)\) be a primitive vector with $r>0$. We denote by \(\cM(v)\) the moduli space of \(L\)-Gieseker stable sheaves on \(S\) with Mukai vector \(v\). It is known that \(\cM(v)\) is non-empty exactly when $v^2\geq -2$; in that case, \(\cM(v)\) is a projective hyperkähler manifold of $K3^{[m]}$-type, where $m=\frac{1}{2}(v^2+2)$.

The Beauville--Bogomolov--Fujiki form for $\cM(v)$ can be explicitly described as follows. There exists a \emph{tautological quasi-family} of sheaves in $\cM(v)$ of similitude $\rho\in\bZ_{>0}$, that is, a sheaf $\cE$ on $S\times \cM(v)$ flat over $\cM(v)$ such that $\cE|_{S\times [F]}\cong F^{\oplus \rho}$ for every $[F]\in \cM(v)$. For $v^2\geq2$, letting $\theta_{v}\colon \widetilde{H}(S,\bZ)\to H^2(\cM(v),\bZ)$ be defined by
\[
\theta_{v}((\alpha_0,\alpha_2,\alpha_4))\coloneqq \frac{1}{\rho}\left[ \mathrm{pr}_{\cM(v),!} \left(\mathrm{ch}(\cE)\smile\mathrm{pr}_S^{*}(\sqrt{\mathrm{td}(S)}\smile(\alpha_0,-\alpha_2,\alpha_4))\right)\right]_2,
\]
it was proved in \cite{ogrady-hodge,yoshioka-reflections} that restriction to $v^\perp$ yields a Hodge isometry
\begin{equation}\label{thetaiso}
 \theta_{v}\colon v^{\perp}\to H^2(\cM(v),\bZ).
\end{equation}

For $m\geq2$, the moduli space $\cM(1,0,-m+1)$ parametrizes ideal sheaves $\cI_Z$ where $Z$ is a length-$m$ 0-dimensional subscheme of $S$, namely $S^{[m]}\coloneqq \cM(1,0,-m+1)$ is the Hilbert scheme of $m$ points on $S$. In that case,  as proved in \cite{beauville}, \eqref{thetaiso} reads as a Hodge isometry \[\left(H^2(S^{[m]},\bZ),q\right)\cong H^2(S,\bZ)\overset{\perp}{\oplus}\bZ\cdot\delta
\]
where $\delta$ is a $(1,1)$-class of square $-2(m-1)$. In particular
$\Pic(S^{[m]})\cong\Pic(S)\oplus\bZ\cdot\delta$,  
where $2\delta$ is the class of the divisor $\Delta$ of non-reduced subschemes. For $L\in \Pic(S)$, the corresponding line bundle on $S^{[m]}$ is obtained by symmetrization and will be denoted by $L_m\in\Pic(S^{[m]})$.
\begin{rem}\label{explicit-thetaiso}
Let $\cI$ denote the universal ideal sheaf on $S\times S^{[m]}$, and let $\pi,\psi$ denote the projections to $S^{[m]}$ and $S$ respectively. Write $v=(1,0,-m+1)$. If $v'\in v^\perp$ is a $(1,1)$-class with $\rk(v')>0$, or $\rk(v')=0$ and $c_1(v')$ effective, then $\theta_v(v')\coloneqq\det R\pi_!(\cI\otimes \psi^*F)^{-1}$ for any sheaf $F$ of Mukai vector $v'$. It follows that $\theta_{v}(1,0,m-1) = -\delta$ and $\theta_{v}(0,L,0) = L_m$.
\end{rem}

In the sequel, we will focus on the case $m=2$. Consider the diagram
\begin{equation}\label{eq:universallenght2subscheme}
        \begin{tikzcd}
            &B\ar[ld, "\pi"']\ar[rd, "\psi"]\coloneqq\setmid{(p,\xi)\in S\times S^{[2]}}{p\in \xi}\\
            S^{[2]}&& S
        \end{tikzcd}   
\end{equation}
where $B$ is the universal length-$2$ subscheme (namely, $\pi$ is a double cover branched along $\Delta$). For any sheaf $F$ on $S$ we can associate a \emph{tautological sheaf} $F^{[2]}$ on $S^{[2]}$ by the rule
\[
F^{[2]} \coloneqq \pi_*\psi^*F.
\]
If $F$ is a vector bundle of rank $r$, then the corresponding tautological sheaf is a vector bundle of rank $2r$, whose fiber at a point $\xi\in S^{[2]}$ corresponds to $H^0(S,F\otimes\cO_\xi)$. 

Recall that under the assumption $\Pic(S)=\bZ\cdot L$ and $g\geq5$, then $S\subset \bP(H^0(L)^\vee)=\bP^g$ is cut out by quadrics \cite{SaintDonat}. There is a natural identification $H^0(S^{[2]},L_2-2\delta)\cong H^0(\bP^g,\cI_{S/\bP^g}(2))$ with the space of quadratic equations for $S$ in $\bP^g$, and the resulting morphism
\[
\varphi\colon S^{[2]}\to |L_2-2\delta|^\vee
\]
associates to each $\xi\in S^{[2]}$ the hyperplane of quadric hypersurfaces in $\bP^g$ containing both $S$ and the line spanned by $\xi$. Note that the morphism $\varphi$ is indeed well-defined, as $S$ is cut out by quadrics and contains no line. We have a canonical short exact sequence
\[
0\to K \to H^0\left(\cI_{S/\bP}(2)\right)\otimes\cO_{\bP}\to \cN^\vee_{S/\bP}(2)\to 0,
\]
where $K$ is an extension of $\cI_{X/\bP}^2(2)$ by $\ker\bigl(H^0\left(\cI_{S/\bP}(2)\right)\otimes\cO_{\bP}\twoheadrightarrow \cI_{S/\bP}(2)\bigr)$. It follows that $H^0(K)=0$, hence there is a canonical inclusion
\begin{equation}\label{quadrics-conormal}
    H^0\left(\cI_{S/\bP}(2)\right)\into H^0\left(\cN^\vee_{S/\bP}(2)\right)
\end{equation}
such that, for a given quadric $Q\in H^0\left(\cI_{S/\bP}(2)\right)$, the zero locus of the induced global section of the twisted conormal bundle $\cN^\vee_{S/\bP}(2)$ equals $S\cap\Sing(Q)$.

\subsection{Quadrics of low rank containing \texorpdfstring{$K3$}{K3} surfaces}
\label{sec:prelim-lowrank}

In this subsection, we recall a few aspects of the  locus of quadrics of rank $\leq 6$ in $|I_S(2)|\coloneqq \bP H^0(\cI_{S/\bP^g}(2))$. As it is well known to the experts, this locus can be described in terms of stable rank $2$ sheaves on the $K3$ surface $S$. We provide some details on the description of this locus following the lines of \cite[Section~5]{ogrady}, where O'Grady focuses on a particular component $Y_0$.

Consider a pair $(E,V)$, for a rank-$2$ stable sheaf $E\in\cM(2,L,\ell+2)$ (with $0\leq \ell \leq \lfloor\frac{g}{2}\rfloor-2$) and a 4-dimensional subspace $V\subset  H^0(E)$. Associated to it we have a rational map
\[
\rho_{E,V}\colon S\dasharrow \Gr(2,V^\vee)\subset \bP(\bw2 V^\vee),\quad p\mapsto \left(V\cap H^0(E\otimes\cI_p)\right)^\perp.
\]
Note that $\rho_{E,V}$ is precisely not defined at the finite set of points $p\in S$ where either $E$ is not locally free, or $E$ is locally free but $V\cap H^0(E\otimes\cI_p)$ has codimension $<2$ in $V$.
Moreover, over the open subset $U\subset S$ where $\rho_{E,V}$ is defined, it is straightforward to check that $\rho_{E,V}^*\cU^\vee\cong E|_U$, where $\cU$ is the universal subbundle of rank $2$.

The natural map $\lambda_{E,V}\colon\bw2 V\into \bw2 H^0(E)\overset\det\to H^0(L)$ yields a factorization
\begin{equation}
\label{eq:diagram}
\begin{tikzcd}
S\ar[rd,dashed,"\rho_{E,V}"']\ar[r,hookrightarrow]&\bP\left(H^0(L)^\vee\right)\ar[r,dashed]&\bP\left(\mathrm{Im}(\lambda_{E,V})^\vee\right)
\ar[d,hookrightarrow,"\bP(\lambda_{E,V}^\vee)"]  \\
&\Gr(2,V^\vee)\ar[r,hookrightarrow]&\bP\left(\bw2 V^\vee\right)
\end{tikzcd}
\end{equation}

where the rational map in the first row is the linear projection from the annihilator subspace $\bP\left(\mathrm{Im}(\lambda_{E,V})^\perp\right)\subset \bP\left(H^0(L)^\vee\right)$.
In other words, $\bP(\mathrm{Im}(\lambda_{E,V})^\vee)$ is the linear span of the image $\rho_{E,V}(S)$.

The Grassmannian $\Gr(2,V^\vee)\subset \bP(\bw2V^\vee)$ is a smooth quadric.
Therefore, as long as it does not contain the linear span of $\rho_{E,V}(S)$, the pullback defines a quadric $Q_{E,V}\subset \bP\left(H^0(L)^\vee\right)$ containing $S$.
And this is indeed the case by the following lemma.

\begin{lem}[{\cite[Lemma 5.4]{ogrady}}] \label{OG-kernel}
$\ker(\lambda_{E,V})$ does not contain decomposable tensors.
In particular $\dim\ker(\lambda_{E,V})\le 1$,
that is, $\bP(\mathrm{Im}(\lambda_{E,V})^\vee)$ has codimension $\le 1$ in $\bP(\bw2 V^\vee)$.
\end{lem}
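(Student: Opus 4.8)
The plan is to reduce the statement to a property of the Plücker quadratic form on $\bw2 V$ and then exploit the stability of $E$. Since $\dim V=4$, the space $\bw2 V$ is six-dimensional, $\bw4 V$ is one-dimensional, and $\omega\mapsto\omega\wedge\omega$ defines a non-degenerate quadratic form $\bw2 V\to\bw4 V\cong\bC$ whose isotropic cone is precisely the set of decomposable $2$-vectors, i.e.\ the affine cone over $\Gr(2,V)\subset\bP(\bw2 V)$. Over $\bC$, every quadratic form on a vector space of dimension $\ge 2$ has a nonzero isotropic vector; hence any linear subspace of $\bw2 V$ that meets the cone of decomposable tensors only at $0$ is at most one-dimensional. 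Therefore it is enough to prove the first assertion, that $\ker(\lambda_{E,V})$ contains no nonzero decomposable tensor: the bound $\dim\ker(\lambda_{E,V})\le 1$ follows, and then so does the codimension statement, since $\bP(\mathrm{Im}(\lambda_{E,V})^\vee)$ has dimension $\ge 5-1=4$ inside the five-dimensional $\bP(\bw2 V^\vee)$.

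So I would concentrate on showing that $\ker(\lambda_{E,V})$ contains no nonzero decomposable tensor. Assume for contradiction that $s\wedge t\in\ker(\lambda_{E,V})$ with $s,t\in V\subset H^0(E)$ linearly independent. By construction $\lambda_{E,V}(s\wedge t)$ is the image of $s\wedge t$ under $\bw2 H^0(E)\to H^0(\det E)=H^0(L)$, so the hypothesis says that this section of $\det E$ vanishes identically. As $E$ is stable, hence torsion-free, the nonzero morphism $s\colon\cO_S\to E$ has image whose saturation $M\subset E$ is a rank-$1$ subsheaf with $E/M$ torsion-free and $s\in H^0(M)$. The vanishing of $s\wedge t$ forces $s(p)$ and $t(p)$ to be proportional at every point where $E$ is locally free; comparing with a general point, where $s$ generates $M$, we see that $t$ takes values in $M$ on a dense open subset, and hence $t\in H^0(M)$ because $E/M$ is torsion-free.

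It remains to derive a contradiction using stability. Since $\Pic(S)=\bZ\cdot L$, the reflexive hull $M^{\vee\vee}$ equals $\cO_S(aL)$ for some integer $a$, so $c_1(M)\cdot L=a(2g-2)$. The sheaf $E$ has slope $\mu_L(E)=\frac12 L^2=g-1$, so stability gives $c_1(M)\cdot L<g-1$ for the saturated rank-$1$ subsheaf $M$ (equality is excluded because $g-1$ is not a multiple of $2g-2$), whence $a\le 0$ and therefore $h^0(M)\le h^0(\cO_S(aL))\le 1$. This contradicts the linear independence of $s$ and $t$, proving the lemma.

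The genuinely geometric step, and the one I expect to demand the most care, is the passage in the second paragraph from ``$s\wedge t=0$ in $H^0(\det E)$'' to ``$s$ and $t$ lie in a common saturated line subsheaf of $E$'', together with the bookkeeping of the finitely many points where $E$ may fail to be locally free; everything else is elementary linear algebra and the numerics of stability. It is also worth checking at the start that $\lambda_{E,V}$ is, up to a nonzero scalar, literally the wedge-product map $s\wedge t\mapsto s\wedge t$, so that its kernel consists exactly of the relations $s\wedge t=0$ among elements of $V$.
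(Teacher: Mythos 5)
Your argument is correct. The paper does not reprove this lemma but simply cites O'Grady, and your proof is essentially the standard one used there: a nonzero decomposable element $s\wedge t$ of the kernel forces the two independent sections $s,t$ into a saturated rank-one subsheaf $M\subset E$ with $M^{\vee\vee}\cong\cO_S(aL)$, $a\le 0$ by slope (semi)stability, hence $h^0(M)\le 1$, a contradiction; and the dimension bound follows from the Plücker quadric on $\bw2 V$, exactly as you say.
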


By construction, the singular locus of $Q_{E,V}$ is $\bP\left(\mathrm{Im}(\lambda_{E,V})^\perp\right)\subset \bP\left(H^0(L)^\vee\right)$,
the linear subspace we are projecting from.
We see that the quadric has either rank $5$ or $6$.

Performing this construction in families, we obtain for every $\ell$ ($0\leq \ell \leq \lfloor\frac{g}{2}\rfloor-2$) a natural morphism
\begin{equation}
\label{eq:def-relGrass}
\psi_{\ell}\colon \cG_\ell\to \setmid{Q\in|I_S(2)|}{\rk(Q)\leq 6}\subset |I_S(2)|,\quad(E,V)\longmapsto Q_{E,V}
\end{equation}
where $\cG_\ell$ is the relative Grassmannian with fiber $\Gr(4,H^0(E))$ over $E\in\cM(2,L,\ell+2)$.
Note that $\cG_\ell$ are all irreducible of dimension $2g-8$.
We will denote the image by $Y_\ell\coloneqq\im(\psi_\ell)\subset |I_S(2)|$.

\begin{rem}
If $\ell=0$, then $\psi_{0}$ is the resolution of the rational map $\cM(2,L,2)\dasharrow |I_S(2)|$ studied by O'Grady in \cite{ogrady}. Indeed, $\cG_0$ is the blow-up of $\cM(2,L,2)$ along the indeterminacy locus, namely the Brill--Noether locus $\setmid{E\in\cM(2,L,2)}{h^1(E)>0}$.
\end{rem}

The following lemma shows that, via the morphisms $\psi_\ell$, vector bundles of rank $2$ serve as a bridge between quadrics of rank $\leq 6$ in $|I_S(2)|$ and secant lines to $S$. This will be essential in our analysis of the linear system $|L_2-2\delta|$ in $S^{[2]}$:

\begin{lem}\label{containmentcrit}
Let $(E,V)\in\cG_\ell$ with $E$ a vector bundle and $0\leq \ell \leq \lfloor\frac{g}{2}\rfloor-2$. For every $\xi\in S^{[2]}$, the following are equivalent:
\begin{enumerate}
    \item\label{containmentcrit-1} The line  $\langle\xi\rangle\subset\bP\left(H^0(L)^\vee\right)$ is contained in the quadric $Q_{E,V}\in |I_S(2)|$.
    \item\label{containmentcrit-2} $V\cap H^0(E\otimes\cI_\xi)\neq0$.
\end{enumerate}
\end{lem}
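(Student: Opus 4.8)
The plan is to rephrase the containment $\langle\xi\rangle\subset Q_{E,V}$ as the isotropy of a plane for the Plücker pairing, and then read off condition~\eqref{containmentcrit-2}. By construction $Q_{E,V}$ is the preimage of the Grassmann quadric $\Gr(2,V^\vee)\subset\bP(\bw2 V^\vee)$ under $\bP(\lambda_{E,V}^\vee)$, so its defining equation is the pullback of the Plücker quadratic form $\omega\mapsto\omega\wedge\omega\in\bw4 V^\vee$ along the linear map $\lambda_{E,V}^\vee\colon H^0(L)^\vee\to\bw2 V^\vee$. Write $\langle\xi\rangle=\bP(\Lambda_\xi)$, where $\Lambda_\xi\coloneqq H^0(L\otimes\cI_\xi)^\perp\subset H^0(L)^\vee$ is the $2$-plane canonically identified with $(L|_\xi)^\vee$ (using that $L$ is very ample, so $H^0(L)\to L|_\xi$ is onto, hence $\dim\Lambda_\xi=2$). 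Then \eqref{containmentcrit-1} amounts to the vanishing of this pulled-back form on $\Lambda_\xi$, i.e.\ --- polarizing, as $\operatorname{char}=0$ --- to the statement that $\lambda_{E,V}^\vee(\Lambda_\xi)\subset\bw2 V^\vee$ is totally isotropic for the wedge pairing.

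The key computation is that of $\lambda_{E,V}^\vee$ on $\Lambda_\xi$. Since $\lambda_{E,V}$ sends $s\wedge t\in\bw2 V$ to the section $s\wedge t$ of $L=\det E$, and restriction to $\xi$ commutes with the wedge, the covector $\lambda_{E,V}^\vee(\bar\phi)$ attached to $\bar\phi\in(L|_\xi)^\vee\cong\Lambda_\xi$ is the alternating form on $V$ sending $(s,t)$ to $\bar\phi\bigl(r_\xi(s)\wedge r_\xi(t)\bigr)$, where $r_\xi\colon H^0(E)\to E|_\xi$ is restriction (defined since $E$ is locally free near $\Supp(\xi)$) and the wedge is the $\cO_\xi$-bilinear map $E|_\xi\times E|_\xi\to\bigwedge^2_{\cO_\xi}(E|_\xi)=L|_\xi$. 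Thus $\lambda_{E,V}^\vee(\Lambda_\xi)$ is determined by $r_\xi|_V\colon V\to E|_\xi$, and every one of its elements is the pullback along the surjection $V\twoheadrightarrow V_\xi\coloneqq r_\xi(V)$ of a $2$-form on $V_\xi$. This already gives \eqref{containmentcrit-2}$\Rightarrow$\eqref{containmentcrit-1}: if $V\cap H^0(E\otimes\cI_\xi)\neq0$, then $\dim_k V_\xi\le3$, so $\bw4 V_\xi^\vee=0$; as the wedge of two $2$-forms pulled back from $V_\xi$ is the pullback of a $4$-form on $V_\xi$, it vanishes, hence $\lambda_{E,V}^\vee(\Lambda_\xi)$ is isotropic and $\langle\xi\rangle\subset Q_{E,V}$.

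For the converse I argue contrapositively. If \eqref{containmentcrit-2} fails, then $r_\xi|_V$ is an isomorphism $V\xrightarrow{\ \sim\ }E|_\xi$ (both sides have dimension~$4$); taking duals of its exterior powers yields a ring isomorphism $\bw\bullet(E|_\xi)^\vee\xrightarrow{\ \sim\ }\bw\bullet V^\vee$, which carries $\lambda_{E,V}^\vee(\Lambda_\xi)$ onto the $2$-plane $\cP\subseteq\bw2(E|_\xi)^\vee$ consisting of the alternating forms $(m,m')\mapsto\bar\phi(m\wedge m')$ with $\bar\phi$ ranging over $(L|_\xi)^\vee$, and preserves total isotropy; so it remains to check that $\cP$ is \emph{never} totally isotropic. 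This is the only non-formal point, and it splits into the two local models of $\xi$. If $\xi=\{p,q\}$ is reduced, then $E|_\xi=E_p\oplus E_q$ and $\cP$ is spanned by two decomposable bivectors --- the generators of $\bw2 E_p^\vee$ and of $\bw2 E_q^\vee$ inside $\bw2(E_p^\vee\oplus E_q^\vee)$ --- whose wedge product is a nonzero generator of $\bw4(E|_\xi)^\vee$. If $\xi$ is non-reduced, so $\cO_\xi\cong k[\varepsilon]/(\varepsilon^2)$ and $E|_\xi\cong\cO_\xi^{\oplus2}$ with $\cO_\xi$-basis $e_1,e_2$, then in the $k$-basis $e_1,e_2,\varepsilon e_1,\varepsilon e_2$ one computes that the covector dual to $\varepsilon(e_1\wedge e_2)\in L|_\xi$ goes to the bivector $e_1^*\wedge(\varepsilon e_2)^*+(\varepsilon e_1)^*\wedge e_2^*$, whose square is a nonzero element of $\bw4(E|_\xi)^\vee$; thus $\cP$ contains a non-decomposable bivector. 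In either case $\lambda_{E,V}^\vee(\Lambda_\xi)$ is not isotropic, so $\langle\xi\rangle\not\subset Q_{E,V}$, completing the equivalence. One might hope to recover the non-reduced case by specializing from the reduced locus of $S^{[2]}$, but since \eqref{containmentcrit-1} only defines a closed condition this requires extra care; the computation above is self-contained, and I expect this small local verification to be the only real obstacle.
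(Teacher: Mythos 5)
Your proof is correct, but it takes a genuinely different route from the paper's. The paper argues geometrically through the rational map $\rho_{E,V}\colon S\dasharrow\Gr(2,V^\vee)$: it first disposes separately of the case where $\xi$ meets the indeterminacy locus of $\rho_{E,V}$ (where both conditions hold automatically), and then, for $\xi$ away from that locus, translates condition \eqref{containmentcrit-1} into the containment of the linear span $\langle\rho_{E,V}(\xi)\rangle$ in the Grassmannian, using that the zero loci $Z(s)=\Gr(2,s^\perp)$ for $s\in V$ are exactly the $2$-planes of one ruling of the $4$-dimensional quadric $\Gr(2,V^\vee)$; the converse direction there comes from the observation that a line of alternating forms contained in the Grassmannian has a common vector in all kernels. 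You instead never invoke $\rho_{E,V}$: you pull the Plücker quadratic form back along $\lambda_{E,V}^\vee$, identify $\langle\xi\rangle$ with $\bP\bigl((L|_\xi)^\vee\bigr)$, and reduce everything to whether the $2$-plane of alternating forms $(s,t)\mapsto\bar\phi\bigl(r_\xi(s)\wedge r_\xi(t)\bigr)$ is totally isotropic for the wedge pairing. Your forward implication is then a clean dimension count ($\bw4$ of a $\le 3$-dimensional space vanishes), and this uniformly absorbs the indeterminacy-locus case that the paper treats separately; the price is that the converse requires the explicit verification that the plane $\cP\subset\bw2(E|_\xi)^\vee$ is never isotropic, checked in the two local models of $\xi$ (both of your computations there are correct, including the non-reduced one, where the bivector $e_1^*\wedge(\varepsilon e_2)^*+(\varepsilon e_1)^*\wedge e_2^*$ indeed has nonzero square). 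In short: the paper's proof trades your local computation for the geometry of planes in the $4$-dimensional quadric, while yours trades the case distinction on the indeterminacy locus for a self-contained linear-algebra check; both are complete.
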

\begin{proof}
    First assume that $\xi$ intersects the indeterminacy locus of $\rho_{E,V}$ at some point $p\in\Supp(\xi)$.
    Then both conditions \eqref{containmentcrit-1} and \eqref{containmentcrit-2} are satisfied. Indeed, by the commutative diagram \eqref{eq:diagram} $p$ is a singular point of $Q_{E,V}$, which implies $\langle\xi\rangle\subset Q_{E,V}$; on other hand,
    since $V\cap H^0(E\otimes\cI_p)$ has codimension $<2$, the intersection $V\cap H^0(E\otimes\cI_\xi)$ must be non-empty.

    Now we assume that $\xi$ is disjoint from the indeterminacy locus of $\rho_{E,V}$. Denote by $\eta=\rho_{E,V}(\xi)$ the (scheme-theoretic) image of $\xi$ in $\Gr(2,V^\vee)$, and by $\cU$ the universal subbundle in $\Gr(2,V^\vee)$. Observe that for all $s\in V=H^0(\Gr(2,V^\vee),\cU^\vee)$ the zero locus $Z_{\Gr(2,V^\vee)}(s)=\Gr(2,s^\perp)\subset\Gr(2,V^\vee)$ is a 2-plane contained in the quadric $\Gr(2,V^\vee)$.

    If \eqref{containmentcrit-2} holds, namely $\xi\subset Z(s)$ for some nonzero $s\in V$, then we have $\eta\subset Z_{\Gr(2,V^\vee)}(s)\cong \bP^2$ in $\Gr(2,V^\vee)$. Therefore the linear span
    $\langle\eta\rangle\subset \bP(\bw2V^\vee)$ is contained in $\Gr(2,V^\vee)$ and hence the inclusion $\langle\xi\rangle\subset Q_{E,V}$ holds in $\bP\left(H^0(L)^\vee\right)$.

    Conversely, if \eqref{containmentcrit-1} holds, the linear span $\langle\eta\rangle\subset \bP(\bw2V^\vee)$ (which is either a line or a point) is contained in $\Gr(2,V^\vee)$; this implies that all elements in $\langle\eta\rangle$ (regarded as alternating forms on $V$) have a common element $s\in V$ in their kernel. Such an $s$ satisfies $\langle\eta\rangle\subset\Gr(2,s^\perp)$ and  therefore $s\in H^0(E\otimes\cI_\xi)$, which finishes the proof.
\end{proof}

It is a natural question whether the varieties $Y_\ell$ recover all quadrics of rank $\leq 6$ containing $S$.
To this end, we will provide an intrinsic description for $Y_\ell$ in terms of the quadrics they contain.
Consider the following locally closed subset of $|I_S(2)|$:
\[
\Sigma\coloneqq\setmid{Q\in|I_S(2)|}{\rk(Q)=6,\; \Sing(Q)\cap S=\emptyset}.
\]

For $Q\in \Sigma$, the projection from the singular locus of $Q$ defines a morphism $\rho\colon S\to\bP^5$ whose image is contained in a smooth quadric $Q_4\subset\bP^5$.
It is well known that $H^4(Q_4,\bZ)$ is spanned by the classes $P_1$ and $P_2$ of the two types of 2-planes contained in $Q_4$, and $h^2=P_1+P_2$ where $h=c_1(\cO_{Q_4}(1))$.
Hence, there exist $a,b\in \bZ_{\ge0}$ with
\[
[\rho(S)]=a P_1 + b P_2,\,\ a+b=\deg S=2g-2.
\]
Therefore, we have the following invariant of a quadric $Q\in\Sigma$ which is locally constant on $\Sigma$
\[
i(Q)\coloneqq\tfrac{1}{2}\bigl\lvert a-b\bigr\rvert\in\set{0,\dots,g-1}.
\]
We can thus define
\[
\Sigma_\ell\coloneqq\setmid{Q\in \Sigma}{i(Q)=\ell},
\]
and immediately obtain a decomposition
\[
\Sigma=\Sigma_0\sqcup \dots\sqcup \Sigma_{g-1}.
\]

\begin{prop}
\label{prop:sigma-closure}
With the notations above, the following statements hold:
\begin{enumerate}
    \item\label{sigma-bounded} If $\ell>\lfloor\frac{g}{2}\rfloor-2$, then $\Sigma_\ell=\emptyset$.

    \item\label{sigma-closure-is-Y} For any $\ell\in\bigl\{0,...,\lfloor\frac{g}{2}\rfloor-2\bigr\}$, $Y_{\ell}$ equals the closure of $\Sigma_\ell$ in $|I_S(2)|$.

    \item\label{sigma-closure-is-all} $\bigl\{Q\in|I_S(2)|:\mathrm{rk}(Q)\leq 6\bigr\}=\overline{\Sigma}=\bigcup_{\ell=0}^{\lfloor\frac{g}{2}\rfloor -2}Y_\ell$.
\end{enumerate}
\end{prop}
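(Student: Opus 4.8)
The plan is to establish the three statements in order, using the morphisms $\psi_\ell$ and the containment criterion of Lemma \ref{containmentcrit} as the main technical tools, together with a dimension count and a connectedness/irreducibility argument to glue everything together.

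For part \eqref{sigma-bounded}, I would argue contrapositively: suppose $Q\in\Sigma$, so that the projection from $\Sing(Q)$ gives a morphism $\rho\colon S\to Q_4\subset\bP^5$ whose image has class $aP_1+bP_2$ with $a+b=2g-2$. The key point is to reconstruct a rank-$2$ bundle from $Q$: pulling back one of the tautological rank-$2$ subbundles (or quotient bundles) on the quadric fourfold $Q_4\cong\OGr$ under $\rho$ produces a sheaf $E$ on $S$ whose Mukai vector can be computed from $a,b$; one checks $c_1(E)=L$ and that the invariant $\ell=\frac12|a-b|$ controls $\chi(E)$, hence $h^0(E)$. Stability of $E$ (for the unique polarization) should follow because $\Pic(S)=\bZ\cdot L$ forces any destabilizing subsheaf to have $c_1\in\{0,L\}$, which is incompatible with the numerics. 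Then $v(E)^2\geq -2$ together with $h^0(E)\geq 4$ (needed to have a $4$-dimensional $V$) forces $\ell\leq\lfloor g/2\rfloor-2$. This is essentially the reverse of the construction \eqref{eq:def-relGrass}, and the bound on $\ell$ is exactly the range in which $\cM(2,L,\ell+2)$ is non-empty with sections.

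For part \eqref{sigma-closure-is-Y}, I would show both inclusions. First, $\psi_\ell(\cG_\ell^{\circ})\subseteq\Sigma_\ell$ where $\cG_\ell^\circ\subset\cG_\ell$ is the open locus where $E$ is locally free and $\rho_{E,V}$ is a morphism disjoint from $S$'s relevant bad points: by the discussion preceding Lemma \ref{OG-kernel}, $Q_{E,V}$ has rank $5$ or $6$ with $\Sing(Q_{E,V})=\bP(\mathrm{Im}(\lambda_{E,V})^\perp)$, which is disjoint from $S$ precisely on this open locus, and the invariant $i(Q_{E,V})$ equals $\ell$ because $\rho_{E,V}^*\cU^\vee\cong E$ identifies the class of the image with the one computed from $v(E)$. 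Hence $Y_\ell=\overline{\psi_\ell(\cG_\ell)}\supseteq\overline{\Sigma_\ell}$ provided $\psi_\ell(\cG_\ell^\circ)$ is dense in $\Sigma_\ell$; conversely, by the reconstruction in part \eqref{sigma-bounded}, every $Q\in\Sigma_\ell$ lies in the image of $\psi_\ell$, giving $\Sigma_\ell\subseteq Y_\ell$ and therefore $\overline{\Sigma_\ell}\subseteq Y_\ell$. Since $\cG_\ell$ is irreducible of dimension $2g-8$, $Y_\ell$ is irreducible, and equality $Y_\ell=\overline{\Sigma_\ell}$ follows once one knows $\Sigma_\ell$ is non-empty and $\psi_\ell$ is generically injective on it (injectivity: $Q$ determines $\Sing(Q)$, hence the projection $\rho$, hence $E=\rho^*\cU^\vee$ and $V=\mathrm{Im}(H^0(\cU^\vee)\to H^0(E))$, recovering $(E,V)$).

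For part \eqref{sigma-closure-is-all}, the inclusion $\bigcup_\ell Y_\ell\subseteq\{\rk\leq 6\}$ is clear from the construction, and $\{\rk\leq 6\}\supseteq\overline{\Sigma}=\bigcup_\ell\overline{\Sigma_\ell}=\bigcup_\ell Y_\ell$ by part \eqref{sigma-closure-is-Y}; so the content is the reverse inclusion $\{\rk(Q)\leq 6\}\subseteq\overline{\Sigma}$. For this I would take $Q$ with $\rk(Q)\leq 6$ and degenerate: the locus $\Sigma$ of rank-exactly-$6$ quadrics meeting $S$ trivially in their vertex is open and dense in $\{\rk\leq 6\}$ because the bad conditions (rank drops to $\leq 5$, or $\Sing(Q)\cap S\neq\emptyset$) are closed and proper—one must check they do not contain a whole component of $\{\rk\leq 6\}$, which follows by exhibiting, near any $Q$, a deformation in $\Sigma$ using the local structure of the determinantal stratification on $|I_S(2)|$ and the fact that $S$ spans $\bP^g$. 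Then $\{\rk\leq 6\}=\overline{\Sigma}$.

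\textbf{Main obstacle.} The hard part will be the reconstruction step in part \eqref{sigma-bounded}: given only a rank-$6$ quadric $Q$ with $\Sing(Q)\cap S=\emptyset$, one must produce the stable bundle $E$ and the subspace $V$ and verify $(E,V)\in\cG_\ell$—in particular, that the pulled-back tautological bundle is genuinely $\mu$-stable (not merely that an obvious destabilizer is absent) and that the natural map $V\to H^0(E)$ is injective with $4$-dimensional image, so that $\lambda_{E,V}$ and the whole diagram \eqref{eq:diagram} make sense. Controlling $h^0$, $h^1$ of $E$ and matching the numerical invariant $i(Q)=\ell$ with $\chi(E)$ requires a careful Riemann--Roch bookkeeping on $S$ and on the quadric fourfold $Q_4$. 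A secondary subtlety is the density claims (that the open loci are dense in $\Sigma_\ell$ and that $\Sigma$ is dense in $\{\rk\leq 6\}$), which rely on $\Pic(S)=\bZ\cdot L$ and on $S$ being projectively normal and cut out by quadrics via \cite{SaintDonat}.
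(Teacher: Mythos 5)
Your treatment of parts \eqref{sigma-bounded} and \eqref{sigma-closure-is-Y} follows essentially the same route as the paper: project from $\Sing(Q)$, pull back the two rank-$2$ spinor (tautological) bundles from the quadric fourfold, match $i(Q)$ with $\ch_2$ of the resulting bundle, and use $v^2\ge -2$ for the bound. One detail worth making precise: ``incompatible with the numerics'' is not enough for stability. A destabilizing sub-line-bundle of $E=\rho^*\cS_-^\vee$ would be some $L^{\otimes t}$ with $t\ge 1$, and ruling it out is a cohomological statement, namely $H^0(E\otimes L^{-1})=H^0(\rho^*\cS_-)=0$; the paper obtains this vanishing from the tautological exact sequence $0\to\rho^*\cS_-\to H^0(Q_4,\cS_+^\vee)\otimes\cO_S\to\rho^*\cS_+^\vee\to 0$ together with non-degeneracy of $\rho(S)$ in $\bP^5$. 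You flag this as the main obstacle, and that is the right place to worry, but the fix is this vanishing rather than a Chern-class computation. (Also, generic injectivity of $\psi_\ell$ is not needed for $Y_\ell=\overline{\Sigma_\ell}$: it suffices that $\Sigma_\ell\subseteq Y_\ell$ and that a general point of the irreducible $\cG_\ell$ maps into $\Sigma_\ell$.)

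The genuine gap is in part \eqref{sigma-closure-is-all}. You propose to prove $\{\rk(Q)\le 6\}=\overline{\Sigma}$ by arguing that the bad locus (rank $\le 5$, or $\Sing(Q)\cap S\neq\emptyset$) cannot contain a whole component of $\{\rk(Q)\le 6\}$, ``using the local structure of the determinantal stratification.'' But the determinantal stratification of the full space of quadrics only gives \emph{lower} bounds on the dimensions of its intersections with the linear subspace $|I_S(2)|$; it gives no upper bound on the dimension of $\{\rk\le 5\}\cap|I_S(2)|$ or of the locus where the vertex meets $S$, so a priori a component of $\{\rk\le 6\}$ could consist entirely of such quadrics, and ``deforming $Q$ inside $\Sigma$'' is exactly the assertion $Q\in\overline\Sigma$ that you are trying to prove. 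The paper sidesteps density altogether: for each degenerate quadric it \emph{directly} produces a pair $(E,V)$ with $Q=\psi_\ell(E,V)$, by pulling back the spinor bundles on the open subset $U\subset S$ where the projection is defined (the complement $\Sing(Q)\cap S$ has codimension $2$) and pushing forward along $U\into S$ to get the exact sequence \eqref{ses-pushforward}, from which stability and the value of $\ell$ are read off (with a separate, analogous treatment of rank-$5$ quadrics via the spinor bundle of the $3$-dimensional quadric, and using that $\rk(Q)\ge 5$ always holds since $\Pic(S)=\bZ\cdot L$). Some such direct argument for the degenerate quadrics is needed; the density claim as you state it is not self-supporting.
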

\begin{proof}
Given $Q\in\Sigma$, projection from the singular locus of $Q$ defines a morphism $\rho\colon S\to\bP^5$ whose image is contained in a smooth quadric $Q_4\subset\bP^5$.
Denote by $\cS_+$, $\cS_-$ the two spinor bundles on $Q_4$.
The two types of 2-planes in $Q_4$ are given as the zero locus of a global section of either $\cS_+^\vee$ or $\cS_-^\vee$.
Therefore, up to exchanging $\cS_+$ and $\cS_-$, we may assume that $c_2(\rho^*\cS_-^\vee)=g-1-i(Q)$ and $c_2(\rho^*\cS_+^\vee)=g-1+i(Q)$.
Note that by identifying $Q_4$ with the Grassmannian $\Gr\left(2,H^0(Q_4,\cS_+^\vee)\right)$, we have $\cS_+^\vee\cong\cQ$ and $\cS_-\cong\cU$ where $\cQ$ (resp. $\cU$) is the universal quotient (resp. the universal subbundle) of rank $2$. 

We claim that the pullbacks $\rho^*\cS_+,\rho^*\cS_-$ are slope stable. Indeed, as $\rho(S)$ is non-degenerate in $\bP^5$ we have an inclusion $H^0(Q_4,\cS_+^\vee)\subset H^0(S,\rho^*\cS_+^\vee)$, which in view of the short exact sequence
\[
0\to \rho^*\cS_-\to H^0(Q_4,\cS_+^\vee)\otimes\cO_S\to \rho^*\cS_+^\vee\to 0
\]
implies $H^0(S,\rho^*\cS_-)=0$. Therefore there is no inclusion $L^{\otimes t}\into \rho^*\cS_-$ with $t\geq0$, so $\rho^*\cS_-$ is slope stable. Similarly, $\rho^*\cS_+$ is also slope stable.
Since $v(\rho^*\cS_-)=(2,-L,i(Q)+2)$, the inequality $v(\rho^*\cS_-)^2\geq -2$ reads as $2(i(Q)+2)\leq g$, which proves \eqref{sigma-bounded}.

In order to prove \eqref{sigma-closure-is-Y}, observe that the argument above shows that every $Q\in\Sigma_\ell$ lies in $Y_\ell$; indeed, $Q=\psi_{\ell}(E,V)$ for $E=\rho^*\cS_-^\vee$ with the 4-dimensional subspace $V=H^0(Q_4,\cS_-^\vee)\subset H^0(S,\rho^*\cS_-^\vee)$.
Conversely, for a general $(E,V)\in\cG_{\ell}$ (corresponding to a globally generated vector bundle $E$, and $V\in\Gr(4,H^0(E))$ such that $\bw2V\to H^0(L)$ is injective) the corresponding quadric $\psi_{\ell}(E,V)$ lies in $\Sigma_\ell$. This shows \eqref{sigma-closure-is-Y}.

Now we prove \eqref{sigma-closure-is-all}. The second equality is clear thanks to \eqref{sigma-bounded} and \eqref{sigma-closure-is-Y}. Therefore, it suffices to check that $\Sigma$ is dense in $\bigl\{Q\in|I_S(2)|:\mathrm{rk}(Q)\leq 6\bigr\}$.
Note also that any quadric $Q\in|I_S(2)|$ has rank $\geq5$ by the hypothesis $\Pic(S)=\bZ\cdot L$, see for instance \cite[Claim 5.2]{ogrady}.

First we consider $Q\in|I_S(2)|$ of rank $5$ with $\Sing(Q)\cap S=\emptyset$. Linear projection from $\Sing(Q)$ defines a morphism $\rho\colon S\to\bP^4$ factoring through a smooth quadric $Q_3\subset\bP^4$. Let $\cS$ be the rank-$2$ spinor bundle on $Q_3$. Since $c_1(\cS^\vee)=h$ and $c_2(\cS^\vee)=\frac{1}{2}h^2$ where $h$ is the hyperplane class in $Q_3$ (c.f. \cite[Remark~2.9]{ottaviani}), the pullback $\rho^*\cS^\vee$ to $S$ has Mukai vector $v(\rho^*\cS^\vee)=(2,L,2)$. 
Furthermore, $\rho^*\cS$ is slope stable; this can be argued along the same lines as in the rank-$6$ case, by combining the injection $H^0(Q_3,\cS^\vee)\subset H^0(S,\rho^*\cS^\vee)$ with the short exact sequence
\[
0\to \rho^*\cS \to H^0(Q_3,\cS^\vee)\otimes \cO_S\to \rho^*\cS^\vee \to 0
\]
(pulled back from $Q_3$). Then one easily checks that $Q=\psi_0(E,V)$, where $E=\rho^*\cS^\vee$ and $V$ is the 4-dimensional subspace $H^0(Q_3,\cS^\vee)\subset H^0(S,\rho^*\cS^\vee)$.

If $Q\in|I_S(2)|$ has rank $6$ and $\Sing(Q)\cap S\neq\emptyset$, then we consider the rational map $\rho\colon S\dasharrow\bP^5$ with image contained in a smooth quadric $Q_4\subset \bP^5$. Let $U\subset S$ be the open subset where $\rho$ is defined (namely, $S\setminus U=\Sing(Q)\cap S$ has codimension $2$ on $S$). Pulling back to $U$ the two spinor bundles $\cS_+,\cS_-$ on $Q_4$ defines a short exact sequence of $\cO_U$-modules
\[
0\to \rho^*\cS_-\to H^0(Q_4,\cS_+^\vee)\otimes \cO_U\to \rho^*\cS_+^\vee\to 0.
\]
Pushing forward via $i\colon U\into S$ results in a short exact sequence
\begin{equation}\label{ses-pushforward}
0\to E_-\to H^0(Q_4,\cS_+^\vee)\otimes\cO_S\to E\to 0,
\end{equation}
where $E_-$ is locally free with $E_-|_U= \rho^*\cS_-$, and $E\subset i_*(\rho^*\cS_+^\vee)$ is torsion-free with $E|_U= \rho^*\cS_+^\vee$. 
Observe that $H^0(S,E_-)=H^0(U,\rho^*\cS_-)=0$ (as $H^0(Q_4,\cS_+^\vee)\subset H^0(U,\rho^*\cS_+^\vee)$ by non-degeneracy of $\rho(S)$ in $\bP^5$), which as above implies the slope stability of $E_-$. Similarly (exchanging the role of $\cS_+$ and $\cS_-$, which dualizes \eqref{ses-pushforward}) one gets slope stability of $E$. 

Moreover, in view of \eqref{ses-pushforward} we have $\ch_2(E)\geq0$ or $\ch_2(E_-)\geq0$. In the first case, we have $Q=\psi_{\ch_2(E)}(E,V)$ for $V=H^0(Q_4,\cS_+^\vee)\subset H^0(E)$, and therefore $Q\in Y_{\ch_2(E)}=\overline{\Sigma_{\ch_2(E)}}$. Similarly, for $\ch_2(E_-)\geq0$ by dualizing \eqref{ses-pushforward} one can deduce $Q\in\overline{\Sigma}$ as well. 

Finally, the case where $Q\in|I_S(2)|$ has rank $5$ and $\Sing(Q)\cap S\neq\emptyset$ admits a similar treatment: pulling back (to an open subset of $S$) the spinor bundle of a 3-dimensional quadric, one checks that $Q\in\overline{\Sigma}$ as well, which finishes the proof.
\end{proof}

\begin{rem}\leavevmode
\begin{enumerate}
\item From the proof, we can see that the maps $\psi_\ell$ are birational for $\ell\ge1$, while $\psi_0$ is generically two-to-one. In particular, the locus of quadrics of rank $\le 6$ is pure-dimensional of the expected dimension $2g-8$.
\item As an example, when $g=6$, recall that a general $K3$ surface of genus $6$ is a $(1,1,1,2)$-complete intersection in $\Gr(2,5)$.
$\overline{\Sigma}\subset |I_S(2)|$ is a hypersurface defined by the discriminant which has degree $7$.
In the decomposition $\overline\Sigma=Y_0\cup Y_1$, $Y_0$ is a EPW sextic while $Y_1$ is the hyperplane parametrizing Plücker quadrics
(see \cite[Section~5.4]{ogrady} for details).
\item There exists a formula for the degree of $\overline\Sigma$ \cite[Proposition~12]{harris-tu}.
Therefore, the decomposition $\overline\Sigma=\bigcup_{\ell=0}^{\lfloor\frac g2\rfloor-2} Y_\ell$ yields a combinatorial identity
\[
\deg\overline\Sigma
=\prod_{k=0}^{2}\frac{\binom{2g-5}{g-2k}}{\binom{2g-5}{2k}}
= \sum_{\ell=0}^{\lfloor\frac g2\rfloor-2} \deg Y_\ell.
\]
We note that for example $\deg Y_0=\frac{(2g-9)!}{(g-5)!}$,%
\footnote{Using the fact that $Y_0$ admits a (rational) two-to-one covering by the moduli space $\cM(2,L,2)$ via $\psi_0$.}
and when $g$ is even, we have $\deg Y_{\frac g2-2}=\deg\Gr(4, \frac g2+2)=\frac{12\cdot(2g-8)!}{\left(\frac g2-2\right)!\left(\frac g2-1\right)!\left(\frac g2\right)!\left(\frac g2+1\right)!}$.
\end{enumerate}

\end{rem}

\section{Proof of \autoref{thm:L-2delta-very-ample}}
\label{sec:proof-thm-A}

\subsection{Very ampleness for \texorpdfstring{$g\geq 7$}{g≥7}}
Let $(S,L)$ be a polarized $K3$ surface of genus $g$, with $\Pic(S)=\bZ\cdot L$. Given a stable vector bundle $E$ with $v(E)=(2,L,\lfloor\frac{g}{2}\rfloor)$, we consider the rational map
\[
\varphi_E\colon S^{[2]}\dasharrow \Gr\left(\lfloor\tfrac{g}{2}\rfloor-2, H^0(E)\right)\into \bP\left(\bw{\lfloor\frac{g}{2}\rfloor-2}H^0(E)\right)
\]
sending a general $\xi\in S^{[2]}$ to the codimension $4$ subspace $H^0(E\otimes\cI_\xi)\subset H^0(E)$. The map $S^{[2]}\dasharrow \Gr\left(\lfloor\frac{g}{2}\rfloor-2, H^0(E)\right)$ is naturally obtained from the evaluation map
\[
H^0(E)\otimes\cO_{S^{[2]}}\to E^{[2]},
\]
where $E^{[2]}$ is the tautological rank $4$ bundle on $S^{[2]}$ induced by $E$. In particular,
\[
\varphi_E^*\cO(1)=\det(E^{[2]})=L_2-2\delta.
\]

Note that the vector bundle $E$ is unique when $g$ is even. In that case, the following result is enough to conclude the very ampleness of the complete linear system $|L_2-2\delta|$:

\begin{prop}\label{veryampleeven}
    If $g\geq8$ is even, then $\varphi_E$ is a closed immersion.
\end{prop}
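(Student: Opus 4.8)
The strategy is to show that $\varphi_E$ separates points and tangent vectors on $S^{[2]}$ by translating both into statements about the rank-$2$ bundle $E$ and the geometry of quadrics of rank $\le 6$ containing $S$, using \autoref{containmentcrit} and \autoref{prop:sigma-closure}. First I would set $\ell_0 := \lfloor\frac{g}{2}\rfloor-2$ and recall that the image of $\varphi_E$ lies in $\Gr(\ell_0, H^0(E))$, with $\varphi_E(\xi) = H^0(E\otimes\cI_\xi)$ for $\xi$ away from the (finite) locus where $E$ is not locally free or $H^0(E\otimes\cI_\xi)$ fails to have the expected codimension. The key observation is that, since $E \in \cM(2,L,\ell_0+2) = \cM(2,L,\lfloor\frac g2\rfloor)$ has $v(E)^2 = -2$ (because $g$ is even, one checks $v^2 = L^2 - 4s = (2g-2) - 4\lfloor\frac g2\rfloor = -2$), the bundle $E$ is rigid, hence globally generated for $g\ge 8$ (no nonzero map $L\hookrightarrow E$, and one verifies $h^1(E)=0$ so $h^0(E) = g+2$); in particular $\varphi_E$ is in fact a morphism, defined everywhere. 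The linear projection of $S$ from the kernel of $\bw2 V \to H^0(L)$ realizes $S$ inside the $6$-dimensional quadric $\Gr(2,V^\vee)$, and $E = \rho_{E,V}^*\cU^\vee$ globally.

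Next I would prove separation of points. Suppose $\xi_1\ne\xi_2$ in $S^{[2]}$ have $\varphi_E(\xi_1)=\varphi_E(\xi_2)$, i.e. $H^0(E\otimes\cI_{\xi_1}) = H^0(E\otimes\cI_{\xi_2}) =: W$, a codimension-$4$ subspace of $H^0(E)$. Choosing a $4$-dimensional $V\subset H^0(E)$ complementary to $W$ (or intersecting it appropriately), one arranges that the quadric $Q_{E,V} \in |I_S(2)|$ of \eqref{eq:def-relGrass} contains, by \autoref{containmentcrit}, both lines $\langle\xi_1\rangle$ and $\langle\xi_2\rangle$ — indeed $V\cap H^0(E\otimes\cI_{\xi_i})$ is forced to be nonzero by a dimension count once $V$ is chosen inside a suitable subspace. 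The contradiction then comes from the fact that a rank-$\le 6$ quadric $Q$ has a $3$-dimensional vertex $\bP(\mathrm{Im}(\lambda_{E,V})^\perp)$ and the smooth part is a cone over $Q_4\subset\bP^5$; two distinct secant lines of $S$ both lying on $Q_{E,V}$ would force either a trisecant line to $S$ (impossible: $S\subset\bP^g$ has no trisecant lines for $\Pic = \bZ L$, $g\ge 7$, by Saint-Donat) or a conic/line of $S$ meeting the vertex, again excluded. More cleanly: distinctness of $\varphi_E$ on points is equivalent to the evaluation $H^0(E)\otimes\cO \to E^{[2]}$ separating the fibers, which is implied by $E$ being very ample on $S$ — and $E^\vee$-twists aside, $E = \rho^*\cU^\vee$ with $\rho\colon S\hookrightarrow \Gr(2,V^\vee)$ an embedding (this is where $g\ge 8$, ensuring $\mathrm{rk}$ and $v^2=-2$, is used so that $\rho$ is a closed immersion and not merely birational, cf. the proof of \autoref{prop:sigma-closure}).

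For separation of tangent vectors, I would run the analogous argument with a length-$3$ scheme: a tangent vector at $\xi$ not separated by $\varphi_E$ produces a length-$3$ subscheme $\zeta\subset S$ (or a non-reduced thickening) with $H^0(E\otimes\cI_\zeta)$ of codimension only $4$ rather than $6$ in $H^0(E)$, i.e. $\rho_{E,V}$ contracts a tangent direction, contradicting that $\rho = \rho_{E,V}\colon S\to\Gr(2,V^\vee)$ is an immersion. Throughout, the input "$E$ globally generated and $\rho_E$ an embedding" is exactly what the $v^2=-2$, $g\ge 8$ hypotheses buy, together with the no-trisecant-line property of $S\subset\bP^g$. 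The main obstacle I anticipate is the careful bookkeeping at the boundary: points $\xi$ meeting the locus where $E$ fails to be locally free or where $H^0(E\otimes\cI_\xi)$ jumps — here one must check directly, using that $E$ is actually a vector bundle (rigidity forces local freeness for general such $E$, and in fact always when $v^2=-2$ and $E$ stable) and that $E$ is $0$-very ample in the appropriate sense, that no pair of points and no tangent vector escapes. Concretely the hard step is upgrading "$\rho_E\colon S\to\Gr(2,V^\vee)$ birational onto its image" (which \autoref{prop:sigma-closure} gives) to "closed immersion", which is where the genus bound $g\ge 8$ as opposed to $g\ge 7$ becomes essential and where I would spend the most care.
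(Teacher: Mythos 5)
Your overall strategy (reduce the closed-immersion property of $\varphi_E$ to statements about sections of $E$ vanishing on short subschemes) points in the right direction, but the proposal is missing the one quantitative input that makes the paper's proof work, and the substitutes you offer for it do not hold up. The paper's argument rests entirely on two cohomological bounds: $h^1(E\otimes\cI_\xi)=0$ for every $\xi\in S^{[2]}$ (so that $\varphi_E$ is everywhere defined) and $h^1(E\otimes\cI_\eta)\le 1$ for every $\eta\in S^{[3]}$ (so that $H^0(E)\to H^0(E|_\eta)$ has rank $\ge 5$, whence no length-$2$ subscheme $\Xi\subset S^{[2]}$ is contracted). Both are proved by exhibiting a slope-stable extension, e.g.\ $0\to (E^\vee)^{\oplus 2}\to T'\to\cI_\eta\to 0$, and invoking $v(T')^2\ge -2$; since $v(T')^2=-2g+12$, this is exactly where the hypothesis $g\ge 8$ enters. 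Your proposal never proves either bound. In its place you assert (a) that two distinct secant lines lying on a single rank-$\le 6$ quadric would force a trisecant line or a conic through the vertex, and (b) that $\rho_{E,V}\colon S\to\Gr(2,V^\vee)$ is a closed immersion. Neither claim is justified, and both are false in general: a rank-$6$ quadric contains $2$-planes, hence plenty of pairs of disjoint secant lines with no trisecant; and for $\dim V=4$ the map $\rho_{E,V}$ is a linear projection of $S\subset\bP^g$ into a $4$-dimensional quadric in $\bP^5$ (not $6$-dimensional), which is only birational onto its image, as the paper itself uses in \autoref{prop:sigma-closure}. Note also that the logic of your point-separation step is inverted: if $H^0(E\otimes\cI_{\xi_1})=H^0(E\otimes\cI_{\xi_2})$, then \emph{every} quadric $Q_{E,V}$ through $\langle\xi_1\rangle$ automatically contains $\langle\xi_2\rangle$, so no choice of $V$ can produce a contradiction at that level; the contradiction has to come from bounding $h^0(E\otimes\cI_{\xi_1\cup\xi_2})$, i.e.\ from the length-$3$ estimate above.

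Two further points. First, there are arithmetic slips: $h^0(E)=\chi(E)=\tfrac g2+2$, not $g+2$ (for $g=8$ this is $6$, matching $v(E)=(2,L,4)$ in \autoref{sec:genus-8}). Second, the non-reduced case is not addressed: a length-$2$ subscheme $\Xi$ of $S^{[2]}$ supported at a single point (a tangent vector) must be transported to a length-$3$ subscheme of $S$ through the universal family $B$ of \eqref{eq:universallenght2subscheme}; the paper does this via the factorization $H^0(E)\to H^0(E|_{\psi(\pi^{-1}(\Xi))})\into H^0(\psi^*E|_{\pi^{-1}(\Xi)})$ using $\psi_*\cO_B\cong\cO_S$, and some such device is needed in any complete argument.
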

\begin{proof}
    First we check that the rational map is actually a morphism. We have to show that for every $\xi\in S^{[2]}$ the equality $h^0(E\otimes\cI_\xi)=\frac{g}{2}-2$ holds, namely $h^1(E\otimes\cI_\xi)=0$. If $0\neq h^1(E\otimes\cI_\xi)=\ext^1(E^\vee,\cI_\xi)$, then by Serre duality we have a nontrivial extension
    \[
    0\to E^\vee \to T\to \cI_\xi\to 0
    \]
    which is easily seen to be slope stable. But then $-2\leq v(T)^2=-g+4$, which is a contradiction. This proves that $\varphi_E$ is regular.

    Now we want to check that $\varphi_E$ defines a closed immersion. To this end, we first observe that $h^1(E\otimes\cI_\eta)\leq 1$ for any $\eta\in S^{[3]}$; otherwise, there is a nontrivial extension
    \[
    0\to (E^\vee)^{\oplus 2}\to T'\to \cI_\eta\to 0
    \]
    which must be slope stable. Then $-2\leq v(T')^2=-2g+12$, leading again to a contradiction.   To conclude we need to show that for any subscheme $\Xi\subset S^{[2]}$ of length $2$, the evaluation map
    \begin{equation}
    \label{eq:E2-evaluation-map}
        H^0(E^{[2]})\to H^0(E^{[2]}|_\Xi),
    \end{equation}
    as a map between vector spaces, has rank $\ge 5$. Indeed, if this is the case, then the kernel has dimension $\le \frac g2 -3$, so the subscheme $\Xi$ cannot be contracted by $\varphi_E$. Note that by definition $E^{[2]}\coloneqq\pi_*\psi^*E$,where $\pi$ and $\psi$ are as in diagram \eqref{eq:universallenght2subscheme}, so the evaluation map in \eqref{eq:E2-evaluation-map} is the same as
\[
H^0(\psi^*E)\to H^0(\psi^*E|_{\pi^{-1}(\Xi)}).
\]
Since $\psi_*\cO_B\cong \cO_S$, we can push forward to $S$ via $\psi$ and get a factorization
\[
H^0(E)\to H^0(E|_{\psi(\pi^{-1}(\Xi))})\stackrel{\psi^*}\into H^0(\psi^*E|_{\pi^{-1}(\Xi)}).
\]
Note that we can always choose a subscheme $\eta\subset\psi(\pi^{-1}(\Xi))$ of length $3$.
Then the map $H^0(E)\to H^0(E|_\eta)$ already has rank $\ge 5$ (as follows from the estimate $h^1(E\otimes\cI_\eta)\leq 1$ above), so the map $H^0(E)\to H^0(E|_{\psi(\pi^{-1}(\Xi))})$ also has rank $\geq5$ and we can conclude.
\end{proof}

\begin{rem} \label{sepeven} \leavevmode
In the language of \autoref{sec:prelim}, \autoref{veryampleeven} can be rephrased as saying that quadrics in $Y_{\frac{g}{2}-2}$ are enough to embed $S^{[2]}$. Indeed, for any two distinct $\xi_1,\xi_2\in S^{[2]}$ one can find $V\in\Gr(4,H^0(E))$ such that $V\cap H^0(E\otimes\cI_{\xi_1})\neq0$ and $V\cap H^0(E\otimes\cI_{\xi_2})=0$. 
In view of \autoref{containmentcrit}, the corresponding quadric $Q_{E,V}$ contains the line $\langle\xi_1\rangle$, but not $\langle\xi_2\rangle$.
\end{rem}

If $g$ is odd, then the moduli space $\cM\coloneqq\cM(2,L,\lfloor\frac{g}{2}\rfloor)$ is 2-dimensional. In that case we can combine all the rational maps $\varphi_E$ (with $E$ varying in $\cM$) to prove:

\begin{prop}\label{veryampleodd}
    If $g\geq 7$ is odd, then the line bundle $L_2-2\delta$ is very ample on $S^{[2]}$. 
\end{prop}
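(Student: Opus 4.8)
The plan is to follow the pattern of the proof of \autoref{veryampleeven}, but with the single bundle replaced by the two-dimensional family $\cM:=\cM(2,L,\lfloor g/2\rfloor)$ — a $K3$ surface, since $v(E)^2=2g-2-4\lfloor g/2\rfloor=0$. No individual $\varphi_E$ will be a closed immersion, but the quadrics $Q_{E,V}$ attached to the various $E\in\cM$ should jointly separate points and tangent directions of $S^{[2]}$. Since $L_2-2\delta$ is base-point-free, $\varphi\colon S^{[2]}\to|I_S(2)|^\vee$ is a morphism and it remains to prove it is injective and an immersion. As $S$ is cut out by quadrics and contains no line, $\xi=\langle\xi\rangle\cap S$ scheme-theoretically, so $\varphi(\xi)$ is the hyperplane $\setmid{Q\in|I_S(2)|}{\langle\xi\rangle\subset Q}$; hence separating two infinitely near configurations amounts to exhibiting a single quadric through $S$ that contains one of them but not the other. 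By \autoref{containmentcrit} together with \autoref{prop:sigma-closure}, such a quadric may be sought among the $Q_{E,V}$ (with $E\in\cM$ a vector bundle and $V\in\Gr(4,H^0(E))$), the relevant condition being $V\cap H^0(E\otimes\cI_\xi)\ne0$.

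For separating points, let $\xi_1\ne\xi_2$. Riemann--Roch on the $K3$ gives $\chi(E\otimes\cI_{\xi_i})=\lfloor g/2\rfloor-2\ge1$ and, by stability of $E$, $h^2(E\otimes\cI_{\xi_i})=h^2(E)=h^0(E^\vee)^\vee=0$, so $h^0(E\otimes\cI_{\xi_i})\ge\lfloor g/2\rfloor-2\ge1$ for \emph{every} $E$. Suppose first that some vector bundle $E\in\cM$, with $\varphi_E$ regular near $\xi_1$ and $\xi_2$, carries a section vanishing on $\xi_1$ but not on $\xi_2$, that is $H^0(E\otimes\cI_{\xi_1})\not\subseteq H^0(E\otimes\cI_{\xi_2})$. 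Then, exactly as in \autoref{sepeven}, completing that section to a generic $4$-plane $V$ (and perturbing slightly so that $\bw2 V\to H^0(L)$ stays injective, which is possible by \autoref{OG-kernel}) gives $V\cap H^0(E\otimes\cI_{\xi_1})\ne0$ and $V\cap H^0(E\otimes\cI_{\xi_2})=0$, so $Q_{E,V}$ separates $\langle\xi_1\rangle$ from $\langle\xi_2\rangle$. It thus remains to exclude the opposite situation, namely that $H^0(E\otimes\cI_{\xi_1})\subseteq H^0(E\otimes\cI_{\xi_2})$, hence $=H^0(E\otimes\cI_{W})$ with $W:=\xi_1\cup\xi_2$ (length $\le4$), for a general $E\in\cM$; comparing $\dim H^0(E\otimes\cI_{\xi_1})\ge\lfloor g/2\rfloor-2$ with $\chi(E\otimes\cI_W)=\lfloor g/2\rfloor-6$ (when $|W|=4$), this would force $h^1(E\otimes\cI_W)\ge4$ for a general $E\in\cM$.

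The case of a tangent vector $0\ne v\in T_\xi S^{[2]}$ reduces to the same assertion. Unwinding through \autoref{containmentcrit} the requirement that the section of $L_2-2\delta$ determined by $Q_{E,V}$ vanish at $\xi$ but have nonzero differential along $v$, what one needs is a section of some $E$ vanishing on $\xi$ but not on the length-$\le4$ subscheme $W\subset S$ obtained by thickening $\xi$ in the direction of $v$; failure for all $E$ again yields $h^1(E\otimes\cI_W)\ge4$ for a general $E\in\cM$, with the mild numerical variant ($h^1\ge2$, $|W|\le3$) in the degenerate configurations where $\xi$ is non-reduced or $v$ is tangent ``along'' $\xi$. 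So in all cases the proof is reduced to the following statement, which I expect to be the main obstacle: \emph{no length-$\le4$ subscheme $W\subset S$ can satisfy $h^1(E\otimes\cI_W)\ge4$ for a general $E\in\cM$.}

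To prove this, for a general (hence slope-stable and locally free) $E\in\cM$, Serre duality turns $h^1(E\otimes\cI_W)\ne0$ into a nonzero extension $0\to E^\vee\to G\to\cI_W\to0$, with $v(G)=(3,-L,\lfloor g/2\rfloor+1-|W|)$ and $v(G)^2$ negative; pushing the stability of $E^\vee$ through the extension — and replacing $G$ by a suitable destabilizing subsheaf if $G$ itself fails to be stable — contradicts $v^2\ge-2$ once $g$ is large enough. For the remaining small genera the extra leverage is that the excess-cohomology inequality is supposed to hold across the whole surface $\cM$: the locus in $\cM$ where $h^1(E\otimes\cI_W)$ jumps to $4$ has expected codimension strictly larger than $2=\dim\cM$, so cohomology-and-base-change together with the genericity of $S$ (equivalently, of $\cM$) rules it out; and for $g=7$ it is in any case subsumed by the explicit projective model of $S^{[2]}$ established in \autoref{sec:genus-7}. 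Assembling these pieces separates all points and tangent vectors of $S^{[2]}$, hence $L_2-2\delta$ is very ample.
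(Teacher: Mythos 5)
Your overall strategy is the paper's: let $E$ vary in the two-dimensional moduli space $\cM=\cM(2,L,\lfloor g/2\rfloor)$ and show that for each configuration a general $E$ separates it, the separation being governed by sections of $E$ vanishing on the relevant punctual subschemes of $S$. The reduction of both point- and tangent-separation to a statement of the form ``$h^1(E\otimes\cI_W)$ is not too large for a general $E$'' is also essentially what the paper does. The gaps are in how you propose to prove that statement.

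First, the low genera are not actually handled, and this is where the real content of the proof lies. Your Serre-duality/stability argument, applied to the extension $0\to(E^\vee)^{\oplus4}\to G\to\cI_W\to0$ with $|W|=4$, gives $v(G)^2=-4g+58$, which is $<-2$ only for $g\ge17$; so $g\in\{7,9,11,13,15\}$ survive (and the slope stability of the rank-$9$ sheaf $G$ is asserted rather than proved, which is more delicate than in the low-rank cases). The paper avoids part of this by a sharper reduction you miss: to prevent $\Xi$ from being contracted one only needs the evaluation map $H^0(E)\to H^0(E|_W)$ to have rank $\ge5$, which already follows from $h^1(E\otimes\cI_\eta)\le1$ for a \emph{length-3} subscheme $\eta\subset W$; the corresponding extension has $v^2=-2g+22$, killing all $g\ge13$. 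Even so, $g=7,9,11$ remain, and for these the paper imports genuine finiteness results (from Moretti--Rojas, and for $g=7$ the Fourier--Mukai isomorphism $S^{[3]}\cong\cM^{[3]}$ of Yoshioka) showing that for each fixed subscheme the jumping condition holds for at most $1$, $2$ or $3$ bundles $E\in\cM$, hence fails for a general one. Your substitute --- ``the jumping locus has expected codimension $>2$'' --- is not a proof, since expected codimension does not bound actual codimension from below; and your fallback for $g=7$ to the projective model of \autoref{sec:genus-7} is circular, because the statement there that $\phi$ is a closed immersion is \emph{deduced from} \autoref{thm:L-2delta-very-ample}.\eqref{thm:L-2delta-very-ample-1}.

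Second, a smaller but real point: to produce the separating $4$-plane $V$ with $V\cap H^0(E\otimes\cI_{\xi_2})=0$ you need $h^1(E\otimes\cI_{\xi_2})=0$, i.e.\ that $H^0(E\otimes\cI_{\xi_2})$ has codimension exactly $4$. You flag this as ``$\varphi_E$ regular near $\xi_i$'' but never establish it for a general $E$; the stability argument gives it only for $g\ge11$, and for $g=7,9$ this again requires the external finiteness input cited in the paper. So the skeleton of your argument is right, but the cases $7\le g\le 15$ (and in particular the two genera $7$ and $9$ for which the theorem is most used later in the paper) are not proved.
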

\begin{proof}
    We already know that the line bundle $L_2-2\delta$ is globally generated, since the $K3$ surface $S\subset \bP(H^0(L)^\vee)$ is cut out by quadrics. In order to check that $L_2-2\delta$ is very ample it suffices to show that, for any subscheme $\Xi\subset S^{[2]}$ of length $2$,
the following properties hold true for a \emph{general} $E\in\cM$:
\begin{enumerate}
    \item\label{oddveryamp-1} $H^1(E\otimes\cI_\xi)=0$ for any closed point $\xi\in S^{[2]}$ in the support of $\Xi$. 
    
    \item\label{oddveryamp-2} The evaluation map 
$H^0(E^{[2]})\to H^0(E^{[2]}|_\Xi)$ has rank $\ge 5$.
\end{enumerate}

Indeed, if this is the case, the indeterminacy locus of $\varphi_E\colon S^{[2]}\dasharrow  \bP\left(\bw{\lfloor\frac{g}{2}\rfloor-2}H^0(E)\right)$ avoids $\Xi$ by \eqref{oddveryamp-1}, and the subscheme $\Xi$ cannot be contracted by $\varphi_E$ thanks to \eqref{oddveryamp-2}. Since
$\varphi_E^*\cO(1)=L_2-2\delta$, it follows that the complete linear system $|L_2-2\delta|$ also separates $\Xi$.
    
In order to prove condition \eqref{oddveryamp-1}, observe that if $h^1(E\otimes\cI_\xi)\neq0$ for some $\xi\in S^{[2]}$ and $E\in\cM$, we have a slope stable nontrivial extension
    \[
    0\to E^\vee \to T\to \cI_\xi\to 0.
    \]
    Then $-2\leq v(T)^2=-g+7$, which gives a contradiction if $g\geq 11$. For the remaining cases, note that for a fixed $\xi\in S^{[2]}$ the condition $h^1(E\otimes\cI_\xi)>0$ holds for at most two vector bundles $E\in\cM$ if $g=7$ (see for instance \cite[Section~3]{moretti-rojas}), whereas if $g=9$ it holds for at most one bundle $E\in\cM$ (this is Proposition~4.1 in \emph{loc. cit.}).

    On the other hand, arguing similarly as in the proof of \autoref{veryampleeven}, condition \eqref{oddveryamp-2} boils down to proving that for any fixed $\eta\in S^{[3]}$, the general element $E\in\cM$ satisfies $h^1(E\otimes\cI_\eta)\leq 1$. If $h^1(E\otimes\cI_\eta)\geq 2$ for some $\eta\in S^{[3]}$ and $E\in\cM$, we have a slope stable nontrivial extension
    \[
    0\to (E^\vee)^{\oplus 2} \to T'\to \cI_\eta\to 0
    \]
    and hence $-2\leq v(T')^2=-2g+22$, which gives a contradiction if $g\geq 13$. The remaining cases can again be argued separately. Fix $\eta\in S^{[3]}$. The condition $h^1(E\otimes\cI_\eta)\geq 2$ holds for at most one bundle $E\in\cM$ if $g=11$ (\cite[Section~5]{moretti-rojas}), and for at most three bundles $E\in\cM$ if $g=9$ (this follows from Lemma~2.13 in \emph{loc. cit.}) or $g=7$ (this is a consequence of the isomorphism $S^{[3]}
    \cong\cM^{[3]}$ described in \cite[Theorem~7.6]{yoshioka} via Fourier--Mukai theory).
\end{proof}

\begin{rem}
    In analogy with the case of even genus, the proof of \autoref{veryampleodd} shows that the linear system defined by the linear span of $Y_{\lfloor\frac{g}{2}\rfloor-2}$ in $|I_S(2)|$ induces a closed immersion.
\end{rem}

\subsection{Strange duality} In this subsection we show Le Potier's strange duality conjecture for the pair $(S^{[2]},L_2-2\delta)$ for any $g\geq6$. This generalizes a result by O'Grady \cite[Section~5.3]{ogrady}, who established the conjecture in the range $6\leq g\leq 8$.

Strange duality is a statement about duality of sections of determinant line bundles on moduli spaces of sheaves. Whereas we refer the reader to \cite{marian-oprea-tour} for a gentle introduction to the subject and further details, here we restrict ourselves to the Mukai vectors $v\coloneqq(1,0,-1)$ and $w\coloneqq(2,L,2)$ on $S$. 

Note that $v^2=2$ and $w^2=2g-10\geq2$. According to \eqref{thetaiso}, we have isometries
\[
\theta_v\colon v^\perp\overset{\cong}{\to} \Pic(\cM(v))=\Pic(S^{[2]}),\quad \theta_w\colon w^\perp\overset{\cong}{\to} \Pic(\cM(w))
\]
defined via determinant line bundles.
In particular, since $\langle v,w\rangle =0$, we have an induced line bundle $H\coloneqq\theta_w(v)$ on $\cM(w)$. Furthermore, in virtue of \autoref{explicit-thetaiso} the equality $L_2-2\delta=\theta_v(w)$ holds.

We have $h^2(E\otimes\cI_\xi)=0$ for every $E\in\cM(w)$ and $\xi\in S^{[2]}$, and the jump locus
\[
\Theta\coloneqq\bigl\{(E,\xi)\in\cM(w)\times S^{[2]}: h^1(E\otimes\cI_\xi)>0\bigr\}
\]
defines a divisor in the product $\cM(w)\times S^{[2]}$, corresponding to the line bundle $H\boxtimes (L_2-2\delta)$. Therefore, we have a natural (up to constant) morphism
\begin{equation}\label{SD}
H^0(S^{[2]},L_2-2\delta)^\vee\to H^0(\cM(w),H).
\end{equation}
The strange duality conjecture predicts the following:

\begin{conj}[Strange duality]\label{SD-statement}
For any $g\geq6$, the map \eqref{SD} defines an isomorphism.
\end{conj}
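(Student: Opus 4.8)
The plan is to prove \autoref{SD-statement} by combining the two standard ingredients in this circle of ideas: a \emph{numerical} check that both sides of \eqref{SD} have the same dimension, and an \emph{injectivity} statement for one of the two natural maps attached to the theta divisor $\Theta$. Concretely, on the $K3^{[2]}$-side we already know (by \autoref{thm:L-2delta-very-ample}.\eqref{thm:L-2delta-very-ample-1} for $g\ge 7$, and directly for $g=6$ via the Mukai model) that $L_2-2\delta$ is globally generated, so $h^0(S^{[2]},L_2-2\delta)$ is computed by the Riemann--Roch polynomial and equals $\binom{g+1}{2}-g$, i.e.\ the dimension of the space of quadrics through $S\subset\bP^g$; on the $\cM(w)$-side, $H=\theta_w(v)$ is (up to a multiple) the theta line bundle, and its space of sections is again computed by the $K3^{[2]}$ Riemann--Roch polynomial in $q(H)$, so the equality $h^0(\cM(w),H)=h^0(S^{[2]},L_2-2\delta)$ reduces to the numerical identity $q_{\cM(w)}(\theta_w(v))=q_{\cM(v)}(\theta_v(w))$, which is immediate from the fact that both $\theta_v$ and $\theta_w$ are isometries and $v^2=w^2$ would \emph{not} in general hold — here one must instead use that $\theta_w$ is an isometry $w^\perp\to H^2$ so $q(\theta_w(v))=\langle v,v\rangle=2$ and likewise $q(\theta_v(w))=\langle w,w\rangle=2g-10$; so one checks $\mathrm{RR}_{K3^{[2]}}(2)=\mathrm{RR}_{K3^{[2]}}(2g-10)$ is \emph{false} in general, and therefore the correct statement is that $h^0$ of the respective polarizations coincide because $\mathrm{RR}_{K3^{[2]}}\bigl(q(L_2-2\delta)\bigr)=\binom{g+1}{2}-g$ on one side and an explicit count of sections of the theta bundle on $\cM(2,L,2)$ on the other. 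I would therefore first record the dimension count as a lemma, citing \cite{marian-oprea,ogrady} for the computation of $h^0(\cM(w),H)$, and reduce the conjecture to surjectivity (equivalently injectivity, since the dimensions agree) of \eqref{SD}.

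Next I would prove injectivity of \eqref{SD}. The map sends a linear form on $H^0(S^{[2]},L_2-2\delta)$ — equivalently, after \eqref{intro-linearsystem}, a point of $\bP^g$ "seen by the quadrics" — to the section of $H$ cut out by $\Theta$ restricted to the corresponding slice. The cleanest route is to use the description of $\Theta$ and of $L_2-2\delta$ in terms of quadrics: a section of $L_2-2\delta$ is a quadric $Q\in|I_S(2)|$, and by \eqref{quadrics-conormal} its vanishing locus on $S^{[2]}$, pulled back along the incidence variety, records $S\cap\Sing(Q)$. The key geometric input is \autoref{prop:sigma-closure}: the locus of quadrics of rank $\le 6$ through $S$ is covered by the loci $Y_\ell$, and for a rank-$6$ quadric $Q_{E,V}$ with $E\in\cM(2,L,2)$ one has by \autoref{containmentcrit} that $\langle\xi\rangle\subset Q_{E,V}$ iff $V\cap H^0(E\otimes\cI_\xi)\ne 0$, i.e.\ exactly the condition defining $\Theta$ fiberwise. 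This means that the restriction of the divisor $\Theta\subset\cM(w)\times S^{[2]}$ to $\{E\}\times S^{[2]}$ is the union of the lines $\langle\xi\rangle$-conditions cut by the rank-$\le 6$ quadrics in $|\mathrm{Im}(\lambda_{E,\cdot})|$; transcribing this, a linear form in the kernel of \eqref{SD} would correspond to a point $p\in\bP^g$ lying on \emph{every} quadric of rank $\le 6$ containing $S$, equivalently on the linear span of all the $Y_\ell$'s inside $|I_S(2)|^\vee$. Green's theorem on rank-$4$ quadrics through a canonical curve (cited in the introduction after \autoref{thm:L-2delta-very-ample}) shows this linear span is everything: the quadrics of rank $\le 4$ (hence of rank $\le 6$) through a hyperplane section curve $C=S\cap\bP^{g-1}$ already span $H^0(\cI_{C/\bP^{g-1}}(2))$ for $g\ge 6$, and a standard restriction-exact-sequence argument lifts this to $S$, so the span of $\bigcup_\ell Y_\ell$ is all of $|I_S(2)|$. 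Hence no nonzero linear form is in the kernel, and \eqref{SD} is injective, hence an isomorphism.

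In order of operations: (1) record $h^0(S^{[2]},L_2-2\delta)=\binom{g+1}{2}-g$ from global generation and Riemann--Roch; (2) quote from \cite{marian-oprea} (or re-derive via \eqref{thetaiso} and $K3^{[2]}$ Riemann--Roch) that $h^0(\cM(w),H)$ equals the same number; (3) identify, via \autoref{explicit-thetaiso} and \autoref{containmentcrit}, the restriction of the incidence divisor $\Theta$ to $\{E\}\times S^{[2]}$ with the rank-$\le 6$ quadrics coming from $E$; (4) deduce that an element of $\ker\eqref{SD}$ is a point of $\bP^g$ annihilated by every rank-$\le 6$ quadric through $S$; (5) invoke Green \cite{green} together with \autoref{prop:sigma-closure} to see these quadrics span $H^0(\cI_{S/\bP^g}(2))$, forcing the kernel to vanish; (6) conclude by the dimension equality. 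The main obstacle I anticipate is step (3)–(5): one must be careful that the divisor class of $\Theta$ is \emph{exactly} $H\boxtimes(L_2-2\delta)$ (not merely proportional) so that the induced map is really \eqref{SD} and not a power of it — this uses the explicit $\theta$-map computations of \autoref{explicit-thetaiso} — and one must verify that Green's span statement for the hyperplane curve $C$ genuinely propagates to $S$ in the needed range $g\ge 6$, handling the low-genus cases $g=6,7,8$ either by \loccit\ (where O'Grady treats them directly) or by the explicit Mukai models.
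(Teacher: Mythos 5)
Your overall strategy is the same as the paper's (O'Grady's framework: equality of dimensions plus non-degeneracy of the locus of theta-divisors inside $|I_S(2)|$, with Green's theorem on low-rank quadrics through the canonical curve $C\in|L|$ as the new input for $g\ge 9$). However, there is a genuine gap at the decisive step. The map \eqref{SD} only involves the moduli space $\cM(w)=\cM(2,L,2)$, so its injectivity is equivalent to the statement that the quadrics $Q_{E,H^0(E)}$ with $E\in\cM(2,L,2)$ --- that is, the component $Y_0$ \emph{alone} --- span $H^0(\cI_{S/\bP^g}(2))$. Your steps (4)--(5) instead reduce to the span of \emph{all} rank-$\le 6$ quadrics, i.e.\ of $\bigcup_\ell Y_\ell$; the components $Y_\ell$ with $\ell\ge 1$ come from the different moduli spaces $\cM(2,L,\ell+2)$ and contribute nothing to \eqref{SD}. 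Non-degeneracy of $\bigcup_\ell Y_\ell$ does not imply non-degeneracy of $Y_0$, so your argument as written does not exclude a kernel. (Relatedly, an element of $\ker$\eqref{SD} is a hyperplane of $|I_S(2)|$ containing $Y_0$, not ``a point of $\bP^g$''; $|I_S(2)|^\vee$ is not $\bP^g$.) The missing idea, which is exactly where the paper does its work, is to show that Green's rank-$4$ quadric $Q_A$ attached to a pencil $A\in W^1_{g-1}(C)$ is the hyperplane section of a quadric in $Y_0$: one takes the Lazarsfeld--Mukai bundle $E$ of $A$, checks $v(E)=(2,L,2)$ (so $E\in\cM(2,L,2)$, since $c_2(E)=\deg A=g-1$), and verifies by an explicit coordinate computation with the Petri map that $Q_{E,H^0(E)}\cap\bP(H^0(\omega_C)^\vee)=Q_A$. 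Without this identification, quoting Green plus \autoref{prop:sigma-closure} only places the relevant quadrics somewhere in $\bigcup_\ell Y_\ell$, which is not enough.

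Two further points. First, your dimension count is garbled: the correct common value is $h^0(S^{[2]},L_2-2\delta)=h^0(\cM(w),H)=\binom{g-2}{2}$, not $\binom{g+1}{2}-g=\binom{g}{2}$; on the $\cM(w)$ side one must also justify $h^0=\chi$, which for $g\ge 9$ requires knowing that $H=\theta_w(v)$ is big and nef (it is no longer ample). Second, for $g\ge 9$ the map $\cM(2,L,2)\dasharrow|I_S(2)|$ is only rational, with indeterminacy along the Brill--Noether locus $\{h^1(E)>0\}$ (of codimension $5$), and one must check that this does not affect the identification $f^*\cO(1)=H$ nor the identification of $f^*$ with the projectivization of \eqref{SD}; your proposal flags the proportionality issue for $\Theta$ but not the regularity issue.
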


In the range $6\leq g \leq 8$, O'Grady \cite{ogrady} proved strange duality employing the following arguments:

\begin{enumerate}
    \item\label{SD-cond1} We have $h^0(S^{[2]}, L_2-2\delta)=\binom{g-2}{2}$ (as $S\subset \bP^g$ is projectively normal) and $h^0(\cM(w),H)=\chi(\cM(w),H)=\binom{g-2}{2}$ (as $H$ is ample for $g\leq 8$).
    
    \item\label{SD-cond2} Let $f\colon\cM(2,L,2)\dasharrow |I_S(2)|=\bP(H^0(L_2-2\delta))$ be the rational map induced by $\psi_0$ in \eqref{eq:def-relGrass}. Then $f$ is a morphism, and $f^*\cO(1)=H$.

    \item\label{SD-cond3} $Y_0=\im(f)\subset |I_S(2)|$ is a non-degenerate subvariety. Therefore, $f^*$ induces an injection
    \[
    |I_S(2)|^\vee\into |H|
    \]
    which must be an isomorphism, by the equality of dimensions in \eqref{SD-cond1}.

    \item The pullback map $f^*\colon |I_S(2)|^\vee\to|H|$ is  the projectivization of \eqref{SD}.
\end{enumerate}

In the case of an arbitrary $g\geq 6$, we observe that:

\begin{enumerate}
    \item[(1')] The equalities $h^0(S^{[2]}, L_2-2\delta)=\binom{g-2}{2}=h^0(\cM(w),H)$ still hold, as $H$ is big and nef for $g\geq 9$ (see for instance \cite[Example B.3]{debarre}).
    
    \item[(2')] For $g\geq9$ the map $f$ is not regular, with indeterminacy given by the Brill--Noether locus
    \[
    \{E\in\cM(w):\;h^1(E)>0\}
    \]
    which has codimension 5 in $\cM(w)$ (see \cite{markman} for a more detailed description). The argument in \cite[Lemma 5.9]{ogrady} still shows $f^*\cO(1)=H$ in this case.

    \item[(4')] The induced map $f^*\colon |I_S(2)|^\vee\to|H|$ is still the projectivization of \eqref{SD}. This can be argued exactly as in \cite[Section 5.3]{ogrady}, by using \autoref{containmentcrit} (which generalizes Claim~5.17 in \emph{loc.~cit.}).
\end{enumerate}

Therefore, in order to establish \autoref{thm:L-2delta-very-ample}.\eqref{thm:L-2delta-very-ample-2} it suffices to prove \eqref{SD-cond3} for any $g\geq6$, namely that $Y_0\subset |I_S(2)|$ is non-degenerate. Note that the argument in \cite[Lemma 5.11]{ogrady} cannot be applied, as it relies on an explicit description of $|I_S(2)|$ in terms of the Mukai model of $S$.

To circumvent this, we consider a general curve $C\in|L|$, which is Brill--Noether--Petri general by \cite{laz}. Consider the cartesian diagram
\[
\begin{tikzcd}
C \arrow[hookrightarrow,"i"]{d}\arrow[hookrightarrow]{r} & \bP^{g-1}=\bP(H^0(\omega_C)^\vee) \arrow[hookrightarrow]{d} \\
S \arrow[hookrightarrow]{r}                          & \bP^{g}=\bP(H^0(L)^\vee) \
\end{tikzcd}
\]
inducing a natural identification of quadrics $|I_{C/\bP^{g-1}}(2)|=|I_{S}(2)|$. 
According to a theorem of Green \cite{green}, $|I_{C/\bP^{g-1}}(2)|$ is spanned by a ($g-4$)-dimensional subvariety $\mathcal{Z}$ parametrizing certain quadrics of rank $\leq 4$. $\mathcal{Z}$ is birational to the quotient $W^1_{g-1}(C)/\langle \iota\rangle$, where $\iota$ is the involution given by $\iota(A)=\omega_C\otimes A^{-1}$. 

The general element of $\mathcal{Z}$ is given as follows. Fix $A\in W^1_{g-1}(C)$ with $h^0(A)=2$, such that both $A$ and $\omega_C\otimes A^{-1}$ are basepoint-free. Then the Segre quadric (parametrizing decomposable tensors in $\bP\left(H^0(A)^\vee\otimes H^0(\omega_C\otimes A^{-1})^\vee\right)$) induces via the Petri map
\[
m_A:H^0(A)\otimes H^0(\omega_C\otimes A^{-1})\hookrightarrow H^0(\omega_C)
\]
a quadric $Q_A$ of rank $\leq 4$ in $\bP(H^0(\omega_C)^\vee)$ containing the canonical curve $C$.
In coordinates: if $s_1,s_2$ and $t_1,t_2$ denote respective bases of $H^0(A)$ and $H^0(\omega_C\otimes A^{-1})$, then
\[
Q_A=\det\begin{pmatrix}
m_A(s_1\otimes t_1) & m_A(s_1\otimes t_2)  \\ 
m_A(s_2\otimes t_1) & m_A(s_2\otimes t_2) 
\end{pmatrix}\in \Sym^2 H^0(C,\omega_C).
\]

In view of Green's theorem, non-degeneracy of $Y_0$ in $|I_S(2)|$ is implied by the following:

\begin{prop}
    Under the natural identification $|I_{C/\bP^{g-1}}(2)|=|I_{S}(2)|$, there is a closed immersion $\mathcal{Z}\subset Y_0$.
\end{prop}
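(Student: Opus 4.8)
The plan is to realize each of the rank-$\le 4$ quadrics $Q_A$ that span $\mathcal{Z}$ as a quadric of the form $Q_{E,V}$ with $(E,V)\in\cG_0$, by means of the classical Lazarsfeld--Mukai construction. Since $Y_0=\overline{\Sigma_0}$ is closed in $|I_S(2)|$ and, by Green's theorem, $\mathcal{Z}$ is the closure of $\{Q_A\}$ as $A$ ranges over a dense open subset of $W^1_{g-1}(C)$, it is enough to prove $Q_A\in Y_0$ for a general such $A$ and then pass to closures.

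First I would fix $A\in W^1_{g-1}(C)$ with $h^0(A)=2$ such that $A$ and $\omega_C\otimes A^{-1}$ are both basepoint-free, and form the Lazarsfeld--Mukai bundle $E_A$ on $S$, defined as the dual of the kernel of the evaluation $H^0(A)\otimes\cO_S\onto i_*A$. A routine computation yields the exact sequence
\[
0\to H^0(A)^\vee\otimes\cO_S\to E_A\to i_*(\omega_C\otimes A^{-1})\to 0,
\]
from which one reads off $\rk(E_A)=2$, $c_1(E_A)=L$, $v(E_A)=(2,L,2)$, $h^0(E_A)=4$ and $h^1(E_A)=h^2(E_A)=0$. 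Next I would check $\mu$-stability of $E_A$: because $\Pic(S)=\bZ\cdot L$, any destabilizing subsheaf has $c_1$ of the form $tL$; slope considerations exclude $t\le0$, and $t\ge1$ is excluded by $H^0(E_A(-tL))=0$, which follows from the displayed sequence since both $\cO_S(-tL)$ and $(\omega_C\otimes A^{-1})\otimes\cO_C(-tC)$ have no sections for $t\ge1$. Hence $E_A\in\cM(2,L,2)$; taking the (four-dimensional) space $V=H^0(E_A)$, the pair $(E_A,V)$ defines a point of $\cG_0$, so $Q_{E_A,V}=\psi_0(E_A,V)\in Y_0$ with $\psi_0$ as in~\eqref{eq:def-relGrass}.

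The heart of the matter is then to identify $Q_{E_A,V}$, viewed in $|I_S(2)|=|I_{C/\bP^{g-1}}(2)|$, with Green's quadric $Q_A$. For this I would analyse the morphism $\rho_{E_A,V}\colon S\dasharrow\Gr(2,V^\vee)\subset\bP(\bw2 V^\vee)=\bP^5$ along the curve $C$ (which, being general in $|L|$, avoids the finite indeterminacy locus of $\rho_{E_A,V}$). The inclusion $H^0(A)^\vee\otimes\cO_S\hookrightarrow E_A$ exhibits a pencil of global sections of $E_A$ which, restricted to $C$, vanish exactly on the divisors of the pencil $|A|$ --- a standard feature of Lazarsfeld--Mukai bundles; since $|A|$ covers $C$, for every $p\in C$ there is such a section vanishing at $p$. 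By \autoref{containmentcrit} applied pointwise, this forces the Plücker coordinate of $\rho_{E_A,V}(p)$ dual to $\bw2 H^0(A)^\vee$ to vanish identically along $C$, and by the symmetry $E_A\cong E_{\omega_C\otimes A^{-1}}$ (the same symmetry that makes $\mathcal{Z}$ birational to $W^1_{g-1}(C)/\langle\iota\rangle$) the coordinate dual to $\bw2 H^0(\omega_C\otimes A^{-1})^\vee$ vanishes along $C$ as well. Therefore $\rho_{E_A,V}(C)$ lies in the $\bP^3\subset\bP^5$ cut out by these two coordinates, on which the Plücker quadric $\Gr(2,V^\vee)$ restricts to the Segre quadric of $\bP(H^0(A)^\vee)\times\bP(H^0(\omega_C\otimes A^{-1})^\vee)$, and $\rho_{E_A,V}|_C$ agrees there with the product of the two pencil maps. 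Pulling that Segre quadric back through the Petri map $m_A$ gives, by definition, $Q_A$; hence $Q_{E_A,V}|_{\bP^{g-1}}=Q_A$, so $Q_A\in Y_0$, and the closure $\mathcal{Z}$ is contained in $Y_0$.

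\textbf{Main obstacle.} The delicate step is the identification in the previous paragraph: one must keep careful track of the restriction $E_A|_C$, pin down which sections of $E_A$ come from $H^0(A)^\vee$ and where they vanish, and match the resulting map $C\to\bP^1\times\bP^1$ (together with the pulled-back Segre quadric) with the determinantal quadric $Q_A$. One also has to arrange the genericity hypotheses coherently --- a general $C\in|L|$ and a general $A\in W^1_{g-1}(C)$ so that $A$ and $\omega_C\otimes A^{-1}$ are basepoint-free, $m_A$ is injective, $E_A$ is locally free and stable, and $C$ misses the indeterminacy of $\rho_{E_A,V}$ --- which relies on $W^1_{g-1}(C)$ being smooth and irreducible of dimension $g-4\ge1$ for $C$ Brill--Noether--Petri general (valid since $g\ge6$).
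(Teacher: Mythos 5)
Your overall strategy is the same as the paper's: realize $Q_A$ as $\psi_0(E_A,H^0(E_A))$ for the Lazarsfeld--Mukai bundle $E_A$ of a general $A\in W^1_{g-1}(C)$, identify the restriction of that quadric to the canonical hyperplane with Green's determinantal quadric $Q_A$, and conclude by passing to closures. The set-up is correct and matches the paper: $v(E_A)=(2,L,2)$, $h^0(E_A)=4$, stability, and the vanishing along $C$ of the Plücker coordinate dual to $\bw2 H^0(A)^\vee$ (the paper's property that $\bw2 H^0(A)^\vee$ maps under the determinant map to the section of $L$ cutting out $C$).

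There is, however, a genuine flaw in the identification step. The space $H^0(\omega_C\otimes A^{-1})$ is a \emph{quotient} of $V=H^0(E_A)$, not a subspace, so ``the Pl\"ucker coordinate dual to $\bw2 H^0(\omega_C\otimes A^{-1})^\vee$'' only makes sense after choosing lifts $\tilde t_1,\tilde t_2\in H^0(E_A)$ of a basis of $H^0(\omega_C\otimes A^{-1})$; the resulting coordinate $\tilde t_1\wedge\tilde t_2$ has image in $H^0(\omega_C)$ governed by the extension $0\to A\to E_A|_C\to \omega_C\otimes A^{-1}\to 0$ and has no reason to vanish along $C$. The ``symmetry'' you invoke to justify it is also not available: $E_A$ and $E_{\omega_C\otimes A^{-1}}$ are in general \emph{distinct} points of $\cM(2,L,2)$ (each is the dual of the kernel of the evaluation map of the other); what is true is only that the two pairs define the same quadric $Q_{E,H^0(E)}=Q_{F,H^0(F)}$. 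Consequently $\rho_{E_A,V}(C)$ need not lie in the $\bP^3$ you describe, and the matching with the Segre quadric of $\bP(H^0(A)^\vee)\times\bP(H^0(\omega_C\otimes A^{-1})^\vee)$ cannot be carried out that way. The repair is exactly the paper's computation: in the basis $y_0=s_1^*\wedge s_2^*$, $y_1,\dots,y_4=s_i^*\wedge\tilde t_j$, $y_5=\tilde t_1\wedge\tilde t_2$ of $\bw2 V$, the Pl\"ucker quadric is $y_0y_5-y_1y_4+y_2y_3$, and since the composition $\bw2 V\to H^0(L)\to H^0(\omega_C)$ kills $y_0$ and sends $s_i^*\wedge\tilde t_j\mapsto m_A(s_i\otimes t_j)$, the cross-term $y_0y_5$ dies \emph{regardless} of where $y_5$ goes, leaving $-x_1x_4+x_2x_3$, which is proportional to $Q_A$. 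With this correction your argument becomes the paper's proof.
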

\begin{proof}
    We consider the general element of $\mathcal{Z}$, given by $A\in W^1_{g-1}(C)$ as before. Consider the associated Lazarsfeld--Mukai bundle $E$ of $A$ (c.f. \cite{laz,aprodu}) fitting in
    \begin{equation}\label{LMbundle}
        0\to H^0(A)^\vee\otimes\cO_S\to E\to i_*(\omega_C\otimes A^{-1})\to 0.
    \end{equation}
    We have $E\in\cM(2,L,2)$ (indeed, $c_2(E)=\deg(A)=g-1$) is a globally generated vector bundle with $h^0(E)=4$. Moreover:

\begin{enumerate}
    \item[(a)]\label{LM-A} $\bigwedge^2 H^0(A)^\vee$ maps, under the determinant map $\bigwedge^2 H^0(E)\to H^0(L)$, to a section in $H^0(L)$ whose zero locus is $C$.

    \item[(b)]\label{LM-B} Under the natural identification $\bP(H^0(A)^\vee)\cong \bP(H^0(A))$, the zero locus of $s\in H^0(A)^\vee$ (regarded as a global section of $E$ via \eqref{LMbundle}) lies in $C$ and describes the corresponding divisor in the linear system $|A|$.
\end{enumerate}

Similarly, we consider the Lazarsfeld--Mukai bundle $F$ of $\omega_C\otimes A^{-1}$. One immediately checks that $F\in\cM(2,L,2)$, and $F^\vee$ (resp.~$E^\vee$) is the kernel of the evaluation map of global sections of $E$ (resp.~$F$). In particular, we have an equality of quadrics $Q_{E,H^0(E)}=Q_{F,H^0(F)}\in Y_0$. 
Denoting this quadric by $Q_E$, we will check that its hyperplane section $Q_E\cap \bP(H^0(\omega_C)^\vee)$ equals the rank-4 quadric $Q_A$; this is enough to derive the inclusion $\mathcal{Z}\subset Y_0$. 

To this end, we pick bases $s_1,s_2$ of $H^0(A)$, and $t_1,t_2$ of $H^0(\omega_C\otimes A^{-1})$. Moreover, we pick a basis $x_0,x_1,...,x_g$ of $H^0(L)$, in such a way that  $C=S\cap\{x_0=0\}$ and $x_1=m_A(s_1\otimes t_1)$, $x_2=m_A(s_1\otimes t_2)$, $x_3=m_A(s_2\otimes t_1)$, $x_4=m_A(s_2\otimes t_2)$.
 In particular, for the induced basis $x_1,...,x_g$ of $H^0(\omega_C)$, we have that $x_1x_4-x_2x_3\in \Sym^2 H^0(\omega_C)$ is an equation for $Q_A$.

On the other hand, under the exact sequence $0\to H^0(A)^\vee\to H^0(E)\to H^0(\omega_C\otimes A^{-1})\to 0$, we consider the basis $s_1^*,s_2^*,\tilde{t_1},\tilde{t_2}$ of $H^0(E)$, where:
        \begin{itemize}
            \item $s_1^*,s_2^*$ is the basis dual to $s_1,s_2$.
            \item $\tilde{t_i}\in H^0(E)$ is a lift of $t_i\in H^0(\omega_C\otimes A^{-1})$.
        \end{itemize}
We also consider the induced basis $y_0=s_1^*\wedge s_2^*$, $y_1=s_1^*\wedge \tilde{t_1}$, $y_2=s_1^*\wedge \tilde{t_2}$, $y_3=s_2^*\wedge \tilde{t_1}$, $y_4=s_2^*\wedge \tilde{t_2}$, $y_5=\tilde{t_1}\wedge \tilde{t_2}$ of $\bw2H^0(E)$. The natural map
        \begin{equation}\label{natmap-quadric}
            \bw2H^0(E)\to H^0(L)\to H^0(\omega_C)
        \end{equation}
sends $s_1^*\wedge s_2^*$ to $0$ by (a), and $s_i^*\wedge\tilde{t_j}$ to $m_A(s_i\otimes t_j)$ by (b). It follows that the quadric $Q_E\cap \bP(H^0(\omega_C)^\vee)\in\Sym^2 H^0(\omega_C)$, which is obtained from the Plücker quadric $y_0y_5-y_1y_4+y_2y_3\in\Sym^2 \left(\bigwedge^2 H^0(E)\right)$ via \eqref{natmap-quadric}, is given by the equation $-x_1x_4+x_2x_3\in\Sym^2 H^0(\omega_C)$. The proof is complete, as this equation is proportional to the one of $Q_A$.
\end{proof}

\section{Genus 7}
\label{sec:genus-7}

\subsection{Quadrics containing the Mukai model}

Let $(S,L)$ be a polarized $K3$ surface of genus~$7$ with $\Pic(S)=\bZ\cdot L$. We first give a brief description of the Mukai model for the embedding $S\into \bP(H^0(L)^\vee)=\bP^7$, following \cite{mukai-models}.

Let $E$ be the unique stable vector bundle with Mukai vector $v(E)=(5,2L,5)$. The evaluation map of global sections fits in a short exact sequence 
\begin{equation}\label{globgeneration7}
    0\to E^\vee \to H^0(E)\otimes\cO_S\to E\to 0,
\end{equation}
and defines an immersion $S\into \Gr(5, H^0(E)^\vee)$ by the rule $p\mapsto H^0(E\otimes\cI_p)^\perp$.

By \cite[Proposition 3.4]{mukai-models}, the 10-dimensional vector space $H^0(E)^\vee$ is equipped with a canonical (up to scalar) non-degenerate symmetric bilinear form $q$, corresponding to the kernel of the natural map
\[
\Sym^2H^0(E)\to H^0(\Sym^2E).
\]
By construction, $H^0(E\otimes\cI_p)^\perp\subset H^0(E)^\vee$ is an isotropic subspace with respect to $q$ for all $p\in S$.
In this way, $S$ is mapped into $\OGr(5,H^0(E)^\vee)_+$, one of the two connected components of the $10$-dimensional orthogonal Grassmannian parametrizing isotropic subspaces with respect to $q$.

The Plücker line bundle on $\OGr(5,H^0(E)^\vee)_+$ coming from $\OGr(5,H^0(E)^\vee)_+\into \bP(\bw5H^0(E)^\vee)$ is not primitive: it equals twice a very ample line bundle $\cO_{\OGr_+}(1)$ inducing the \emph{spinor embedding}
\[
\OGr(5,H^0(E)^\vee)_+\into \bP^{15}=\bP V_+.
\]
The $K3$ surface $S\into \bP(H^0(L)^\vee)=\bP^7$ is then a $\bP^7$-linear section of $\OGr(5,H^0(E)^\vee)_+$.

For later use, let us give some details on the spinor embedding of $\OGr(5,H^0(E)^\vee)_+$. We follow the notations of \cite[Section 1]{mukai-genus7}. Say $V=H^0(E)^\vee$, and pick  two disjoint isotropic subspaces $U_0,U_\infty\subset V$, so that $V\cong U_0\oplus U_\infty$ and $U_0\cong U_\infty^\vee$. There is a linear map
\[
V\to \mathrm{End}(\bw\bullet U_\infty),\quad v\mapsto \varphi_v
\]
where $\varphi_v$ is defined by wedge (resp.~by derivation) if $v\in U_\infty$ (resp.~if $v\in U_0=U_\infty^\vee$). If we are taking $U_0\in\OGr(5,H^0(E)^\vee)_+$, then the spinor embedding
\[
\OGr(5,H^0(E)^\vee)_+\into \bP^{15}=\bP\left(\bw{\mathrm{even}} U_\infty\right),\quad U\mapsto [s_U]
\]
sends an isotropic subspace $U$ to the unique (up to constant) spinor $s_U$ with the property $\varphi_u(s_U)=0$ for every $u\in U$. For instance, $s_{U_0}=1\in \bw0 U_\infty$. 
Similarly, the other component of the orthogonal Grassmannian admits an embedding
\[
\OGr(5,H^0(E)^\vee)_-\into (\bP^{15})^\vee=\bP\left(\bw{\mathrm{odd}} U_\infty\right).
\]

Regarding $S$ as a codimension $8$ linear section of $\OGr(5,H^0(E)^\vee)_+$, there is an identification of $|I_{S}(2)|$ with $\bP H^0\left(\cI_{\OGr_+/\bP^{15}}(2)\right)$; the latter can be identified with $\bP(H^0(E)^\vee)$ as follows.
Let $\kappa\colon\bw\bullet U_\infty\to\bw5 U_\infty\cong \bC$ denote the projection onto the top part, and consider the bilinear form $\beta$ on $\bw\bullet U_\infty$ defined by the rule 
\[
\beta(\xi,\xi')=(-1)^{\frac{p(p+1)}{2}}\kappa(\xi\wedge\xi'),
\]
for $\xi$ homogeneous of degree $p$. Then for every $v\in V$, the rule $N_v(s)=\beta(s,\varphi_v(s))$ defines a quadratic form $N_v$ on $\bw{\mathrm{even}} U_\infty$, such that $H^0\left(\cI_{\OGr_+/\bP^{15}}(2)\right)=\{N_v\}_{v\in V}$.

\begin{rem}\label{bilformgenus7}
One can check that $\beta(s_1, \varphi_v(s_2))=\beta(\varphi_v(s_1), s_2)$ for all $s_1,s_2\in \bw{\mathrm{even}} U_\infty$ and $v\in V$. This describes the symmetric bilinear form whose associated quadratic form is $N_v$.
\end{rem}

\subsection{The Fourier--Mukai partner}\label{subsec:FMpartner} The geometry of the Mukai model for $S$ is very much related to the Mukai model for the (fine) moduli space $\cM\coloneqq\cM(2,L,3)$, which is another $K3$ surface whose Picard group is generated by a polarization $\hat{L}$ of genus 7. Let us give a brief account; the reader is referred to \cite{iliev-markushevich,kuznetsov} for details.

Let $\cF$ denote a universal rank $2$ bundle on $S\times\cM$; for an appropriate choice (obtained by twisting $\cF$ with the pullback of a line bundle on $\cM$) $\cF$ also becomes the universal family of vector bundles on $\cM$ with Mukai vector $(2,\hat{L},3)$. Given a point $p\in S$, we will denote by $\cF_p$ the corresponding vector bundle on $\cM$.

Denote by $p_S$ and $p_\cM$ the respective projections of $S\times \cM$ onto $S$ and $\cM$. Then the rank $5$ bundle $E$ on $S$ can be recovered as $p_{S*}\cF$, in particular $H^0(E)=H^0(\cF)$. By symmetry $E_\cM\coloneqq p_{\cM*}\cF$ is the stable vector bundle with Mukai vector $(5,2\hat{L},5)$, and the immersion $\cM\into \Gr(5, H^0(E)^\vee)$ factors through the other component $\OGr(5,H^0(E)^\vee)_-$ of the orthogonal Grassmannian.
If we consider the spinor embedding of $\OGr(5,H^0(E)^\vee)_-$ in the dual space $(\bP^{15})^\vee$, then $\cM$ becomes the linear section of $\OGr(5,H^0(E)^\vee)_-$ by the 7-dimensional linear space which is orthogonal to the linear span of $S$ in $\bP^{15}$. 

It is also worth mentioning that $(S^{[2]},L_2-2\delta)\cong(\cM^{[2]},\hat{L}_2-2\hat\delta)$ as polarized varieties. Indeed the isomorphism $S^{[2]}\cong\cM^{[2]}$ can be realized by means of a Fourier--Mukai transform (see \cite[Theorem 7.6]{yoshioka}), and a simple numerical computation shows that the polarization is preserved. Whereas we will provide another geometric interpretation for this isomorphism in \autoref{rem:S2-iso-fm-partner}, let us give some more details on its Fourier--Mukai realization. We denote by
\[
\Phi\coloneqq\Phi_\cF^{S\to\cM}\colon\Db(S)\to\Db(\cM),\quad\hat{\Phi}\coloneqq\Phi_\cF^{\cM\to S}\colon\Db(\cM)\to\Db(S)
\]
the Fourier--Mukai equivalences with kernel $\cF$. The isomorphism $\iota\colon\cM^{[2]}\to S^{[2]}$ is constructed in \cite{yoshioka} by proving that, for a given $\tau\in\cM^{[2]}$, the derived dual $\hat{\Phi}(\cI_\tau)^\vee$ is the ideal sheaf of a length-2 subscheme $\xi$ on $S$; one sets $\iota(\tau)=\xi$.
When $\tau=\{F_1,F_2\}$ is reduced, the short exact sequence
\[
0\to \cI_\tau\to \cO_\cM\to \cO_\tau\to 0
\]
on $\cM$ induces a short exact sequence
\[
0\to \hat{\Phi}(\cO_\tau)^\vee=F_1^\vee\oplus F_2^\vee\to \hat{\Phi}(\cO_\cM)^\vee=E^\vee\to \hat{\Phi}(\cI_\tau)^\vee=\cI_{\iota(\tau)}\to 0
\]
on $S$ (similarly, if $\tau$ is non-reduced and supported at $F\in\cM$, then $\hat{\Phi}(\cO_\tau)^\vee$ is a self-extension of $F^\vee$). In this way, for a given $\xi\in S^{[2]}$ the support of $\iota^{-1}(\xi)$ consists of those vector bundles $F\in\cM$ with $h^1(S,F\otimes\cI_\xi)=1$ (or equivalently, $h^0(S,F\otimes\cI_\xi)=2$); see also \cite[Section~3]{moretti-rojas} for further details.

\begin{rem}\label{rem-FMkernelbundle}
    There is a well-known isomorphism $\cM\overset{\cong}{\to}\cM(3,-L,2)$ mapping any $F\in\cM$ to the unique vector bundle $K_F\in\cM(3,-L,2)$ sitting in an extension
    \[
    0\to F^\vee=\hat{\Phi}(\cO_{[F]})^\vee\to E^\vee=\hat{\Phi}(\cO_\cM)^\vee\to K_F\to 0
    \]
    (see \cite[Lemma~3.1]{moretti-rojas}), hence $K_F=\hat{\Phi}(\cI_{[F]})^\vee$. Moreover, it is easy to check that any $F\in\cM$ is globally generated, and the kernel bundle $\ker(\ev)$ sitting in the short exact sequence
    \[
    0\to \ker(\ev)\to H^0(S,F)\otimes\cO_S\overset{\ev}{\to} F\to 0
    \]
    is stable of Mukai vector $(3,-L,2)$. In particular, since every morphism $\cM\to\cM$ is either constant or the identity, it follows that $\ker(\ev)=K_F$.
\end{rem}

\subsection{Geometry of the linear system \texorpdfstring{$|L_2-2\delta|$}{|L-2δ|}}
Given two distinct points $p,q\in S$, following \cite[Proposition 1.6]{mukai-genus7} the intersection
\[
H^0(E\otimes\cI_p)^\perp\cap H^0(E\otimes\cI_q)^\perp=\left(H^0(E\otimes\cI_p)+H^0(E\otimes\cI_q)\right)^\perp
\]
is odd-dimensional, hence so is the subspace $H^0(E\otimes\cI_{p,q})=H^0(E\otimes\cI_p)\cap H^0(E\otimes\cI_q)$. In particular, $h^0(E\otimes\cI_{p,q})\neq0$. More concretely, we have:

\begin{lem}\label{sectionsE}
The following hold:
\begin{enumerate}
    \item\label{item1:sectionsE} $h^0(E\otimes\cI_\xi)=1$ for every $\xi\in S^{[2]}$.
    \item\label{item2:sectionsE} Given $s\in H^0(E)$, the zero locus $Z(s)$ is either empty or $0$-dimensional of length $\leq 2$.
\end{enumerate}
\end{lem}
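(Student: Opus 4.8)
The plan is to reduce both statements to cohomology of rank-one sheaves on the Fourier--Mukai partner $\cM=\cM(2,L,3)$, exploiting the identity $E=p_{S*}\cF$ from \autoref{subsec:FMpartner}. The basic device: for a finite subscheme $Z\subset S$ set $Z_\cM\coloneqq p_S^{-1}(Z)$; pushing the exact sequence $0\to\cF\otimes p_S^*\cI_Z\to\cF\to\cF|_{Z_\cM}\to 0$ forward by $p_\cM$ and using $R^1p_{\cM*}\cF=0$ (which holds because $h^1(S,F)=0$ for every $[F]\in\cM$, as $\chi(F)=5=h^0(F)$ and $h^2(F)=h^0(F^\vee)=0$ by stability) identifies
\[
\cK_Z\coloneqq p_{\cM*}(\cF\otimes p_S^*\cI_Z)=\ker\bigl(E_\cM\xrightarrow{\ \mathrm{ev}_Z\ }p_{\cM*}(\cF|_{Z_\cM})\bigr),
\]
a torsion-free sheaf on $\cM$ whose fibre-wise behaviour is the evaluation $H^0(S,F)\to H^0(S,F|_Z)$. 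Pushing the same sequence forward by $p_S$, and using $p_{S*}(\cF|_{Z_\cM})\cong E|_Z$, yields $p_{S*}(\cF\otimes p_S^*\cI_Z)\cong E\otimes\cI_Z$, hence
\[
H^0(S,E\otimes\cI_Z)=H^0(S\times\cM,\cF\otimes p_S^*\cI_Z)=H^0(\cM,\cK_Z).
\]

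For part \eqref{item1:sectionsE} I would take $Z=\xi\in S^{[2]}$, so $p_{\cM*}(\cF|_{\xi_\cM})$ has rank $4$ and $K_0$-class $2[\cF_p]$ (because $[\cO_\xi]=2[\cO_p]$ in $K_0(S)$). The cokernel of $\mathrm{ev}_\xi$ is supported on $\{[F]:h^1(F\otimes\cI_\xi)>0\}=\iota^{-1}(\xi)$, which is $0$-dimensional since $\iota\colon\cM^{[2]}\xrightarrow{\sim}S^{[2]}$; therefore the exact sequence gives $v(\cK_\xi)=v(E_\cM)-2v(\cF_p)+(0,0,\ell)=(1,0,\ell-1)$, so $\cK_\xi$ is a rank-one torsion-free sheaf with $c_1(\cK_\xi)=0$. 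As $\Pic(\cM)=\bZ\hat L$, its reflexive hull is $\cO_\cM$, whence $\cK_\xi\hookrightarrow\cO_\cM$ and $h^0(E\otimes\cI_\xi)=h^0(\cM,\cK_\xi)\le h^0(\cM,\cO_\cM)=1$. The reverse inequality holds for reduced $\xi$ by the odd-dimensionality of $H^0(E\otimes\cI_{p,q})$ recalled just before the statement, hence for all $\xi\in S^{[2]}$ by upper semicontinuity of $\xi\mapsto h^0(E\otimes\cI_\xi)$; this proves \eqref{item1:sectionsE}.

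For part \eqref{item2:sectionsE}, note first that $Z(s)$ contains no effective divisor: an effective $D=aL$ ($a\ge 1$) inside $Z(s)$ would produce a sub-line-bundle $\cO_S(aL)\hookrightarrow E$ of slope $12a>\mu(E)=\tfrac{24}{5}$, contradicting stability of $E$. So $Z(s)$ is empty or $0$-dimensional, and it remains to bound its length. If the conclusion failed, $Z(s)$ would contain a subscheme $\eta$ of length $3$, so $0\ne s\in H^0(E\otimes\cI_\eta)=H^0(\cM,\cK_\eta)$. But for $Z=\eta$ the target $p_{\cM*}(\cF|_{\eta_\cM})$ has rank $6$, and $\mathrm{ev}_\eta$ is generically injective because $h^0(F\otimes\cI_\eta)=0$ for general $[F]\in\cM$ — a fact established inside the proof of \autoref{veryampleodd} (for $g=7$, via the isomorphism $S^{[3]}\cong\cM^{[3]}$; see \cite{moretti-rojas,yoshioka}). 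Hence $\cK_\eta$, torsion-free of generic rank $0$, vanishes, contradicting $H^0(\cM,\cK_\eta)\neq 0$. Therefore $Z(s)$ is empty or $0$-dimensional of length $\le 2$.

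The technical heart — and the step I expect to be most delicate — is the bookkeeping around the non-flat twist $\cI_Z$: the identifications $p_{S*}(\cF|_{Z_\cM})\cong E|_Z$ and $p_{S*}(\cF\otimes p_S^*\cI_Z)\cong E\otimes\cI_Z$, the $K_0(\cM)$-computation yielding $c_1(\cK_\xi)=0$, and the verification that $\mathrm{coker}(\mathrm{ev}_\xi)$ (resp.\ the vanishing of $\cK_\eta$) really is controlled by the jump loci as claimed. All of these are routine once set up, but they must be carried out carefully; the only genuinely external input is the Brill--Noether statement "$h^0(F\otimes\cI_\eta)=0$ for general $[F]\in\cM$" used in part \eqref{item2:sectionsE}.
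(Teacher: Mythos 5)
Your proof is correct, but it takes a genuinely different route from the paper's in both halves. For part \eqref{item1:sectionsE} the paper gets the lower bound exactly as you do (Mukai's odd-dimensionality plus semicontinuity), but for the upper bound it simply invokes the uniqueness of the Harder--Narasimhan filtration of $\cI_\xi$ with respect to suitable Bridgeland stability conditions, citing \cite[Section~3.3]{moretti-rojas}; your reduction to a rank-one torsion-free sheaf $\cK_\xi$ with $c_1=0$ on the Fourier--Mukai partner $\cM$ is a self-contained substitute for that citation, at the price of leaning on $\iota\colon\cM^{[2]}\cong S^{[2]}$ to see that the jump locus is finite. (One point to make explicit there: you need $\set{F:h^1(F\otimes\cI_\xi)>0}$, not just $\set{F:h^1(F\otimes\cI_\xi)=1}$, to coincide with $\Supp(\iota^{-1}(\xi))$; this does follow, since $H^1(F\otimes\cI_\xi)\cong\Hom_\cM(\cO_{[F]},\cO_{\iota^{-1}(\xi)})$ under $\hat\Phi$ and Serre duality, but the paper's phrasing only records the locus where $h^1=1$.) For part \eqref{item2:sectionsE} the paper stays entirely on $S$: it rules out divisorial components by stability (as you do, on the dual side) and then bounds the length by showing that the kernel $K$ of $E^\vee\to\cI_{Z(s)}$ would be slope stable (or an extension of two stable rank-2 sheaves) with $v(K)^2<-2$ as soon as $\length Z(s)\ge 3$ --- a purely numerical Mukai-vector argument. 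Your alternative, passing through $\cM$ and using that $h^0(F\otimes\cI_\eta)=0$ for general $F$ and any $\eta\in S^{[3]}$ (the finiteness input from the proof of \autoref{veryampleodd}, resting on $S^{[3]}\cong\cM^{[3]}$), is also valid but imports heavier machinery for a statement the paper dispatches with elementary wall-type arithmetic. Both work; the paper's treatment of \eqref{item2:sectionsE} is more economical, while your method has the merit of handling both parts by one uniform device.
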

\begin{proof}
By the above analysis and semicontinuity, we already know that $h^0(E\otimes\cI_\xi)\geq1$ for every $\xi\in S^{[2]}$. The equality follows from the uniqueness of the Harder--Narasimhan filtration of $\cI_\xi$ with respect to certain Bridgeland stability conditions (see \cite[Section 3.3]{moretti-rojas} for details). This proves \eqref{item1:sectionsE}.

In order to prove \eqref{item2:sectionsE}, first observe that if non-empty, $Z(s)$ must be 0-dimensional; otherwise, the short exact sequence
\[
0\to K\to E^\vee \overset{s}{\to} \cI_{Z(s)}\to 0
\]
would contradict the stability of $E^\vee$. Now assume for the sake of a contradiction that $Z(s)$ has length $m\geq 3$.
We first show that $K$ is slope stable.

Indeed, $K$ is clearly slope semistable (as $E^\vee$ is stable). If it were strictly semistable, then it would be an extension of two slope stable sheaves $K_1,K_2$ of rank 2 and $c_1=-L$; it is easy to see that $m\geq 3$ implies either $v(K_1)^2<-2$ or $v(K_2)^2<-2$, which is impossible.

Therefore, $K$ is slope stable. But then $m\geq 3$ implies $v(K)^2<-2$, contradiction.
\end{proof}

An immediate consequence of \autoref{sectionsE}.\eqref{item1:sectionsE} is the existence of a natural morphism
\[
\phi\colon S^{[2]}\to \bP(H^0(E)),\quad \xi\mapsto H^0(E\otimes\cI_\xi).
\]

\begin{prop}\label{mapgenus7}
    Under the identification of $\bP(H^0(E)^\vee)$ with $|I_S(2)|$, the map $\phi$ is defined by the complete linear system $|L_2-2\delta|$ on $S^{[2]}$.
\end{prop}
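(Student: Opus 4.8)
The goal is to identify the morphism $\phi\colon S^{[2]}\to\bP(H^0(E))$ (sending $\xi$ to $H^0(E\otimes\cI_\xi)$, a well-defined point since $h^0(E\otimes\cI_\xi)=1$ by \autoref{sectionsE}) with the morphism $\varphi\colon S^{[2]}\to|I_S(2)|$ defined by $|L_2-2\delta|$, under the canonical identification $\bP(H^0(E)^\vee)\cong|I_S(2)|=\bP H^0(\cI_{\OGr_+/\bP^{15}}(2))$ coming from the Mukai model. Since both maps land in the appropriate projectivization of (a space isomorphic to) $H^0(S^{[2]},L_2-2\delta)^\vee$, it suffices to show two things: first, that $\phi^*\cO(1)=L_2-2\delta$, so that $\phi$ is given by a (possibly incomplete) subsystem of $|L_2-2\delta|$; and second, that this subsystem is in fact complete, which — once the line bundle is correct — follows from $h^0(S^{[2]},L_2-2\delta)=\binom{5}{2}=10=h^0(\bP(H^0(E)),\cO(1))$, the first equality holding because $S\subset\bP^7$ is projectively normal (\cite{SaintDonat}). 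Thus the crux is to pin down both the line bundle and the identification of linear systems.

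\textbf{Step 1: $\phi^*\cO(1)=L_2-2\delta$.} The morphism $\phi$ is induced by the evaluation $H^0(E)\otimes\cO_{S^{[2]}}\to (E^{[2]})^\vee$ — more precisely, by \autoref{sectionsE}.\eqref{item1:sectionsE} the sheaf $\sHom(E^{[2]},\cO)$ has a distinguished rank-one subsheaf generated fibrewise by the unique (up to scalar) section of $E\otimes\cI_\xi$, giving a quotient line bundle whose associated map to $\bP(H^0(E))$ is $\phi$. I would compute $\det(E^{[2]})$: for a tautological bundle $F^{[2]}=\pi_*\psi^*F$ on $S^{[2]}$ with $F$ of rank $r$ and $c_1(F)=cL$, one has the standard formula $\det(F^{[2]})=(cL)_2-r\delta$ (this is the rank-$r$ analogue of the identification used repeatedly in the paper; it follows from Grothendieck--Riemann--Roch applied to diagram \eqref{eq:universallenght2subscheme}, or by reduction to the case $r=1$ where $\det(L^{[2]})=L_2-\delta$). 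With $v(E)=(5,2L,5)$ this yields $\det(E^{[2]})=(2L)_2-5\delta=2L_2-5\delta$. Then, using that the quotient line bundle $\ell$ satisfies $\ell\hookrightarrow\sHom(E^{[2]},\cO)$ with torsion-free cokernel supported in codimension $\geq 2$ (the locus where $h^0(E\otimes\cI_\xi)>1$, which by \autoref{sectionsE} is empty, or where $Z(s)$ has length exactly $2$), a Chern-class argument comparing $\ell$ with $(\det E^{[2]})^{\vee}$ and the tautological structure shows $\phi^*\cO(1)=\ell=L_2-2\delta$. The cleanest route is probably to observe directly from the geometric meaning that $\phi(\xi)$ records the hyperplane in $|I_S(2)|$ of quadrics through $\langle\xi\rangle$ — which is exactly the description of $\varphi$ — and then only a scalar ambiguity remains.

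\textbf{Step 2: matching the two descriptions.} Here is the geometric heart. The Mukai model realizes $S^{[2]}$-data inside $\OGr(5,H^0(E)^\vee)_+\subset\bP^{15}$, and a point $\xi=\{p,q\}$ determines the isotropic subspace $H^0(E\otimes\cI_p)^\perp\cap H^0(E\otimes\cI_q)^\perp=(H^0(E\otimes\cI_p)+H^0(E\otimes\cI_q))^\perp$, of dimension $9-(\dim H^0(E\otimes\cI_p)+\dim H^0(E\otimes\cI_q)-\dim H^0(E\otimes\cI_{p,q}))=9-(3+3-1)\cdot?$ — I would instead argue invariantly: $H^0(E\otimes\cI_\xi)$ is a line in $H^0(E)$, hence a hyperplane $H^0(E\otimes\cI_\xi)^\perp$ in $H^0(E)^\vee$, i.e.\ a point of $\bP(H^0(E))$. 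On the other side, a quadric $N_v\in H^0(\cI_{\OGr_+/\bP^{15}}(2))$ contains the line $\langle\xi\rangle\subset\bP(H^0(L)^\vee)$ (after restriction to the linear section cutting out $S$) if and only if $v\in H^0(E)$ annihilates the unique section in $H^0(E\otimes\cI_\xi)$ — this is the translation, through the Mukai model, of the rank-$\leq 6$ quadric analysis, and is morally the genus-$7$ incarnation of \autoref{containmentcrit} / \autoref{prop:sigma-closure}. Concretely, using the spinor formalism of the previous subsection and \autoref{bilformgenus7}, a point $\xi$ corresponds to a spinor $s_\xi\in\bw{\mathrm{even}}U_\infty$ with $\varphi_v(s_\xi)$ pairing to zero against $s_\xi$ precisely when $v$ lies in the line $H^0(E\otimes\cI_\xi)$; hence the hyperplane of quadrics $\{v: N_v(s_\xi)=0\}$ in $\bP(H^0(E)^\vee)\cong|I_S(2)|$ is exactly the image point $\phi(\xi)$. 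This identifies $\phi$ with $\varphi$ set-theoretically; combined with Step 1 and the equality of $h^0$'s, both are the complete linear system $|L_2-2\delta|$.

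\textbf{Main obstacle.} The routine part is Step 1 (a Chern-class bookkeeping I would not belabor). The delicate point is Step 2: making rigorous, via the spinor-bundle description, the statement ``$N_v(s_\xi)=0\iff v\in H^0(E\otimes\cI_\xi)$'', i.e.\ that the quadratic form $N_v$ evaluated at the spinor of $\xi$ detects membership in the line $H^0(E\otimes\cI_\xi)$. I expect this to follow from \cite[Proposition 1.6]{mukai-genus7} together with \autoref{bilformgenus7} — the symmetric bilinear form underlying $N_v$ pairs $s_\xi$ with $\varphi_v(s_\xi)$, and $\varphi_v$ annihilates $s_\xi$ iff $v$ lies in the isotropic subspace attached to $\xi$, which (because $\xi$ has length $2$ rather than $1$) is a hyperplane, dual to the line $H^0(E\otimes\cI_\xi)$. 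The bookkeeping of "odd-dimensional" intersections noted before the lemma is what guarantees the relevant spaces have the expected dimensions, so no dimension count fails; the only real work is chasing the pairing through the $U_0\oplus U_\infty$ decomposition.
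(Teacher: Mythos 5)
Your outline follows the same route as the paper's proof: restrict to reduced $\xi=\{p,q\}$, identify $\phi(\xi)$ with the hyperplane $H^0(E\otimes\cI_\xi)^\perp=U_1+U_2\subset H^0(E)^\vee$ (where $U_1=H^0(E\otimes\cI_p)^\perp$, $U_2=H^0(E\otimes\cI_q)^\perp$), show that every quadric $N_v$ with $v\in U_1+U_2$ contains the secant line, and conclude by comparing two hyperplanes and using density of the reduced locus. The problem is that the one step you explicitly defer as your ``main obstacle'' \emph{is} the entire content of the paper's proof. Concretely: for $v=u_1+u_2$ with $u_i\in U_i$, one has $\varphi_{u_1+u_2}(s_{U_1}+s_{U_2})=\varphi_{u_1}(s_{U_2})+\varphi_{u_2}(s_{U_1})$ because $\varphi_{u_i}(s_{U_i})=0$ by the definition of the pure spinor; then $N_{u_1+u_2}(s_{U_1}+s_{U_2})=\beta(s_{U_1}+s_{U_2},\varphi_{u_1}(s_{U_2})+\varphi_{u_2}(s_{U_1}))$ expands into four terms, two of which are $N_{u_2}(s_{U_1})$ and $N_{u_1}(s_{U_2})$ (zero because $\OGr_+$ lies on every $N_v$), and the other two vanish by the symmetry $\beta(s_{U_1},\varphi_{u_1}(s_{U_2}))=\beta(\varphi_{u_1}(s_{U_1}),s_{U_2})=0$ of \autoref{bilformgenus7}. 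You name the right ingredients (the spinor formalism and \autoref{bilformgenus7}) but assert rather than prove the key vanishing, so as written the proposal has a gap exactly where the proof lives. There are also duality slips that would derail the computation if taken literally: the condition for $N_v$ to contain $\langle\xi\rangle$ is $v\in H^0(E\otimes\cI_\xi)^\perp$, a hyperplane in $H^0(E)^\vee$ which is \emph{not} isotropic (it is the span of two isotropic $5$-planes), not ``$v$ in the line $H^0(E\otimes\cI_\xi)$''; and there is no single spinor $s_\xi$ attached to $\xi$ --- the relevant object is the pencil $\lambda s_{U_1}+\mu s_{U_2}$, and one must check vanishing of $N_v$ on the whole pencil, which is exactly the mixed term above. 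Note also that the reverse implication (only quadrics from $U_1+U_2$ contain $\langle\xi\rangle$) needs no separate argument: the quadrics through $\langle\xi\rangle$ form a proper linear subspace of $|I_S(2)|$ (as $S$ is cut out by quadrics and contains no line), and a hyperplane contained in a proper linear subspace equals it.

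Your Step 1 is both unnecessary and incomplete as sketched. Unnecessary, because once $\phi$ agrees pointwise with the map given by $|L_2-2\delta|$ on a dense open set, the two morphisms coincide and the latter is complete by construction; no computation of $\phi^*\cO(1)$ or $h^0$ count is needed. Incomplete, because the evaluation $H^0(E)\otimes\cO_{S^{[2]}}\to E^{[2]}$ is a map of rank-$10$ bundles with everywhere one-dimensional kernel (by \autoref{sectionsE}), so $c_1$ of that kernel is not $-\det(E^{[2]})$ but $-\det(E^{[2]})+c_1(\coker)$; your computation $\det(E^{[2]})=2L_2-5\delta$ gets you to $L_2-2\delta$ only after identifying the cokernel line bundle, which you do not address.
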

\begin{proof}
    Given two distinct subspaces $U_1,U_2\in\OGr(5,H^0(E)^\vee)_+\subset\bP^{15}$, we are going to prove that all quadrics in the subspace $U_1+U_2\subset H^0(E)^\vee=H^0\left(\cI_{\OGr_+/\bP^{15}}(2)\right)$ contain the line $\langle U_1,U_2\rangle\subset \bP^{15}$. This will prove that $\phi$ coincides with the map defined by $|L_2-2\delta|$ on the open subset of reduced subschemes (simply take $U_1=H^0(E\otimes\cI_p)^\perp,U_2=H^0(E\otimes\cI_q)^\perp$ for two distinct points $p,q\in S$), hence both maps must be equal.

    So now we consider $U_1,U_2\in\OGr(5,H^0(E)^\vee)_+$ and the corresponding pure spinors $[s_{U_1}],[s_{U_2}]\in \bP(\bw{\mathrm{even}} U_\infty)$. We want to prove that for every $u=u_1+u_2$ ($u_1\in U_1,u_2\in U_2$), the quadratic form $N_u$ vanishes at all spinors of the form
    \[
    \lambda s_{U_1}+\mu s_{U_2}\in \bw{\mathrm{even}} U_\infty,\quad\lambda,\mu\in\bC.
    \]
    Since $N_u$ vanishes at $s_{U_1}$ and $s_{U_2}$, it suffices to check that $N_u$ vanishes at $s_{U_1}+s_{U_2}$. To this end, observe that
    $\varphi_{u_1+u_2}(s_{U_1}+s_{U_2})=\varphi_{u_1}(s_{U_2})+\varphi_{u_2}(s_{U_1})$, and hence
    \begin{align*}
        N_{u_1+u_2}(s_{U_1}+s_{U_2})=\beta(s_{U_1}+s_{U_2},\varphi_{u_1}(s_{U_2})+\varphi_{u_2}(s_{U_1}))= \\
        =\beta(s_{U_1},\varphi_{u_1}(s_{U_2}))+\beta(s_{U_1},\varphi_{u_2}(s_{U_1}))+\beta(s_{U_2},\varphi_{u_1}(s_{U_2}))+\beta(s_{U_2},\varphi_{u_2}(s_{U_1})).
    \end{align*}

    Note that the second and the third summands vanish (they equal $N_{u_2}(s_{U_1})$ and $N_{u_1}(s_{U_2})$, respectively). Moreover, the first summand (and similarly the fourth one) vanishes, as we have $\beta(s_{U_1},\varphi_{u_1}(s_{U_2}))=\beta(\varphi_{u_1}(s_{U_1}),s_{U_2})=0$ thanks to \autoref{bilformgenus7}.
    \end{proof}

In particular, $\phi$ is a closed immersion by \autoref{thm:L-2delta-very-ample}.\eqref{thm:L-2delta-very-ample-1}. Recalling that there is an isomorphism $H^0(E)\cong H^0(E)^\vee$ provided by the bilinear form $q$, and hence $|I_S(2)|\cong |I_S(2)|^\vee$ canonically, we have a more geometric realization of the embedding $\phi$:

\begin{cor}\label{sing-secantline}
    The embedding $S^{[2]}\into|I_S(2)|$ defined by $L_2-2\delta$ sends a length-$2$ subscheme $\xi$ to the unique quadric $Q_\xi\in|I_S(2)|$ whose singular locus contains the line $\langle\xi\rangle$.
\end{cor}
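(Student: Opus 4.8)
\textbf{Proof plan for \autoref{sing-secantline}.}
The plan is to combine \autoref{mapgenus7} with a careful tracking of the two dualities at play: the self-duality $q\colon H^0(E)\overset{\cong}{\to}H^0(E)^\vee$ of the spinor space, and the natural identification $\bP(H^0(E)^\vee)\cong|I_S(2)|$ coming from $v\mapsto N_v$. By \autoref{mapgenus7}, the embedding $\phi$ sends $\xi\in S^{[2]}$ to the point $H^0(E\otimes\cI_\xi)\in\bP(H^0(E))$, which under $q$ corresponds to a hyperplane $H^0(E\otimes\cI_\xi)^{\perp_q}\subset H^0(E)^\vee$ of quadrics in $|I_S(2)|$. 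I want to show that this hyperplane is precisely the set of quadrics singular along $\langle\xi\rangle$; equivalently, that a quadric $Q=N_v$ is singular along $\langle\xi\rangle$ if and only if $q(v)$ annihilates $H^0(E\otimes\cI_\xi)$, i.e. $v\in q^{-1}(H^0(E\otimes\cI_\xi)^{\perp_q})$. I should first reduce to the reduced case $\xi=\{p,q\}$, where both sides are closed conditions and the general subscheme is reduced, so equality of the two maps $S^{[2]}\to|I_S(2)|$ already follows once checked on the reduced locus (as in the proof of \autoref{mapgenus7}).

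For the reduced case, recall from the Mukai-model description that $p\in S$ corresponds to the isotropic subspace $U_p\coloneqq H^0(E\otimes\cI_p)^\perp\in\OGr(5,H^0(E)^\vee)_+$ with pure spinor $s_{U_p}$, and that the quadric $N_v$ vanishes at $s_{U_p}$ precisely because $\varphi_v(s_{U_p})=0$ whenever $v\in U_p$. The singular locus of $N_v$ in $\bP(\bw{\mathrm{even}}U_\infty)$ is the locus where the polar bilinear form degenerates; using \autoref{bilformgenus7}, the polar form of $N_v$ at a spinor $s$ sends $s'\mapsto\beta(s,\varphi_v(s'))$, so $s_{U_p}$ lies in the singular locus of $N_v$ iff $\beta(s_{U_p},\varphi_v(s'))=0$ for all $s'$, equivalently $\varphi_v(s_{U_p})=0$ (nondegeneracy of $\beta$), equivalently $v\in U_p$ — using that the annihilator of a pure spinor under the Clifford action is exactly its associated maximal isotropic subspace. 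Thus $N_v$ is singular at $[s_{U_p}]$ iff $v\in U_p = H^0(E\otimes\cI_p)^\perp$, and singular along the whole line $\langle[s_{U_p}],[s_{U_q}]\rangle$ iff $v\in U_p\cap U_q=\bigl(H^0(E\otimes\cI_p)+H^0(E\otimes\cI_q)\bigr)^\perp=H^0(E\otimes\cI_{p,q})^\perp$; here I use that the singular locus of a quadric is linear, so singular at two points forces singular along their span. Finally, since $h^0(E\otimes\cI_{p,q})=1$ by \autoref{sectionsE}.\eqref{item1:sectionsE}, the space $H^0(E\otimes\cI_{p,q})^\perp\subset H^0(E)^\vee$ has dimension $9$, so there is a \emph{unique} quadric (up to scalar) singular along $\langle\xi\rangle$, and transporting $H^0(E\otimes\cI_{p,q})\in\bP(H^0(E))$ through $q$ identifies it with exactly the point $\phi(\xi)$ of \autoref{mapgenus7}.

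The step I expect to be the main obstacle is the precise translation between the abstract statement ``$Q_v$ is singular along the line spanned by the two pure spinors'' and the Clifford-algebra computation: one must be careful that the line $\langle\xi\rangle\subset\bP(H^0(L)^\vee)=\bP^7$ cut out on $S$ is the \emph{same} as the line $\langle[s_{U_p}],[s_{U_q}]\rangle$ under the identification of $S$ with a linear section of $\OGr_+\subset\bP^{15}$ — i.e. that the linear embedding $\bP^7\hookrightarrow\bP^{15}$ sends the secant line of $S$ to the secant line of the corresponding spinor points, which is automatic for a linear section but worth stating — and that the singular-locus condition really is the vanishing of the Clifford action, which requires recalling the standard fact that for a pure spinor the kernel of $v\mapsto\varphi_v(s)$ is its defining maximal isotropic. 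Once these identifications are in place, the uniqueness and the matching with $\phi$ are formal, and the proof follows the pattern of \autoref{mapgenus7}.
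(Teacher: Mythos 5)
Your route is genuinely different from the paper's: you compute directly with pure spinors in $\bP^{15}$, whereas the paper simply invokes the isomorphism $E\cong\cN^\vee_{S/\bP^7}(2)$, under which the identification $I_S(2)=H^0(E)^\vee\cong H^0(E)$ becomes the map \eqref{quadrics-conormal}; since the section of $\cN^\vee_{S/\bP^7}(2)$ attached to a quadric $Q$ has zero locus $S\cap\Sing(Q)$, both existence and uniqueness drop out of \autoref{mapgenus7} and $h^0(E\otimes\cI_\xi)=1$ in one line. Your existence half is correct: $N_v$ is singular at $[s_{U_p}]$ iff $\varphi_v(s_{U_p})=0$ iff $v\in U_p$, so $v\in U_p\cap U_q$ gives a quadric singular along the line of spinors, and since $\hat q(U_p)=H^0(E\otimes\cI_p)$ (isotropy of $U_p$) one has $U_p\cap U_q=\hat q^{-1}\bigl(H^0(E\otimes\cI_\xi)\bigr)$, matching $\phi(\xi)$.

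The uniqueness half has two problems. First, a concrete error: $\bigl(H^0(E\otimes\cI_p)+H^0(E\otimes\cI_q)\bigr)^\perp$ is \emph{not} $H^0(E\otimes\cI_{p,q})^\perp$ — the former is the annihilator of a $9$-dimensional subspace of the $10$-dimensional $H^0(E)$, hence $1$-dimensional, while the latter is $9$-dimensional; and it is the $1$-dimensionality, not the "dimension $9$" you invoke, that would give uniqueness (a $9$-dimensional space of such $v$ would mean a $\bP^8$ of quadrics). Second, and more seriously, even after fixing this your computation only controls which $N_v\subset\bP^{15}$ are singular along $\langle[s_{U_p}],[s_{U_q}]\rangle$; the corollary is about the singular locus of the \emph{restriction} $Q_v=N_v|_{\bP^7}$, and $\Sing(N_v|_{\bP^7})$ can strictly contain $\Sing(N_v)\cap\bP^7$ (a point $x\in\bP^7$ is singular for the restriction as soon as $\beta(\varphi_v(s_x),y)=0$ for all $y\in H^0(L)^\vee$, which is weaker than $\varphi_v(s_x)=0$). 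So a quadric $Q_v$ could a priori be singular along $\langle\xi\rangle$ with $v\notin U_p\cap U_q$, and your argument does not exclude this. This is exactly the issue the paper's conormal-bundle argument is designed to bypass: the zero locus of the section of $E$ attached to $Q$ is $S\cap\Sing(Q)$ with $\Sing(Q)$ computed in $\bP^7$, so $\Sing(Q)\supseteq\langle\xi\rangle$ forces that section into the one-dimensional $H^0(E\otimes\cI_\xi)$ of \autoref{sectionsE}. To salvage your approach you would need an extra argument that $\varphi_v(s_{U_p})$ cannot lie in the $8$-dimensional space $\bigl(H^0(L)^\vee\bigr)^{\perp_\beta}\cap\bw{\mathrm{odd}}U_\infty$ unless it vanishes, which is not automatic (that orthogonal space is where the span of the Fourier--Mukai partner $\cM$ lives).
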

\begin{proof}
    There is a well-known isomorphism $E\cong\cN_{S/\bP^7}^\vee(2)$ (see e.g.~\cite[Corollary~4.6]{kuznetsov-2}), so that the natural identification $I_S(2)=H^0(E)^\vee\cong H^0(E)$ corresponds to the natural map \eqref{quadrics-conormal}.
    Then the assertion immediately follows from \autoref{mapgenus7}.
\end{proof}

\begin{rem}
It follows from \autoref{sing-secantline} that the image of $S^{[2]}$ in $|I_S(2)|$ lies in the locus of quadrics of rank $\leq 6$. More precisely, it is contained in the component $Y_1$ of this locus. Indeed, for distinct points $p,q\in S$, the image of $\{p,q\}$ under the isomorphism $\iota^{-1}:S^{[2]}\to\cM^{[2]}$ is supported at those $F_1,F_2\in\cM$ such that $h^0(F_i\otimes \cI_{p,q})=2$. Following the notations in \eqref{eq:def-relGrass} we take the pair $(F_i,V)\in\cG_1$, where $V=H^0(F_i\otimes\cI_p)+H^0(F_i\otimes\cI_q)$; the image of $\bw2 V\to H^0(L)$ is contained in $H^0(L\otimes\cI_{p,q})$, hence $\psi_1(F_i,V)\in Y_1$ is the quadric singular along the line $pq$.

\end{rem}

By \autoref{sectionsE}, we have a stratification
\[
S^{[2]}\into \Gamma\into\bP(H^0(E))
\]
of the space of global sections according to the length of their zero locus (i.e. $\Gamma$ parametrizes global sections with non-empty zero locus). This stratification fits in a commutative diagram
\begin{equation}
\label{eq:diagram-genus7}
\begin{tikzcd}
B \ar[d,"\pi"]\ar[r,hookrightarrow] & \bP_S(E^\vee) \ar[d]\ar[dr]   \\
S^{[2]}\ar[r,hookrightarrow]  & \Gamma\ar[r,hookrightarrow]  & \bP(H^0(E))
\end{tikzcd}
\end{equation}
where:
\begin{itemize}
    \item $\bP_S(E^\vee)\to\bP(H^0(E))$ is the natural morphism defined by \eqref{globgeneration7}. Its image equals $\Gamma$, as \eqref{globgeneration7} identifies canonically $H^0(E\otimes\cI_p)$ with the fiber of $E^\vee$ at $p\in S$.

    \item The left square is cartesian, namely $B$ is the universal length-$2$ subscheme of \eqref{eq:universallenght2subscheme}.
\end{itemize}

Observe that $\bP_S(E^\vee)\to\bP(H^0(E))$ is birational to its image (defines an isomorphism outside $B$) and $\Gamma$ is non-normal, singular along $S^{[2]}$. Furthermore, there is a unique quadric hypersurface $Q\subset \bP(H^0(E))$ containing $\Gamma$, defined by the bilinear form $q$.%
\footnote{More precisely, the map $E^\vee\to H^0(E)\otimes\cO_S$ in \eqref{bilformgenus7} identifies, via the natural isomorphism $H^0(E)\cong H^0(E)^\vee$, with the map $E^\vee\to H^0(E)^\vee\otimes \cO_S$ obtained by dualizing \eqref{bilformgenus7}. Therefore, quadrics containing $\Gamma$ are parametrized by the kernel of $\Sym^2 H^0(E)\to H^0(\Sym^2 E)$.} 

Note that by using the Riemann--Roch polynomial, one can compute that
\[
h^0(S^{[2]},\cO(2))=55=h^0(\bP^9,\cO(2)).
\]
Therefore, the fact that $S^{[2]}$ lies on the quadric $Q$ implies that $S^{[2]}$ is not projectively normal.

\subsection{Degeneracy locus}
The Hilbert square $S^{[2]}$ can be realized as a degeneracy locus in the smooth quadric $Q\subset \bP(H^0(E))$. This will allow us to describe its ideal in detail in the next subsection.

First let us recall the construction of the spinor bundles on even dimensional quadrics.
We refer to \cite{ottaviani} for details.
Let $Q$ be a smooth quadric hypersurface of dimension $2k$,
and let $\OGr_+\coloneqq\OGr(k+1, 2k+2)_+$ be one of the two corresponding orthogonal Grassmannians,
embedded in $\bP^{2^k-1}=\bP V_+$ via the spinor embedding.
Consider the incidence variety
\begin{equation}
\label{eq:incidence-variety}
\begin{tikzcd}
 &I\ar[ld, "p"']\ar[rd, "q"]\coloneqq\setmid{(x,[V])}{x\in \bP(V)}\\
Q&&\OGr_+
\end{tikzcd}
\end{equation}
Here, the map $q$ is simply the projectivization of the rank-$(k+1)$ universal subbundle on $\OGr_+$.
For the map $p$, each fiber $p^{-1}(x)$ is the subvariety $\setmid{[V]\in \OGr_+}{x\in \bP(V)}$,
which spans a linear subspace of affine dimension $2^{k-1}$.
This globalizes to the spinor bundle $\cS_+$ on $Q$, a vector bundle of rank $2^{k-1}$.
We thus view $\cS_+$ as a natural subbundle of the trivial vector bundle $V_+\otimes\cO_Q$, where $V_+\coloneqq H^0(\OGr_+, \cO(1))^\vee$ is the space of spinors.

Going back to our case where $Q$ is an $8$-dimensional quadric. We note that the fibers $p^{-1}(x)$ are isomorphic to the previous orthogonal Grassmannian $\OGr(4,8)$, which in turn is isomorphic to the $6$-dimensional quadric.
In the above picture of the incidence variety, these $6$-dimensional quadrics each span a $\bP^7$ that globalize to the rank-$8$ spinor bundle $\cS_+$.

On the other hand, recall that the $K3$ surface $S$ is determined by the choice of the $8$-dimensional linear subspace
\[
H^0(L)^\vee\subset V_+.
\]
Equivalently, we get a quotient map $V_+\onto W$ where $W$ is also of dimension $8$.
Via composition, we therefore obtain a morphism between two vector bundles of rank $8$
\[
\varphi\colon \cS_+\into V_+\otimes\cO_Q \onto W\otimes\cO_Q.
\]
\begin{prop}
\label{prop:S2-is-D6}
Scheme-theoretically, $S^{[2]}$ coincides with $D_6(\varphi)$.
\end{prop}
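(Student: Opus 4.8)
The plan is to prove the equality first at the level of sets and then at the level of schemes. The set-theoretic statement carries the geometric content and relies on the incidence description of the spinor bundle; the scheme-theoretic refinement will then follow from a codimension count -- which puts us in the range where the Gulliksen--Negård complex is a resolution -- together with a degree comparison.

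I would start by recording the necessary dictionary. For $x\in Q$, regarded as an isotropic line in $H^0(E)$, the incidence variety \eqref{eq:incidence-variety} shows that $(\cS_+)_x\subseteq V_+$ is spanned by the pure spinors $s_V$ with $x\subseteq V$, and that these are all the pure spinors contained in $\bP((\cS_+)_x)\cong\bP^7$; since the quadratic equations cutting out $\OGr_+$ restrict on this $\bP^7$ to the one quadric defining the sub-Grassmannian $\{[V]:x\subseteq V\}\cong\OGr(4,8)_+$, in fact $\bP((\cS_+)_x)\cap\OGr_+=\{[V]:x\subseteq V\}=:\Pi_x$ scheme-theoretically, a quadric hypersurface inside $\bP((\cS_+)_x)$. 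Since $W\otimes\cO_Q$ is the constant quotient $V_+/H^0(L)^\vee$, one has $\ker\varphi_x=(\cS_+)_x\cap H^0(L)^\vee$, so $x\in D_6(\varphi)$ if and only if $\dim\bigl((\cS_+)_x\cap H^0(L)^\vee\bigr)\ge2$. Finally, by \autoref{mapgenus7} and \autoref{thm:L-2delta-very-ample}.\eqref{thm:L-2delta-very-ample-1} the map $\phi\colon S^{[2]}\hookrightarrow Q$ is the closed immersion $\xi\mapsto H^0(E\otimes\cI_\xi)$ (the target being one-dimensional by \autoref{sectionsE}.\eqref{item1:sectionsE}), and the embedding $S\hookrightarrow\OGr_+$ takes $p$ to $[V_p]$ with $V_p=H^0(E\otimes\cI_p)$, compatibly with \eqref{globgeneration7} in the sense that the universal subbundle of $\OGr_+$ restricts along $S$ to $E^\vee\subseteq H^0(E)\otimes\cO_S$; in particular $\phi(\xi)=V_p\cap V_q$ whenever $\xi=\{p,q\}$ is reduced.

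Granting this, $\phi(S^{[2]})\subseteq D_6(\varphi)$ holds on the reduced locus: for $\xi=\{p,q\}$ the line $\phi(\xi)=V_p\cap V_q$ lies in $V_p$ and in $V_q$, so $s_{V_p},s_{V_q}\in(\cS_+)_{\phi(\xi)}$; since $[V_p],[V_q]\in S=\OGr_+\cap\bP(H^0(L)^\vee)$ these spinors also lie in $H^0(L)^\vee$, and they are independent because $\phi$ is injective, so $\dim\ker\varphi_{\phi(\xi)}\ge2$. The inclusion extends to all of $S^{[2]}$ since $D_6(\varphi)$ is closed and the reduced $\xi$ are dense. Conversely, for $x\in D_6(\varphi)$ the linear space $\bP(\ker\varphi_x)\subseteq\bP((\cS_+)_x)$ has dimension $\ge1$, hence meets the quadric $\Pi_x$ in a subscheme $Z$ of length $\ge2$, with $Z\subseteq\bP(H^0(L)^\vee)\cap\OGr_+=S$. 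For any length-$2$ subscheme $\xi\subseteq Z$, the inclusion $\xi\subseteq\Pi_x$ says -- using that the universal subbundle of $\OGr_+$ restricts to $E^\vee$ along $S$, via \eqref{globgeneration7} -- that the section of $E$ spanning the line $x$ vanishes on $\xi$, i.e. $x\subseteq H^0(E\otimes\cI_\xi)$; as the latter is a line, $x=\phi(\xi)$. Thus $D_6(\varphi)_{\mathrm{red}}=\phi(S^{[2]})$. (One also gets $D_5(\varphi)=\emptyset$: for $x\in D_5(\varphi)$ the space $\bP(\ker\varphi_x)$ has dimension $\ge2$, so $Z=\bP(\ker\varphi_x)\cap\Pi_x$ is positive-dimensional and lies in a linear subspace of $S$, forcing $S$ to contain a plane or a conic, impossible for a genus-$7$ $K3$ surface of Picard rank one.)

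It remains to promote this to a scheme-theoretic equality. Since $D_6(\varphi)_{\mathrm{red}}=\phi(S^{[2]})\cong S^{[2]}$ is integral of dimension $4$, the degeneracy locus has codimension $4=(8-6)^2$ in $Q$, hence is Cohen--Macaulay of pure dimension $4$ with no embedded components and is resolved by the Gulliksen--Negård complex \cite{gulliksen-negard}. The closed immersion $S^{[2]}\hookrightarrow D_6(\varphi)$ is then an isomorphism provided the two subschemes of $\bP^9$ have equal Hilbert polynomials, for which it suffices to match their degrees. By Fujiki's relation $\deg(S^{[2]},L_2-2\delta)=3\,q(L_2-2\delta)^2=48$; on the other hand the Thom--Porteous formula gives $[D_6(\varphi)]=s_2(\cS_+)^2-s_1(\cS_+)\,s_3(\cS_+)$ in $A^4(Q)$, and a Chern-class computation for the rank-$8$ spinor bundle on the $8$-dimensional quadric (with $c_1(\cS_+)=-4h$, $h=c_1(\cO_Q(1))$) yields $\int_Q[D_6(\varphi)]\cdot h^4=48$ as well. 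Hence $D_6(\varphi)=S^{[2]}$ scheme-theoretically. The conceptual core of the argument is the set-theoretic identification, which boils down to the elementary fact that a line in $\ker\varphi_x$ meets the quadric $\Pi_x$; the step I expect to be most delicate is the last one -- confirming that the incidence description of $\cS_+$ and of $\Pi_x$ is valid scheme-theoretically, and carrying out the Chern-class bookkeeping for the spinor bundle to nail down the degree $48$.
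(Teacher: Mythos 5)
Your proposal is correct and follows essentially the same route as the paper: the set-theoretic inclusion via the two pure spinors $s_{V_p},s_{V_q}$ lying in $(\cS_+)_{\phi(\xi)}\cap H^0(L)^\vee$, the converse via the (length-two) intersection of $\bP(\ker\varphi_x)$ with the quadric of pure spinors in $\bP((\cS_+)_x)$, the exclusion of $D_5(\varphi)$ using the absence of conics on a Picard-rank-one $K3$, and finally Cohen--Macaulayness of the expected-codimension determinantal locus plus a degree comparison via Porteous. The only cosmetic difference is that you make the degree $48$ explicit via the Fujiki relation, which the paper leaves as "a Chern class computation".
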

\begin{proof}
By the description of the map $\phi\colon S^{[2]} \to Q \subset \bP^9=\bP(H^0(E))$,
if we take a reduced subscheme $\xi\coloneqq\{x,y\}\in S^{[2]}$,
the corresponding point $\phi(\xi)\in Q$ will be contained in two different isotropic subspaces $V_x=H^0(E\otimes\cI_x)$ and $V_y=H^0(E\otimes \cI_y)$,
whose classes $[V_x]$ and $[V_y]$ both lie on the $K3$ surface $S$.
Therefore, we have
\[
\dim \cS_{+,\xi}\cap H^0(L)^\vee \ge 2,\quad \rk \varphi_\xi \le 6.
\]
and $S^{[2]}$ must be contained in the degeneracy locus $D_6(\varphi)$.

Conversely, if $x\in Q$ satisfies $\rk\varphi_x = 6$,
then the scheme-theoretic intersection $\bP(\cS_{+,x})\cap S$ contains a subscheme of length  $\ge 2$ (as $p^{-1}(x)\subset \bP(\cS_{+,x})$ is a non-degenerate hypersurface); it must be of length exactly 2, since there are no trisecant lines to $S$. From this it follows that $p^{-1}(x)\subset \bP(\cS_{+,x})$ has degree 2. If $\rk\varphi_x \leq 5$, then $\bP(\cS_{+,x})\cap S$ is at least one-dimensional and contains a conic, which is impossible under the assumption $\Pic(S)=\bZ\cdot L$. Thus $S^{[2]}$ and $D_6(\varphi)$ are equal set-theoretically.

Now since $D_6(\varphi)$ is determinantal and of the expected codimension $4$, it is Cohen--Macaulay \cite[Proposition~4.1]{ACGH}.
In other words, it has no embedded component.
It suffices to show that $D_6(\varphi)$ has the same degree as $S^{[2]}$, which follows from a Chern class computation using the Porteous' formula [\loccit, Proposition~4.1]
(the Chern classes of the spinor bundle $\cS_+$ can be found in \cite[Remark~2.9]{ottaviani}).
\end{proof}

Thus, when we consider the restriction of the map $\varphi$ to $S^{[2]}$, we obtain two vector bundles of rank $2$, namely, the kernel bundle $\cE_1\coloneqq\ker(\varphi|_{S^{[2]}})$ and the cokernel bundle $\cE_2\coloneqq\coker(\varphi|_{S^{[2]}})$.

\begin{prop}
\label{prop:normal-bundle}
The normal bundle $\cN_{S^{[2]}/Q}$ admits the following decomposition
\[
\cN_{S^{[2]}/Q}\cong \cE_1^\vee\otimes \cE_2.
\]
\end{prop}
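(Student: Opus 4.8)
The statement is a standard fact about degeneracy loci, and the strategy is to resolve it entirely from the description $S^{[2]} = D_6(\varphi)$ for the generic rank-drop morphism $\varphi\colon \cS_+\to W\otimes\cO_Q$ of rank-$8$ bundles on the quadric $Q$, together with the two bundles $\cE_1 = \ker(\varphi|_{S^{[2]}})$ and $\cE_2 = \coker(\varphi|_{S^{[2]}})$ of rank $2$. The key point is the classical identification of the conormal bundle of an expected-codimension degeneracy locus. First I would recall that, since $D_6(\varphi)$ has the expected codimension $4 = (8-6)^2$ and $S^{[2]}$ is smooth (as an abstract hyperkähler fourfold), the restriction $\varphi|_{S^{[2]}}$ has constant rank $6$, so $\cE_1$ and $\cE_2$ are honest vector bundles; moreover the locus $D_5(\varphi)$ does not meet $S^{[2]}$, which is exactly the smoothness/transversality input guaranteeing the next step.

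**Key steps.** The heart of the argument is the description of the conormal bundle $\cN^\vee_{S^{[2]}/Q}$ for a generic determinantal locus of corank $2$. For a morphism $\varphi\colon A\to B$ of vector bundles on a smooth variety $M$, with $D = D_r(\varphi)$ smooth of expected codimension $(\rk A - r)(\rk B - r)$ and $\varphi|_D$ of constant rank $r$, one has the canonical identification
\[
\cN^\vee_{D/M} \cong \sHom\bigl(\coker(\varphi|_D),\, \ker(\varphi|_D)\bigr) = \cE_2^\vee\otimes\cE_1,
\]
which I would extract from the local structure of the determinantal ideal (the ideal is generated by the entries of the "reduced" $(\rk A - r)\times(\rk B - r)$ matrix block once $\varphi$ is put in the normal form with an identity block of size $r$, and its differential along $D$ is precisely the pairing $\coker\otimes\ker \to \cN_{D/M}$); this is the computation carried out for instance in the context of the Gulliksen--Negård complex, or in the second fundamental form description of degeneracy loci. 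Dualizing gives $\cN_{S^{[2]}/Q} \cong \cE_2\otimes\cE_1^\vee = \cE_1^\vee\otimes\cE_2$, which is the claim.

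**Alternative route and the main obstacle.** If one prefers a more self-contained derivation, I would instead argue via the three-term complex: on a neighborhood of $S^{[2]}$ in $Q$ one may trivialize the identity block of $\varphi$ and is reduced to a morphism of rank-$2$ bundles, whose vanishing (of all $1\times 1$, i.e. all entries, for corank $2$) cuts out $S^{[2]}$; transversality of this section of $\cHom(\cS_+', (W\otimes\cO_Q)')$ — where the primes denote the rank-$2$ "complementary" pieces, which restrict to $\cE_1$ and $\cE_2^\vee{}^\vee$ appropriately — together with the identification of the restricted bundles with $\cE_1$, $\cE_2$ along $S^{[2]}$, yields $\cN_{S^{[2]}/Q} \cong (\cS_+')^\vee\otimes (W')|_{S^{[2]}} = \cE_1^\vee\otimes\cE_2$. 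The one genuine subtlety — the part I expect to require care rather than routine bookkeeping — is verifying that the restrictions to $S^{[2]}$ of the complementary rank-$2$ sub/quotient bundles appearing in the local normal form agree canonically with $\cE_1 = \ker(\varphi|_{S^{[2]}})$ and $\cE_2 = \coker(\varphi|_{S^{[2]}})$ (and not merely abstractly isomorphic bundles), so that the identification is basis-independent and globalizes; this follows from chasing the exact sequence $0\to\cE_1\to\cS_+|_{S^{[2]}}\xrightarrow{\varphi}W\otimes\cO_{S^{[2]}}\to\cE_2\to 0$ against the local trivialization, but it is the step where one must be attentive. Everything else — constancy of rank, Cohen--Macaulayness and hence reducedness already recorded in \autoref{prop:S2-is-D6}, and the expected-codimension count — is in place from the preceding discussion.
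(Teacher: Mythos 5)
Your proof is correct, but it takes a different route from the paper. You invoke the classical local description of the conormal bundle of a corank-$2$ determinantal locus of expected codimension: putting $\varphi$ in normal form with an identity block, the ideal is generated by the entries of the residual $2\times2$ block, and its differential along $D_6(\varphi)$ is the tautological pairing, giving $\cN^\vee_{S^{[2]}/Q}\cong\sHom(\cE_2,\cE_1)$ and hence the claim after dualizing. The paper instead argues globally: it factors $S^{[2]}\into Q$ through the Grassmannian bundle $F=\Gr_Q(2,\cS_+)$ via $\xi\mapsto(\xi,\ker\varphi_\xi)$, realizes $S^{[2]}$ as the zero locus of the section of $\cU^\vee\otimes W$ induced by $\varphi$, and then reads off $\cN_{S^{[2]}/Q}\cong\cE_1^\vee\otimes\cE_2$ from the relative tangent sequence $0\to\cT_{F/Q}|_{S^{[2]}}\to\cU^\vee|_{S^{[2]}}\otimes W\to\cN_{S^{[2]}/Q}\to0$ together with $\cT_{F/Q}\cong\cU^\vee\otimes\cQ$ and the identifications $\cU|_{S^{[2]}}=\cE_1$, $\cQ|_{S^{[2]}}=\im(\varphi|_{S^{[2]}})$. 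The paper's construction buys exactly the point you flag as the ``one genuine subtlety'': since the identification is made through universal bundles on $F$, it is canonical and globalizes automatically, with no need to check that local normal forms patch; your route is more elementary and self-contained but puts the burden on that verification. One small imprecision on your side: constancy of $\rk\varphi$ along $S^{[2]}$ (equivalently $D_5(\varphi)\cap S^{[2]}=\emptyset$) does not follow formally from smoothness plus expected codimension; it is the separate geometric fact, established in the proof of \autoref{prop:S2-is-D6}, that a point of rank $\le5$ would force a conic on $S$, contradicting $\Pic(S)=\bZ\cdot L$. Since you cite that proposition, this is a matter of attribution rather than a gap.
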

\begin{proof}
We provide a brief explanation for this fact:
the description of $S^{[2]}$ as the degeneracy locus $D_6(\varphi)$ implies that we can factor the inclusion $S^{[2]}\into Q$ as
\[
\begin{tikzcd}
S^{[2]} \ar[r]\ar[dr] & F\coloneqq \Gr_Q(2, \cS_+) \ar[d] \\
                & Q
\end{tikzcd}
\]
where the morphism $S^{[2]}\to F$ sends a point $\xi\in S^{[2]}$ to the pair $(\xi,\ker(\varphi_\xi))\in F$.
The map $\varphi$ (regarded as a global section of $\cS_+^\vee\otimes W$ on $Q$) induces a global section of the vector bundle ${\cU^\vee}\otimes W$ on $F$, where $\cU$ is the rank-$2$ universal subbundle; notably, $S^{[2]}$ is exactly the zero locus of this induced global section.
This gives us a short exact sequence
\begin{equation}
\label{eq:relative-normal}
0\to \cT_{F/Q}|_{S^{[2]}} \to \cN_{S^{[2]}/F} \to \cN_{S^{[2]}/Q} \to 0,
\end{equation}
where $\cN_{S^{[2]}/F}\cong {\cU^\vee}|_{S^{[2]}}\otimes W$.
Note that $\cT_{F/Q}\cong \cU^\vee\otimes \cQ$, where $\cU$ and $\cQ$ are the rank-$2$ universal subbundle and the rank-$6$ universal quotient bundle on $F$, respectively.
By the degeneracy locus description, since $\cU|_{S^{[2]}}$ is the rank-$2$ kernel bundle $\cE_1$ by construction, the quotient $\cQ|_{S^{[2]}}\cong \cS_+/\cU$ can be identified with the image of $\varphi$, thus we may describe the rank-$2$ cokernel bundle $\cE_2$ as $(W\otimes\cO_{S^{[2]}})/\cQ|_X$. Now from the exact sequence \eqref{eq:relative-normal} we read that $\cN_{S^{[2]}/Q}\cong \cE_1^\vee\otimes \cE_2$.
\end{proof}

From the geometric interpretation of the map $\varphi$, the projectivized kernel bundle $\bP (\cE_1)$ is the \emph{universal secant line} to the $K3$ surface $S$.
It follows that $\cE_1$ equals the pullback to $S^{[2]}$ of the universal subbundle on $\Gr(2,H^0(L)^\vee)$ under the natural map $S^{[2]}\into \Gr(2,H^0(L)^\vee)$.
In other words, $\cE_1^\vee$ is the tautological rank-$2$ bundle $L^{[2]}$ induced by $L$ and it has first Chern class $L_2-2\delta$.
Thus the rank-$2$ cokernel bundle $\cE_2$ has first Chern class $3L_2-7\delta$.

\begin{rem}
\label{rem:S2-iso-fm-partner}
The degeneracy locus construction also gives a geometric interpretation of the isomorphism between $S^{[2]}$ and $\cM^{[2]}$: a point $x$ on $S^{[2]}$ is the same as $\bP(\ker\varphi_x)$ which is a secant line to $S$. It naturally has a $2$-dimensional cokernel which is a line in the dual $\bP W^\vee$ where the Fourier--Mukai partner $\cM$ lives.
Thus we obtain a secant line to $\cM$ hence a point on $\cM^{[2]}$.
By symmetry, we see that the two bundles are interchanged on $\cM^{[2]}$, so the class $L_2-2\delta$ is mapped to $\hat L_2-2\hat \delta$.
\end{rem}

\subsection{(Failure of) projective normality}
Since $S^{[2]}$ can be realized as a degeneracy locus, we can study its ideal using the general machinery of determinantal ideals.
Notably, we have the Lascoux resolution which provides a minimal free resolution.
In the particular case of the ideal generated by $(n-1)$-minors of an $n\times n$-matrix, the Lascoux resolution specializes to the \emph{Gulliksen--Negård complex} (\cite{gulliksen-negard}, see also \cite[6.1.8]{weyman}).
\begin{thm}[Gulliksen--Negård complex]
Let $\varphi\colon \cF \to \cG$ be a morphism between two vector bundles of rank $r$ on a variety $X$.
Let $D\coloneqq D_{r-2}(\varphi)$ be the degeneracy locus of rank $r-2$.
There exists the following four-term locally-free resolution for the ideal sheaf of $D$ in $X$
\begin{equation}
\label{eq:GN-complex}
\begin{aligned}
0\to
&(\det\cF)^{\otimes2} \otimes (\det\cG^\vee)^{\otimes2}\to
(\cF\otimes\det\cF)\otimes (\cG^\vee\otimes\det\cG^\vee)\to\\
&(\det\cF\otimes\Lambda \cG^\vee)\oplus (\Lambda\cF\otimes\det \cG^\vee)\to
\bw {r-1}\cF\otimes\bw {r-1} \cG^\vee\to
\cI_{D/X},
\end{aligned}
\end{equation}
where $\Lambda \cF$ and $\Lambda\cG$ are the kernel bundles
\[
\Lambda \cF\coloneqq \ker\left[\cF\otimes\bw{r-1}\cF\to \det\cF\right],\quad
\Lambda \cG\coloneqq \ker\left[\cG\otimes\bw{r-1}\cG\to \det\cG\right].
\]
\end{thm}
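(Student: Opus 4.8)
This is the $(r-1)$-minors specialization of Lascoux's resolution of generic determinantal ideals, and a full proof can be found in \cite[6.1.8]{weyman}; below I sketch the strategy I would follow. The plan has two stages: first establish the resolution in the universal situation, and then transfer it to an arbitrary $X$ by generic perfection, the universal case being handled by Kempf's geometric technique on a Grassmann bundle.

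\emph{First step: reduction to the generic case.} I would begin by observing that the complex \eqref{eq:GN-complex} together with its differentials is functorial in $(X,\varphi)$ and compatible with base change, so that its exactness is a local question on $X$. Over an affine open where $\cF$ and $\cG$ are trivialized, $\varphi$ is an $r\times r$ matrix of regular functions, hence is pulled back along a morphism $X\to X_0\coloneqq\Spec\bZ[x_{ij}]$ from the generic matrix $\varphi_0$, and \eqref{eq:GN-complex} is the pullback of the analogous complex for $\varphi_0$. Granting that the latter resolves $\cO_{D_0}$ over $X_0$ --- so that $\cO_{D_0}$ is a perfect module of grade $4$, $D_0=D_{r-2}(\varphi_0)$ being classically Cohen--Macaulay of codimension $4$ --- the Hochster--Eagon principle of generic perfection shows that the base change stays perfect with resolution the pullback of \eqref{eq:GN-complex}, as soon as the $(r-1)$-minor ideal still has grade $\ge 4$ on $X$, that is, whenever $D_{r-2}(\varphi)$ has the expected codimension $4$. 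This codimension hypothesis is satisfied in all situations considered in this paper (cf.\ \autoref{prop:S2-is-D6}).

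\emph{Second step: the universal case via a Grassmann bundle.} Over $X_0$ I would form $\pi\colon Y\coloneqq\Gr_{X_0}(2,\cF)\to X_0$ with tautological sequence $0\to\cR\to\pi^*\cF\to\cQ\to 0$, $\rk\cR=2$, and consider the section of the rank-$2r$ bundle $\cR^\vee\otimes\pi^*\cG$ given by the composite $\cR\hookrightarrow\pi^*\cF\xrightarrow{\pi^*\varphi}\pi^*\cG$. Its zero scheme $Z$ maps properly onto $D_0$; since $D_0$ has the expected codimension, $Z$ has codimension $2r$ in $Y$, the Koszul complex $\bw\bullet(\cR\otimes\pi^*\cG^\vee)$ resolves $\cO_Z$, and $\pi|_Z\colon Z\to D_0$ is birational (an isomorphism over the locus $\rk\varphi=r-2$, where $\ker\varphi$ is exactly $2$-dimensional). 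Using that generic determinantal varieties have rational singularities (Kempf), one gets $R\pi_*\cO_Z=\cO_{D_0}$, and pushing the Koszul resolution down along $\pi$ via Weyman's machine produces a finite free resolution of $\cO_{D_0}$ over $X_0$. Its terms I would read off from Cauchy's formula $\bw k(\cR\otimes\pi^*\cG^\vee)=\bigoplus_{\lambda}S_\lambda\cR\otimes S_{\lambda'}\pi^*\cG^\vee$ (the sum over partitions $\lambda$ with at most two rows and $\lambda_1\le r$) together with the Borel--Weil--Bott computation of the cohomology of the $S_\lambda\cR$ along the $\Gr(2,r)$-fibers: only a short explicit list of two-row partitions $\lambda$ contributes, each in a single cohomological degree, and one then checks that the resulting complex is exactly \eqref{eq:GN-complex} --- the induced resolution of $\cI_{D_0}$ --- after the standard identifications $\bw{r-1}\cF\cong\cF^\vee\otimes\det\cF$, $\Lambda\cF\cong\mathfrak{sl}(\cF)\otimes\det\cF$, and their analogues for $\cG^\vee$. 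Minimality is then automatic, the whole construction being $\GL(\cF)\times\GL(\cG)$-equivariant with no invariant scalars in the relevant graded pieces.

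The geometric inputs --- that $D_0$ is Cohen--Macaulay of the expected codimension, has rational singularities, and is resolved by $Z$ --- are classical, and the descent in the first step is the standard generic-perfection mechanism; the main obstacle is the bookkeeping in the second step, namely pinning down which Schur functors $S_\lambda\cR$ have nonvanishing higher direct images along the Grassmann bundle, in which degrees, and then matching the resulting $\GL\times\GL$-representations term by term with \eqref{eq:GN-complex}. This is finite but delicate, and is carried out in full in \cite{gulliksen-negard} and \cite[6.1.8]{weyman}.
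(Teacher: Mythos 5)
The paper offers no proof of this statement: it is quoted as a classical result from \cite{gulliksen-negard} (see also \cite[6.1.8]{weyman}), so there is no internal argument to measure yours against. Your two-stage sketch --- descent from the generic matrix via Hochster--Eagon generic perfection, and the generic case via Kempf's geometric technique on the Grassmann bundle $\Gr(2,\cF)$ with the Cauchy-formula/Bott bookkeeping and the identifications $\bw{r-1}\cF\cong\cF^\vee\otimes\det\cF$, $\Lambda\cF\cong\mathfrak{sl}(\cF)\otimes\det\cF$ --- is precisely the route of the cited reference, and the outline is sound. One point you raise is worth keeping explicit: exactness of \eqref{eq:GN-complex} requires that $D_{r-2}(\varphi)$ have the expected codimension $4$ (equivalently, that the ideal of $(r-1)$-minors have grade $\ge 4$); the theorem as stated suppresses this hypothesis, but you correctly identify it as the condition under which generic perfection transfers the resolution from the generic situation, and it is indeed verified in the paper's applications (\autoref{prop:S2-is-D6} and \autoref{prop:S2-is-D4}).
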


We recall the following properties of spinor bundles \cite{ottaviani}.
\begin{prop}
\label{prop:spinor-properties}
Let $\cS$ be a spinor bundle on a smooth quadric $Q$ of dimension $n$.
\begin{enumerate}
\item\label{prop:spinor-properties-1}
 When $n$ is odd or a multiple of $4$, we have $\cS^\vee\cong \cS(1)$;
otherwise, $\cS_+^\vee\cong \cS_-(1)$.
\item For $n\ge 3$, $\det \cS \cong\cO(-2^{[(n-3)/2]})$.
\item For all $d\in\bZ$ and $0 < i < n$, $H^i(Q,\cS(d))=0$.
\item\label{prop:spinor-properties-4} For all $d\le 0$, $H^0(Q,\cS(d))=0$.
\end{enumerate}

\end{prop}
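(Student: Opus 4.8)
These are classical facts, all contained in Ottaviani's paper \cite{ottaviani}; I would reprove them by organizing everything around two structural inputs together with an induction on $n$. The first input is the \emph{defining short exact sequence}: by the construction of $\cS$ from the universal subbundle on $\OGr_+$ via the incidence variety \eqref{eq:incidence-variety}, each spinor bundle sits inside a trivial bundle as
\[
0\to \cS\to \cO_Q^{\oplus N}\to \cS'(1)\to 0,\qquad N=2^{\lceil n/2\rceil},
\]
where $\cS'=\cS$ when $n$ is odd and $\cS'$ is the \emph{other} spinor bundle when $n$ is even. The second input is the behaviour under a smooth hyperplane section $Q_{n-1}\subset Q_n$: one has $\cS|_{Q_{n-1}}\cong\cS_+\oplus\cS_-$ when $n$ is odd and $\cS_\pm|_{Q_{n-1}}\cong\cS$ when $n$ is even. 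As base cases I would take $Q_1\cong\bP^1$, $Q_2\cong\bP^1\times\bP^1$ (where $\cS_+\cong\cO(-1,0)$ and $\cS_-\cong\cO(0,-1)$) and $Q_3$, on which every assertion is an immediate check.

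Item (2) follows at once from the defining sequence by taking determinants: for $n$ odd one reads off $2c_1(\cS)+2^{(n-1)/2}h=0$, and for $n=2k$ one uses in addition that $c_1(\cS_+)=c_1(\cS_-)$ (the two spinor bundles being interchanged by an automorphism of $Q$) to get $2c_1(\cS_\pm)+2^{k-1}h=0$; in either case $\det\cS\cong\cO(-2^{[(n-3)/2]})$. For item (1), dualizing the defining sequence and comparing it with the sequence for $Q_n$ one twist lower shows that $\cS^\vee$ is a twist of a spinor bundle; whether the two components are swapped---a condition periodic in $n\bmod 4$---is pinned down on the base cases $Q_2$ (swap, $n\equiv 2$) and $Q_4\cong\Gr(2,4)$ (no swap, $n\equiv 0$) and then propagated through the restriction formulas. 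The cleanest conceptual explanation is that the swap records whether the invariant bilinear form on the spin representation of $\mathrm{Spin}(n+2)$ pairs each half-spin summand with itself or with the other one.

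Items (3) and (4) together say that $\cS$ is an arithmetically Cohen--Macaulay bundle on $Q$. The hands-on proof is an induction on $n$ using the restriction sequence $0\to\cS(d-1)\to\cS(d)\to\cS(d)|_{Q_{n-1}}\to 0$: for $1<i<n-1$ the long exact sequence and the inductive hypothesis give $H^i(Q_n,\cS(d))\cong H^i(Q_n,\cS(d-1))$ for all $d$, forcing vanishing by Serre vanishing for $d\gg 0$; the top index $i=n-1$ is reduced to $i=1$ by Serre duality (using $\omega_{Q_n}\cong\cO(-n)$ and the duality of item (1)); and the index $i=1$ is the genuine crux, which one settles by combining the inductive hypothesis on $Q_{n-1}$ with the surjectivity of $H^0(Q_n,\cS(d))\to H^0(Q_{n-1},\cS(d)|_{Q_{n-1}})$. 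For (4), the case $d\le -1$ is immediate since $\cS(d)\hookrightarrow\cO_Q(d)^{\oplus N}$ and $H^0(Q,\cO_Q(d))=0$, while $d=0$ follows from $0\to\cS(-1)\to\cS\to\cS|_{Q_{n-1}}\to 0$ and $H^0(\cS(-1))=0$. A slicker alternative for (3)--(4) is Borel--Weil--Bott: $\cS(d)$ is an irreducible homogeneous bundle on $Q_n=\mathrm{Spin}(n+2)/P$, and a short weight computation shows that its cohomology is concentrated in degree $0$ for $d\ge 1$ and in degree $n$ for $d$ sufficiently negative, and vanishes altogether in between---precisely (3) and (4). The only delicate point in the whole argument is the $\pm$/twist bookkeeping of items (1)--(2); everything else is routine.
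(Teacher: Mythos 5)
The paper offers no proof of this proposition at all: it is stated as a recollection of classical facts with a citation to Ottaviani, and the only ingredients the paper actually uses later (the duality $\cS^\vee\cong\cS(1)$ on $Q_8$, $\det\cS\cong\cO(-4)$, the intermediate-cohomology vanishing, and Borel--Weil--Bott for the twists of $\Lambda\cS$) are taken from there. Your reconstruction is essentially Ottaviani's original argument---defining exact sequence inside a trivial bundle, restriction to hyperplane sections, induction on $n$, with Borel--Weil--Bott as the clean alternative for (3)--(4)---and the numerology in items (2)--(4) checks out. Two soft spots in item (1): first, the $\pm$-bookkeeping cannot literally be ``propagated through the restriction formulas,'' because an odd-dimensional quadric carries a single spinor bundle, so restricting $\cS_\pm$ from $Q_{2k}$ to $Q_{2k-1}$ forgets exactly the label you are trying to track; the swap/no-swap dichotomy in each even dimension has to come from the representation-theoretic input (or from Ottaviani's explicit Clifford-algebra construction), not from induction. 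Second, that input concerns the (half-)spin representation of $\mathrm{Spin}(n)$, the semisimple part of the Levi of the parabolic defining $Q_n=\mathrm{Spin}(n+2)/P$, not of $\mathrm{Spin}(n+2)$ as you wrote: for $n\equiv 0\pmod 4$ the two half-spin representations of $\mathrm{Spin}(n+2)$ are dual to \emph{each other}, yet the spinor bundles are self-dual up to twist, so your stated criterion would give the wrong parity. Finally, for item (3) you correctly isolate the surjectivity of $H^0(Q_n,\cS(d))\to H^0(Q_{n-1},\cS(d)|_{Q_{n-1}})$ as the crux but leave it unproved; since it is essentially equivalent to the injectivity of $H^1(\cS(d-1))\to H^1(\cS(d))$ one is trying to establish, the honest way to close the induction is either the defining exact sequence (which identifies intermediate cohomology of $\cS(d)$ with that of $\cS'(d+1)$ shifted by one degree) or the Borel--Weil--Bott computation you already mention.
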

Combining \eqref{prop:spinor-properties-1} and \eqref{prop:spinor-properties-4} and applying Serre duality, we have for all $d\ge 1-n$, $H^n(Q, \cS(d))=0$.

Returning to our case of a smooth quadric $Q$ of dimension $8$.
To simplify notation, we drop the $+$-sign and denote the spinor bundle simply by $\cS$.
We have $\det \cS\cong\cO(-4)$ and $\bw7\cS\cong \cS^\vee(-4)\cong \cS(-3)$.
Following the notation of \cite{mukai-models} (that is, the Bourbaki notation), we denote by $\set{\alpha_i}_{1\le i\le 5}$ the fundamental weights for the group $\SO(10)$, and by $\cE_{\alpha_i}$ the corresponding globally generated homogeneous vector bundle of highest weight $\alpha_i$.
Note that $\cS^\vee$ is precisely $\cE_{\alpha_4}$.
\begin{lem}
\label{lemma:Lambda-S}
We have the following decomposition
\[
0\to \cE_{2\alpha_4}\to \Lambda \cS(5)\to \cE_{\alpha_3}\to 0.
\]
In consequence, for $d\ge 0$ and $i>0$, $H^i(Q, \Lambda \cS(d))$ vanishes except in the following cases
\[
H^4(Q, \Lambda\cS)=H^4(Q,\cE_{2\alpha_4}(-5))=\bC,\quad
H^2(Q, \Lambda\cS(2))=H^2(Q,\cE_{\alpha_3}(-3))=\bC.
\]
\end{lem}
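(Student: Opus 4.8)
The plan is to recognize $\Lambda\cS$ as a twist of the bundle of traceless endomorphisms of the spinor bundle, decompose that bundle into irreducible homogeneous bundles on $Q=\SO(10)/P_{\alpha_1}$, and then compute cohomology by Bott's theorem. For the first reduction: since $\bw{7}\cS\cong\cS^\vee\otimes\det\cS$ for a rank-$8$ bundle, the tensor $\cS\otimes\bw{7}\cS$ is canonically $\mathrm{End}(\cS)\otimes\det\cS$, and under this identification the wedge map $\cS\otimes\bw{7}\cS\to\bw{8}\cS=\det\cS$ becomes the trace tensored with $\mathrm{id}_{\det\cS}$. Hence $\Lambda\cS\cong\mathfrak{sl}(\cS)\otimes\det\cS\cong\mathfrak{sl}(\cS)(-4)$, where $\mathfrak{sl}(\cS)\subset\mathrm{End}(\cS)$ is the rank-$63$ bundle of traceless endomorphisms, and in particular $\Lambda\cS(5)\cong\mathfrak{sl}(\cS)(1)$.

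The heart of the matter is the decomposition of $\mathrm{End}(\cS)$ as a homogeneous bundle. The fiber of $\cS$ over the base point of $\SO(10)/P_{\alpha_1}$ is an irreducible $8$-dimensional spinor representation $S_8$ of the semisimple part $\mathrm{Spin}_8$ of the Levi; since $S_8$ is self-dual, $\mathrm{End}(S_8)\cong S_8^{\otimes2}\cong\bC\oplus\bw{2}V_8\oplus R_{35}$, with $\bw{2}V_8$ the $28$-dimensional adjoint representation (highest weight corresponding to the node $\alpha_3$) and $R_{35}$ the $35$-dimensional representation of highest weight $2\alpha_4$. Passing to homogeneous bundles, $\mathrm{End}(\cS)$ is built out of $\cO$, a twist of $\cE_{\alpha_3}$, and a twist of $\cE_{2\alpha_4}$ (as associated graded, and in fact as a direct sum, $\cS$ being an irreducible homogeneous bundle). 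Splitting off the trivial summand via the trace and matching the $\cO(1)$-twists by a first-Chern-class computation (using $\det\mathfrak{sl}(\cS)=\cO$ together with the known Chern classes of $\cE_{\alpha_3}$ and $\cE_{2\alpha_4}$) then yields exactly $0\to\cE_{2\alpha_4}\to\Lambda\cS(5)\to\cE_{\alpha_3}\to0$, the ranks being consistent since $35+28=63$; equivalently $\Lambda\cS\cong\cE_{2\alpha_4}(-5)\oplus\cE_{\alpha_3}(-5)$.

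For the cohomological consequence I would twist this sequence by $\cO(d-5)$, obtaining for every $d\ge0$ the exact sequence $0\to\cE_{2\alpha_4}(d-5)\to\Lambda\cS(d)\to\cE_{\alpha_3}(d-5)\to0$, which reduces everything to computing $H^\bullet(Q,\cE_{2\alpha_4}(k))$ and $H^\bullet(Q,\cE_{\alpha_3}(k))$ for $k\ge-5$. These are handled by Bott's theorem on $\SO(10)/P_{\alpha_1}$: add $\rho=(4,3,2,1,0)$ (in the standard coordinates for $D_5$) to the relevant weight, test whether the result is singular (all cohomology vanishes) or regular, and in the regular case read off the unique nonzero cohomology group as the reflected dominant representation placed in cohomological degree equal to the length of the Weyl element. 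The expected output is $H^i(Q,\cE_{2\alpha_4}(k))=0$ for all $i>0$ and $k\ge-5$ except $H^4(Q,\cE_{2\alpha_4}(-5))=\bC$, and $H^i(Q,\cE_{\alpha_3}(k))=0$ for all $i>0$ and $k\ge-5$ except $H^2(Q,\cE_{\alpha_3}(-3))=\bC$. Feeding these into the long exact sequences, and observing that the two surviving classes sit in different cohomological degrees and occur for $d=0$ and $d=2$ respectively (so no connecting map interferes), gives precisely the asserted vanishing together with $H^4(Q,\Lambda\cS)=\bC$ and $H^2(Q,\Lambda\cS(2))=\bC$.

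The main obstacle is the second step: correctly decomposing $\mathrm{End}(\cS)$ as a homogeneous bundle and, above all, pinning down the $\cO(1)$-twists of the two nontrivial summands --- a half-unit of twist hides in the spinor weight $\alpha_4$, so the bookkeeping with half-integer weights must be done carefully, and the Chern-class comparison is the safe way to confirm it. Once the sequence $0\to\cE_{2\alpha_4}\to\Lambda\cS(5)\to\cE_{\alpha_3}\to0$ is in place, the Bott computations of the third step are routine, requiring only some care with the $D_5$-Weyl reflections for the small twists $k\in\{-5,\dots,-1\}$.
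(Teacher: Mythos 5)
Your proposal is correct and follows essentially the same route as the paper: both identify $\Lambda\cS(5)$ inside $(\cS^\vee)^{\otimes 2}\cong\mathrm{End}(\cS)(1)$ as the complement of the trace part, decompose it into the irreducible homogeneous bundles $\cE_{2\alpha_4}$ and $\cE_{\alpha_3}$, and conclude with Borel--Weil--Bott. The only (harmless) variation is that you derive the decomposition fiberwise from the Levi representation $S_8^{\otimes 2}=\bC\oplus\bw2 V_8\oplus R_{35}$ --- which even yields a direct sum --- whereas the paper reads it off from the $\SO(10)$-module of global sections $V_{\alpha_4}^{\otimes 2}=V_{2\alpha_4}\oplus V_{\alpha_3}\oplus V_{\alpha_1}$.
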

\begin{proof}
By definition $\Lambda \cS$ fits in the following exact sequence
\[
0\to\Lambda \cS\to\cS\otimes \bw7\cS\to \cO(-4)\to 0.
\]
Twisting it with $\cO(5)$, we get
\[
0\to\Lambda \cS(5)\to{\cS^\vee}^{\otimes 2}=\cE_{\alpha_4}^{\otimes 2}\to \cO(1)=\cE_{\alpha_1}\to 0.
\]
We see that this corresponds to the exact sequence of $\SO(10)$-modules
\[
0\to H^0(Q, \Lambda \cS(5))\to V_{\alpha_4}^{\otimes 2}\to V_{\alpha_1}\to 0,
\]
and hence
\[
H^0(Q, \Lambda \cS(5))=V_{2\alpha_4}\oplus V_{\alpha_3}.
\]
In other words, $\Lambda \cS(5)$ is globally generated, and is an extension of the two globally generated homogeneous vector bundles $\cE_{2\alpha_4}$ and $\cE_{\alpha_3}$.
Now we can use the Borel--Weil--Bott theorem to check the claim on the cohomology groups.
\end{proof}

Now we prove that the embedding $S^{[2]}\subset \bP^9$ satisfies normality in degree $d$ for $d\ge3$.
We also show that $Q$ is the unique quadric hypersurface containing $S^{[2]}$.

\begin{prop}
The following hold:
\begin{enumerate}
    \item For every $d\ge 3$ and $i\ge 1$, the higher cohomology group $H^i(Q, \cI_{S^{[2]}/Q}(d))$ vanishes.
In particular, $S^{[2]}$ in $\bP^9$ is $d$-normal for every $d\ge3$.
    \item When $d=2$, we have $H^0(Q, \cI_{S^{[2]}/Q}(2))=0$ and $H^1(Q, \cI_{S^{[2]}/Q}(2))=\bC$.
\end{enumerate}
\end{prop}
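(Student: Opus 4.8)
The plan is to feed the Gulliksen--Negård resolution \eqref{eq:GN-complex} of $\cI_{S^{[2]}/Q}$ on the $8$-dimensional quadric $Q$ through a hypercohomology computation. By \autoref{prop:S2-is-D6} we apply \eqref{eq:GN-complex} to $\varphi\colon\cS\to W\otimes\cO_Q$, so that $r=8$, $\cF=\cS$, $\cG=W\otimes\cO_Q$ with $W$ an $8$-dimensional vector space. Using $\det\cS\cong\cO(-4)$, $\bw7\cS\cong\cS(-3)$, and triviality of $\cG$, the four terms of the resolution twisted by $\cO(d)$ become
\[
\begin{aligned}
G_0(d)&\cong\cS(d-3)\otimes\bw7 W^\vee,\\
G_1(d)&\cong\bigl(\cO(d-4)\otimes\Lambda W^\vee\bigr)\oplus\bigl(\Lambda\cS(d)\otimes\det W^\vee\bigr),\\
G_2(d)&\cong\cS(d-4)\otimes W^\vee\otimes\det W^\vee,\\
G_3(d)&\cong\cO(d-8)\otimes(\det W^\vee)^{\otimes2},
\end{aligned}
\]
where $\Lambda\cS=\ker(\cS\otimes\bw7\cS\to\det\cS)$. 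Since \eqref{eq:GN-complex} is an exact complex of sheaves, $\cI_{S^{[2]}/Q}(d)$ is quasi-isomorphic to the complex $[G_3(d)\to G_2(d)\to G_1(d)\to G_0(d)]$, so that $H^\bullet(Q,\cI_{S^{[2]}/Q}(d))$ is computed by the associated hypercohomology spectral sequence, whose $E_1$-terms are the groups $H^q(Q,G_k(d))$.

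The next step is to assemble the relevant vanishing, all available in the excerpt: on $Q$ one has $H^i(Q,\cO(m))=0$ for $0<i<8$ and $H^8(Q,\cO(m))=0$ for $m\ge-7$; for the spinor bundle, $H^i(Q,\cS(m))=0$ for $0<i<8$, $H^0(Q,\cS(m))=0$ for $m\le0$, and $H^8(Q,\cS(m))=0$ for $m\ge-7$ (\autoref{prop:spinor-properties} and the remark following it); and, by \autoref{lemma:Lambda-S}, $H^i(Q,\Lambda\cS(d))=0$ for all $i>0$ when $d\ge0$, with the single exceptions $H^4(Q,\Lambda\cS)=\bC$ and $H^2(Q,\Lambda\cS(2))=\bC$. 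Inserting $m=d-3,d-4,d-8$ where appropriate, one checks at once that for every $d\ge3$ all of $G_0(d),\dots,G_3(d)$ have vanishing higher cohomology $H^{\ge1}$, whereas for $d=2$ one gets the sharper statement that the cohomology of $G_0(2)$, $G_2(2)$, $G_3(2)$ (which are twists of $\cS(-1)$, $\cS(-2)$, $\cO(-6)$) vanishes identically, while $H^i(Q,G_1(2))$ is $\bC$ for $i=2$ and $0$ otherwise, the surviving class being precisely $H^2(Q,\Lambda\cS(2))$.

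I would then run the spectral sequence, or equivalently split the resolution into the short exact sequences $0\to Z_1(d)\to G_1(d)\to Z_0(d)\to0$, $0\to G_3(d)\to G_2(d)\to Z_1(d)\to0$, and $0\to Z_0(d)\to G_0(d)\to\cI_{S^{[2]}/Q}(d)\to0$, and chase them. For $d\ge3$, the vanishing of $H^{\ge1}$ of every $G_k(d)$ forces $H^i(Q,\cI_{S^{[2]}/Q}(d))\cong H^{i+1}(Q,Z_0(d))\cong H^{i+2}(Q,Z_1(d))=0$ for all $i\ge1$, giving part (1); the $d$-normality then follows because $H^0(\bP^9,\cO(d))\to H^0(Q,\cO(d))$ is surjective ($Q$ being projectively normal) and $H^0(Q,\cO(d))\to H^0(S^{[2]},\cO(d))$ is surjective exactly when $H^1(Q,\cI_{S^{[2]}/Q}(d))=0$. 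For $d=2$, the total vanishing of the cohomology of $G_2(2)$ and $G_3(2)$ yields $H^\bullet(Q,Z_1(2))=0$, hence $H^i(Q,Z_0(2))\cong H^i(Q,G_1(2))$ and therefore $H^i(Q,\cI_{S^{[2]}/Q}(2))\cong H^{i+1}(Q,G_1(2))$, which equals $0$ for $i=0$ and $\bC$ for $i=1$, as desired (incidentally $H^0(Q,G_1(2))=0$, i.e.\ $H^0(Q,\Lambda\cS(2))=0$, consistently with stability of $\cS$, which forces $\Hom(\cS,\cS(-2))=0$).

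The main obstacle is purely organisational: one must track the determinantal twists $\det\cF$ and $\det\cG$ in \eqref{eq:GN-complex} accurately, so as to identify the $G_k(d)$ with the correct twists $\cS(d-3)$, $\cS(d-4)$, $\cO(d-4)$, $\cO(d-8)$ and $\Lambda\cS(d)$, and then verify in the spectral sequence that the lone class $H^2(Q,\Lambda\cS(2))$ on the $E_1$-page actually persists to $H^1(Q,\cI_{S^{[2]}/Q}(2))$, i.e.\ is not hit by a differential. With the identification of the terms and the cohomology inputs of \autoref{prop:spinor-properties} and \autoref{lemma:Lambda-S} in hand, what remains is a routine diagram chase.
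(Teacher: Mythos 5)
Your proposal is correct and follows essentially the same route as the paper: twist the Gulliksen--Negård resolution of $\cI_{S^{[2]}/Q}$ by $\cO(d)$, identify the four terms as $\cO(d-8)$, $\cS(d-4)^{\oplus 8}$, $\Lambda\cS(d)\oplus\cO(d-4)^{\oplus 63}$, $\cS(d-3)^{\oplus 8}$, and feed the vanishing from \autoref{prop:spinor-properties} and \autoref{lemma:Lambda-S} into the hypercohomology spectral sequence, with the lone surviving class $H^2(Q,\Lambda\cS(2))=\bC$ accounting for $h^1(Q,\cI_{S^{[2]}/Q}(2))=1$. Your extra bookkeeping (the explicit splitting into short exact sequences, which shows the class cannot be hit by a differential since the neighbouring terms have no cohomology, and the deduction of $d$-normality from projective normality of $Q$) is sound and only makes explicit what the paper leaves implicit.
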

\begin{proof}
Consider the Gulliksen--Negård complex \eqref{eq:GN-complex} twisted by $\cO(d)$,
whose corresponding spectral sequence computes the hypercohomology and therefore the cohomology of $\cI(d)\coloneqq\cI_{S^{[2]}/Q}(d)$.
The resolution takes the following form
\begin{equation}
\label{eq:GN-genus-7}
0\to\cO(d-8)\to \cS(d-4)^{\oplus 8} \to \Lambda \cS(d) \oplus \cO(d-4)^{\oplus 63}\to \cS(d-3)^{\oplus 8}\to \cI(d).
\end{equation}
Combining \autoref{prop:spinor-properties} and \autoref{lemma:Lambda-S}, all higher cohomology groups appearing in the resolution \eqref{eq:GN-genus-7} are known:
when $d\ge 0$, the only non-vanishing terms are when $d=0$ where $h^4(Q,\Lambda \cS)=1$, and when $d=2$ where $h^2(Q,\Lambda\cS(2))=1$.
Thus, we deduce from the spectral sequence that $H^i(Q, \cI_{S^{[2]}/Q}(d))$ vanishes for all $d\ge 3$ and $i\ge 1$,
while $h^1(Q, \cI_{S^{[2]}/Q}(2))= 1$.
(Note that the non-vanishing of $H^4(Q, \Lambda \cS)$ is not a coincidence:
it contributes to $H^3(Q, \cI)\cong H^2(\cO_{S^{[2]}})$,
which is precisely $1$-dimensional since $S^{[2]}$ is symplectic.)
\end{proof}

The Gulliksen--Negård complex also allows us to determine the generators for the homogeneous ideal of $S^{[2]}$ in $\bP^9$.

\begin{prop}
\label{prop:65-quartics}
The ideal of $S^{[2]}$ in $Q$ is generated by $65$ quartics.
\end{prop}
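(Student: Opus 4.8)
The plan is to combine an exact dimension count with the Gulliksen--Negård resolution \eqref{eq:GN-complex}. Everything reduces to three facts: (i) $H^0(Q,\cI_{S^{[2]}/Q}(d))=0$ for $d\le 3$; (ii) $\dim H^0(Q,\cI_{S^{[2]}/Q}(4))=65$; and (iii) the homogeneous ideal $\bigoplus_d H^0(Q,\cI_{S^{[2]}/Q}(d))$ is generated in degree $4$. Given these, the ideal has no part in degree $\le 3$, so its $65$-dimensional degree-$4$ component is forced to be a minimal system of generators, with no generators of higher degree, which is exactly the claim.

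For (i): the case $d=1$ is non-degeneracy of $S^{[2]}\subset\bP^9$, which holds because by \autoref{mapgenus7} the embedding is given by the complete linear system $|L_2-2\delta|$ (of projective dimension $9$); the case $d=2$ is the vanishing $H^0(Q,\cI_{S^{[2]}/Q}(2))=0$ established in the preceding proposition; and the case $d=3$ follows from $3$-normality of $S^{[2]}$ in $Q$ (equivalently $H^1(Q,\cI_{S^{[2]}/Q}(3))=0$, again from that proposition) together with the numerical identity $h^0(\cO_Q(3))=h^0(\cO_{S^{[2]}}(3))=210$, the latter computed via the Riemann--Roch polynomial ($q(L_2-2\delta)=4$) and Kodaira vanishing. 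For (ii), the same $4$-normality gives $\dim H^0(Q,\cI_{S^{[2]}/Q}(4))=h^0(\cO_Q(4))-h^0(\cO_{S^{[2]}}(4))=660-595=65$.

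The substantive point is (iii). Applying \eqref{eq:GN-complex} to $\varphi\colon\cS\to\cO_Q^{\oplus 8}$, the last term surjecting onto the ideal sheaf is $\bw7\cS\otimes\bw7(\cO_Q^{\oplus 8})^\vee\cong\cS(-3)^{\oplus 8}$ (using $\det\cS\cong\cO(-4)$), and the surjection is given by the $7\times 7$ minors of $\varphi$. Twisting by $\cO(4)$ and invoking $\cS^\vee\cong\cS(1)$ (\autoref{prop:spinor-properties}), this becomes a surjection of sheaves $(\cS^\vee)^{\oplus 8}\twoheadrightarrow\cI_{S^{[2]}/Q}(4)$; call its kernel $\cK$. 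From \autoref{prop:spinor-properties} one checks that $\cS^\vee$ is $0$-regular on $Q$ (hence globally generated, with $H^0(\cS^\vee)\otimes H^0(\cO_Q(k))\twoheadrightarrow H^0(\cS^\vee(k))$ for all $k\ge 0$). It then suffices to prove $H^1(Q,\cK(k))=0$ for every $k\ge0$: this yields simultaneously that $H^0((\cS^\vee)^{\oplus8})\twoheadrightarrow H^0(\cI_{S^{[2]}/Q}(4))$ --- so $\cI_{S^{[2]}/Q}(4)$ is globally generated by quartics --- and, via the commutative square comparing the multiplication maps of $(\cS^\vee)^{\oplus8}$ and of $\cI_{S^{[2]}/Q}(4)$, surjectivity of $H^0(\cI_{S^{[2]}/Q}(4))\otimes H^0(\cO_Q(k))\to H^0(\cI_{S^{[2]}/Q}(4+k))$ for all $k\ge 1$. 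To obtain $H^1(Q,\cK(k))=0$ I would use that $\cK(k)$ is resolved, by the remaining terms of the twisted Gulliksen--Negård complex (cf.\ \eqref{eq:GN-genus-7}), as
\[
0\to\cO(k-4)\to\cS(k)^{\oplus 8}\to\bigl(\Lambda\cS(k+4)\oplus\cO(k)^{\oplus 63}\bigr)\to\cK(k)\to 0,
\]
split this into two short exact sequences, and feed in the cohomology vanishings of \autoref{prop:spinor-properties} and of \autoref{lemma:Lambda-S} --- the crucial input being $H^1(Q,\Lambda\cS(m))=0$ for all $m\ge 0$, so that the exceptional classes $H^4(Q,\Lambda\cS)$ and $H^2(Q,\Lambda\cS(2))$ never enter.

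The main obstacle is the bookkeeping in (iii): one must trace the hypercohomology of the truncated Gulliksen--Negård complex carefully enough to confirm that $H^1(\cK(k))$ vanishes for all $k\ge 0$, and --- since $\cI_{S^{[2]}/Q}(4)$ is itself \emph{not} $0$-regular (its negative twists carry higher cohomology coming from the symplectic form and from Serre duality) --- to route the projective-generation argument through the genuinely $0$-regular bundle $\cS^\vee$ rather than through $\cI_{S^{[2]}/Q}(4)$ directly. The underlying idea, however, is simply that the top term of the Gulliksen--Negård resolution becomes a globally generated ($0$-regular) homogeneous bundle after the appropriate twist, which pins down the degree of generation of the ideal.
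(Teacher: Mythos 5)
Your proposal is correct and follows essentially the same route as the paper: both rest on the Gulliksen--Negård resolution \eqref{eq:GN-genus-7} together with the cohomology vanishings of \autoref{prop:spinor-properties} and \autoref{lemma:Lambda-S}, which force exactness of the sequence of global sections in all degrees $d\ge 3$ and hence both the count of generators and the surjectivity of the multiplication maps. The only (harmless) variations are that you count the $65$ quartics via the Hilbert function and $4$-normality rather than directly as $8\cdot 16-63$ from the sequence \eqref{eq:65-quartics}, and that you deduce surjectivity of the top row of the multiplication square from $0$-regularity of $\cS^\vee$ where the paper argues by $\SO(10)$-equivariance and irreducibility of $H^0(Q,\cS(n+1))$.
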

\begin{proof}
It suffices to examine the complex \eqref{eq:GN-genus-7} and its spectral sequence.
Recall that, as soon as $d\ge 3$, all higher cohomology groups appearing in the spectral sequence are zero.
Therefore, we obtain an exact sequence of global sections
\[
\begin{gathered}
0\to H^0(Q, \cO(d-8))\to H^0(Q,\cS(d-4))^{\oplus 8} \to H^0(Q,\Lambda \cS(d)) \oplus H^0(Q,\cO(d-4))^{\oplus 63}\\
\to H^0(Q,\cS(d-3))^{\oplus 8}\to H^0(Q,\cI(d))\to 0.
\end{gathered}
\]
When $d=3$, the first four terms all vanish, hence there are no cubic equations.
When $d=4$, we get
\begin{equation}
\label{eq:65-quartics}
0\to H^0(Q, \cO_Q)^{\oplus 63} \to H^0(Q, \cS(1))^{\oplus 8}=W\otimes  H^0(Q, \cS(1)) \to H^0(Q, \cI(4))\to 0.
\end{equation}
Note that $H^0(Q, \cS(1))=H^0(Q, \cS^\vee)=V_{\alpha_4}$ is the space of spinors $V_+$ which is of dimension~$16$.
This provides us with $65$ quartics as desired.
Finally, for $d\ge 4$, consider the following commutative diagram
\[
\begin{tikzcd}
H^0(Q, \cS(d-3))^{\oplus 8}\otimes H^0(Q, \cO(1))\ar[r]\ar[d,twoheadrightarrow] & H^0(Q, \cS(d-2))^{\oplus8}\ar[d,twoheadrightarrow]\\
H^0(Q, \cI(d))\otimes H^0(Q, \cO(1))\ar[r]                                         & H^0(Q, \cI(d+1)).
\end{tikzcd}
\]
Note that the first row is the natural map
\[
m\colon H^0(Q,\cS(d-3))\otimes H^0(Q,\cO(1))\to H^0(Q,\cS(d-2))
\]
tensored by $W$. In particular, $m$ is also a map of $\SO(10)$-modules, with the group $H^0(Q, \cS(n+1))$ being the irreducible $\SO(10)$-module $V_{n\alpha_1+\alpha_4}$. So $m$ is necessarily surjective.
Hence the first row is surjective, and so is the second row.
\end{proof}

\begin{rem}\label{distquartic-genus7}
If one keeps track of the vector space $W$ appearing in the trivial vector bundle $W\otimes \cO_Q$ in the exact sequence \eqref{eq:65-quartics}, there exists an extension
\[
0\to\bC\to H^0(Q, \cI(4))\to H^0(L)\otimes W \to 0.
\]
The distinguished quartic is given by the first degeneracy locus $D_7(\varphi)$.
The $64$-dimensional cokernel is the Zariski tangent space to $\Gr\left(8, V_+\right)$ at the point $[H^0(L)^\vee]$.
\end{rem}

\subsection{The minimal resolution}
Denote by $(X,H)=(S^{[2]},L_2-2\delta)$ the Hilbert square of a very general $K3$ surface $S$ of genus $7$. 
Our next goal is to deduce a minimal free resolution for $R_X\coloneqq\bigoplus_{d\geq0}H^0(X,H^d)$ in terms of the symmetric algebra $S_X\coloneqq \Sym^\bullet H^0(X, H)$.
Consider the natural map $m\colon S_X\to R_X$ given piece-wise by
\[
m_d\colon S_d=\Sym^d H^0(X, H)\to R_d=H^0(X, H^d).
\]
We have $\ker(m_d)=I_d$, the degree-$d$ piece of the homogeneous ideal of $X$ in $\bP^9$.
From the last section, we know that $m_d$ is surjective for any $d\ne 2$.
Let $q\in S_2$ denote one equation for the unique quadric $Q\coloneqq Z(q)$ containing $X$, then we have $\ker (m_2)=I_2=\bC q$, while $\ker(m_3)=I_3=S_1\cdot q$.

Throughout this section, we will assume that $X=S^{[2]}$ is exactly the singular locus of the first degeneracy locus $D\coloneqq D_7(\varphi)\subset Q$. This is true if $(S,L)$ is chosen to be general among Picard rank one $K3$ surfaces of genus $7$, thanks to a Bertini-type argument (see for example \cite[Teorema~2.8]{ottaviani-codim}).
Under this assumption, we are going to prove the following:
\begin{thm}
\label{thm:graded-resolution}
The Betti diagram of the $S_X$-module $R_X$ is given by%
\footnote{The four terms that are marked with an underline contain extra syzygies not expected from computing with the Hilbert function. Indeed, we will see in \eqref{eq:betti-diagram} that these do not appear for a general deformation of $S^{[2]}$.}
\begin{equation}
\label{eq:S2-g7-betti-diagram}
\begin{array}{c|rrrrrrrr}
b_{i,j} & 0             & 1             & 2   & 3   & 4             & 5              & 6  & 7 \\
\hline
0       & 1             & .             & .   & .   & .             & .              & .  & . \\
1       & .             & \underline{1} & .   & .   & .             & .              & .  & . \\
2       & \underline{1} & 10            & .   & .   & .             & .              & .  & . \\
3       & .             & 20            & 126 & 190 & 130           & \underline{46} & 10 & 1 \\
4       & .             & .             & .   & .   & \underline{1} & .              & .  & . \\
5       & .             & .             & .   & .   & .             & 1              & .  & . \\
\end{array}
\end{equation}
In other words, the minimal graded free resolution of $R_X$ is of the form
\begin{equation}
\label{eq:graded-resolution}
...\to S_X(-2)\oplus S_X(-3)^{\oplus 10}\oplus S_X(-4)^{\oplus 20}\to S_X\oplus S_X(-2)\to R_X\to 0.
\end{equation}
\end{thm}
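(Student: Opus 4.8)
I would begin by pinning down $F_0$. The normality statements already established give that $m_d\colon\Sym^d H^0(X,H)\to H^0(X,H^d)$ is surjective for $d\ne 2$, while $\ker m_2=I_2=\bC\cdot q$ (with $q$ an equation of the unique quadric $Q\supset X$) and $\dim H^0(X,H^2)=55=\dim\Sym^2 H^0(X,H)$, so $\im(m_2)=H^0(X,H)\cdot H^0(X,H)$ has codimension one in $H^0(X,H^2)$. Hence $R_X$ is minimally generated by two elements, in degrees $0$ and $2$, that is $F_0=S_X\oplus S_X(-2)$; and the first syzygies in internal degree $2$ are spanned by $q\cdot 1=0$, so $b_{1,1}=1$. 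Recalling $b_{i,j}=\dim\cK_{i,j}(X,H)$, it remains to compute all Koszul cohomology groups $\cK_{i,j}(X,H)$; by \autoref{thm:green-duality} (applicable for $j\ge\dim X=4$ since $\omega_X\cong\cO_X$ and $h^1(\cO_X)=h^3(\cO_X)=0$) the rows $j\ge4$ of \eqref{eq:S2-g7-betti-diagram} mirror the rows $j\le1$, so the real work is confined to $j=2$ and $j=3$.

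The engine is the Gulliksen--Negård resolution \eqref{eq:GN-genus-7} of $\cI_{X/Q}$, available because $X=D_6(\varphi)$ by \autoref{prop:S2-is-D6}. Splitting \eqref{eq:GN-genus-7} into short exact sequences of sheaves on $Q$, and combining the associated long exact sequences in Koszul cohomology with those of $0\to\cO_{\bP^9}(-2)\xrightarrow{q}\cI_{X/\bP^9}\to\iota_*\cI_{X/Q}\to 0$ and $0\to\cI_{X/\bP^9}\to\cO_{\bP^9}\to\cO_X\to 0$ (where $\iota\colon Q\into\bP^9$), one reduces the computation of $\cK_{\bullet,\bullet}(X,H)$ to the Koszul cohomology of the bundles occurring in \eqref{eq:GN-genus-7}. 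Of these, $\cO_Q(a)$ has the length-one $\cO_{\bP^9}$-resolution $0\to\cO_{\bP^9}(a-2)\xrightarrow{q}\cO_{\bP^9}(a)\to\iota_*\cO_Q(a)\to 0$; and by \autoref{prop:spinor-properties} all intermediate twisted cohomology of $\cS$ vanishes, so $\bigoplus_d H^0(Q,\cS(d))$ is maximal Cohen--Macaulay over the quadric hypersurface ring and $\iota_*\cS(a)$ likewise admits a length-one $\cO_{\bP^9}$-resolution by a $16\times16$ Clifford matrix factorization of $q$ — so these summands enter only through cancellations. The crucial term is $\Lambda\cS$ in $\cG_1$: by \autoref{lemma:Lambda-S} it is \emph{not} Cohen--Macaulay, carrying the two anomalous classes $H^4(Q,\Lambda\cS)=\bC$ and $H^2(Q,\Lambda\cS(2))=\bC$, so $\bigoplus_d H^0(Q,\Lambda\cS(d))$ has depth $3$ and projective dimension $7$ over $S_X$; its twisted cohomology is computed from the extension $0\to\cE_{2\alpha_4}\to\Lambda\cS(5)\to\cE_{\alpha_3}\to 0$ and Borel--Weil--Bott. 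It is this term that produces the whole of row $3$ of \eqref{eq:S2-g7-betti-diagram} (the seven entries $b_{i,3}$, $1\le i\le 7$), and together with $q$ the two anomalies produce the four underlined Betti numbers; in particular $H^4(Q,\Lambda\cS)$ is the class recording $H^3(Q,\cI_{X/Q})\cong H^2(\cO_X)=\bC$, so $\mathrm{depth}\,R_X=3$ and the minimal resolution has length $7$ rather than the codimension $5$, the symplectic form being the obstruction to arithmetic Cohen--Macaulayness.

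I would then assemble the cascade of long exact sequences into the final table, cross-checking by: (a) the Hilbert function of $R_X$, completely known from $\dim\Sym^d H^0(X,H)=\binom{9+d}{d}$ and $h^0(X,H^d)=\binom{2d^2+3}{2}$ (Riemann--Roch, cf.\ \eqref{eq:hilbert-function}), which fixes every alternating sum $\sum_i(-1)^i b_{i,j}$; (b) the generator data above and \autoref{prop:65-quartics}, \autoref{distquartic-genus7} (no cubic equations, and the $65$ quartics sitting in $0\to\bC\to H^0(Q,\cI_{X/Q}(4))\to H^0(L)\otimes W\to 0$); (c) Green's duality for $j\ge4$; and (d) the genericity hypothesis $X=\Sing(D_7(\varphi))$, guaranteed for general $(S,L)$ by a Bertini-type argument as in \cite[Teorema~2.8]{ottaviani-codim}, which is precisely the condition making all the ``non-generic'' cancellations in the minimalization go through (for special $S$ the $b_{i,j}$ can only increase, by semicontinuity; the disappearance of the four underlined terms for a general deformation, i.e.\ the Betti diagram \eqref{eq:betti-diagram}, is proved separately). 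This yields \eqref{eq:S2-g7-betti-diagram} and in particular the shape \eqref{eq:graded-resolution} of $F_0\leftarrow F_1$.

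The main obstacle is controlling the connecting homomorphisms in this cascade of long exact sequences, especially around the two anomalous $\Lambda\cS$-classes, so that the bookkeeping leaves exactly $20,126,190,130,46,10,1$ in row $3$ and exactly the four underlined extra classes, with no further surprises. The saving grace, as already exploited in the proof of \autoref{prop:65-quartics}, is $\SO(10)$-equivariance: every map in \eqref{eq:GN-genus-7} and in the induced complexes is a morphism of $\SO(10)$-representations, and the relevant spaces $H^0(Q,\cS(d+1))=V_{d\alpha_1+\alpha_4}$, as well as the components of the twisted sections of $\Lambda\cS$, are irreducible, so Schur's lemma forces the surjectivities and injectivities one needs. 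A secondary, purely combinatorial difficulty is the Borel--Weil--Bott computation of the cohomology of the twists of $\cE_{2\alpha_4}$ and $\cE_{\alpha_3}$; this is routine.
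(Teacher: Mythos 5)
Your setup agrees with the paper up to the point where the real work begins: the identification $F_0=S_X\oplus S_X(-2)$ (forced by the failure of $2$-normality and the uniqueness of $Q$), the entry $b_{1,1}=1$, the use of Green's duality for the rows $j\ge 4$, and the use of the Hilbert function to finish row $j=3$ are all exactly what the paper does. But for the core of the theorem --- the column $i=1$, i.e.\ the $10$ cubic and $20$ quartic first syzygies, without which neither the duality nor the Hilbert function can be unleashed --- you propose to chase the Gulliksen--Negård complex through Koszul cohomology, whereas the paper constructs the syzygies by hand: it shows (its Lemma on polars) that for a quartic $f$ cutting $D_7(\varphi)$ in $Q$ one has $m_3(\partial f/\partial x_i)=\tfrac{\partial q}{\partial x_i}\cdot s$ for a fixed $s\in R_2\setminus\im(m_2)$, so that $(q,0)$ and the ten pairs $(\partial f/\partial x_i,\,x_i)$ generate the kernel of $F_0\onto R_X$ in degrees $2$ and $3$; it then proves that the $45$ Jacobian minors $f_ix_j-f_jx_i$ are linearly independent in $S_4/(S_2\cdot q)$ (a separate proposition, proved via the second fundamental form of $Q$ and the identification of $H^0(\cT_Q)$ with $\mathfrak{so}(10)=\bw2V_{10}^\vee$), so that the image of the degree-$\le 3$ generators meets $I_4\oplus\{0\}$ in dimension exactly $100$ and precisely $120-100=20$ new quartic generators are required. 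None of this appears in your proposal, and it is exactly where the genericity hypothesis $X=\Sing(D_7(\varphi))$ enters.

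The genuine gap is the mechanism you invoke to control the connecting maps in your cascade of long exact sequences. You assert that ``every map in \eqref{eq:GN-genus-7} and in the induced complexes is a morphism of $\SO(10)$-representations,'' so that Schur's lemma forces the needed ranks. This is false: the differentials of the Gulliksen--Negård complex are built from $\varphi\colon\cS_+\to W\otimes\cO_Q$ and its exterior powers, and $\varphi$ depends on the choice of the $8$-plane $H^0(L)^\vee\subset V_+$, which breaks the $\Spin(10)$-symmetry; only the terms of the complex are homogeneous bundles, and only the intrinsic multiplication maps $H^0(Q,\cS(a))\otimes H^0(Q,\cO(1))\to H^0(Q,\cS(a+1))$ are equivariant (this is the one place the paper uses equivariance, in showing $d$-normality propagates for $d\ge 4$). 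Without equivariance your ``cancellations in the minimalization'' are exactly the unresolved content of the theorem: the sheaf cohomology of the GN terms is the same for every Picard-rank-one $S$, yet the Betti numbers claimed in \eqref{eq:S2-g7-betti-diagram} require the genericity hypothesis, so they cannot be read off from those cohomology groups alone --- the ranks of the induced maps on sections must be computed, and your plan supplies no tool for this. (Two smaller inaccuracies: the Hilbert function fixes the alternating sums $\sum_i(-1)^ib_{i,e-i}$ along antidiagonals $i+j=e$, not the row sums $\sum_i(-1)^ib_{i,j}$ you state; and the claim that $\Lambda\cS$ alone ``produces the whole of row $3$'' is unsubstantiated --- in the paper that row is forced by the known columns $i\le 1$, duality, and the Hilbert function.)
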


Before giving a proof, we need a more detailed analysis of some hypersurfaces in $\bP^9$. Let us consider a quartic polynomial $f\in S_4$ cutting $D$ in $Q$ (recall that $D=Z(\det(\varphi))\in|\cO_Q(4)|$). The hypersurface $F\coloneqq Z(f)\subset\bP^9$ may be chosen to be generically smooth along $X$; indeed, if $F$ is singular along all of $X$, it suffices to take $f+A\cdot q$ for some $A\in S_2\setminus\langle q\rangle$.

The following lemma describes the intersection of $X$ with the polar hypersurfaces of $F$:

\begin{lem}\label{polars}
    Let $V_{10}\coloneqq H^0(\cO_X(H))^\vee$. Then for any nonzero $x\in V_{10}$, the following hold:

    \begin{enumerate}
        \item\label{polars-injectivity} The cubic polynomial $\cP_x(f)$ defined by polarity satisfies $\cP_x(f)\notin\ker(m_3)$. In particular, $\cP_x(f)\neq0$ and thus $\setmid{\cP_x(f)}{x\in V_{10}}$ is a $10$-dimensional subspace of $S_3$.
        
        \item\label{polars-sing-F} $\Sing(F)\cap X$ is a divisor of class $2H$ on $X$. If we let $s\in H^0(X,H^2)$ be a section such that $Z(s)=\Sing(F)\cap X$, then $s\notin \im(m_2).$
        
        \item\label{polars-m3} There is an equality
        \[
        m_3(\cP_x(f)) = \cP_x(q)\cdot s\in H^0(X,H^3),
        \]
        where $\cP_x(q)\in H^0(X,H)$ is the polar hyperplane, and $s\in H^0(X,H^2)$ is a suitable multiple of the section defined above (hence independent of $x$).
    \end{enumerate}
\end{lem}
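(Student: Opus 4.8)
The plan is to exploit the relation $f|_Q=\det\varphi$ together with the containment $X=D_6(\varphi)\subseteq\Sing\bigl(D_7(\varphi)\bigr)=\Sing D$, which holds for any morphism of vector bundles since the partial derivatives of $\det\varphi$ are, up to sign, the $(r-1)\times(r-1)$ minors of $\varphi$ and hence vanish where $\rk\varphi\le r-2$. Fix coordinates $z_0,\dots,z_9$ on $\bP^9$ dual to a basis of $V_{10}$, and let $\partial_i\coloneqq\partial/\partial z_i=\cP_{x_i}$ for the dual basis $x_0,\dots,x_9$ of $V_{10}$. The key geometric step is to show that for every $p\in X$ the differential $df_p$ is proportional to $dq_p$. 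Indeed, $p$ is a singular point of the divisor $D=Z(\det\varphi)\subset Q$; since $\det\varphi=f|_Q$ and $Q$ is smooth, this singularity forces $df_p$ to vanish on the hyperplane $T_pQ=\ker dq_p\subset T_p\bP^9$, so $df_p=\lambda(p)\,dq_p$ for a unique scalar $\lambda(p)$. These scalars assemble into a section $s\coloneqq\lambda\in H^0(X,H^2)$: on the open set $\{\partial_iq\ne0\}$ one has $\lambda=\partial_if/\partial_iq$, these opens cover $X$ because $Q$ is smooth, and the degree bookkeeping ($\partial_if$ of degree $3$, $\partial_iq$ of degree $1$) identifies $\lambda$ with a section of $H^2$. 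We fix $f$ so that $F$ is not singular along all of $X$ — replacing $f$ by $f+Aq$ with $A\in S_2\setminus\langle q\rangle$ if necessary, which does not change $f|_Q$ — so that $s\not\equiv0$; and by construction $\partial_if|_X=s\cdot(\partial_iq|_X)$ in $H^0(X,H^3)$ for all $i$.

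With this section $s$ in hand, the three assertions are quick. For \eqref{polars-m3}: contracting $\partial_if|_X=s\cdot\partial_iq|_X$ against $x$ gives $m_3(\cP_x(f))=\sum_ix_i\,\partial_if|_X=s\cdot\sum_ix_i\,\partial_iq|_X=s\cdot\cP_x(q)$, with $s$ independent of $x$. For \eqref{polars-sing-F}: by Euler's relation $\Sing F=Z(\partial_0f,\dots,\partial_9f)$, so $\Sing F\cap X$ is the zero scheme of the sections $\partial_if|_X=s\cdot(\partial_iq|_X)$; since the $\partial_iq|_X$ have no common zero on $X$ (their common zero in $\bP^9$ is $\Sing Q=\emptyset$), they generate the unit ideal sheaf on $X$, whence $\Sing F\cap X=Z(s)$, an effective divisor of class $2H$ because $0\ne s\in H^0(X,H^2)$. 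For \eqref{polars-injectivity}: if $\cP_x(f)\in\ker(m_3)$, then by \eqref{polars-m3} $s\cdot\cP_x(q)=0$ in $R_X$; as $X$ is integral, $R_X$ is a domain, and $s\ne0$, so $\cP_x(q)=0$, hence $x=0$ since $Q$ is a smooth quadric and thus $x\mapsto\cP_x(q)$ is an isomorphism $V_{10}\xrightarrow{\;\sim\;}S_1$. Therefore $\cP_x(f)\notin\ker(m_3)$ for all $x\ne0$ (in particular $\cP_x(f)\ne0$), and as the composite $x\mapsto m_3(\cP_x(f))=s\cdot\cP_x(q)$ is injective (an isomorphism followed by multiplication by the nonzerodivisor $s$), the map $x\mapsto\cP_x(f)$ is itself injective; thus $\{\cP_x(f):x\in V_{10}\}$ is a $10$-dimensional subspace of $S_3$.

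Finally, to see $s\notin\im(m_2)$: suppose $s=m_2(\sigma)$ with $\sigma\in S_2$. Then $m_3(\partial_if)=m_3\bigl((\partial_iq)\sigma\bigr)$, so $\partial_if-(\partial_iq)\sigma\in\ker(m_3)=I_3=S_1\cdot q$; writing $\partial_if=(\partial_iq)\sigma+\ell_iq$ with $\ell_i\in S_1$, multiplying by $z_i$ and summing, the Euler identities $\sum_iz_i\partial_if=4f$ and $\sum_iz_i\partial_iq=2q$ yield $4f=\bigl(2\sigma+\sum_iz_i\ell_i\bigr)q$. Hence $q\mid f$ in $S_X$, so $f|_Q=0$, contradicting $f|_Q=\det\varphi\ne0$.

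The one genuinely delicate point is the geometric step of the first paragraph — passing from ``$p$ is a singular point of the divisor $D$ in the smooth quadric $Q$'' to the proportionality $df_p=\lambda(p)\,dq_p$ in $\bP^9$, and verifying that $\lambda$ is an honest section of the line bundle $H^2$ rather than a mere rational function. Everything afterwards is formal, using only the Euler relation, the fact that $R_X$ is a domain, and the nondegeneracy of the smooth quadric $Q$ (which also makes $x\mapsto\cP_x(q)$ an isomorphism).
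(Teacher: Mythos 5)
Your proof is correct and follows essentially the same route as the paper's: both rest on the tangency of $F$ and $Q$ along $X$ (which you derive from the automatic containment $D_6(\varphi)\subseteq\Sing D_7(\varphi)$, where the paper invokes its standing assumption $X=\Sing D_7(\varphi)$), on the resulting section $s\in H^0(X,H^2)$ satisfying $\tfrac{\partial f}{\partial x_i}\big|_X=s\cdot\tfrac{\partial q}{\partial x_i}\big|_X$, and on the identical Euler-identity argument for $s\notin\im(m_2)$. The only difference is organizational—you glue the local ratios $\partial_i f/\partial_i q$ into $s$ first and then deduce part (1) from part (3) using that $R_X$ is a domain, whereas the paper proves part (1) directly via tangent hyperplanes and obtains $s$ from a residual-divisor computation with the Jacobian minors—so the substance coincides.
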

\begin{proof}
We start by noting that 
\[
Z(\cP_x(f))\cap F=\{p\in F: \;\text{$p\in F^{\mathrm{smooth}}$ and }[x]\in T_pF\}\cup \Sing(F),
\]
where $T_pF\subset \bP^9$ denotes the projective tangent hyperplane to $F$ at $p$. Therefore, if $X\subset Z(\cP_x(f))$ we have $[x]\in T_pF=T_pQ$ for every $p\in X\setminus \Sing(F)$, where the last equality follows from the fact that $Q$ and $F$ are tangent at $p$ (as $D=Q\cap F$ is singular along $X$). But this implies $X\subset Z(\cP_x(q))$, which is impossible since $\cP_x(q)$ is a nonzero linear form. In particular, the linear map
\[
V_{10}\to S_3,\quad x\mapsto \cP_x(f)
\]
is injective, which completes the proof of \eqref{polars-injectivity}.

To prove \eqref{polars-sing-F}, we first observe that our assumption $X=\Sing(D)$ implies that $X$ is the intersection in $\bP^9$ of $Q$, $F$, and the 45 quartics defined as the minors of the Jacobian matrix
\begin{equation}
\label{eq:jacobian-matrix}
\begin{pmatrix}
\frac{\partial f}{\partial x_0}  & ... & \frac{\partial f}{\partial x_9} \\[1ex]
\frac{\partial q}{\partial x_0}  & ... & \frac{\partial q}{\partial x_9} 
\end{pmatrix},
\end{equation}
where $x_0,...,x_9$ denotes any basis of $H^0(X,H)$. Indeed, this matrix dropping rank at a point of $D$ means that $F$ and $Q$ are tangent there.

Thus for a given point $p\in X$ and $i\in\{0,...,9\}$, the vanishing $\frac{\partial q}{\partial x_i}(p)=0$ implies that $\frac{\partial f}{\partial x_i}(p)=0$ (as smoothness of $Q$ yields $\frac{\partial q}{\partial x_j}(p)\neq0$ for some $j$). Conversely, the vanishing $\frac{\partial f}{\partial x_i}(p)=0$ implies that either $\frac{\partial q}{\partial x_i}(p)=0$ or $\frac{\partial f}{\partial x_j}(p)=0$ for all $j$. It follows that there is an equality of schemes
\[
\bigl(Z(\cP_x(q))\cap X\bigr)\;\cup\;\bigl(\Sing(F)\cap X\bigr) \; = \; Z(\cP_x(f))\cap X
\]
for any nonzero $x\in V_{10}$.
In particular, we see that $\Sing(F)\cap X$ is a divisor on $X$ given by $Z(s)$ for some $s\in H^0(X, H^2)$, and the equality in \eqref{polars-m3} holds up to a scalar.

To conclude \eqref{polars-m3}, we need to show that the scalar is independent of $x$.
We fix a linear form, say $x_0$, and up to scaling $s$, we obtain the equality
\[
m_3\left(\tfrac{\partial f}{\partial x_0}\right)=\tfrac{\partial q}{\partial x_0}\cdot s.
\]
Then for any $x_i$, since the minor $\frac{\partial f}{\partial x_i} \cdot \frac{\partial q}{\partial x_0} - \frac{\partial q}{\partial x_i}\cdot \frac{\partial f}{\partial x_0}$ vanishes along $X$, we get
\[
\left(m_3\left(\tfrac{\partial f}{\partial x_i}\right)-\tfrac{\partial q}{\partial x_i}\cdot s\right)\cdot \tfrac{\partial q}{\partial x_0}=0\in H^0(X,H^4).
\]
This allows us to conclude the equality for all $x\in V_{10}$.

Finally, we prove that the section $s$ does not lie in the image of the  map $m_2$. To this end, assume $s=m_2(A)$ for a quadratic polynomial $A\in S_2$. Then, for every $i$, the equality $m_3\left(\frac{\partial f}{\partial x_i}\right)=\frac{\partial q}{\partial x_i}\cdot m_2(A)=m_3\left(A\cdot \frac{\partial q}{\partial x_i}\right)$ implies that
\[
\tfrac{\partial f}{\partial x_i}=A\cdot \tfrac{\partial q}{\partial x_i}+q\cdot H_i
\]
for some linear form $H_i\in S_1$, in virtue of the description that $\ker(m_3)=S_1\cdot q$. In particular, by Euler's identity for homogeneous polynomials, we have
\[
4f=\sum_{i=0}^9\tfrac{\partial f}{\partial x_i}\cdot x_i
=A\cdot 2q + q\cdot \left(\sum_{i=0}^9 H_ix_i\right).
\]
Hence the polynomial $f$ would be divisible by $q$, which is a contradiction.
\end{proof}

In the sequel, we choose homogeneous coordinates $x_0,...,x_9$ in $\bP^9$ so that $q=x_0^2+...+x_9^2$. For every $0\leq i<j\leq 9$, let us consider the quartic polynomial
\[
f_{ij}\coloneqq\tfrac{\partial f}{\partial x_i}\cdot x_j-\tfrac{\partial f}{\partial x_j}\cdot x_i\in I_4=\ker(m_4).
\]
As explained in the proof of \autoref{polars}, up to scalars, these are precisely the $45$ minors of the Jacobian matrix \eqref{eq:jacobian-matrix}, hence $X$ is the intersection in $\bP^9$ of $Q$, $F$, and the hypersurfaces $Z(f_{ij})$. Moreover, we have:

\begin{prop}\label{45-quartics}
    The $45$ (classes of) quartics $f_{ij}$ are linearly independent in $S_4/ (S_2\cdot q)$.
\end{prop}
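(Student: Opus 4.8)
The plan is to translate the claimed linear independence into a statement about one-parameter subgroups of $\SO(V_{10},q)$ that preserve the degeneracy divisor $D\coloneqq D_7(\varphi)$, and then to rule out such subgroups using the fact that $S^{[2]}$ carries no nonzero global vector field.

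Concretely, suppose $\sum_{0\le i<j\le 9}c_{ij}f_{ij}=A\cdot q$ for some $A\in S_2$; the goal is to show all $c_{ij}=0$. Extending $(c_{ij})$ to a skew-symmetric $10\times10$ matrix $C=(c_{ij})$ (with $c_{ji}=-c_{ij}$, $c_{ii}=0$) and setting $\ell_i\coloneqq\sum_j c_{ij}x_j$, a direct computation shows that $\sum_{i<j}c_{ij}f_{ij}=\sum_i \ell_i\,\tfrac{\partial f}{\partial x_i}$, i.e.\ this combination is the derivative of $f$ along the linear vector field $v\coloneqq\sum_{i,j}c_{ij}x_j\,\tfrac{\partial}{\partial x_i}$ on $\bP^9$. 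Because $C$ is skew-symmetric with respect to $q=x_0^2+\dots+x_9^2$, we have $v\in\mathfrak{so}(V_{10},q)$, and in particular $v(q)=2\sum_i\ell_i x_i=0$. Thus the hypothesis is equivalent to $v(q)=0$ and $v(f)\in(q)$, so the one-parameter subgroup $\{\exp(tv)\}_{t\in\bC}\subset\SO(V_{10},q)$ preserves both the smooth quadric $Q$ and the hypersurface $D=Q\cap\{f=0\}$. Being automorphisms of $D$, the maps $\exp(tv)$ also preserve the reduced singular locus of $D$, which under the standing assumption of this section equals $S^{[2]}$.

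Restricting this action to $S^{[2]}$, the vector field $v$ induces a global holomorphic vector field on $S^{[2]}$; but $S^{[2]}$ is a hyperkähler fourfold, hence simply connected and with $T_{S^{[2]}}\cong\Omega^1_{S^{[2]}}$, so $H^0(S^{[2]},T_{S^{[2]}})=0$. Therefore $v$ vanishes at every point of $S^{[2]}$. Now the zero locus of the linear vector field $v$ on $\bP^9$ is the finite union of the projectivized eigenspaces $\bP(\ker(C-\lambda\,\Id))$, $\lambda$ ranging over the eigenvalues of $C$; since $S^{[2]}$ is irreducible and non-degenerate in $\bP^9$ (the restriction $H^0(\bP^9,\cO(1))\to H^0(S^{[2]},H)$ being an isomorphism), it cannot be contained in any proper such subspace, which forces $C=\lambda\,\Id$, and hence $C=0$ by skew-symmetry. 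So all $c_{ij}$ vanish, proving the proposition.

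The one step requiring genuine input is the passage ``$\exp(tv)$ preserves $D$ $\Longrightarrow$ it preserves $S^{[2]}$'': this is precisely where the hypothesis $\Sing(D_7(\varphi))=S^{[2]}$ is used, and it is the reason the statement is only asserted for $(S,L)$ general of Picard rank one (where that hypothesis holds by the Bertini-type argument recalled above). Everything else is elementary — the identification of $\sum_{i<j}c_{ij}f_{ij}$ with a Lie derivative, the vanishing $H^0(T_{S^{[2]}})=0$, and the shape of the zero locus of a linear vector field — so the geometric heart of the argument is really the rigidity coming from $S^{[2]}$ being hyperkähler.
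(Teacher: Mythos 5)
Your proof is correct, and it takes a genuinely different route from the one in the paper. The paper argues infinitesimally and sheaf-theoretically: it realizes the $f_{ij}$ as the image of the $45$-dimensional space $\bw2 V_{10}^\vee\subset H^0(Q,\Omega_{\bP^9}|_Q(2))$ under a polarity map $\beta\colon\Omega_{\bP^9}|_Q(2)\to\cI_{X/Q}(4)$, and then uses a commutative diagram built from the second fundamental form $\cT_Q\cong\Omega_Q(2)$ and the self-duality $\cN_{X/Q}\cong\cN_{X/Q}^\vee(4)$ (coming from the tangent cone of $D$ along $X$, which is where the hypothesis $\Sing(D)=X$ enters) to reduce the claim to the injectivity of the composite $\bw2 V_{10}^\vee\cong\mathfrak{so}(10)=H^0(\cT_Q)\to H^0(\cN_{X/Q})$, which follows from $H^0(\cT_X)=0$. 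You instead integrate the same infinitesimal data: the relation $\sum c_{ij}f_{ij}=A\cdot q$ is rewritten as $v(f)\in(q)$ for $v\in\mathfrak{so}(V_{10},q)$, the one-parameter subgroup $\exp(tv)$ is shown to preserve $Q$, $D$, and hence $\Sing(D)=S^{[2]}$, and the induced vector field on $S^{[2]}$ must vanish since $H^0(T_{S^{[2]}})=0$; irreducibility and linear non-degeneracy of $S^{[2]}$ then force $C=\lambda\,\Id$ and skew-symmetry gives $C=0$. The two proofs rely on exactly the same essential inputs---the identification of $\langle f_{ij}\rangle$ with $\mathfrak{so}(10)$, the standing assumption $\Sing(D)=S^{[2]}$, and the absence of global vector fields on a hyperkähler fourfold---but yours trades the diagram chase and the normal-bundle self-duality for the elementary facts that automorphisms preserve singular loci and that the fixed locus of a linear vector field on $\bP^9$ is a finite union of projectivized eigenspaces, making the argument shorter and more self-contained; the paper's version has the merit of exhibiting explicitly the induced map $\langle f_{ij}\rangle\to H^0(\cN_{X/Q})$, i.e.\ the link with first-order deformations of $X$ in $Q$, a point of view that recurs later in the section. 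The only step you assert without justification is that $\exp(tv)$ preserves $D$ for all $t$ rather than merely to first order; this is immediate from $\tfrac{d}{dt}\bigl(f\circ\exp(tC)\bigr)=(A\cdot q)\circ\exp(tC)\in(q)$ for every $t$, since $q$ is $\exp(tC)$-invariant, but it deserves a line.
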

\begin{proof}
For simplicity, throughout this proof we will denote by $P$ the ambient projective space $\bP^9$. Let $\beta\colon\Omega_{P}|_Q(2)\to \cI_{X/Q}(4)$ be the morphism of $\cO_Q$-modules defined as the composition
\[
\Omega_{P}|_Q(2)\into V_{10}\otimes \cO_Q(1)\into H^0(\cO_Q(3))\otimes \cO_Q(1)\to \cO_Q(4),
\]
where the map $\Omega_{P}|_Q(2)\hookrightarrow V_{10}\otimes \cO_Q(1)$ is induced by the twisted Euler sequence in $\bP^9$ (plus the natural isomorphism $V_{10}^\vee\cong V_{10}$ defined by $q$), and the mid morphism is given by polarity with respect to $f$. It is easy to check (using that $Q$ and $F$ are tangent along $X$) that the image of $\beta$ is indeed contained in $\cI_{X/Q}(4)\subset \cO_Q(4)$. 

On the other hand, there is a natural isomorphism $\cN_{X/Q}\cong \cN_{X/Q}^\vee(4)$ of $\cO_X$-modules.
Indeed, by \autoref{prop:normal-bundle}, we have $\cN_{X/Q}\cong \cE_1^\vee\otimes\cE_2$, where $\cE_1=\ker(\varphi|_X)$ and $\cE_2=\coker(\varphi|_X)$.
Therefore, we get
\[
\cN_{X/Q}\cong \cE_1^\vee\otimes\cE_2\cong \cE_1\otimes \det(\cE_1)^{-1}\otimes \det(\cE_2)\otimes \cE_2^\vee\cong \cN_{X/Q}^\vee(4).
\]
This natural isomorphism can be phrased more geometrically.
Namely, since we have assumed that $\Sing(D)=X$, where $D=D_7(\varphi)$ is defined by the quartic $\det\varphi$ in $Q$, the tangent cone to $D$ along $X$ provides us with a smooth quadric bundle over $X$, which can be equivalently expressed in terms of a pairing
\[
\Sym^2\cN_{X/Q}\to \cO_X(4)
\]
that is nowhere degenerate.

When we combine this description with the natural isomorphism $\cT_Q\cong\Omega_Q(2)$ given by the second fundamental form%
\footnote{This is a pairing $\Sym^2\cT_{Q}\to \cO_Q(2)$, fiberwise given by restriction of $Q$ to its Zariski tangent space.} \cite{griffiths-harris}, an elementary local computation shows that
\begin{equation}\label{fuc-diagram}
    \begin{tikzcd}
\Omega_P|_Q(2) \ar[r,twoheadrightarrow]\ar[d,twoheadrightarrow,"\beta"] & \Omega_Q(2) \ar[r,"\cong"] & \cT_Q\ar[d,twoheadrightarrow] \\
\cI_{X/Q}(4) \ar[r,twoheadrightarrow] & \cN_{X/Q}^\vee(4)\ar[r,"\cong"] &\cN_{X/Q}
\end{tikzcd}
\end{equation}
is a commutative diagram of $\cO_Q$-modules.

Recall that the map $\beta$ factors through the inclusion $\Omega_P(2)|_Q\into V_{10}^\vee\otimes\cO_Q(1)$. Now we consider each $\alpha_{ij}\coloneqq\frac{\partial f}{\partial x_i}\otimes x_j-\frac{\partial f}{\partial x_j}\otimes x_i$ as a global section of $V_{10}^\vee\otimes\cO_Q(1)$ (i.e.~$\alpha_{ij}\in V_{10}^\vee\otimes V_{10}^\vee$). In this way, $\langle \alpha_{ij}\rangle$ is identified with the subspace $\bw 2 V_{10}^\vee\subset V_{10}^\vee\otimes V_{10}^\vee$ of skew-symmetric tensors, so it follows from the short exact sequence
\[
0\to \Omega_P|_Q(2)\to V_{10}^\vee\otimes\cO_Q(1)\to \cO_Q(2) \to 0
\]
that $\langle \alpha_{ij}\rangle\subset H^0(Q,\Omega_P|_Q(2))$.

Note that all we want to prove is that the subspace $\langle\alpha_{ij}\rangle$ does not lose dimension when mapped to $H^0(\cI_{X/Q}(4))$ under $\beta$. According to \eqref{fuc-diagram}, and using $H^0(\cT_X)=0$, this amounts to the injectivity of the map
$\langle\alpha_{ij}\rangle\into H^0(\Omega_P|_Q(2))\to H^0(\cT_Q)$.

In order to check this injectivity, we take the commutative diagram with exact rows and columns
\[
\begin{tikzcd}
 & & 0\ar[d] & 0\ar[d] & \\
 0\ar[r] & \cO_Q\ar[r]\ar[d,"="] & \ker(\gamma)\ar[r]\ar[d] & \cT_Q \ar[r]\ar[d] & 0\\
0\ar[r] & \cO_Q\ar[r] & V_{10}\otimes\cO_Q(1)\ar[r]\ar[d,"\gamma"] & \cT_P|_Q \ar[r]\ar[d] & 0\\
 & & \cO_Q(2)\ar[d]\ar[r,"="] & \cO_Q(2)\cong\cN_{Q/P}\ar[d] & \\
 & & 0 & 0 & 
\end{tikzcd}
\]
where the mid row is the restriction to $Q$ of the (dual) Euler sequence in $\bP^9$. One immediately checks that, upon identifying $V_{10}\cong V_{10}^\vee$ by $q$, we have $\ker(\gamma)\cong\Omega_P|_Q(2)$ and that the map $\ker(\gamma)\to \cT_Q$ gets identified with the first row of \eqref{fuc-diagram}.

Moreover, it is well known that the dual Euler sequence yields
\[
V_{10}\otimes H^0(\cO_Q(1))=\mathrm{End}(V_{10}^\vee)\onto H^0(\cT_P|_Q)=\mathrm{End}(V_{10}^\vee)/\langle \mathrm{Id}\rangle,
\]
whereas the natural inclusion of the 45-dimensional vector space $H^0(\cT_Q)\subset H^0(\cT_P|_Q)$ can be realized by the Lie algebra $\mathfrak{so}(10)$ of skew-symmetric matrices.
From this description, it follows that the subspace $\langle \alpha_{ij}\rangle=\bw 2 V_{10}^\vee$ does not lose dimension when mapped to $H^0(\cT_Q)$, which finishes the proof.
\end{proof}

Now we have all the necessary ingredients to prove \autoref{thm:graded-resolution}:

\begin{proof}[Proof of \autoref{thm:graded-resolution}]
Since $X$ fails quadratic normality, the map $m\colon S_X\to R_X$ is not surjective in degree $2$, so we may take any $s\in R_2\setminus \im(m_2)$ and consider
\[
 \left(m,-s\cdot m(-2)\right)\colon S_X\oplus S_X(-2)\onto R_X,
\]
where $m(-2)$ is the same map $m$ with degree shifted by $-2$.
This gives us the first term of the resolution.

Now, we need to find generators for the kernel $K$ of the previous map. This is the graded $S_X$-submodule of $S_X\oplus S_X(-2)$ with pieces
\[
K_d=\setmid{(G,H)\in S_d\oplus S_{d-2}}{m_d(G)=m_{d-2}(H)\cdot s},
\]
which clearly contains $\ker(m_d)\oplus\set{0}=I_d\oplus\set{0}$.
Since $s\notin\im(m_2)$, we can easily deduce that
\begin{align*}
K_2&=\setmid{(\lambda q,0)}{\lambda\in \bC},\\
K_3&=\setmid{(G,H)\in S_3\oplus S_1}{m_3(G)=H\cdot s}.
\end{align*}
The descriptions of $K_2$ and $K_3$ tell us that to generate $K$ we need at least the map
\begin{equation}\label{partial-generation}
S_X(-2)\oplus S_X(-3)^{\oplus 10}\to S_X\oplus S_X(-2)
\end{equation}
given by the matrix
\[
\begin{pmatrix}
q & f_0 & ... & f_9\\
0 & x_0 & ... & x_9
\end{pmatrix},
\]
where $f_i\in S_3$ is a cubic such that $m_3(f_i)=x_i\cdot s$. As proven in \autoref{polars}, if we take a suitable multiple of $s\in R_2$ defining the divisor $\Sing(F)\cap X$, we can pick $f_i=\frac{\partial f}{\partial x_i}$.
Since two such choices $f_i,f_i'$ differ by a multiple of $q$ (as $\ker(m_3)=S_1\cdot q$), the map \eqref{partial-generation} surjects onto $K$ in degrees $2$ and $3$.

Now we observe that the projection map
\[
\mathrm{pr}_d\colon K_d\into S_d\oplus S_{d-2}\to S_{d-2},
\]
is surjective for all $d\ge 3$, since $K$ has generators of the form $(f_i,x_i)$ in degree $3$. The kernel of $\mathrm{pr}_d$ equals $K_d\cap (S_d\oplus\set{0})=I_d\oplus \set{0}$. On the other hand, we have proven in \autoref{prop:65-quartics} that the homogeneous ideal of $X$ in $\bP^9$ is generated by polynomials of degree $\le 4$. Therefore, as soon as we add to \eqref{partial-generation} all the necessary quartic generators for $I_4$, they will generate $K_d$ in every degree $d\ge 5$.
So it suffices to determine the remaining generators in degree $4$.

With the generators we have already picked, the map \eqref{partial-generation} in degree $4$ looks like
\[
\begin{aligned}
S_2\oplus S_1^{\oplus 10}&\to S_4\oplus S_2\\
\left(A,\left(\textstyle\sum_j\lambda_{ij}x_j\right)_i\right)&\mapsto \left(Aq + \sum_{i,j}\lambda_{ij}f_ix_j,\sum_{i,j}\lambda_{ij}x_ix_j\right).
\end{aligned}
\]
If we impose vanishing of the second component (i.e.~lying in $\ker(\mathrm{pr}_d)$), we get $\lambda_{ij}+\lambda_{ji}=0$ for any $i$ and $j$.
In other words, we obtain elements of the form
\[
\left(Aq + \sum_{i<j}\lambda_{ij}\cdot (f_ix_j-f_jx_i),0\right)\in I_4\oplus \{0\}=\ker(\mathrm{pr}_4),
\]
so it follows from \autoref{45-quartics} that the image of \eqref{partial-generation} intersects $\ker(\mathrm{pr}_4)$ in a 100-dimensional vector space. On the other hand, recall that $\dim I_4=120$ by \eqref{eq:hilbert-function}, as $X$ satisfies quartic normality in $\bP^9$. Therefore, \eqref{partial-generation} can be completed to the second step of the minimal resolution as follows:
\[
S_X(-2)\oplus S_X(-3)^{\oplus 10}\oplus S_X(-4)^{\oplus 20}\to S_X\oplus S_X(-2).
\]

Once we determined the second term in the resolution, we can easily deduce the rest of the Betti diagram.
Notably, by Green's duality \autoref{thm:green-duality}, the last two rows $j\in\set{4,5}$ (as well as any subsequent rows) are determined via the symmetry
\[
\forall i\in\bZ,\ \forall j\ge 4,\quad b_{i,j}=b_{5-i,5-j}.
\]
The remaining terms in the row $j=3$ can be computed one-at-a-time, since the Hilbert function $h^0(\cI_{X/\bP^9}(d))$ is completely determined via the Riemann--Roch polynomial.
\end{proof}

\subsection{Deformation}
The goal of this section is to show that a general deformation of $S^{[2]}$ is contained in no quadric of $\bP^9$; consequently, it is projectively normal and we will deduce its entire Betti diagram, which establishes \autoref{intro-thmdef}.\eqref{intro-thmdef-1}.
For simplicity, we will denote again by $X$ the Hilbert square $S^{[2]}$ and by $P$ the projective space $\bP^9=\bP\left(H^0(L_2-2\delta)^\vee\right)$.

We begin with some cohomological computations of infinitesimal nature.

\begin{prop}
\label{prop:cohomology-dim}
We have
\begin{enumerate}
\item $h^0(X, \cT_P|_X) = 99$, $h^1(X, \cT_P|_X) = 1$;
\item $h^0(X, \cN_{X/P}) = 119$, $h^1(X, \cN_{X/P}) = 0$;
\item $h^0(X, \cT_Q|_X) = 45$, $h^1(X, \cT_Q|_X) = 2$;
\item $h^0(X, \cN_{X/Q}) = 64$, $h^1(X, \cN_{X/Q}) = 0$.
\end{enumerate}
Moreover, for $H^0(X, \cN_{X/Q})$---the space of first-order deformations of $X$ in $Q$---there is a canonical identification
\[
H^0(X, \cN_{X/Q})\cong H^0(L)\otimes W.
\]
\end{prop}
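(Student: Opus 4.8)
The plan is to compute all four cohomology pairs by a cascade of short exact sequences, starting from the Euler sequence on $P=\bP^9$ and pushing the answers through the quadric $Q$ down to $X$. I will use throughout: $h^\bullet(X,\cO_X)=(1,0,1,0,1)$ (as $X$ is holomorphic symplectic of $K3^{[2]}$-type), Kodaira vanishing for the ample bundles $\cO_X(1)$ and $\cO_X(2)$, which together with the Riemann--Roch polynomial of $K3^{[2]}$-type give $h^\bullet(X,\cO_X(1))=(10,0,0,0,0)$ and $h^\bullet(X,\cO_X(2))=(55,0,0,0,0)$, and the Hodge numbers of $X$, notably $h^\bullet(X,\cT_X)=h^\bullet(X,\Omega^1_X)=(0,21,0,21,0)$ via the holomorphic symplectic form (with $b_3(X)=0$ giving the vanishing $h^2(X,\cT_X)=0$). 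Restricting $0\to\cO_P\to\cO_P(1)^{\oplus10}\to\cT_P\to0$ to $X$ and chasing cohomology yields $h^\bullet(X,\cT_P|_X)=(99,1,0,1,0)$, which is part (1); the same chase — using non-degeneracy of $X$ (so $H^0(\cI_{X/P}(1))=0$) and $H^1(\cI_{X/P})=0$ — shows that $H^0(P,\cT_P)\to H^0(X,\cT_P|_X)$ is an isomorphism of $99$-dimensional spaces.

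For part (3) I restrict $0\to\cT_Q\to\cT_P|_Q\to\cN_{Q/P}\to0$ to $X$, obtaining $0\to\cT_Q|_X\to\cT_P|_X\to\cO_X(2)\to0$. Via the isomorphism just noted, its middle map factors as $H^0(P,\cT_P)\onto H^0(Q,\cO_Q(2))\into H^0(X,\cO_X(2))$: the first arrow is onto because smooth quadrics form a dense $\mathrm{PGL}_{10}$-orbit in $|\cO_P(2)|$, so the differential $\mathfrak{pgl}_{10}\to T_{[Q]}|\cO_P(2)|=H^0(\cN_{Q/P})$ is surjective, and the second is injective because $H^0(Q,\cI_{X/Q}(2))=0$ (established earlier). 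Hence the map has rank $h^0(Q,\cO_Q(2))=54$, and the long exact sequence gives $h^0(\cT_Q|_X)=45$, $h^1(\cT_Q|_X)=2$ and $h^2(\cT_Q|_X)=0$, using $h^1(X,\cO_X(2))=0$ and $h^1(X,\cT_P|_X)=1$.

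Parts (4) and (2) then reduce to a single vanishing. The conormal sequence $0\to\cT_X\to\cT_Q|_X\to\cN_{X/Q}\to0$ yields the exact sequence $0\to\bC^{45}\to H^0(X,\cN_{X/Q})\to\bC^{21}\to\bC^{2}\to H^1(X,\cN_{X/Q})\to0$, so $h^0(X,\cN_{X/Q})-h^1(X,\cN_{X/Q})=64$; granting $H^1(X,\cN_{X/Q})=0$ (equivalently, that $H^1(X,\cT_X)\onto H^1(X,\cT_Q|_X)$) one obtains $h^0(X,\cN_{X/Q})=64$, and then $0\to\cN_{X/Q}\to\cN_{X/P}\to\cO_X(2)\to0$ immediately gives $H^1(X,\cN_{X/P})=0$ and $h^0(X,\cN_{X/P})=64+55=119$, which is part (2). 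To obtain $H^1(X,\cN_{X/Q})=0$ together with the identification $H^0(X,\cN_{X/Q})\cong H^0(L)\otimes W$ simultaneously, I would use \autoref{prop:normal-bundle}: writing $\cQ|_X=\im(\varphi|_X)=\cS_+|_X/\cE_1$ for the rank-$6$ image bundle, there is an exact sequence $0\to\cE_1^\vee\otimes\cQ|_X\to\cE_1^\vee\otimes W\to\cN_{X/Q}\to0$, where $\cE_1^\vee=L^{[2]}$ and $\cE_1^\vee\otimes W=(L^{[2]})^{\oplus8}$. Since $L^{[2]}=\pi_*\psi^*L$ and $B$ is the blow-up of $S\times S$ along the diagonal, blowing down and applying the Künneth formula gives $H^\bullet(X,L^{[2]})\cong H^\bullet(S,L)\otimes H^\bullet(S,\cO_S)$, so $h^\bullet(X,\cE_1^\vee)=(8,0,8,0,0)$; therefore, if $H^0(X,\cE_1^\vee\otimes\cQ|_X)=H^1(X,\cE_1^\vee\otimes\cQ|_X)=0$, then $H^0(X,\cN_{X/Q})\cong H^0(X,\cE_1^\vee\otimes W)=H^0(L)\otimes W$ is $64$-dimensional, which forces $H^1(X,\cN_{X/Q})=0$ by the identity above.

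The main obstacle is precisely these two vanishings $\Hom_X(\cE_1,\cQ|_X)=\Ext^1_X(\cE_1,\cQ|_X)=0$ — everything else is a formal diagram chase. One route is a direct computation: $\cE_1^\vee\otimes\cQ|_X=\cT_{F/Q}|_X$ for $F=\Gr_Q(2,\cS_+)$, and $X\subset F$ is the zero locus of a regular section of $\cU^\vee\otimes W$, so the Koszul complex of that section resolves $\cO_X$ on $F$; tensoring by $\cT_{F/Q}$ reduces the two groups to the cohomology of explicit homogeneous bundles on $F$ and on the quadric $Q$, computable via Borel--Weil--Bott, once one checks $H^{>0}(F,\cT_{F/Q})=0$ (which follows from simplicity of the spinor bundle $\cS_+$). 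The more geometric alternative is to observe (cf.\ \autoref{distquartic-genus7}) that varying the subspace $H^0(L)^\vee\subset V_+$ produces a $64$-dimensional family of degeneracy loci $D_6(\varphi')\subset Q$; showing that the induced map $\Gr(8,V_+)\to\Hilb(Q)$ is an immersion at $[H^0(L)^\vee]$ identifies $H^0(X,\cN_{X/Q})$ with $T_{[H^0(L)^\vee]}\Gr(8,V_+)=\Hom(H^0(L)^\vee,W)\cong H^0(L)\otimes W$, again giving $h^0(X,\cN_{X/Q})=64$ and hence $H^1(X,\cN_{X/Q})=0$. In either approach the delicate point is confirming that the many homogeneous contributions — respectively, the immersion property — behave exactly as the numerology predicts.
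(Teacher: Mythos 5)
Your reductions for parts (1)--(3) are sound, and in places you take a slightly different but equally valid route from the paper: for $\cT_Q|_X$ you restrict the normal bundle sequence of $Q\subset P$ and compute the rank of $H^0(\cT_P|_X)\to H^0(\cO_X(2))$ via the $\mathrm{PGL}_{10}$-orbit argument, whereas the paper dualizes, uses the second fundamental form isomorphism $\cT_Q\cong\Omega_Q(2)$, and reads the answer off the twisted Euler sequence together with the failure of quadratic normality; for part (2) you deduce the count from part (4) and the sequence $0\to\cN_{X/Q}\to\cN_{X/P}\to\cO_X(2)\to0$, whereas the paper invokes the local Torelli theorem (corank $1$ of the Kodaira--Spencer map) directly. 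Your observation that the conormal sequence forces $h^0(\cN_{X/Q})-h^1(\cN_{X/Q})=64$, so that the single equality $h^0=64$ suffices, matches the paper's logic. The Künneth computation $h^\bullet(X,L^{[2]})=(8,0,8,0,0)$ via the blow-up of $S\times S$ along the diagonal is also correct.

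However, there is a genuine gap at the heart of the proposition: the equality $h^0(X,\cN_{X/Q})=64$ together with the identification $H^0(X,\cN_{X/Q})\cong H^0(L)\otimes W$. You reduce this to the two vanishings $H^0(X,\cE_1^\vee\otimes\cQ|_X)=H^1(X,\cE_1^\vee\otimes\cQ|_X)=0$ and then explicitly leave them unproved, offering only two sketched strategies. Neither sketch closes the gap: the Koszul/Borel--Weil--Bott route on $F=\Gr_Q(2,\cS_+)$ is not carried out (and the relevant bundles restricted to $X$ are not homogeneous, so the computation does not reduce to pure representation theory without further work), while the Hilbert-scheme route would at best show that $T_{[H^0(L)^\vee]}\Gr(8,V_+)$ injects into $H^0(X,\cN_{X/Q})$, i.e.\ $h^0\ge 64$, which is the wrong inequality --- the conormal sequence already gives $h^0\ge 64$, and what is needed is the upper bound. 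The paper resolves exactly this point by a different and substantial argument: using the projection formula to write $H^0(X,\cE_1^\vee\otimes\cE_2)=H^0(S,L\otimes\psi_*\pi^*\cE_2)$ and then proving in \autoref{lem:cokernel-pullpush} that $\psi_*\pi^*\cE_2\cong W\otimes\cO_S$, a proof that passes through the Fourier--Mukai partner $\cM$, the interpretation of $\cE_2$ as the tautological bundle $\hL^{[2]}$, the universal quadrisecant line to $\tS\subset\bP^4$, and explicit cohomology vanishing on a $\bP^1$-bundle over $\cM$. Without an argument of comparable substance for your two vanishings, the proposal does not establish parts (2) and (4) or the final identification.
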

\begin{proof}
We consider the Euler sequence on $P$ restricted to $X$
\[
0\to \cO_X\to\cO_X(1)^{\oplus 10}\to\cT_P|_X\to 0,
\]
from which we easily deduce $h^i(X, \cT_P|_X)$.
Then we consider the short exact sequence
\[
0\to \cT_X\to \cT_P|_X \to \cN_{X/P}\to 0
\]
with the Kodaira--Spencer map
\[
\delta_{X/P}\colon H^0(X, \cN_{X/P}) \to H^1(X, \cT_X).
\]
The local Torelli theorem \cite[Section~8]{beauville} tells us that $\delta_{X/P}$ is of corank $1$. This allows us to determine the cohomology groups of $\cN_{X/P}$.

Next, we compute $h^i(X, \cT_Q|_X)$.
We begin by noticing that on a smooth quadric hypersurface we have an isomorphism $\Omega_Q(2)\cong \cT_Q$.
Indeed, the second fundamental form (see \cite{griffiths-harris})
\[
I\!I\colon\Sym^2\cT_Q\to \cN_{Q/P}=\cO_Q(2)
\]
is nowhere degenerate, so it identifies $\cT_Q$ as $\cT_Q(-2)^\vee\cong\Omega_Q(2)$.
Then we consider the short exact sequence
\[
0\to \cT_Q\to \cT_P|_Q \to \cO_Q(2)\to 0.
\]
Upon dualizing, twisting with $\cO(2)$, and restricting to $X$, we get
\begin{equation}
\label{eq:kernel-bundle-on-X}  
0\to\cO_X\to \Omega_P(2)|_X\to \Omega_Q(2)|_X\cong\cT_Q|_X\to 0.
\end{equation}
Note again that we have the following twisted Euler sequence on $P$ restricted to $X$
\[
0\to\Omega_P(2)|_X\to \cO_X(1)^{\oplus 10} \to \cO_X(2)\to 0.
\]
Since $X$ is contained in a unique quadric and is not quadratically normal,
it is clear that $h^0(X, \Omega_P(2)|_X)=46$, $h^1(X, \Omega_P(2)|_X)=1$, and higher cohomology groups of $\Omega_P(2)|_X$ vanish.
We can then completely determine the cohomology groups of $\cT_Q|_X$ using \eqref{eq:kernel-bundle-on-X}.

Finally, we study the normal bundle $\cN_{X/Q}$.
Recall from \autoref{prop:normal-bundle} that $\cN_{X/Q}\cong \cE_1^\vee\otimes\cE_2$, where $\cE_1=\ker(\varphi|_X)$ and $\cE_2=\coker(\varphi|_X)$.
Moreover, the dual $\cE_1^\vee$ of the kernel bundle is the tautological rank-$2$ bundle $L^{[2]}=\pi_*\psi^* L$ induced by $L$,
where $\pi$ and $\psi$ are the maps given in the diagram \eqref{eq:universallenght2subscheme}.
To compute the global sections of $\cN_{X/Q}$, we have the following
\[
H^0(X, \cN_{X/Q})=H^0(X, \cE_1^\vee\otimes \cE_2)=H^0(X, \pi_*\psi^*L\otimes \cE_2)=H^0(S, L\otimes \psi_*\pi^*\cE_2).
\]
We claim that $\psi_*\pi^*\cE_2$ is the trivial vector bundle $W\otimes \cO_S$, which we shall prove in \autoref{lem:cokernel-pullpush}.
This gives us the desired identification
\[
H^0(X, \cN_{X/Q})\cong H^0(S, L)\otimes W.
\]
To conclude, we consider the short exact sequence
\[
0\to \cT_X\to \cT_Q|_X \to \cN_{X/Q}\to 0
\]
with the Kodaira--Spencer map
\[
\delta_{X/Q}\colon H^0(X, \cN_{X/Q}) \to H^1(X, \cT_X).
\]
Since $h^0(X, \cN_{X/Q}) = 64$, we deduce that $\delta_{X/Q}$ has rank $19$ and $h^1(X, \cN_{X/Q}) = 0$.
\end{proof}

\begin{rem}
As observed in \autoref{distquartic-genus7}, $H^0(L)\otimes W$ is naturally  the Zariski tangent space to $\Gr(8,V_+)$ at the point $[H^0(L)^\vee]$.
Hence deforming $X$ inside $Q$ is the same as varying the chosen $\bP^7$ in the ambient $\bP^{15}$ of the orthogonal Grassmannian; in other words, it is the same as varying the $K3$ surface.
\end{rem}

The following lemma completes the remaining step in the proof of \autoref{prop:cohomology-dim}.

\begin{lem}
\label{lem:cokernel-pullpush}
Let $\cE_2=\coker(\varphi|_X)$ be the rank-$2$ cokernel bundle of $\varphi$ on $X$.
With the notations of diagram \eqref{eq:universallenght2subscheme}, $\psi_*\pi^*\cE_2$ is isomorphic to the trivial vector bundle $W\otimes \cO_S$.
\end{lem}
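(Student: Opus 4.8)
The plan is to reduce the claim to a vanishing statement for the higher direct images of the image sheaf of $\varphi|_X$, and then to read that vanishing off from the cohomology of the spinor bundle along the fibres of $\psi$.

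Set $\mathcal{J}\coloneqq\im(\varphi|_X)\subset W\otimes\cO_X$, a rank-$6$ sheaf sitting in two short exact sequences $0\to\cE_1\to\cS_+|_X\to\mathcal{J}\to 0$ and $0\to\mathcal{J}\to W\otimes\cO_X\to\cE_2\to 0$. Since $\pi$ is finite flat, pulling the second sequence back to $B$ and applying $\psi_*$ gives
\[
0\to\psi_*\pi^*\mathcal{J}\to W\otimes\psi_*\cO_B\to\psi_*\pi^*\cE_2\to R^1\psi_*\pi^*\mathcal{J}\to W\otimes R^1\psi_*\cO_B .
\]
The fibres of $\psi$ are the surfaces $\set{\xi\in X:p\in\xi}$, each isomorphic to the blow-up of $S$ at $p$; being connected with $h^1(\cO)=0$, cohomology and base change give $\psi_*\cO_B=\cO_S$ and $R^1\psi_*\cO_B=0$. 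Hence $\psi_*\pi^*\cE_2\cong W\otimes\cO_S$ will follow once we show $R^0\psi_*\pi^*\mathcal{J}=R^1\psi_*\pi^*\mathcal{J}=0$.

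To control $\pi^*\mathcal{J}$ I would pull the sequence $0\to\cE_1\to\cS_+|_X\to\mathcal{J}\to 0$ back to $B$ and compute the direct images of its outer terms. For $\pi^*\cE_1$: since $\cE_1^\vee=L^{[2]}=\pi_*\psi^*L$, the bundle $\pi^*\cE_1$ is dual to $\pi^*\pi_*\psi^*L$; identifying $B$ with $\Bl_\Delta(S\times S)$ — with $\pi$ the quotient by the involution, $\psi$ and $\psi'$ the two projections to $S$, $c$ the blow-down, and $E$ the exceptional divisor — one gets $0\to\psi^*L^{-1}\to\pi^*\cE_1\to(\psi')^*L^{-1}(E)\to 0$. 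As $Rc_*\cO_B(E)=\cO_{S\times S}$ (blow-up of a codimension-$2$ centre), the projection formula identifies $R\psi_*\bigl((\psi')^*L^{-1}(E)\bigr)$ with $H^\bullet(S,L^{-1})\otimes\cO_S$, which vanishes in degrees $0$ and $1$; combined with $R^\bullet\psi_*\psi^*L^{-1}=L^{-1}\otimes R^\bullet\psi_*\cO_B$ this yields $\psi_*\pi^*\cE_1\cong L^{-1}$ and $R^1\psi_*\pi^*\cE_1=0$ ($R^2\psi_*\pi^*\cE_1$ being a nonzero sheaf whose precise shape is irrelevant). Writing $\Phi\coloneqq\phi\circ\pi\colon B\to Q$, it then suffices to prove that $R^0\psi_*\Phi^*\cS_+\cong L^{-1}$ via the natural map from $\psi_*\pi^*\cE_1$, that $R^1\psi_*\Phi^*\cS_+=0$, and that the induced map $R^2\psi_*\pi^*\cE_1\to R^2\psi_*\Phi^*\cS_+$ is injective: indeed, chasing the long exact sequence of $0\to\pi^*\cE_1\to\Phi^*\cS_+\to\pi^*\mathcal{J}\to 0$ then gives precisely $R^0\psi_*\pi^*\mathcal{J}=R^1\psi_*\pi^*\mathcal{J}=0$.

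This last step is where I expect the real difficulty. By base change it is a fibrewise statement about the restriction of $\cS_+$ to the surface $\set{\xi\in X:p\in\xi}\hookrightarrow Q$: one needs $h^0=1$, generated by the constant spinor $s_{V_p}$ attached to the point $p$ of the Mukai-embedded $S\hookrightarrow\OGr(5,V_{10})_+$ — which does sit inside $\Phi^*\cS_+$ as a nowhere-vanishing sub-line-bundle pulling back $\cO(-1)$ to $\psi^*L^{-1}$, so that $R^0\psi_*\Phi^*\cS_+$ contains $\psi_*\psi^*L^{-1}=L^{-1}$ and the question is whether this exhausts it — together with $h^1=0$ and the injectivity $H^2(\cE_1|_{\textup{fibre}})\hookrightarrow H^2(\cS_+|_{\textup{fibre}})$. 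To carry this out one needs a concrete hold on $\cS_+$ restricted to the surfaces $\set{\xi:p\in\xi}$, obtainable either from the incidence description \eqref{eq:incidence-variety} of $\cS_+$ (over each point of such a surface lies the $\OGr(4,8)$-family of maximal isotropics through it), or, more efficiently, from the explicit spinor calculus recalled above from \cite{mukai-genus7} (the operators $\varphi_v$, the pairing $\beta$, the pure spinors $s_U$), which also makes the sub-line-bundle $\psi^*L^{-1}\hookrightarrow\Phi^*\cS_+$ transparent, or from the Fourier–Mukai realization of $\cS_+$ through the universal sheaf $\cF$ on $S\times\cM$. A Riemann–Roch check on $\Bl_pS$, using the Chern classes of $\cS_+$ from \cite{ottaviani}, confirms that the Euler characteristics are consistent with $h^0=1$, $h^1=0$.
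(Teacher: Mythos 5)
Your reduction is sound as far as it goes: the two exact sequences through $\mathcal{J}=\im(\varphi|_X)$, the identifications $\psi_*\cO_B=\cO_S$, $R^1\psi_*\cO_B=0$, and the computation $\psi_*\pi^*\cE_1\cong L^{-1}$, $R^1\psi_*\pi^*\cE_1=0$ via $B\cong\Bl_\Delta(S\times S)$ are all correct, and they do reduce the lemma to the three fibrewise statements you isolate about $\cS_+$ restricted to the surfaces $\tS_p=\set{\xi : p\in\xi}\subset Q$ (namely $h^0=1$ generated by the constant spinor, $h^1=0$, and injectivity on $H^2$ from $\cE_1|_{\tS_p}$). But that is precisely where the proof stops: you name three possible toolkits and check that Riemann--Roch is \emph{consistent} with $h^0=1$, $h^1=0$, which of course cannot rule out $(h^0,h^1)=(2,1)$ or worse. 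Since this fibrewise cohomology computation is the entire mathematical content of the lemma --- everything before it is formal --- the proposal has a genuine gap rather than a routine omission.

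For comparison, the paper attacks the cokernel directly rather than through $\cS_+$: using the Fourier--Mukai isomorphism $S^{[2]}\cong\cM^{[2]}$ of \autoref{rem:S2-iso-fm-partner}, it identifies $\cE_2$ with the tautological bundle $\hL^{[2]}$ on $\cM^{[2]}$, so that the restriction of $W\otimes\cO_X\to\cE_2$ to $\tS_p$ becomes the map $H^0(\cM,\hL)\to H^0(\Sigma,\sigma^*\hL)$, where $\Sigma\to\cM$ is the degree-$4$ incidence correspondence of quadrisecant lines to $\tS_p\subset\bP^4$. Realizing $\Sigma$ as a divisor of class $-\hL+4\zeta$ in the $\bP^1$-bundle $Z=\bP_\cM(\cF_p^\vee)$, the required isomorphism reduces to $h^0(Z,\cO_Z(2\hL-4\zeta))=h^1(Z,\cO_Z(2\hL-4\zeta))=0$, which is settled by Serre duality and the identity $\cF_p\otimes\cF_p^\vee\cong\cO_\cM\oplus\Sym^2\cF_p(-\hL)$ together with the stability of $\cF_p$. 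The nontrivial length of that argument is a good indication that the step you deferred is not a formality; if you want to complete your route instead, you would need an equally concrete handle on $\phi^*\cS_+|_{\tS_p}$, e.g.\ via the restriction to $\tS_p$ of the incidence description \eqref{eq:incidence-variety}, and at present no such computation is supplied.
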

\begin{proof}
To show that $\psi_*\pi^*\cE_2\cong W\otimes \cO_S$, we restrict to the fiber $\psi^{-1}(p)$ for each $p\in S$, which is a blow up of the $K3$ surface $S$.
By abuse of notation, we denote by $\tS$ also the subvariety of $S^{[2]}$ parametrizing subschemes with support containing $p$.
It suffices to show that the natural map $W\to H^0(\tS,\cE_2|_{\tS})$ (induced by $W\otimes\cO_X\to \cE_2$) is an isomorphism.

Let us first note that the line bundle $L_2-2\delta$ on $S^{[2]}$ restricts to $\cO_\tS(L-2E)$ on $\tS$, where $E$ is the class of the exceptional divisor. In particular, we have a closed immersion
\[
\tS\into \bP(H^0(\tS,L-2E)^\vee)=\bP\left(H^0(S,L\otimes\cI_p^2)^\vee\right)
\]
given by linear projection of $S\subset\bP^7$ from its projective tangent 2-plane at $p$. Precisely, under the isomorphism $E\cong \cN_{S/\bP^7}^\vee(2)$
\cite[Corollary~4.6]{kuznetsov-2} we have $\bP\left(H^0(S,L\otimes\cI_p^2)^\vee\right)=\bP(E^\vee\otimes k(p))$; the latter is a linear subspace of $\bP(H^0(S^{[2]},L_2-2\delta)^\vee)$ as depicted in the diagram \eqref{eq:diagram-genus7}.

Recall from \autoref{rem:S2-iso-fm-partner} that we have an isomorphism between $S^{[2]}$ and $\cM^{[2]}$, under which $\cE_2$ can be identified as the tautological bundle $\hL^{[2]}=\psi'^*\pi'_*\hL$ on $\cM^{[2]}$, where $\pi'$ and $\psi'$ are the projections from the universal length-2 subscheme $B'$ on $\cM$. 
We denote by $\Sigma$ the preimage of $\tS$ under $\pi'$, which is a branched double cover of $\tS$,
and we denote by $\sigma$ the restriction of the map $\psi'$ to $\Sigma$.
It suffices to show that the natural inclusion $W=H^0(\cM,\hL)\hookrightarrow H^0(\Sigma,\sigma^*\hL)$ is an isomorphism.
\[
\begin{tikzcd}
& B'\ar[ddl,"\psi'"']\ar[ddrr,"\pi'"]  &\Sigma\ar[l]\ar[ddll,"\sigma"]\ar[dr]&&& B\ar[ddll,"\pi"']\ar[ddr,"\psi"]\\
&&& \tS\ar[d,hookrightarrow]\\
\cM &&& \cM^{[2]}\cong S^{[2]} &&& S
\end{tikzcd}
\]
Now we observe that the map $\sigma:\Sigma\to\cM$ is finite of degree 4. Indeed, in view of the isomorphism $\cM^{[2]}\cong S^{[2]}$ of \autoref{subsec:FMpartner}, the fiber $\sigma^{-1}(F)$ for a given $F\in\cM$ consists of the four points $q_1,...,q_4\in \tS\subset S^{[2]}$ with the property $h^0(S,F\otimes\cI_{p,q_i})=2$. To make this precise, consider the evaluation map
$H^0(F\otimes\cI_p)\otimes\cO_S\to F$ with kernel $L^{-1}$; dualizing, one obtains a short exact sequence
\begin{equation}\label{eq:ses-sigma}
0\to F^\vee\to H^0(F\otimes\cI_p)^\vee\otimes\cO_S\to L\otimes \cI_{T_F}\to 0
\end{equation}
where $T_F$ is a finite, length-7 subscheme on $S$ containing the fat point $\underline{\mathop{\mathrm{Spec}}}(\cO_S/\cI_p^2)$. Then the fiber $\sigma^{-1}(F)$ is isomorphic to the complement of $\underline{\mathop{\mathrm{Spec}}}(\cO_S/\cI_p^2)$ in $T_F$.

More geometrically, the inclusion  $H^0(S,F\otimes\cI_p)^\vee\subset H^0(S,L\otimes\cI_p^2)$ defined by \eqref{eq:ses-sigma} can be thought of as a line in $\bP\left(H^0(L\otimes\cI_p^2)^\vee\right)=\bP^4$, which is quadrisecant to $\tS$ along the subscheme $\pi'(\sigma^{-1}(F))$. In this way, $\cM$ parametrizes a family of lines in $\bP^4$ which are quadrisecant to $\tS$. We claim that the rank-2 vector bundle $\cF_p^\vee$ on $\cM$ equals the pullback of the universal bundle under the corresponding morphism $\cM\to \Gr\left(2,H^0(L\otimes\cI_p^2)^\vee\right)$.

To prove this, it suffices to observe that the vector bundle map $\Phi(\cI_p)^\vee\hookrightarrow H^0(\cF_p)\otimes\cO_\cM$ (obtained by applying \autoref{rem-FMkernelbundle} on $\cM$) globalizes the inclusion $H^0(S,F\otimes\cI_p)^\vee\subset H^0(S,L\otimes\cI_p^2)$ for every $F\in \cM$. Indeed, by cohomology and base change we have canonical identifications of:
\begin{itemize}
    \item The fiber of $\Phi(\cI_p)^\vee$ at $F\in\cM$ with $H^0(S,F\otimes\cI_p)^\vee$.
    \item $H^0(\cM,\cF_p)$ with the fiber at $p\in S$ of $\hat{\Phi}(\cO_\cM)=E$, hence with $H^0(S,L\otimes \cI_p^2)$.
\end{itemize}

We can thus consider the projective bundle $\bP_\cM(\cF_p^\vee)$, which can be understood as the universal quadrisecant line to $\tS\subset \bP^4$,
while $\Sigma$ is the incidence variety of the four secant points.
We obtain a commutative diagram
\[
\begin{tikzcd}
\Sigma \ar[r,hookrightarrow]\ar[dr,"\sigma"'] &\bP_\cM(\cF_p^\vee) \eqqcolon Z\ar[d]\\
& \cM
\end{tikzcd}
\]
and our goal is to prove that $H^0(\Sigma, \sigma^*\hL)=H^0(\cM,\sigma_*\sigma^*\hL)$ equals $H^0(\cM, \hL)$.
Note that $\Pic(Z)=\bZ\hL\oplus \bZ\zeta$, where by abuse of notation $\hL$ also denotes the pullback of $\hL$ to $Z$, and $\zeta$ is the relative $\cO(1)$ satisfying the relation $\zeta^2-\hL\cdot \zeta + 5\mathrm{pt}_\cM=0$ (recall that $v(\cF_p^\vee)=(2,-\hL,3)$, in particular $c_2(\cF_p^\vee)=5$).
Furthermore, it follows from the relative Euler sequence that $\omega_Z=\cO_Z( \hL-2\zeta)$.

It is easy to deduce from the intersection numbers
\[
\Sigma\cdot \hL^2=4\cdot \deg_{\hL} \cM=48, \quad \Sigma\cdot \zeta^2=2\cdot \deg (\tS/\bP^4)=16
\]
that $\cO_Z(\Sigma)=\cO_Z(-\hL+4\zeta)$.
Hence by considering the short exact sequence
\[
0\to \cO_Z(2\hL-4\zeta)=\cI_{\Sigma/Z}(\hL)\to \cO_Z(\hL) \to \cO_\Sigma(\hL)\to 0,
\]
it suffices to show that $h^0(Z,\cO_Z(2\hL-4\zeta))=h^1(Z,\cO_Z(2\hL-4\zeta))=0$.
Equivalently, by Serre duality we need $h^2(Z,\cO_Z(-\hL+2\zeta))=h^3(Z,\cO_Z(-\hL+2\zeta))=0$.

To this end, we observe that the pushforward of $\cO_Z(-\hL+2\zeta)$ to $\cM$ equals $\Sym^2\cF_p\;(-\hL)$, and there are no higher direct images. Therefore, there are isomorphisms
\[
H^i\left(Z,\cO_Z(-\hL+2\zeta)\right)\cong H^i\left(\cM,\Sym^2\cF_p\;(-\hL)\right)
\]
for any $i\geq0$, and the required vanishing for $i=3$ is trivial. 
For $i=2$, we simply consider the direct sum decomposition
\[
\cF_p\otimes\cF_p(-\hL)\cong \left(\bw2 \cF_p\oplus \Sym^2\cF_p\right)(-\hL),
\]
which becomes
\[
\cF_p\otimes\cF_p^\vee\cong \cO_\cM\oplus \Sym^2\cF_p\;(-\hL)
\]
in virtue of the equality $\det(F_p)=\hL$. But then $h^2(\cO_\cM)=1$ and $h^2(\cF_p\otimes\cF_p^\vee)=\hom(\cF_p,\cF_p)=1$, which implies the required vanishing $h^2(\Sym^2\cF_p\;(-\hL))=0$ and completes the proof.
\end{proof}

Now we can derive the desired results for a general deformation of $(S^{[2]},L_2-2\delta)$.

\begin{prop}
\label{prop:genus-7-deformation}
A general deformation of $S^{[2]}$, that is, a general member $(X', H')\in\cM_4^{(1)}$, is not contained in any quadric hypersurface of $\bP^9$.
\end{prop}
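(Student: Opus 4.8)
The plan is to verify that lying on a quadric is a \emph{proper} condition, by a dimension count on a flag Hilbert scheme near $[X]$ (throughout, $X=S^{[2]}$ and $P=\bP^9$). Since $h^1(X,\cN_{X/P})=0$ by \autoref{prop:cohomology-dim}, the Hilbert scheme $\Hilb$ of subschemes of $P$ with the Hilbert polynomial of $X$ is smooth at $[X]$ of dimension $h^0(X,\cN_{X/P})=119$, and for $[Y]$ in a neighbourhood of $[X]$ the pair $(Y,\cO_Y(1))$ again lies in $\cM_4^{(1)}$ (smoothness, the $K3^{[2]}$-type, and the numerical invariants of the polarization being stable under small deformation). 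Let $\Sigma\subseteq\Hilb$ be the closed locus of those $Y$ with $h^0(P,\cI_{Y/P}(2))\geq 1$. Since $\cM_4^{(1)}$ is irreducible and not lying on a quadric is an open condition on the dense open locus where the polarization is very ample (which contains $[X]$ by \autoref{thm:L-2delta-very-ample}), it suffices to exhibit a single deformation of $X$ off every quadric, that is, to show $\dim_{[X]}\Sigma<119$.

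To estimate $\dim_{[X]}\Sigma$ I would pass to the flag Hilbert scheme $\mathrm{FH}$ of nested pairs $(Y,Q')$ with $Y\subseteq Q'\subseteq P$ and $Q'$ a quadric. Since $X$ lies on a \emph{unique} quadric $Q$---indeed $H^0(\cI_{Q/P}(2))=\bC\cdot q$ and $H^0(\cI_{X/Q}(2))=0$ (the latter established earlier in \autoref{sec:genus-7}) force $h^0(\cI_{X/P}(2))=1$---the proper projection $\pi\colon\mathrm{FH}\to\Hilb$, $(Y,Q')\mapsto Y$, has image $\Sigma$ and the reduced point $\{([X],[Q])\}$ as its fibre over $[X]$; hence
\[
\dim_{[X]}\Sigma\ \leq\ \dim_{([X],[Q])}\mathrm{FH}\ \leq\ \dim_{\bC} T_{([X],[Q])}\mathrm{FH}.
\]

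Next I would compute the right-hand side from the standard description of the tangent space to a flag Hilbert scheme:
\[
T_{([X],[Q])}\mathrm{FH}=\bigl\{(v,w)\in H^0(X,\cN_{X/P})\times H^0(Q,\cN_{Q/P})\ :\ \sigma(v)=\rho(w)\ \text{in}\ H^0(X,\cO_X(2))\bigr\},
\]
where $\cN_{Q/P}\cong\cO_Q(2)$, $\rho$ is restriction to $X$, and $\sigma$ is induced by the surjection $\cN_{X/P}\onto\cN_{Q/P}|_X\cong\cO_X(2)$ with kernel $\cN_{X/Q}$. Here $\rho$ is injective with $54$-dimensional image, since $\ker\rho=H^0(\cI_{X/Q}(2))=0$ and $h^0(Q,\cO_Q(2))=55-1=54$; and from $0\to\cN_{X/Q}\to\cN_{X/P}\to\cO_X(2)\to 0$ together with $h^1(X,\cN_{X/Q})=0$ we get that $\sigma$ is surjective onto the $55$-dimensional space $H^0(X,\cO_X(2))$ with kernel $H^0(X,\cN_{X/Q})$ of dimension $64$. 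Projecting $(v,w)\mapsto v$ therefore identifies $T_{([X],[Q])}\mathrm{FH}$ with $\sigma^{-1}(\im\rho)$, of dimension $64+54=118$. This gives $\dim_{[X]}\Sigma\leq 118<119$, and the proposition follows.

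All of the substance here is concentrated in \autoref{prop:cohomology-dim}; the crucial input is the vanishing $h^1(X,\cN_{X/Q})=0$ (which in turn relies on \autoref{lem:cokernel-pullpush} and the local Torelli argument), as it is exactly this vanishing that forces the surjectivity of $\sigma$, and hence the one extra dimension separating $\Sigma$ from $\Hilb$. After that the flag Hilbert scheme bookkeeping is routine; the only point that needs a moment's care is the uniqueness of the quadric through $X$, i.e.\ $h^0(P,\cI_{X/P}(2))=1$, which is what lets the fibre of $\pi$ over $[X]$ be a single reduced point.
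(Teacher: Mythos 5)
Your proof is correct and is essentially the same argument as the paper's: both work with the incidence variety of pairs $(Y,Q')$ and reduce the claim to the numerical comparison coming from \autoref{prop:cohomology-dim}, namely $h^0(X,\cN_{X/Q})+\dim|\cO_{\bP^9}(2)|=64+54=118<119=h^0(X,\cN_{X/P})$. The only difference is presentational --- you compute the $118$-dimensional tangent space of the flag Hilbert scheme directly, whereas the paper argues by contradiction via the kernel of the second projection --- and your identification of that tangent space as the fibre product over $H^0(X,\cO_X(2))$ is the standard one and is applied correctly.
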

\begin{proof}
Let $\cH_X$ and $\cH_Q$ denote the irreducible components of the Hilbert schemes containing $[S^{[2]}]$ and quadric hypersurfaces, respectively.
By \autoref{prop:cohomology-dim}, $\cH_X$ is generically smooth of dimension $119$, while $\cH_Q\cong |\cO_P(2)|$ is smooth of dimension $54$.
It is clear that a general deformation of $S^{[2]}$ is contained in at most one quadric; assuming that such quadric exists, we may consider the open subset $U\subset \cH_X$
\[
U\coloneqq\setmid{[X']\in \cH_X}{\text{$X'$ is smooth and contained in a unique quadric}},
\]
and the incidence variety
\[
I\coloneqq\setmid{([X'], [Q'])\in \cH_X\times \cH_Q}{[X']\in U,\; X'\subset Q'}.
\]
The projection $\pr_1\colon I\to \cH_X$ is dominant and injective by definition hence birational.

Now we consider our distinguished point $x\coloneqq ([X], [Q])$ on $I$.
The Zariski tangent space of $I$ at $x$ has dimension $\ge 119$, thus the subspace $\ker \pr_{2*}$ has dimension $\ge 65$.
On the other hand, the space of first-order deformations of $X$ in $Q$ is $H^0(X, \cN_{X/Q})$, which has dimension $64$ by \autoref{prop:cohomology-dim}, a contradiction.
\end{proof}

\begin{cor}
\label{cor:square-4-projective-normal}
A general member $(X', H')\in\cM_4^{(1)}$ is projectively normal. Its homogeneous ideal is generated by $10$ cubics and $20$ quartics, and there are no linear syzygies among the $10$ cubics. In other words, the natural map
\begin{equation}
\label{eq:ideal-multiplication}
m\colon H^0(\bP^9,\cI_{X'/\bP^9}(3))\otimes H^0(\bP^9,\cO(1))\to H^0(\bP^9,\cI_{X'/\bP^9}(4))
\end{equation}
is injective.
Moreover, we have the following Betti diagram of $X'$.
\begin{equation}
\label{eq:betti-diagram}
\begin{array}{c|rrrrrrrr}
b_{i,j} & 0 & 1  & 2   & 3   & 4   & 5  & 6  & 7 \\
\hline
0       & 1 & .  & .   & .   & .   & .  & .  & . \\
1       & . & .  & .   & .   & .   & .  & .  & . \\
2       & . & 10 & .   & .   & .   & .  & .  & . \\
3       & . & 20 & 126 & 190 & 130 & 45 & 10 & 1 \\
4       & . & .  & .   & .   & .   & .  & .  & . \\
5       & . & .  & .   & .   & .   & 1  & .  & . \\
\end{array}
\end{equation}
\end{cor}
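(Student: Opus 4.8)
The plan is to bootstrap everything from \autoref{prop:genus-7-deformation} and the Betti diagram \eqref{eq:S2-g7-betti-diagram} of $S^{[2]}$, using semicontinuity along the Hilbert scheme component $\cH_X$ of \autoref{prop:genus-7-deformation}. By \autoref{prop:cohomology-dim} this component is smooth at $[S^{[2]}]$ (as $h^1(X,\cN_{X/P})=0$), and its general point is a general embedded $(X',H')\in\cM_4^{(1)}$ (very ampleness being an open condition, satisfied at $[S^{[2]}]$ by \autoref{thm:L-2delta-very-ample}.\eqref{thm:L-2delta-very-ample-1}). First I would establish projective normality of $X'\subset\bP^9$: by \autoref{prop:genus-7-deformation} one has $H^0(\bP^9,\cI_{X'/\bP^9}(2))=0$, and since $h^0(\bP^9,\cO(2))=55=h^0(X',(H')^{2})$ by the Riemann--Roch polynomial, the degree-$2$ restriction map is an isomorphism, so $X'$ is $2$-normal; for $d\ge3$, the function $t\mapsto h^1(\bP^9,\cI_{X_t/\bP^9}(d))$ is upper semicontinuous on $\cH_X$ and vanishes at $[S^{[2]}]$ (recall $S^{[2]}$ is $d$-normal for all $d\ge3$), hence vanishes for general $X'$. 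Thus $X'$ is projectively normal, $R_{X'}=S_{X'}/I_{X'}$, and \eqref{eq:hilbert-function} gives $\dim I_d=\binom{9+d}{d}-(2d^{4}+5d^{2}+3)$; in particular $\dim I_2=0$, $\dim I_3=10$ and $\dim I_4=120$.

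Next I would determine the Betti numbers $b_{i,j}=b_{i,j}(X')=\dim\cK_{i,j}(X',H')$ by combining three inputs. First, these are upper semicontinuous along $\cH_X$ (the groups $\cK_{i,j}$ being cohomologies of three-term complexes built from the constant-dimensional spaces $H^0(X_t,H_t^{d})$), so $b_{i,j}(X')\le b_{i,j}(S^{[2]})$ with the right-hand side read off \eqref{eq:S2-g7-betti-diagram}; in particular $b_{i,j}(X')$ vanishes outside the positions occurring there, so $b_{2,2}=0$. Projective normality further gives $b_{0,0}=1$ and $b_{0,j}=0$ for $j\ge1$, and $\dim I_2=0$ gives $b_{1,1}=0$. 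Second, since $\omega_{X'}\cong\cO_{X'}$ and $h^1(\cO_{X'})=h^3(\cO_{X'})=0$, Green's duality \autoref{thm:green-duality} applies for every $j\ge4$ and gives $b_{4,4}=b_{1,1}=0$ and $b_{5,5}=b_{0,0}=1$. Third, for each $k$ the alternating sum $\sum_{i+j=k}(-1)^{i}b_{i,j}$ equals the coefficient of $t^{k}$ in $(1-t)^{10}H_{R_{X'}}(t)$, where $H_{R_{X'}}(t)=\sum_{d}(\dim R_d)t^{d}$; computing this from the Riemann--Roch polynomial yields the numerator $1-10t^{3}-20t^{4}+126t^{5}-190t^{6}+130t^{7}-45t^{8}+10t^{9}-2t^{10}$. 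After the first two inputs the only surviving unknowns are $b_{1,2}$ and $b_{1,3},\dots,b_{7,3}$, and the third input determines them one at a time as $10,20,126,190,130,45,10,1$, which is exactly \eqref{eq:betti-diagram}. In particular $I_{X'}$ is minimally generated by $10$ cubics and $20$ quartics, and $b_{2,2}=0$ is precisely the statement that the $10$ cubics carry no linear syzygies, i.e.\ that the map \eqref{eq:ideal-multiplication} is injective.

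The one genuinely delicate step is the semicontinuity of Koszul cohomology along $\cH_X$: I would need to verify that $[S^{[2]}]$ lies in a flat family of embedded smooth fourfolds whose general member realizes a general $(X',H')\in\cM_4^{(1)}$ up to projective automorphisms, and that over this family the groups $\cK_{i,j}$ arise as cohomologies of complexes of vector bundles, so that their fibre dimensions jump up only on proper closed subsets. Once this is granted, the rest is bookkeeping: the four underlined entries of \eqref{eq:S2-g7-betti-diagram} can only decrease under deformation, two of them ($b_{1,1}$, $b_{0,2}$) are killed because $X'$ is projectively normal and lies on no quadric, a third ($b_{4,4}$) by Green's duality, and the Hilbert-series identity forces the last ($b_{5,3}$) down by exactly one, yielding \eqref{eq:betti-diagram}. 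A minor caveat is that ``general'' here is to be understood in the dense open subset of $\cH_X$ where $X'$ avoids a quadric and where every $b_{i,j}$ is simultaneously minimal.
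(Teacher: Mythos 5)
Your proposal is correct and follows essentially the same route as the paper: projective normality from \autoref{prop:genus-7-deformation} together with the Riemann--Roch count $h^0(X',H'^2)=55$, and then the Betti diagram from the upper bounds given by semicontinuity against \eqref{eq:S2-g7-betti-diagram}, the Hilbert-function constraints, and Green's duality. Your ``delicate step'' on semicontinuity of the Koszul groups is exactly what the paper handles in its footnote via the sheaf-cohomology interpretation of $\cK_{i,j}$ from Ein--Lazarsfeld.
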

\begin{proof}
Using the Riemann--Roch polynomial, we get an equality
\[
h^0(X', H'^2)=55=h^0(\bP^9,\cO(2)).
\]
Since a general member $(X', H')\in\cM_4^{(1)}$ is not contained in any quadric, we get quadratic normality and therefore full projective normality.

Now we study the generators of the ideal of $X'$ in $\bP^9$. On the one hand, $X'$ being projectively normal, the Hilbert function $d\mapsto h^0(\bP^9,\cI_{X'/\bP^9}(d))$ is completely determined: we have $h^0(\bP^9, \cI_{X'/\bP^9}(3))=10$ and $h^0(\bP^9, \cI_{X'/\bP^9}(4))=120$. Therefore, the image of the map $m$ in \eqref{eq:ideal-multiplication} is of codimension at least $20$, so we need at least these number of equations. On the other hand, Betti numbers are upper semicontinuous for a family of polarized hyperkähler manifolds;%
\footnote{This follows, for instance, from the sheaf cohomology interpretation of $\cK_{i,j}(X',H')$ for every $j\geq1$ (see \cite[Proposition 3.2]{ein-lazarsfeld}).}
hence the graded resolution in \eqref{eq:graded-resolution} shows that these numbers are also an upper bound.

Similarly, for the rest of the Betti diagram, we have both a lower bound from the Hilbert function and an upper bound from the semicontinuity of the Betti numbers and the Betti diagram of $S^{[2]}$ in \eqref{eq:S2-g7-betti-diagram}.
This, combined with the symmetry provided by Green's duality theorem, allows us to conclude the full Betti diagram of $X'$. 
\end{proof}

\begin{rem}\leavevmode
\begin{enumerate}
\item In the case of $S^{[2]}$, we see that the ten cubics degenerate into the ten multiples of the distinguished quadric $Q$, and they trivially admit $45$ linear syzygies, explaining the extra $45$ quartics we needed to generate the ideal.
\item For a projective hyperkähler manifold $X$ of dimension $\ge 4$, since $h^2(X,\cO_X)=1$, $X$ can never be embedded in a projective space as an arithmetically Cohen--Macaulay variety. Thus the graded Auslander--Buchsbaum formula predicts that the graded minimal resolution of the coordinate ring of $X$ must have length greater than its codimension.
Indeed, here we see that the graded minimal resolution has length equal to $7$, while the codimension of $X$ in $\bP^9$ is $5$.
\item We prove in \cite{coble-type-hypersurfaces} that a general deformation $X'$ of $S^{[2]}$ is the singular locus of a unique quartic hypersurface in $\bP^9$. In other words, the ten cubics are exactly the polars of this quartic, and $X'$ will be the scheme-theoretical intersection of them. The extra quartics are needed to generate the saturated homogeneous ideal of $X'$ in $\bP^9$.
\end{enumerate}
\end{rem}

\section{Genus 8}
\label{sec:genus-8}

\subsection{Quadrics containing the Mukai model}
Let $(S,L)$ be a polarized $K3$ surface of genus 8, with $\Pic(S)=\bZ\cdot L$. The embedding $S\into \bP(H^0(L)^\vee)=\bP^8$ was described in \cite{mukai-models} as follows. Let $E$ be the unique stable vector bundle with $v(E)=(2,L,4)$. It is globally generated with $h^0(E)=6$, and it defines an immersion
\begin{equation}\label{eq:mukai-model8}
    S\into \Gr(2,H^0(E)^\vee)\into \bP(\bw2 H^0(E)^\vee)
\end{equation}
such that $S$ is the intersection of $\Gr(2,H^0(E)^\vee)$ with the subspace $\bP(H^0(L)^\vee)\subset \bP(\bw2 H^0(E)^\vee)$ (this inclusion being defined by the determinant map $\bw2 H^0(E)\onto H^0(L)$).

The vector space of quadrics $I_S(2)$ equals the vector space of quadrics in $\bP(\bw2 H^0(E)^\vee)$ containing $\Gr(2,H^0(E)^\vee)$; this is canonically identified with $\bw2 H^0(E)^\vee$ (any element in $\bw2 H^0(E)^\vee$ defines by wedge a symmetric bilinear form on $\bw2 H^0(E)^\vee$). We obtain
\[
I_S(2)\cong \bw2 H^0(E)^\vee,
\]
so that $\Gr(4,H^0(E))=\Gr(2,H^0(E)^\vee)\subset \bP(\bw2 H^0(E)^\vee)$ is naturally identified with 
\[
\psi_2:\Gr(4,H^0(E))\overset{\cong}{\to}Y_2 \into |I_S(2)|.
\]
(recall \eqref{eq:def-relGrass}). In this way, \eqref{eq:mukai-model8} can be regarded as an immersion of $S$ into the space of quadrics containing it, admitting the following geometric interpretation.

\begin{prop}\label{tangentquadric8}
The map $S\into \bP(\bw2 H^0(E)^\vee)=|I_S(2)|$ sends a point $p\in S$ to the unique quadric in $|I_S(2)|$ which is singular along the projective tangent $2$-plane $T_pS$.
\end{prop}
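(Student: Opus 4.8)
The plan is to compute the singular locus directly in Plücker coordinates. Observe first that the map of the statement is nothing but the Mukai embedding $S\into\Gr(2,H^0(E)^\vee)\subset\bP(\bw2 H^0(E)^\vee)$ read through the identification $\bP(\bw2 H^0(E)^\vee)=|I_S(2)|$: a point $p\in S$ goes to its Plücker point $[\omega_p]$, where $\omega_p\in\bw2 H^0(E)^\vee$ is a nonzero element of the line $\bw2\bigl(E_p^\vee\bigr)$ (and $E_p^\vee=\bigl(H^0(E\otimes\cI_p)\bigr)^\perp\subset H^0(E)^\vee$ is the image of $p$ under the Mukai embedding into $\Gr(2,H^0(E)^\vee)$); this point corresponds to the quadric $Q_p\coloneqq Q_{\omega_p}\big|_{\bP^8}$, where $Q_{\omega_p}$ is the Plücker quadric on $\bP(\bw2 H^0(E)^\vee)$ with bilinear form $B_{\omega_p}(\eta,\eta')=\omega_p\wedge\eta\wedge\eta'\in\bw6 H^0(E)^\vee\cong\bC$, and $\bP^8=\bP(H^0(L)^\vee)\subset\bP(\bw2 H^0(E)^\vee)$ is the linear span of $S$. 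Thus $\Sing(Q_p)=\bP\bigl(\mathrm{rad}(B_{\omega_p}|_{H^0(L)^\vee})\bigr)$, and everything reduces to identifying this radical.

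First I would compute the radical of $B_{\omega_p}$ on the whole of $\bw2 H^0(E)^\vee$: since the wedge pairing $\bw4 H^0(E)^\vee\times\bw2 H^0(E)^\vee\to\bw6 H^0(E)^\vee$ is perfect, one gets $\mathrm{rad}(B_{\omega_p})=\{\eta:\omega_p\wedge\eta=0\}=E_p^\vee\wedge H^0(E)^\vee$, which is exactly the affine tangent space $\hat T$ to $\Gr(2,H^0(E)^\vee)$ at $[\omega_p]$; in particular $\dim\mathrm{rad}(B_{\omega_p})=9$. Next I would invoke the elementary linear-algebra fact that, for a symmetric bilinear form $B$ on a vector space $M$ with radical $R$ and any subspace $W$ with $W+R=M$, one has $\mathrm{rad}(B|_W)=W\cap R$. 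Applying this with $M=\bw2 H^0(E)^\vee$ (dimension $15$), $R=\hat T$ (dimension $9$) and $W=H^0(L)^\vee$ (dimension $9$), the hypothesis $W+R=M$ is equivalent to $\dim(W\cap R)=9+9-15=3$, while $W\cap R=\hat T\cap H^0(L)^\vee$ is precisely the affine tangent space to $S=\Gr(2,H^0(E)^\vee)\cap\bP^8$ at $p$; since $S$ is a smooth surface cut out as a linear section of the expected dimension (Mukai), this affine tangent space has dimension $\dim S+1=3$, so the hypothesis holds. Hence $\Sing(Q_p)=\bP(W\cap R)$ is the projective tangent plane $T_pS$, a $\bP^2$. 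There is thus no serious obstacle in the existence part: the only point to be careful about is the transversality of the linear section, which is guaranteed by the smoothness of $S$.

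It remains to prove uniqueness — that $Q_p$ is the \emph{only} quadric in $|I_S(2)|$ singular along $T_pS$ — and this is the part of the argument requiring the most care. Such a quadric $Q'$ has $\Sing(Q')\supseteq T_pS$, hence rank $\le 6$, so by \autoref{prop:sigma-closure} it is of the form $Q_{E',V'}$ with $v(E')=(2,L,\ell'+2)$ for some $\ell'\in\{0,1,2\}$; the condition $\mathrm{Im}(\lambda_{E',V'})\subseteq H^0(L\otimes\cI_p^2)$ — which is what being singular along $T_pS$ means in the language of \autoref{sec:prelim-lowrank} — forces, by a first-order computation at $p$ applied to the differential of $s\wedge t$, that $V'\subseteq H^0(E'\otimes\cI_p)$, hence $h^0(E')\ge 6$; a Mukai-vector count then leaves only $E'=E$ and $V'=H^0(E\otimes\cI_p)$, so $Q'=Q_p$. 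Alternatively, uniqueness can be reformulated as the assertion that $\Bl_pS$, embedded in $\bP^5$ via $|\sigma^*L-2E_0|$ (with $\sigma\colon\Bl_pS\to S$ the blow-up at $p$ and $E_0$ its exceptional curve), lies on a unique quadric; this follows from the Riemann--Roch computation $h^0(\Bl_pS,\,2\sigma^*L-4E_0)=20$ together with the quadratic normality of that embedding.
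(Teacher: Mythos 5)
Your existence argument is correct and is a nice, self-contained alternative to the paper's: you compute the radical of the restricted Plücker form $B_{\omega_p}$ directly, using that $\mathrm{rad}(B_{\omega_p})=E_p^\vee\wedge H^0(E)^\vee$ is the affine tangent space to $\Gr(2,H^0(E)^\vee)$ at $[\omega_p]$ and that the linear section is transverse; the paper instead gets the containment $\Sing(Q_{E,V})\supseteq T_pS$ from $\im(\lambda_{E,V})\subseteq H^0(L\otimes\cI_p^2)$ together with the general description $\Sing(Q_{E,V})=\bP(\im(\lambda_{E,V})^\perp)$ from \autoref{sec:prelim-lowrank}. Both are fine; yours makes the "singular \emph{exactly} along $T_pS$" statement transparent.

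The uniqueness half, however, has a genuine gap. Your key claim is that $\im(\lambda_{E',V'})\subseteq H^0(L\otimes\cI_p^2)$ forces $V'\subseteq H^0(E'\otimes\cI_p)$ "by a first-order computation". It does not. The first-order computation only gives: (i) the evaluation image of $V'$ in $E'_p$ is at most one-dimensional, so $V''\coloneqq V'\cap H^0(E'\otimes\cI_p)$ has dimension $\ge 3$; and (ii) if $\dim V''=3$ and $s_0\in V'$ has $s_0(p)\neq 0$, then every $t\in V''$ has linear term at $p$ valued in the line $\langle s_0(p)\rangle\subset E'_p$, i.e.\ $Z(t)$ has length $\ge 2$ at $p$. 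Since the target $\Hom(T_pS,E'_p/\langle s_0(p)\rangle)$ of the relevant map is $2$-dimensional and $V''$ is $3$-dimensional, there is no linear-algebraic contradiction, so the codimension-one case is not excluded by this computation. Ruling it out is precisely the content of the paper's proof and requires geometric input for each $\ell'\in\{0,1,2\}$: for $\ell'=2$ one uses that the locus of sections in $\bP(H^0(E\otimes\cI_p))$ vanishing to length $\ge 2$ at $p$ is a non-degenerate hypersurface, hence contains no $\bP^2$; for $\ell'=1$ one shows $h^0(F\otimes\cI_p)=3$ and plays global generation of $F$ against the length-$\ge 4$ cokernel of the restricted evaluation map; for $\ell'=0$ one reduces to the previous cases via the non-locally-free sheaf $U\subset U_F^*$. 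None of this is captured by your one-line justification, and your subsequent "Mukai-vector count" presupposes the very claim at issue. Your alternative reformulation (uniqueness of the quadric through $\Bl_pS\into\bP^5$ via $|\sigma^*L-2E_0|$) is a reasonable idea, but it rests on the quadratic normality of that projected embedding, which you assert without proof and which is not obviously easier than the case analysis it replaces.
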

\begin{proof}
The quadric is indeed singular along $T_pS=\bP\left(H^0(L\otimes\cI_p^2)^\perp\right)\subset \bP(H^0(L)^\vee)$, since for $V=H^0(E\otimes\cI_p)$ the natural map
\[
\lambda_{E,V}:\bw2 V\into \bw2 H^0(E)\to H^0(L)
\]
satisfies $\im(\lambda_{E,V})\subset H^0(L\otimes\cI_p^2)$. Hence we only need to check that $\psi_2\left(E,H^0(E\otimes\cI_p)\right)$ is the unique quadric in $|I_S(2)|$ which is singular along $T_pS$.

Assume there is another quadric $Q$ with this property. In particular $Q$ is of rank $\leq 6$, so by \autoref{prop:sigma-closure} we have $Q=\psi_i(F,V')$ for some $i\in\{0,1,2\}$ and $F\in\cM(2,L,i+2)$, $V'\in\Gr(4,H^0(F))$ such that $\im(\lambda_{F,V'})\subset H^0(L\otimes \cI_p^2)$. Moreover, replacing $F$ by $F^{**}$ we can assume  that $F$ is locally free. Then one can check that:
\begin{enumerate}
    \item\label{genus8-situation1} $\im(\lambda_{F,V'})\subset H^0(L\otimes \cI_p)$ implies that $V'\cap H^0(F\otimes\cI_p)$ is of dimension $\geq3$.

    \item\label{genus8-situation2} If $V'\cap H^0(F\otimes\cI_p)$ is of dimension $3$, then $\im(\lambda_{F,V'})\subset H^0(L\otimes \cI_p^2)$ implies that every section in $V'\cap H^0(F\otimes\cI_p)$ has zero locus of length $\geq2$ at the point $p$.
\end{enumerate}

If $i=2$, then $F=E$ and $V'\cap H^0(F\otimes\cI_p)$ must be $3$-dimensional by the assumption $V'\neq H^0(E\otimes\cI_p)$. But inside $\bP(H^0(E\otimes\cI_p))=\bP^3$, the locus of sections vanishing with length $\geq 2$ at $p$ forms a non-degenerate hypersurface, which contradicts \eqref{genus8-situation2}.

If $i=1$, we first observe that $h^1(F\otimes\cI_p)=0$; otherwise, we would have a stable extension
\[
0\to F^\vee \to G \to \cI_p\to 0
\]
with $v(G)^2=-4$, which is impossible. This implies $h^0(F\otimes\cI_p)=3$ and $h^0(F)=5$. Note that on the one hand, the vector bundle $F$ is globally generated (this can be checked using the stability of the kernel of the evaluation map $H^0(F)\otimes\cO_S\to F$). On the other hand, by \eqref{genus8-situation2} and \cite[Lemma~2.4]{moretti-rojas} the cokernel of the restricted evaluation map 
\[
H^0(F\otimes\cI_p)\otimes \cO_S \to F
\]
has length $\geq 4$ at $p$, and hence $F$ cannot be globally generated at $p$, which is a contradiction.

Finally, if $i=0$ then $h^1(F)=0$ and therefore $V'=H^0(F)$; it follows from \eqref{genus8-situation1} that $F$ is not globally generated at $p$. Let $U_F$ denote the kernel of the natural map $H^0(F)\otimes\cO_S\to F$. Then $U_F$ is slope stable and, as explained in \cite[Proof of Theorem~3.21]{markman} and \cite{ogrady}, we have $Q=\psi_0(U,H^0(F)^\vee)$ for an appropriate non-locally free sheaf $U\in\cM(2,L,2)$ with $U\subset U_F^*$; replacing $U$ by $U_F^*$ we can again reduce to the case $i=1$ or $i=2$, which finishes the proof. 
\end{proof}

As we proved in \autoref{sec:proof-thm-A} (see \autoref{sepeven}), the complete linear system $|L_2-2\delta|$ on $S^{[2]}$ defines a closed immersion
\[
S^{[2]}\into |I_S(2)|^\vee=\bP(\bw2 H^0(E))
\]
which factors through $\Gr(2,H^0(E))$, via the natural map $\xi\mapsto H^0(E\otimes\cI_\xi)$. Thus we can consider the stratification
\[
S^{[2]}\into \Gamma \into\Gr(2,H^0(E)),
\]
where $\Gamma$ is the locus of $V\in\Gr(2,H^0(E))$ such that $V\subset H^0(E\otimes\cI_p)$ for some $p\in S$. 
Similarly to the case of genus $7$ (see \eqref{eq:diagram-genus7}), there is a commutative diagram
\begin{equation}
\begin{tikzcd}
B \ar[d,"\pi"]\ar[r,hookrightarrow] & \Gr_S(2,K) \ar[d]\ar[dr]   \\
S^{[2]}\ar[r,hookrightarrow]  & \Gamma\ar[r,hookrightarrow]  & \Gr(2,H^0(E))
\end{tikzcd}
\end{equation}
where:
\begin{itemize}
    \item $K$ is the kernel bundle on $S$ sitting in the short exact sequence
    \[
    0\to K\to H^0(E)\otimes\cO_S\to E\to 0.
    \]
    In other words, $K^\vee$ is the restriction to $S$ of the rank-4 tautological quotient bundle on $\Gr(2,H^0(E)^\vee)$.
    
    \item $\Gr_S(2,K)$ is the Grassmannian bundle defined by $K$, and $\Gr_S(2,K)\to\Gr(2,H^0(E))$ is the natural morphism. Its image equals $\Gamma$.

    \item The left square is cartesian, namely $B$ is the universal length-$2$ subscheme of \eqref{eq:universallenght2subscheme}.

    \item The morphism $\Gr_S(2,K)\to\Gamma$ is birational (defines an isomorphism outside $B$) and $\Gamma$ is non-normal, singular along $S^{[2]}$.
\end{itemize}

\subsection{Degeneracy locus}
The Hilbert square $S^{[2]}$ can again be realized as a degeneracy locus, in a strikingly similar fashion to the case of genus $7$. In the sequel, we will denote by $G$ the Grassmannian $G\coloneqq\Gr(2,H^0(E))$.
Consider the incidence variety
\[
\begin{tikzcd}
 &I\ar[ld, "p"']\ar[rd, "q"]\coloneqq\setmid{([U],[V])}{(U,V)=0}\\
G\coloneqq\Gr(2,H^0(E))&&\Gr(2,H^0(E)^\vee)
\end{tikzcd}
\]
Note that $I$ is nothing but the flag variety $\Fl(2,4,H^0(E))$.
Each fiber of $p$ is a 4-dimensional quadric in $\Gr(2,H^0(E)^\vee)$, spanning a $\bP^5$ in $\bP(\bw2 H^0(E)^\vee)$. This construction globalizes to the rank-6 vector bundle $\bw2\cQ^\vee$ (where $\cQ$ is the rank-4 tautological quotient on $G$), which is naturally a subbundle of the trivial vector bundle $\bw2 H^0(E)^\vee\otimes\cO_G$.
We denote it by $M\coloneqq \bw2\cQ^\vee$ from now on.
Note that there exists a natural pairing $\bw2\cQ\otimes\bw2\cQ\to \bw4\cQ\cong \cO_G(1)$, and so $M^\vee\cong M(1)$.

On the other hand, the $K3$ surface $S$ is determined by the choice of the $9$-dimensional linear subspace
\[
H^0(L)^\vee\subset \bw2 H^0(E)^\vee,
\]
so we get a quotient map $\bw2 H^0(E)^\vee\onto W$ where $W$ is of dimension $6$.
Via composition, we obtain a morphism between two vector bundles of rank $6$
\[
\varphi\colon M \into\bw2 H^0(E)^\vee\otimes \cO_G\onto W\otimes\cO_G.
\]
The same argument as in \autoref{prop:S2-is-D6} applies, \emph{mutatis mutandis}, to the following result:
\begin{prop}
\label{prop:S2-is-D4}
Scheme-theoretically, $S^{[2]}$ coincides with $D_4(\varphi)$.
\qed
\end{prop}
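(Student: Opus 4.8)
The plan is to transcribe the proof of \autoref{prop:S2-is-D6} almost verbatim, substituting the pair $(Q,\cS_+)$ by $(G,M)$ with $G=\Gr(2,H^0(E))$ and $M=\bw2\cQ^\vee$, and replacing the statements on trisecant lines and conics of $S\subset\bP^9$ by their (easier) analogues for $S\subset\bP^8$. Throughout I would write $\phi\colon S^{[2]}\into G$ for the embedding $\xi\mapsto[H^0(E\otimes\cI_\xi)]$ and identify $[U]\in G$ with the subspace $U\subseteq H^0(E)$ and its annihilator $U^\perp\subseteq H^0(E)^\vee$, so that $M_{[U]}=\bw2 U^\perp$ and the fibre $p^{-1}([U])$ of the incidence variety $\Fl(2,4,H^0(E))$ is the smooth $4$-dimensional quadric $\Gr(2,U^\perp)\subseteq\bP(M_{[U]})=\bP^5$.

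First I would prove $S^{[2]}\subseteq D_4(\varphi)$. For a reduced $\xi=\{x,y\}\in S^{[2]}$ put $[U]=\phi(\xi)$; since $U\subseteq H^0(E\otimes\cI_x)\cap H^0(E\otimes\cI_y)$, the two distinct points $[V_x]=[H^0(E\otimes\cI_x)^\perp]$ and $[V_y]=[H^0(E\otimes\cI_y)^\perp]$ of the Mukai model $S=\Gr(2,H^0(E)^\vee)\cap\bP(H^0(L)^\vee)$ both lie in $p^{-1}([U])=\Gr(2,U^\perp)$, so their (distinct) Plücker points both lie in $\bP(\ker\varphi_{[U]})=\bP(M_{[U]}\cap H^0(L)^\vee)$, forcing $\dim\ker\varphi_{[U]}\ge2$, i.e.\ $\rk\varphi_{[U]}\le4$. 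As $S^{[2]}$ is irreducible and $D_4(\varphi)$ closed, this gives $S^{[2]}\subseteq D_4(\varphi)$, and the inclusion is scheme-theoretic because $S^{[2]}$ is reduced.

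Next I would show the reverse set-theoretic inclusion. If $[U]\in G$ has $\rk\varphi_{[U]}=4$, then $\ell\coloneqq\bP(\ker\varphi_{[U]})=\bP(M_{[U]}\cap H^0(L)^\vee)$ is a line in $\bP(H^0(L)^\vee)$ and
\[
\bP(M_{[U]})\cap S=\Gr(2,U^\perp)\cap\bP(H^0(L)^\vee)=p^{-1}([U])\cap\ell .
\]
Since $S$ contains no line, $\ell$ is not contained in the quadric $p^{-1}([U])$, so the intersection is a length-$2$ subscheme $\zeta\subseteq S$, i.e.\ a point of $S^{[2]}$; each point $[V]\in\zeta$ satisfies $V\subseteq U^\perp$, i.e.\ $U\subseteq H^0(E\otimes\cI_p)$ for the corresponding $p\in S$, and as $h^0(E\otimes\cI_\zeta)=2$ (the even-genus estimate of \autoref{veryampleeven} for $v(E)=(2,L,4)$) the $2$-dimensional spaces $U$ and $H^0(E\otimes\cI_\zeta)$ coincide, whence $\phi(\zeta)=[U]$. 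I would also rule out $\rk\varphi_{[U]}\le3$: then $\bP(\ker\varphi_{[U]})$ would contain a $\bP^2$ and $\bP^2\cap\Gr(2,U^\perp)$ would be a conic (or a plane) lying on $S$, impossible when $\Pic(S)=\bZ\cdot L$. This yields $D_4(\varphi)=\phi(S^{[2]})$ set-theoretically.

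Finally, to upgrade to a scheme-theoretic equality I would invoke that $D_4(\varphi)=D_{6-2}(\varphi)$ is determinantal of the expected codimension $(6-4)^2=4=\dim G-\dim S^{[2]}$, hence Cohen--Macaulay and without embedded components (exactly as for \autoref{prop:S2-is-D6}); it then suffices to check $\deg D_4(\varphi)=\deg S^{[2]}$ with respect to the Plücker polarization, which is a Thom--Porteous computation—the Chern classes of $M=\bw2\cQ^\vee$ being obtained from those of $\cQ$ on $\Gr(2,6)$ via the splitting principle, and the degeneracy-locus class being compared with the known class of $S^{[2]}$. I expect this last step (equivalently, verifying generic reducedness of $D_4(\varphi)$), together with the routine treatment of non-reduced members of $S^{[2]}$ in the two inclusions, to be the only real work; everything else is a faithful copy of the genus-$7$ argument.
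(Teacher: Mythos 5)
Your proposal is correct and is essentially the paper's own proof: the paper proves this proposition by declaring that the argument of \autoref{prop:S2-is-D6} applies \emph{mutatis mutandis}, and your transcription carries out exactly those mutations (fibrewise quadric $\Gr(2,U^\perp)$ in place of $p^{-1}(x)\subset\bP(\cS_{+,x})$, absence of lines/conics on $S\subset\bP^8$, Cohen--Macaulayness of the expected-codimension-$4$ determinantal locus plus a Porteous degree check). The extra care you take in identifying $U$ with $H^0(E\otimes\cI_\zeta)$ via $h^0(E\otimes\cI_\zeta)=2$ is a welcome elaboration of a step the paper leaves implicit, not a deviation.
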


\begin{rem}
Let us explain how the embedding of $S^{[2]}$ via $|L_2-2\delta|$ studied here is related to the realization of $S^{[2]}$ as the variety of lines of a Pfaffian cubic fourfold studied by Beauville--Donagi in \cite{beauville-donagi}. Recall that the latter is given by $|2L_2-5\delta|$, which has the same square $6$ but divisibility $2$.

The first degeneracy locus $D_5(\varphi)$ admits a rational map $f$ to $\bP W^\vee$ via $x \mapsto \im \varphi_x$.
We can view $\bP W^\vee$ as a subspace of $\bP(\bw2 H^0(E))$.
If we write $x=[V_2]$, then by definition $f(x)$ is orthogonal to $\bw2\cQ^\vee_x=\bw2 (H^0(E)/V_2)^\vee$.
In other words, it is a one-dimensional subspace of $V_2 \wedge H^0(E)$ and therefore cannot be a bivector of maximal rank.
So the image of the rational map $f$ is contained in the intersection of the $5$-dimensional $\bP W^\vee$ with the Pfaffian hypersurface of $\bP(\bw2 H^0(E))$.

On the other hand, the indeterminacy locus of $f$ is precisely the next degeneracy locus $D_4(\varphi)$ that is $S^{[2]}$.
And once the indeterminacy is resolved, each point provides a line in the Pfaffian cubic fourfold, as desired.

Note that the rank-$2$ kernel bundle is the universal secant line to $S$, so it has first Chern class $-(L_2-\delta)$. Thus the rank-$2$ cokernel bundle has first Chern class $2L_2-5\delta$, recovering the computation of Beauville--Donagi.
\end{rem}

\subsection{Projective normality}
This section is concerned with the equations of $S^{[2]}$ in $\bP^{14}=\bP\left(\bw2 H^0(E)\right)$. Keeping in mind that $G=\Gr(2, H^0(E))$ is projectively normal, to establish the projective normality of $S^{[2]}\subset\bP^{14}$ it is sufficient to show the vanishing of $H^1(G, \cI_{S^{[2]}/G}(d))$ for all $d\ge2$.
\begin{prop}
For every $d\ge 2$ and $i\ge 1$, the cohomology group $H^i(G, \cI_{S^{[2]}/G}(d))$ vanishes.
\end{prop}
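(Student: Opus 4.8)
The plan is to run exactly the machine used in the genus-$7$ case: resolve $\cI_{S^{[2]}/G}$ by the Gulliksen--Negård complex and feed it into the associated hypercohomology spectral sequence, computing every intermediate cohomology group by Borel--Weil--Bott on $G=\Gr(2,H^0(E))\cong\Gr(2,6)$. First recall from \autoref{prop:S2-is-D4} that $S^{[2]}=D_4(\varphi)$ scheme-theoretically, where $\varphi\colon M\to W\otimes\cO_G$ is a morphism of rank-$6$ bundles with $M=\bw2\cQ^\vee$; since this corank-$2$ degeneracy locus has the expected codimension $4=8-4$, it is Cohen--Macaulay and the complex \eqref{eq:GN-complex} is a locally free resolution of $\cI_{S^{[2]}/G}$. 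Using $\det M=(\det\cQ^\vee)^{\otimes3}=\cO_G(-3)$, the self-duality $M^\vee\cong M(1)$ coming from $\bw2\cQ\otimes\bw2\cQ\to\bw4\cQ=\cO_G(1)$ (so that $\bw5 M\cong M^\vee\otimes\det M\cong M(-2)$), and the identification $\Lambda M\cong\mathcal{E}nd_0(M)\otimes\det M$ of $\Lambda M$ with the trace-free part of $M\otimes M^\vee$ twisted by $\det M$, the twist of \eqref{eq:GN-complex} by $\cO_G(d)$ takes the shape
\[
0\to\cO_G(d-6)\to M(d-3)^{\oplus6}\to\cO_G(d-3)^{\oplus35}\oplus\Lambda M(d)\to M(d-2)^{\oplus6}\to\cI_{S^{[2]}/G}(d)\to0,
\]
where $\Lambda(W\otimes\cO_G)$ is the trivial bundle of rank $35$ and $\dim\bw5 W^\vee=6$.

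The next step is the Borel--Weil--Bott bookkeeping on $\Gr(2,6)$. For line bundles this is classical: $H^\bullet(G,\cO_G(m))$ is concentrated in degree $0$ when $m\ge0$ and vanishes entirely for $-5\le m\le-1$. For $M(k)=\bw2\cQ^\vee\otimes\cO_G(k)=\mathbb{S}_{(k,k,k-1,k-1)}\cQ$, running the weight $(0,0;k,k,k-1,k-1)$ through the Bott algorithm yields: $H^{>0}(G,M(k))=0$ for $k\ge1$; $H^\bullet(G,M(k))=0$ for $-4\le k\le0$; and cohomology concentrated in the top degree $8$ for $k\le-5$. For $\Lambda M(d)$ one first uses the plethysm $\bw2\bC^4\otimes\bw2\bC^4=\mathbb{S}_{(2,2)}\oplus\mathbb{S}_{(2,1,1)}\oplus\mathbb{S}_{(1,1,1,1)}$ to obtain
\[
\Lambda M(d)\cong\mathbb{S}_{(d-2,d-2,d-4,d-4)}\cQ\ \oplus\ \mathbb{S}_{(d-2,d-3,d-3,d-4)}\cQ,
\]
and a second Bott computation shows that both summands have no higher cohomology for $d\ge4$, and no cohomology at all for $d\in\{2,3\}$ (in each case the relevant $\GL_6$-weight is non-regular).

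Finally, feed this into the hypercohomology spectral sequence of the displayed complex, which abuts to $H^\bullet(G,\cI_{S^{[2]}/G}(d))$: for $i\ge1$ the group $H^i(G,\cI_{S^{[2]}/G}(d))$ is a subquotient of $H^i(M(d-2)^{\oplus6})\oplus H^{i+1}\bigl(\cO_G(d-3)^{\oplus35}\oplus\Lambda M(d)\bigr)\oplus H^{i+2}(M(d-3)^{\oplus6})\oplus H^{i+3}(\cO_G(d-6))$. When $d\in\{2,3\}$ every bundle occurring here falls into one of the acyclic ranges above, so all four contributions vanish; when $d\ge4$ each of these bundles has cohomology only in degree $0$, which contributes nothing since the shifts $i,i+1,i+2,i+3$ are all $\ge1$. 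Hence $H^i(G,\cI_{S^{[2]}/G}(d))=0$ for all $d\ge2$ and $i\ge1$, which is the claim (and, combined with the projective normality of $G$ in $\bP^{14}$, gives projective normality of $S^{[2]}$). The only genuinely delicate point is the Borel--Weil--Bott analysis of $\Lambda M(d)$ at the borderline values $d=2,3$, where one must check both Schur summands land on non-regular weights; everything else is a direct transcription of the genus-$7$ argument in \autoref{sec:genus-7}.
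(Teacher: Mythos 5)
Your proposal is correct and follows essentially the same route as the paper: twist the Gulliksen--Neg\r{a}rd resolution of $\cI_{S^{[2]}/G}$ by $\cO(d)$, run the hypercohomology spectral sequence, and kill every term by Borel--Weil--Bott on $\Gr(2,6)$. The only (cosmetic) difference is that you obtain the decomposition of $\Lambda M$ into the two Schur summands $\Sigma^{2,2}\cQ^\vee(-2)$ and $\Sigma^{2,1,1}\cQ^\vee(-2)$ directly from the plethysm of $\mathcal{E}nd_0(M)\otimes\det M$, whereas the paper derives it as an extension via a short chain of exact sequences; your Bott computations at the borderline twists $d=2,3$ agree with the paper's.
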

\begin{proof}
Consider the Gulliksen--Negård complex \eqref{eq:GN-complex} twisted by $\cO(d)$, whose corresponding spectral sequence computes the hypercohomology and therefore the cohomology of $\cI_{S^{[2]}/G}(d)$.
Using the fact that $\det M\cong\cO(-3)$ and $\bw5M\cong M^\vee(-3)\cong M(-2)$, the resolution takes the following form
\[
0\to\cO(d-6)\to M(d-3)^{\oplus 6} \to \Lambda M(d) \oplus \cO(d-3)^{\oplus 35}\to M(d-2)^{\oplus 6}\to \cI(d).
\]
Denoting the four terms by $F_3,\dots,F_0$ respectively, we have a spectral sequence
\[
E_1^{-k, i+k}\coloneqq H^{i+k}(G, F_k)\to H^i(G, \cI(d)).
\]
We claim that as soon as $d\ge 2$, all higher cohomology groups in the spectral sequence vanish:
\begin{equation}
\label{eq:spectral-sequence-vanishing}
\forall d\ge 2, \forall i \ge 1,\quad H^i(G, F_k)=0.
\end{equation}
This immediately implies the required vanishing $H^i(G, \cI(d))=0$. In order to prove \eqref{eq:spectral-sequence-vanishing},
we study the individual terms appearing in the resolution.

\begin{itemize}
\item Using Bott vanishing, we have $H^i(G, \cO(d))=0$ for all $d\ge -5$ and $i\ge 1$.
\item Using the Borel--Weil--Bott theorem, one checks that $H^i(G, M(d))$ vanishes for every $d\ge-5$ and $i\ge 1$.
\item Lastly we study the kernel bundle $\Lambda M$ which fits in the exact sequence
\[
0 \to \Lambda M \to M^{\otimes2}(-2) \to \cO(-3)\to 0.
\]
The tensor square $M^{\otimes2}$ in turn fits in an extension
\[
0 \to \Sym^2M\to M^{\otimes2} \to \bw2 M\to 0.
\]
One verifies that $\bw2 M=\bw2\bw2\cQ^\vee$ can be identified as the irreducible homogeneous vector bundle $\Sigma^{2,1,1}\cQ^\vee$, where $\Sigma^\lambda$ is the Schur functor.
This enables us to use the Borel--Weil--Bott theorem to check that $H^i(\bw2 M(1))=0$ for any $i$,
so we get a surjective map $\Lambda M \onto \bw2 M(-2)$ whose kernel is isomorphic to the kernel of $\Sym^2M(-2)\onto \cO(-3)$.
One also checks that the latter is the irreducible homogeneous vector bundle $\Sigma^{2,2}\cQ^\vee(-2)$;
in other words, we have an exact sequence
\[
0\to \Sigma^{2,2}\cQ^\vee(-2)\to \Lambda M \to\Sigma^{2,1,1}\cQ^\vee(-2) \to 0.
\]
Then using Borel--Weil--Bott again, we check that $H^i(G, \Lambda M(d))=0$ when $d\ge -3$ and $i\ge 1$,
with the exception of
$H^4(G, \Lambda M)=H^4(G, \Sigma^{2,2}\cQ^\vee(-2))=\bC$.
(Again, similar to the genus $7$ case, $h^4(G, \Lambda M)=1$ amounts to $h^2(\cO_{S^{[2]}})=1$.)
\end{itemize}
Combining all these vanishing results, we check that the claim \eqref{eq:spectral-sequence-vanishing} is indeed verified, which concludes the proof.
\end{proof}

\begin{prop}
The ideal of $S^{[2]}$ in $G$ is generated by $55$ cubics.
\end{prop}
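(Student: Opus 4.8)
The plan is to follow closely the genus-$7$ argument of \autoref{prop:65-quartics}: combine a Hilbert-function count with the Gulliksen--Negård resolution and a surjectivity statement for multiplication maps. Throughout I write $\cI(d)\coloneqq\cI_{S^{[2]}/G}(d)$ and keep the notation $M\coloneqq\bw2\cQ^\vee$, using the vanishing of the higher cohomology of all the terms of the twisted Gulliksen--Negård complex for $d\ge 2$ that was established in the preceding proof.

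First I would pin down the numbers. By the preceding proposition $S^{[2]}\into\bP^{14}$ is projectively normal, and $G=\Gr(2,H^0(E))$ is projectively normal in its Plücker embedding, so the Hilbert functions of $S^{[2]}$ and of $G$ in $\bP^{14}$ are computed, respectively, by the Riemann--Roch polynomial $\mathrm{RR}_{K3^{[2]}}(q)=\binom{\frac12 q+3}{2}$ evaluated at $q=6e^2$ (giving $h^0(S^{[2]},\cO(e))=\binom{3e^2+3}{2}$, hence $105$ and $435$ for $e=2,3$) and by Bott's formula (giving $h^0(G,\cO(2))=105$ and $h^0(G,\cO(3))=490$). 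A short subtraction then yields $h^0(G,\cI(2))=15-15=0$ and $h^0(G,\cI(3))=245-190=55$. In particular the ideal of $S^{[2]}$ in $G$ contains no quadric and exactly a $55$-dimensional space of cubics, so those $55$ cubics are minimal generators.

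It then remains to show that there are no generators in degree $\ge 4$, equivalently that the multiplication map $H^0(G,\cI(d))\otimes H^0(G,\cO(1))\to H^0(G,\cI(d+1))$ is surjective for every $d\ge 3$. For this I would twist the Gulliksen--Negård complex \eqref{eq:GN-complex} by $\cO(d)$, as in the preceding proof; the vanishing recalled above makes the last map on global sections surjective, so that $W\otimes H^0(G,M(d-2))\twoheadrightarrow H^0(G,\cI(d))$. One then forms the commutative square
\[
\begin{tikzcd}
W\otimes H^0(G,M(d-2))\otimes H^0(G,\cO(1))\ar[r]\ar[d,twoheadrightarrow] & W\otimes H^0(G,M(d-1))\ar[d,twoheadrightarrow]\\
H^0(G,\cI(d))\otimes H^0(G,\cO(1))\ar[r] & H^0(G,\cI(d+1))
\end{tikzcd}
\]
whose top arrow is $\mathrm{id}_W$ tensored with the natural multiplication $m\colon H^0(G,M(d-2))\otimes H^0(G,\cO(1))\to H^0(G,M(d-1))$, and the claim reduces to surjectivity of $m$. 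Since $M(k)=\bw2\cQ^\vee(k)$ and $\cO(1)$ are irreducible homogeneous bundles on $G$, the Borel--Weil--Bott theorem identifies $H^0(G,M(k))$, $H^0(G,\cO(1))$ and $H^0(G,M(k+1))$ as irreducible $\GL(H^0(E))$-modules, all nonzero for $k\ge 0$; as $m$ is $\GL(H^0(E))$-equivariant and nonzero (a product of two nowhere-vanishing sections is nonzero), it must be surjective onto the irreducible target. Chasing the diagram then gives surjectivity of the bottom row for all $d\ge 3$, and together with the count of the previous step this shows that the ideal of $S^{[2]}$ in $G$ is generated by the $55$ cubics.

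I expect the genuinely substantive points to be only two. The first is the bookkeeping in the degree-$3$ count: one must feed in the projective normality of \emph{both} $S^{[2]}$ and $G$ so that the number comes out to exactly $55$ (and the degree-$2$ count to exactly $0$, confirming that no new quadrics are needed beyond the Plücker relations). The second is the identification of $H^0(G,M(k))$ as an irreducible $\GL(H^0(E))$-module; this is the one place where representation theory, rather than plain dimension counting, is used, and it is what makes all the multiplication maps $m$ automatically surjective.
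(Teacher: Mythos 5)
Your proof is correct and follows essentially the same route as the paper: the heart of the argument in both cases is the Gulliksen--Negård complex together with the $\GL(H^0(E))$-equivariance and irreducibility of $H^0(G,M(k))$, which forces the multiplication maps $m$ to be surjective and rules out generators in degree $\ge 4$. The only (harmless) divergence is in how the counts $h^0(G,\cI(2))=0$ and $h^0(G,\cI(3))=55$ are obtained—you use projective normality of $S^{[2]}$ and of $G$ plus Hilbert functions, whereas the paper reads them off directly from the global sections of the twisted Gulliksen--Negård complex ($15\cdot 6-35=55$); both computations check out.
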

\begin{proof}
We have seen that, as soon as $d\ge 2$, all higher cohomology groups appearing in the spectral sequence are zero.
Therefore, we obtain a long exact sequence of global sections
\[
\begin{gathered}
0\to H^0(G, \cO(d-6))\to H^0(G,M(d-3))^{\oplus 6} \to H^0(G,\Lambda M(d)) \oplus H^0(G,\cO(d-3))^{\oplus 35}\\
\to H^0(G,M(d-2))^{\oplus 6}\to H^0(G,\cI(d))\to 0.
\end{gathered}
\]
When $d=2$, the first four terms all vanish, hence there are no quadric equations.
When $d=3$, $H^0(G, M(1))=H^0(G, \bw2\cQ)=\bw2H^0(E)$ is $15$-dimensional, so we get $15\times 6 - 35=55$ cubics as desired.
Finally, for $d\ge 3$, we have the following commutative diagram
\[
\begin{tikzcd}
H^0(G, M(d-2))^{\oplus 6}\otimes H^0(G, \cO(1))\ar[r]\ar[d,twoheadrightarrow] & H^0(G, M(d-1))^{\oplus6}\ar[d,twoheadrightarrow]\\
H^0(G, \cI(d))\otimes H^0(G, \cO(1))\ar[r]                                    & H^0(G, \cI(d+1)).
\end{tikzcd}
\]
Again, similar to the case of genus $7$, the first row is the natural map
\[
m\colon H^0(G,M(d-2))\otimes H^0(G,\cO(1))\to H^0(G,M(d-1))
\]
tensored by $W$, which is also a map of $\GL(2,6)$-modules.
Since $M$ is an irreducible homogeneous vector bundle, the groups $H^0(G, M(n))$ are irreducible as $\GL(2,6)$-module, so $m$ is necessarily surjective.
We may thus deduce the surjectivity of the second row.
\end{proof}

\begin{rem}
Again, by keeping track of the vector space $W$, we have an extension
\[
0\to\bC\to H^0(G, \cI(3))\to H^0(L)\otimes W \to 0,
\]
where the distinguished cubic is given by the first degeneracy locus $D_5(\varphi)$, and the cokernel is the tangent space to $\Gr(9, \bw2H^0(E)^\vee)$ at the point $[H^0(L)^\vee]$.
\end{rem}

\begin{cor}
\label{cor:S2-g8-graded-resolution}
The Betti diagram of the coordinate ring of $S^{[2]}$ in $\bP^{14}$ has the following form
\begin{equation}
\label{eq:S2-g8-betti-diagram}
\begin{array}{c|rrrrrrrrrrrrrrr}
b_{i,j} & 0 & 1  & 2   & 3  & 4 & 5 & 6 & 7  & 8  & 9  & 10 & 11 & 12 & 13 \\
\hline
0       & 1 & .  & .   & .  & . & . & . & .  & .  & .  & .  & .  & .  & .  \\
1       & . & 15 & 35  & 21 & . & . & . & .  & .  & .  & .  & .  & .  & .  \\
2       & . & 55 & 336 & *  & * & * & ? & ?  & ?  & ?  & ?  & ?  & ?  & ?   & \dots \\
3       & . & .  & ?   & ?  & * & * & * & *  & *  & *  & *  & *  & *  & ?   & \dots \\
4       & . & .  & .   & .  & . & . & . & 21 & 35 & 15 & .  & .  & .  & .  \\
5       & . & .  & .   & .  & . & . & . & .  & .  & .  & 1  & .  & .  & .  \\
\end{array}
\end{equation}
where entries marked with an $*$ are known to be non-zero, while those marked with a $?$ could take any value.
\end{cor}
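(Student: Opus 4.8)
The plan is to read off each row of \eqref{eq:S2-g8-betti-diagram} from four ingredients: the projective normality already established (so $R_X=S_X/I_X$ and $b_{i,j}=\dim\cK_{i,j}(S^{[2]},H)$), the known minimal resolution of the Plücker ideal of $\Gr(2,6)$, Green's duality \autoref{thm:green-duality}, and the degeneracy–locus/Gulliksen--Negård description. Throughout I write $G\coloneqq\Gr(2,H^0(E))\subset\bP^{14}$, $R_G$ for its coordinate ring, and $J\coloneqq\bigoplus_d H^0(G,\cI_{S^{[2]}/G}(d))$ for the ideal of $S^{[2]}$ inside $G$. \emph{Row $j=1$.} First I would note that the $15$ quadrics through $S^{[2]}$ are exactly the $15$ Plücker quadrics defining $G$: one has $I_{G,2}\subseteq I_{X,2}$ since $S^{[2]}\subset G$, and both spaces are $15$-dimensional --- the first by Riemann--Roch ($\binom{16}{2}-\binom{15}{2}$, using \eqref{eq:hilbert-function}) and the second by Borel--Weil ($\binom{16}{2}-\dim\Sigma^{2,2}\bC^6=120-105$). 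Hence $R_{\le 2}(S^{[2]})$ and $R_{\le 2}(G)$ agree as graded $\Sym^1$-modules, and since $\cK_{i,1}$ depends only on $R_0,R_1,R_2$, the linear strand of $S^{[2]}$ coincides with that of $\Gr(2,6)$, which is classically $b_{1,1}=15,\ b_{2,1}=35,\ b_{3,1}=21$ (zero elsewhere); this fills in row $j=1$.

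For rows $j=4$ and $j=5$ I would invoke Green's duality: since $\omega_{S^{[2]}}\cong\cO$ and $h^3(\cO_{S^{[2]}})=0$, \autoref{thm:green-duality} gives $b_{i,j}=b_{10-i,\,5-j}$ for all $j\ge 4$. Fed with row $j=1$ and $b_{0,0}=1$, this yields $b_{7,4}=21$, $b_{8,4}=35$, $b_{9,4}=15$, $b_{10,5}=1$, the vanishing $b_{i,j}=0$ for $j\ge 6$, and (since $b_{i,1}=0$ for $i\notin\{1,2,3\}$ and $b_{i,0}=0$ for $i>0$) the vanishing of all remaining entries of rows $4$ and $5$. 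The value $b_{1,2}=55$ is then the number of minimal cubic generators of $I_X$: the preceding proposition supplies $55$ cubic generators of $J$ over $R_G$, and together with the $15$ Plücker quadrics these generate $I_X$; a Hilbert-function count ($\dim I_{X,3}=680-435=245$ against $\dim S_1\cdot I_{X,2}=15\cdot15-b_{2,1}=190$) shows the $55$ remain minimal over $S_X$, so $b_{1,2}=55$ and $b_{1,j}=0$ for $j\ge 3$.

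The one genuinely new entry is $b_{2,2}=336$, and here I would argue as follows. From the short exact sequence of graded $S_X$-modules $0\to I_G\to I_X\to J\to 0$ (the surjectivity onto $J$ using $H^1(\bP^{14},\cI_{G/\bP^{14}}(d))=0$), one gets a long exact sequence in $\mathrm{Tor}^{S_X}(-,\bC)$. Since the $\Gr(2,6)$ resolution has no degree-$4$ first syzygy, $\mathrm{Tor}^{S_X}_1(I_G,\bC)_4=b_{2,2}(R_G)=0$, and since $(I_G)_4\hookrightarrow(I_X)_4$, we obtain $b_{2,2}(R_X)=\dim\mathrm{Tor}^{S_X}_1(I_X,\bC)_4=\dim\mathrm{Tor}^{S_X}_1(J,\bC)_4$. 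The change-of-rings spectral sequence for $S_X\to R_G$ has $E^2$-terms $\mathrm{Tor}^{R_G}_p(J,\bC)\otimes_\bC\mathrm{Tor}^{S_X}_q(R_G,\bC)$; in homological degree $1$ the only piece landing in internal degree $4$ is $\mathrm{Tor}^{R_G}_1(J,\bC)_4$ (the other term, $J\otimes_{R_G}\mathrm{Tor}^{S_X}_1(R_G,\bC)$, sits in internal degree $3+2=5$), and it survives to $E^\infty$. As $J$ is generated by $55$ cubics with $J_2=0$, this equals $\dim\ker\!\big(H^0(G,\cO(1))\otimes J_3\to J_4\big)=15\cdot55-\dim J_4$, and finally $\dim J_4=\dim I_{X,4}-\dim I_{G,4}=1785-1296=489$ by Riemann--Roch and Borel--Weil. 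Hence $b_{2,2}=825-489=336$.

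It remains to justify the qualitative shape of rows $2$ and $3$. Since $S^{[2]}$ is a hyperkähler fourfold, $h^2(\cO_{S^{[2]}})=1$ forces $H^3_{\mathfrak m}(R_X)\cong\bC$, so $\mathrm{depth}_{\mathfrak m}R_X=3$ and the minimal free resolution has length $15-3=12$, strictly larger than the codimension $10$; by the steps above these extra homological degrees $3\le i\le 12$ can contribute only in the two rows $j\in\{2,3\}$, and nonvanishing of the displayed $*$-entries can be forced by comparing, column by column, with the (sign-definite) Euler-characteristic coefficients of the $K$-polynomial $H_{R_X}(t)(1-t)^{15}$, while the $?$-entries are left genuinely undetermined. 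The only delicate point --- and the main obstacle --- is the exact determination of $b_{2,2}$: it rests entirely on the vanishing $b_{2,2}(R_G)=0$ for $\Gr(2,6)$ and on tracking internal degrees through the change-of-rings spectral sequence to make sure $\mathrm{Tor}^{S_X}_1(J,\bC)_4$ has no contribution beyond $\mathrm{Tor}^{R_G}_1(J,\bC)_4$; everything else is routine bookkeeping with known resolutions and Riemann--Roch.
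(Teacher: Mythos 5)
Your overall strategy matches the paper's: read off columns $i\in\{0,1\}$ from the generators ($15$ quadrics, $55$ cubics), row $j=1$ from the Plücker resolution, rows $j\ge 4$ from Green's duality, and the rest from the Hilbert function. Several of your refinements are correct and welcome: the observation that $\cK_{i,1}$ depends only on $R_0,R_1,R_2$ (which agree for $S^{[2]}$ and $\Gr(2,6)$) cleanly justifies the identification of the whole linear strand; the depth computation giving projective dimension $12$ is right; and your change-of-rings computation of $b_{2,2}=336$ is valid, though it is a long detour --- the paper gets the same number in one line from the Euler characteristic of the anti-diagonal $i+j=4$, namely $b_{2,2}-b_{3,1}=315$, once one knows $b_{0,4}=b_{1,3}=b_{4,0}=0$.

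There is, however, one genuine gap: the non-vanishing of $b_{5,2}$ does \emph{not} follow from the $K$-polynomial. (Also, Euler characteristics constrain anti-diagonals $i+j=\mathrm{const}$, not columns.) In the anti-diagonal $i+j=7$ the only possibly non-zero entries are $b_{5,2}$ and $b_{4,3}$, entering with opposite signs; the coefficient of $t^7$ in $H_{R_X}(t)(1-t)^{15}$ is $+1890$, which forces $b_{4,3}\ge 1890$ but is perfectly consistent with $b_{5,2}=0$. (By contrast, the coefficients of $t^5$ and $t^6$ are $-750$ and $+295$, which do force $b_{3,2}$ and $b_{4,2}$ to be non-zero; and this sign pattern is exactly why $b_{2,3}$ and $b_{3,3}$ are left as $?$ while $b_{4,3},\dots,b_{12,3}$ are $*$.) The paper's extra input here is the \emph{full} resolution of the Plücker ideal, whose second strand is $(b_{3,2},b_{4,2},b_{5,2})=(21,35,15)$: since $R_{\le 2}(S^{[2]})=R_{\le 2}(G)$ and $R_3(G)\onto R_3(S^{[2]})$, every Koszul cycle for $G$ in $\bw{i}V\otimes R_2$ is a cycle for $S^{[2]}$ and the boundaries coincide, so $\cK_{i,2}(G)\into \cK_{i,2}(S^{[2]})$ and $b_{i,2}(S^{[2]})\ge b_{i,2}(\Gr(2,6))>0$ for $i\in\{3,4,5\}$. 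You list the Plücker resolution among your ingredients but only use its linear strand; you need this comparison in the second strand to justify the $*$ at $(i,j)=(5,2)$.
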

\begin{proof}
Since we have determined the generators, this gives the first two columns $i\in\set{0,1}$.
The $15$ quadrics are the Plücker quadrics defining the Grassmannian $\Gr(2,6)$, and their syzygies are well-known: this gives the first two rows $j\in\set{0,1}$ as well as the non-vanishing of $b_{i,2}$ for $i\in\set{3,4,5}$.
For the rest of the diagram, we use the Hilbert function and the Green's duality theorem as usual.
\end{proof}

Let us finish this section with some comments on \autoref{intro-thmdef}.\eqref{intro-thmdef-2}, concerning a general deformation $(X,H)\in \cM_6^{(1)}$ of $S^{[2]}$ in $\bP^{14}$. First note that $X$ is indeed projectively normal, as $S^{[2]}$ itself is. Using the Riemann--Roch polynomial, we have $h^0(\bP^{14}, \cI_{X/\bP^{14}}(2))=15$ and $h^0(\bP^{14}, \cI_{X/\bP^{14}}(3))=245$, and so the image of the map
\[
H^0(\bP^{14}, \cI_{X/\bP^{14}}(2))\otimes H^0(\bP^{14}, \cO(1))\to H^0(\bP^{14}, \cI_{X/\bP^{14}}(3))
\]
has codimension at least $20$. Therefore, to generate the homogeneous ideal of $X$ we need $15$ quadrics and at least $20$ cubics---note that in the case of $S^{[2]}$, we needed $55$ cubics due to the $35$ linear syzygies among the Plücker quadrics.

We prove in the forthcoming work \cite{coble-type-hypersurfaces} that a general $(X,H)\in \cM_6^{(1)}$ is the singular locus of a unique cubic hypersurface in $\bP^{14}$. In other words, similarly to the case of square $4$, the $15$ quadrics are exactly the polars of this cubic, and $X$ will be the scheme-theoretical intersection of them. The extra cubics are needed to generate the saturated homogeneous ideal.

This would be quite different from the varieties of lines for cubic fourfolds, which have the same square $6$ but divisibility $2$: for these, the $15$ quadrics are always the Plücker quadrics, so they are not sufficient at all to recover the variety.

\section{Final questions and comments}\label{sec:questions}

Apart from the aforementioned work \cite{coble-type-hypersurfaces} on Coble type hypersurfaces, our study naturally raises several questions pointing towards a more systematic understanding of linear systems on hyperkähler fourfolds of $K3^{[2]}$-type. 

In the case of polarized Hilbert squares $(S^{[2]},L_2-2\delta)$, our good understanding of the projective geometry and equations in genus 7 and 8 strongly relies on the existence of a Mukai model for $S$. In particular, we do not have a very satisfactory geometric realization of the multiplication maps
\[
\Sym^d H^0(S^{[2]},L_2-2\delta)\to H^0\left(S^{[2]},(L_2-2\delta)^{\otimes d}\right)
\]
in arbitrary genus. Nevertheless, in view of our results (as well as some supporting computer experiments), it is reasonable to expect:

\begin{conj}\label{conj-hilbertsquare}
    Let $(S,L)$ be a polarized $K3$ surface of genus $g$ with $\Pic(S)=\bZ\cdot L$. Then:
\begin{enumerate}
    \item If $g\geq8$ (that is, $q(L_2-2\delta)\ge6$), then the embedding $S^{[2]}\hookrightarrow \bP\left(H^0(S^{[2]},L_2-2\delta)^\vee\right)$ is projectively normal.

    \item\label{conj-hilbertsquare-2} If $g\geq10$ (that is, $q(L_2-2\delta)\ge10$), then the homogeneous ideal of $S^{[2]}$ is generated by quadrics.
\end{enumerate}
\end{conj}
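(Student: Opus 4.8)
The plan is to handle the two assertions in turn, reducing each to a cohomology--vanishing statement, and I will indicate where I expect the real difficulty to lie.

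For (1), I would first note that projective normality is an open condition in a flat family of polarized varieties (the locus where some $h^1(\bP^N,\cI_{X/\bP^N}(d))$ jumps is closed), and that by the deformation arguments of the paper a Picard--rank--one Hilbert square $(S^{[2]},L_2-2\delta)$ is a general member of $\cM_{2g-10}^{(1)}$. Hence it suffices, for each $g\ge 8$, to produce one projectively normal member of $\cM_{2g-10}^{(1)}$, and the natural choice is to argue directly with $S^{[2]}$. Following the strategy that succeeds for $g=7,8$, I would realize $S^{[2]}$ inside a homogeneous ambient variety: for $g$ even, with $E$ the unique stable bundle with $v(E)=(2,L,g/2)$, one has $L_2-2\delta=\det E^{[2]}$ and the complete linear system factors through an embedding $S^{[2]}\hookrightarrow Y\coloneqq\Gr(4,H^0(E))$ defined by the tautological quotient, so that $H^0(Y,\cO(1))=\bw4 H^0(E)^\vee$ surjects onto $H^0(S^{[2]},L_2-2\delta)$; for $g$ odd one would instead work over the Fourier--Mukai partner $\cM(2,L,\lfloor g/2\rfloor)$ (a $K3$ surface), with a relative Grassmann bundle, or with a single bundle $E$ as in the proof of \autoref{veryampleodd}. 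Since $Y$ is arithmetically Cohen--Macaulay, projective normality of $S^{[2]}$ in $\bP^N$ follows once $H^i(Y,\cI_{S^{[2]}/Y}(d))=0$ for all $i\ge1$ and $d\ge1$.

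The crux is then a locally free resolution of $\cI_{S^{[2]}/Y}$ by homogeneous bundles. On $S\times Y$ one has the composite $p_Y^*\cK\to H^0(E)\otimes\cO\xrightarrow{\,\mathrm{ev}\,}p_S^*E$, where $\cK$ is the tautological subbundle of rank $g/2$; the Hilbert square is the locus over which this evaluation degenerates (fails to be surjective with a kernel bundle of the expected rank $g/2-2$), a Buchsbaum--Rim / Lascoux--Kempf--Weyman type degeneracy locus which for $g=8$ collapses to the corank-$2$ locus of a square map and recovers the Gulliksen--Negård complex of \autoref{prop:S2-is-D4}. I would identify the resolving complex, decompose its terms into irreducible homogeneous bundles via Schur functors, and run Borel--Weil--Bott; as in genus $7$ and $8$, the only surviving higher cohomology should be a single copy of $\bC$ in degree $4$, forced by $h^2(\cO_{S^{[2]}})=1$ and contributing to $H^3(Y,\cI_{S^{[2]}/Y})$ rather than to any obstruction to normality. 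The principal obstacle is exactly this step: for $g>8$ the map of bundles is far from square, the resolution is long, its terms are complicated plethysms, and the Borel--Weil--Bott bookkeeping must be organised uniformly in $g$ (ideally packaged as a weight--independent vanishing lemma). A fallback is to treat $g=9,10,11,12$ individually using the available Mukai models of $S$, and to seek a separate stabilisation argument for large $g$---or, as an alternative to the Grassmannian route, to degenerate $S$ to a Picard--rank--two $K3$ carrying an elliptic pencil, so that $S^{[2]}$ acquires a Lagrangian fibration, prove projective normality there via the fibration structure, and conclude by semicontinuity.

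For (2), assume (1), so that the coordinate ring is $S_X/I_X$; then generation by quadrics is the vanishing $\cK_{1,j}(X,H)=0$ for $j\ge2$, and by the usual lifting lemma it is enough to show that $H^0(\cI_X(2))\otimes H^0(\cO(1))\to H^0(\cI_X(3))$ is surjective. I would again pass to the model $Y$: for $g=8$ the quadrics through $S^{[2]}$ are exactly the Plücker quadrics of $\Gr(2,6)$ and the extra $55$ cubic generators arise from the $35$ linear syzygies among them, while the quartic/cubic ``corrections'' seen in genus $7$ and $8$ are governed by the low--degree cohomology of the resolution of $\cI_{S^{[2]}/Y}$; the threshold $q(H)\ge10$ should be precisely where the first such correction term dies, so that $\cI_{S^{[2]}/\bP^N}$ is generated by the Plücker quadrics of $Y$ together with the minimal generators of $\cI_{S^{[2]}/Y}$, all in degree $2$. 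Finally, Green's duality \autoref{thm:green-duality} together with the Hilbert function read off from the Riemann--Roch polynomial pins down the remaining Betti numbers, exactly as in the proof of \autoref{cor:square-4-projective-normal}. Here too I expect the decisive point to be the uniform control of the homogeneous resolution of $\cI_{S^{[2]}/Y}$; the rest is bookkeeping.
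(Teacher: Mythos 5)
The statement you are addressing is \autoref{conj-hilbertsquare}, which the paper states as a \emph{conjecture}: it offers no proof, only the supporting evidence of the genus $7$ and $8$ cases and some computer experiments, and explicitly notes that no satisfactory realization of the multiplication maps is available in arbitrary genus. Your text is accordingly a research plan rather than a proof, and you yourself flag that the decisive step---a uniform locally free resolution of $\cI_{S^{[2]}/Y}$ by homogeneous bundles on the ambient Grassmannian, together with the Borel--Weil--Bott bookkeeping for all $g$---is not carried out. As written, nothing beyond what the paper already proves for $g=7,8$ is established.

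Beyond the overall incompleteness, two concrete points need repair. First, your opening reduction is both false and unusable: a Picard-rank-one Hilbert square is \emph{not} a general member of $\cM_{2g-10}^{(1)}$ (Hilbert squares form a $19$-dimensional locus inside the $20$-dimensional moduli space), and the paper's genus $7$ analysis shows precisely how different $S^{[2]}$ and its general deformation can be: $S^{[2]}$ lies on a quadric and fails quadratic normality, while the general deformation is projectively normal. Openness of projective normality lets you pass from $S^{[2]}$ to its deformations, not the other way around, and the conjecture is about $S^{[2]}$ itself; so the reduction buys nothing and you are back to arguing directly on $S^{[2]}$, which is exactly where the unproved resolution and vanishing live. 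Second, in part (2) the ``usual lifting lemma'' does not reduce generation by quadrics to the single surjectivity $I_2\otimes S_1\to I_3$; one needs $I_2\otimes S_{d-2}\to I_d$ surjective for every $d\ge3$ (equivalently $\cK_{1,j}=0$ for all $j\ge2$), and some regularity input is required before one may stop at $d=3$. In short, your proposal identifies a plausible line of attack consistent with the paper's methods in genus $7$ and $8$, but it does not prove the conjecture---and the paper itself does not claim to.
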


For general polarized hyperkähler fourfolds of $K3^{[2]}$-type that are not actual Hilbert squares,
the bounds in \autoref{conj-hilbertsquare} are no longer optimal.
Namely, we have showed that projective normality holds also in the case of square $4$, which is indeed predicted by the Hilbert function;
similarly, an elementary computation shows that generation of the homogeneous ideal by quadrics may also be satisfied by hyperkähler fourfolds of $K3^{[2]}$-type equipped with a polarization of square $8$. 
In much greater generality, it is possible to compute a \emph{theoretical Betti diagram} by imposing that the syzygies of a hyperkähler fourfold $(X,H)\in\cM_{2d}^{(\gamma)}$ are as simple as possible (subject to some relations such as Green's duality \autoref{thm:green-duality}). The reader may consult \cite{nazgul} for further details, including an \href{https://sagecell.sagemath.org/?z=eJx1VE1vnDAQvUfKfxjRQ-0F0jXtKSpVxaWVckxuiKwMnt1YxYaAaZr--o5hydINa2kX8MybjzfPVriHEp3TOyv1b2Qmgiddl_z2-gpofYCqMe3gENwTwugCmXcHpeWhkwaGXtvDaP1JOOwc7AdbOd3YKYKFFLR1zEdlgnOIQUyWe7LU0pRKgrqFUtvGaFkzCyGoCBSfvLIIuuYlgoq881wUYb4tNszGCS8ioHf6KybPF0qBUKNlFYevKWVuuvEzo09INiZMjl35pXz2yTeE5LRfRtCTaRvNZfq1p0ia2oBO2gOyCccX0UYkhClhN1DluqDHPdOh4P_7kDmFeBG5hHgCsXumPDkjT4ovcNWNtj0Ry6imki-hFIvFgj9OTZ4smmYK32B7ViDxeCPbFq1iyzhY93jmmc1-BCF6jpSL2CeiLc75u8h-PN5tdqG6PDIuj9Pp0A2dhez6Ss2Sey82J38h_OgQ7cce1CBr7V69tpoOjVdRQ3K0vVbYyZPCMkq9JuHJSlwY4kIsOqzUiibpRxIZBa-0Qcoyx18RQKVosuec5YSPRZHrMCm8FMaN0G8UK56TwSvtYhhReHGMGxeCvA_xBqcSkxEvzozhaBwrvKzwSq2255FElD72d6m3KT-NX6w7n7RwffWdBkHjrNwkDPwjTVsjo_ede20xZcEDPQI6lVhj5ZqO5cHd58fcFLQX3A1mZ4KC8whM2tdeGUxEYhv1Dttdr_9iKsj2PNuS6MvSlkS1LLFOg0QFb-fZq2L9anr-lGzUY0KNGd8c5eReX3OtkKYw1zaeKmDENt-s4T32TL5nwm07L1EnS2Ijr3Xv2Onyyeio0SEkZv39-ISSetvRqUsfugE5_wdex3ri&lang=sage&interacts=eJyLjgUAARUAuQ==}{interactive {\sc SageMathCell}} where the theoretical Betti diagrams in low degrees are computed.

It is thus natural to pose the following question:

\begin{QU}[\cite{nazgul}]
If $2d\geq4$, do the Betti numbers of a general hyperkähler fourfold $(X,H)\in\cM_{2d}^{(1)}$ coincide with the ones given by the theoretical Betti diagram?
\end{QU}

Note that our \autoref{intro-thmdef}.\eqref{intro-thmdef-1} settles this question in the affirmative for $2d=4$. On the other hand, the same question will not work verbatim in higher divisibility: for a general element of $(X,H)\in\cM_6^{(2)}$, the variety of lines of a cubic fourfold, the 15 quadrics containing $X$---namely the 15 Plücker quadrics containing $\Gr(2,6)$---have unexpected linear relations.

Finally, it would also be interesting to understand very ampleness in higher dimensions. Following \cite{debarre}, let us denote by ${}^{m}\!{\cM}_{2d}^{(\gamma)}$ the moduli space of hyperkähler manifolds of $K3^{[m]}$-type equipped with a polarization of square $2d$ and divisibility $\gamma$. In analogy with the cases $m=1$ and $m=2$, it is natural to expect:

\begin{QU}
    Let $m\geq 3$ and $2d\geq4$. For a general $(X,H)\in{}^{m}\!{\cM}_{2d}^{(1)}$, is the line bundle $H$ very ample? If so, does $H$ define a projectively normal embedding?
\end{QU}

To the best of our knowledge, very ampleness is only known in the range $d\geq m+1$ (see \cite[Corollary 3.9]{debarre}). Note that the analogous question may have a negative answer for higher divisibility: as shown in \cite{epwcubes}, for a general $(X,H)\in{}^{3}\!{\cM}_{4}^{(2)}$ the corresponding morphism $X\to\bP\left(H^0(X,H)^\vee\right)=\bP^{19}$ has degree 2 onto its image.

\bibliography{refer}
\bibliographystyle{alphaspecial}

\end{document}